\numberwithin{equation}{section}
\definecolor{citegreen}{rgb}{0,0.6,0}
\definecolor{refred}{rgb}{0.8,0,0}
\crefname{definition}{Definition}{Definitions}
\crefname{theorem}{Theorem}{Theorems}
\crefname{thmx}{Theorem}{Theorems}
\crefname{lemma}{Lemma}{Lemmas}
\crefname{step}{Step}{Steps}
\crefname{substep}{Step}{Steps}
\crefname{claim}{Claim}{Claims}
\crefname{proposition}{Proposition}{Propositions}
\crefname{corollary}{Corollary}{Corollaries}
\crefname{remark}{Remark}{Remarks}
\crefname{section}{Section}{Sections}
\crefname{subsection}{Section}{Sections}
\crefname{chapter}{Chapter}{Chapters}
\crefname{appendix}{Appendix}{Appendices}
\crefname{equation}{}{}
\newcommand{\R}{\mathbb{R}}
\newcommand{\N}{\mathbb{N}}
\newcommand{\Sph}{\mathbb{S}}
\def\HHH{{\rm H}}
\def\RRR{{\mathrm R}}
\def\a{\alpha}
\newcommand{\pa}{\partial}
\newcommand{\ffi}{\varphi}
\newcommand{\ep}{\varepsilon}
\newcommand{\Ric}{{\rm Ric}}
\newcommand{\D}{{\rm D}}
\newcommand{\De}{\Delta}
\newcommand{\na}{\nabla}
\newcommand{\nana}{\nabla^2}
\renewcommand{\epsilon}{\varepsilon}
\renewcommand{\phi}{\varphi}
\renewcommand{\div}{\mathrm{div}}
\newcommand{\gr}{\mathfrak{g}}
\newcommand{\AVR}{{\rm AVR}}
\newcommand{\Sf}{\mathbb{S}}
\mathchardef\emptyset="001F
\DeclarePairedDelimiter{\abs}{\lvert}{\rvert}
\definecolor{vgreen}{rgb}{0.1,0.5,0.2}
\definecolor{viola}{RGB}{85,26,139}
\newtheorem{theorem}{Theorem}[section]
\newtheorem{remark}[theorem]{Remark}
\newtheorem{corollary}[theorem]{Corollary}
\newtheorem{definition}[theorem]{Definition}
\newtheorem{proposition}[theorem]{Proposition}
\newtheorem{lemma}[theorem]{Lemma}
\newtheorem{thmx}{Theorem}
\begin{document}

\hyphenation{ma-ni-fold}

\title[Comparison geometry for substatic manifolds
and weighted isoperimetry]{Comparison geometry for substatic manifolds and a weighted Isoperimetric Inequality}

\author[S.~Borghini]{Stefano Borghini}
\address{S.~Borghini, Universit\`a degli Studi di Trento,
via Sommarive 14, 38123 Povo (TN), Italy}
\email{stefano.borghini@unitn.it}

\author[M.~Fogagnolo]{Mattia Fogagnolo}
\address{M.~Fogagnolo, Universit\`a degli Studi di Padova , via Trieste 63, 35121 Padova (PD), Italy}
\email{mattia.fogagnolo@unipd.it}

\begin{abstract} 
Substatic Riemannian manifolds with minimal boundary arise naturally in General Relativity as spatial slices of static spacetimes
satisfying the Null Energy Condition. Moreover, they constitute a vast generalization of nonnegative Ricci curvature.
In this paper we will prove various geometric results in this class, culminating in a sharp, weighted Isoperimetric inequality that quantifies the area minimizing property of the boundary. Its formulation and proof will build on a comparison theory 
partially stemming from a newly discovered conformal connection with $\mathrm{CD}(0, 1)$ metrics.  
\end{abstract}

\maketitle

\noindent\textsc{MSC (2020): 
49Q10,
53C21,
53E10,
}

\smallskip
\noindent\keywords{\underline{Keywords}: substatic manifolds, comparison geometry, isoperimetric inequality.} 

\date{\today}

\maketitle

\section{Introduction}

In this paper we are interested in the study of triples $(M,g,f)$, where $(M,g)$ is a Riemannian  manifold of dimension $n\geq 3$ with (possibly empty) compact boundary $\pa M$ and $f:M\to\R$ is a smooth function that is positive in the interior of $M$ and zero on $\pa M$, satisfying the following inequality
\begin{equation}
\label{eq:substatic}
f\Ric - \nana f + (\De f)g \geq 0,
\end{equation}
where $\Ric$ is the Ricci tensor of the metric $g$, $\nana$ is the Hessian and $\De={\rm tr}\nana$ is the Laplace--Beltrami operator with respect to the Levi-Civita connection $\na$ of $g$. We will refer to such triples $(M,g,f)$ as {\em substatic triples} or simply {\em substatic manifolds}. 
We say that a substatic manifold has {\em horizon boundary} if $\pa M$ is either empty or it is a minimal hypersurface and $\abs{\nabla f} \neq 0$ on $\partial M$.

Condition~\eqref{eq:substatic} arises naturally in the study of static spacetimes satisfying the Null Energy Condition, as already observed in~\cite{Wang_Wang_Zhang}.
More precisely, a Lorentzian manifold $(L, \gr)$ 
of the form
\[
L=\R\times M\,,\qquad \gr\,=\,-f^2dt\otimes dt+g,
\]
happens to be a solution to the Einstein Field Equation 
\[
\Ric_\gr\,+\,\left(\Lambda-\frac{1}{2}\RRR_\gr\right)\gr\,=\,T\,,
\]
subject to $T(X, X) \geq 0$ for any vector field $X$ satisfying $\gr (X, X) = 0$, exactly when $f$ and $g$ satisfy~\eqref{eq:substatic}. A minimal boundary represents, in this framework, the event horizon of a black hole. For the reader's sake, we included the computations in \cref{app:phys_mot}. 
The class of substatic manifolds obviously includes the very large and thoroughly studied class of manifolds with nonnegative Ricci curvature, where $f$ is just constant, and consequently the minimal boundary is empty. However, even considering explicit model  warped products only, a whole new zoo of examples arises. 

As an example, we recall that the following family of triples $(M, g, f)$ is in fact a family of substatic triples:
\begin{equation}
\label{eq:models}
M=I\times\Sigma\,,\qquad g=\frac{dr\otimes dr}{f^2}+r^2 g_{\Sigma},\qquad f=\sqrt{1-\frac{2\Lambda}{n(n-1)} r^2-\frac{2m}{r^{n-2}} + \frac{q^2}{r^{2n-4}}},
\end{equation}
where $(\Sigma, g_\Sigma)$ is a closed $(n-1)$-dimensional Riemannian manifold satisfying $\Ric_{g_\Sigma} \geq (n-2) g_\Sigma$,
$\Lambda, q\in\R$, $m\geq 0$ and $I\subseteq[0,+\infty)$ is the maximal interval such that the quantity in square root in~\eqref{eq:models} is nonnegative for all $r\in I$. 
According to the sign of $\Lambda$, the case $m=q=0$ corresponds to the space forms. 
If instead $m>0$, $q = 0$,  one obtains the families of the Schwarzschild, Schwarzschild--de Sitter and Schwarzschild--Anti de Sitter black holes, again with respect to $\Lambda$ being vanishing, positive or negative. If $m > 0$ and $q \neq 0$, one gets the Reissner--Nordstr\"om versions of these last spaces. From a physical point of view, $\Lambda$ is the cosmological constant, $m$ is the mass and $q$ is the charge of the black hole. 

{
We will always tacitly assume that $(M,g)$ is complete as a metric space. This holds true for the models~\eqref{eq:models}, provided the absolute value of the charge $q$ is not too big. For instance, for $\Lambda=0$, the solution has a singularity at $r=0$ when $\abs{q}>m$.
}

The main achievement of the present work is the following sharp
Isoperimetric Inequality, taking place in a relevant subclass of substatic triples. It is saturated by warped product metrics only, such as the ones in~\eqref{eq:models}.

\begin{thmx}[Substatic $f$-isoperimetric inequality]
\label{thm:isoperimetric_intro}
Let $(M,g,f)$ be a substatic triple of dimension $n\leq 7$, with horizon boundary and one uniform $f$-complete end.
Assume there exists an exhaustion of nonmimimal outward minimizing hypersurfaces homologous to the boundary. Then, for any bounded domain $\Omega_\Sigma$ with smooth boundary $\pa\Omega_\Sigma=\pa M\sqcup\Sigma$ it holds  
\begin{equation}
\label{eq:isoperimetric_intro}
\abs{\Sigma}^{\frac{n}{n-1}} - \abs{\partial M}^{\frac{n}{n-1}} \geq n \left[\mathrm{AVR}(M, g, f) \abs{\Sf^{n-1}}\right]^{\frac{1}{n-1}} \abs{\Omega_\Sigma}_f.
\end{equation}
Moreover, in the case $\mathrm{AVR}(M, g, f) > 0$, 
\begin{itemize} 
\item if $\partial M \neq \emptyset$, the equality holds in~\eqref{eq:isoperimetric_intro} if and only if $\partial M$ is connected and $(M, g)$ is isometric to 
\begin{equation}
\label{eq:rigidiso_intro}
\left([\overline{s}, +\infty) \times \partial M, \, \frac{ds \otimes ds}{f(s)^2} + \frac{s^2}{\overline{s}^2}\, g_{\partial M}\right), 
\end{equation}
where $g_{\partial M}$ is the metric induced by $g$ on $\partial M$
and $\Sigma$ is a level set of $s$. In particular, $f=f(s)$ is a function of $s$ alone. 
\item If $\partial M = \emptyset$, the equality holds in~\eqref{eq:isoperimetric_intro} if and only if $(M, g)$ is isometric to 
\begin{equation}
\label{eq:rigidiso_intro-nobordo}
\left([0, +\infty) \times \Sf^{n-1}, \, \frac{ds \otimes ds}{f(s)^2} + \frac{s^2}{f(x)^2} \,g_{\Sf^{n-1}}\right), 
\end{equation}
where $g_{\Sf^{n-1}}$ is the round metric on the $(n-1)$-dimensional sphere $\Sf^{n-1}$ and $x \in \Omega_\Sigma$. In this case, $\Sigma$ is a level set of $s$ homothetic to the round sphere. The function $f = f(s)$ depends on $s$ alone also in this case.
\end{itemize}
\end{thmx}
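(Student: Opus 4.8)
The plan is to derive \eqref{eq:isoperimetric_intro} from a Geroch-type monotonicity along a weak inverse mean curvature flow, the monotonicity being forced precisely by the substatic condition \eqref{eq:substatic}, read through the conformal correspondence with $\mathrm{CD}(0,1)$ metrics established above. As a preliminary reduction I would observe that it suffices to treat the case in which $\Sigma$ is strictly outward minimizing: passing to the strictly outward minimizing hull $\widehat\Omega$ of $\Omega_\Sigma$ relative to $\pa M$, its outer boundary $\widehat\Sigma$ satisfies $\abs{\widehat\Sigma}\leq\abs{\Sigma}$, while $\widehat\Omega\supseteq\Omega_\Sigma$ gives $\abs{\widehat\Omega}_f\geq\abs{\Omega_\Sigma}_f$ because $f\geq0$, and $\pa M$, being minimal, is already outward minimizing and is left untouched; since $n\leq 7$, $\widehat\Sigma$ is a smooth hypersurface. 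A further approximation, using the postulated exhaustion by nonminimal outward minimizing hypersurfaces, lets me also assume $\Sigma$ is mean-convex and lies in the region swept by the flow.

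The core of the argument is then the following. Let $\{\Sigma_t\}_{t\geq 0}$ be the weak inverse mean curvature flow in the appropriate $f$-weighted sense (equivalently the ordinary weak flow for the conformal metric), issuing from $\pa M$ --- or, when $\pa M=\emptyset$, from an interior point; the exhaustion hypothesis is exactly what guarantees that this is a \emph{proper} solution sweeping out the $f$-complete end. The substatic inequality \eqref{eq:substatic}, being equivalent to the $\mathrm{CD}(0,1)$ bound for the conformal metric, is what makes a Willmore-type functional of $\Sigma_t$ monotone along the flow, with jumps only helping, as in Huisken--Ilmanen. Combining this monotonicity with the evolution identities for the weighted area $\abs{\Sigma_t}$ and the weighted volume $\abs{\Omega_t}_f$, together with a H\"older inequality, I expect to reach, for a.e.\ $t$, a differential inequality of the form
\[
\frac{d}{dt}\,\abs{\Sigma_t}^{\frac{n}{n-1}}\;\geq\;c\;\frac{d}{dt}\,\abs{\Omega_t}_f ,
\]
where $c$ is governed by the rate at which $\abs{\Sigma_t}$ opens up at the end. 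Integrating from the horizon up to the time at which $\Sigma_t=\Sigma$ yields $\abs{\Sigma}^{n/(n-1)}-\abs{\pa M}^{n/(n-1)}\geq c\,\abs{\Omega_\Sigma}_f$, and the $f$-weighted Bishop--Gromov comparison supplied by the comparison theory identifies $c=n[\AVR(M,g,f)\abs{\Sf^{n-1}}]^{1/(n-1)}$, because it forces $\abs{\Sigma_t}$ to grow at exactly the rate prescribed by the asymptotic $f$-volume ratio. A nonlinear potential-theoretic argument based on $p$-harmonic functions could replace the weak flow throughout, at the price of a $p\to1^+$ limit.

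For the rigidity, suppose $\AVR(M,g,f)>0$ and equality holds in \eqref{eq:isoperimetric_intro}. Then every inequality in the chain must be saturated: the hull replacement is trivial, so $\Sigma$ is strictly outward minimizing; the monotone functional is constant, forcing each leaf $\Sigma_t$ to be totally umbilic with $H$ and $f$ constant on it and the substatic tensor degenerate in the direction normal to the leaves; and the H\"older step degenerates, making the relevant ratio pointwise constant on each leaf. Propagating these identities along the flow shows that, on the flow region, $g$ is a warped product $ds\otimes ds/f(s)^2+\rho(s)^2 g_{\Sigma}$ with $f=f(s)$, and the saturated Bishop--Gromov at the end pins down $\rho(s)=s/\overline s$ together with the cross-sectional geometry. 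If $\pa M\neq\emptyset$, the flow emanates from the horizon, so $g_\Sigma=g_{\pa M}$ and one recovers \eqref{eq:rigidiso_intro}; if $\pa M=\emptyset$, the flow emanates from a point $x\in\Omega_\Sigma$, around which the collapsing leaf must close up smoothly, which forces $(\Sigma,g_\Sigma)$ to be isometric to a round sphere, and one recovers \eqref{eq:rigidiso_intro-nobordo}.

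The step I expect to be the main obstacle is setting up and controlling this flow near the horizon: the conformal metric carrying the $\mathrm{CD}(0,1)$ structure degenerates exactly on $\pa M=\{f=0\}$, so both the weak flow issuing from $\pa M$ and the monotone quantity must be handled across a singular boundary; and in the limit at the end one needs the $f$-weighted volume comparison to be asymptotically sharp, in order to obtain the constant $c$ on the nose rather than up to a harmless factor. Once the monotonicity and the sharp constant are in place, the rigidity --- although lengthy --- is essentially forced by the equality cases accumulated along the way.
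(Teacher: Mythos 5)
Your proposal takes a genuinely different route from the paper's, but the route has gaps that the authors deliberately circumvented, and in fact the paper explicitly flags your approach as an open problem rather than a working strategy.

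The paper does \emph{not} use weak inverse mean curvature flow at all. Its proof runs through \emph{constrained $f$-isoperimetric sets}: for each weighted volume $V$ it produces a minimizer $E_V$ of perimeter among sets sandwiched between $\partial M$ and an outward minimizing box $S$ from the exhaustion (after extending $M$ across the horizon to make the constraint two-sided). The core technical content is then \cref{prop:outermost} (the horizon is a priori outermost area-minimizing, proved via mean curvature flow of $S$ and \cref{pro:H_bound_Sigma}$(iii)$), and \cref{thm:f-isoperimetric}, which shows $E_V$ exists, has $\mathscr C^{1,1}$ boundary avoiding $\partial M$, and carries $\HHH=\lambda f$ with $\lambda>0$ away from the obstacle and $\HHH/f\le\lambda$ a.e. Plugging the (already-established, static) Willmore inequality \cref{eq:willmore-intro} into the first variation of the profile $I_f(V)=|\Sigma_V|$ gives the Dini-derivative bound \cref{eq:derivativeprofilewillmore}, which integrates to \cref{eq:isoperimetric_intro} using $I_f(V_0)\ge|\partial M|$. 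This is the Kleiner/Fogagnolo--Mazzieri scheme, not Huisken--Ilmanen.

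The concrete gaps in your proposal are the following. First, the existence of a proper weak IMCF in this regime is not known; the authors list it in \cref{sec:splitting} (Further directions) as something that \emph{would follow from} a substatic $p$-harmonic analysis that has not been carried out, and that would require precisely the outward-minimizing exhaustion they instead assume by hypothesis. Your flow would have to emanate from $\partial M=\{f=0\}$, which is the conformal infinity of $\tilde g$ (\cref{le:completeness}); a weak IMCF launched ``from infinity'' of the auxiliary metric is not a construction available off the shelf, and in the boundaryless case the $\mathrm{CD}(0,1)$ conformal metric is not a setting where Huisken--Ilmanen's point-source existence theory is established. Second, you leave unspecified the monotone quantity. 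The paper's Willmore bound is a consequence of the Bishop--Gromov monotonicity of $A(t)$ along $\tilde g$-distance level sets, not an evolution statement along IMCF; and the paper explicitly remarks that the Minkowski-type quantities known to be monotone along IMCF in the static vacuum case (lower bounds on $\int f\HHH$) do not seem related to $\int(\HHH/f)^{n-1}$. Without exhibiting a monotone functional whose decay produces the exponent $(n-1)$ in $\HHH/f$ rather than a Minkowski-type exponent, the differential inequality you posit,
\begin{equation}
\frac{d}{dt}\,|\Sigma_t|^{\frac{n}{n-1}}\ \ge\ c\,\frac{d}{dt}\,|\Omega_t|_f,
\end{equation}
has no derivation, and the identification of the sharp constant $c$ ``from Bishop--Gromov at the end'' is a hope rather than a lemma. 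Third, your reduction to strictly outward minimizing $\Sigma$ via hulls is not actually used by the authors and is not needed; what \emph{is} needed, and what your sketch does not supply, is a reason why the evolving/extremal hypersurface never touches $\partial M$ and why $\lambda>0$: the paper gets both from an obstacle-problem analysis plus the a priori outermost property of the horizon, neither of which has an analogue in your outline.

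The rigidity discussion in your proposal is morally aligned with the paper's (forcing $\HHH/f$ constant and propagating the degeneracy of the substatic tensor to obtain a warped splitting), but it rests on the same unproved flow machinery. In the paper the splitting comes from the equality case of \cref{thm:Willmore} combined with the extra constraint $\HHH=\lambda f$ from isoperimetry, and in the boundaryless case from a Heintze--Karcher rigidity argument (\cref{eq:HK-equality}) rather than from a closing-up of IMCF leaves around a point.
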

The asymptotic assumptions entering in the above statement will be better understood in the next Subsection, in connection with the comparison results presented below.
Concerning the quantities appearing in~\eqref{eq:isoperimetric_intro}, we have denoted by $\abs{\Omega_\Sigma}_f$ the weighted volume $\int_{\Omega_\Sigma} f\,d\mu$, whereas $\mathrm{AVR}(M, g, f)$ is a suitable substatic generalization of the classical Asymptotic Volume Ratio for nonnegative Ricci curvature, see \cref{eq:avr-intro}. When it is nonzero, inequality~\cref{eq:isoperimetric_intro} in particular yields a quantitative information about the minimal boundary being in fact area minimizing, in terms of a suitable weighted volume. Observe that a priori the boundary is not even assumed to be area minimizing at all. 
From a more analytical point of view, formula~\eqref{eq:isoperimetric_intro} constitutes a nonstandard weighted isoperimetric inequality, as the perimeter is actually unweighted. The geometric intuition behind it will be given by the end of the following Subsection. 
One can interpret the very thoroughly recently studied  Isoperimetric Inequality in nonnegative Ricci curvature  \cite{agostiniani_sharpgeometricinequalitiesclosed_2020,  brendle_sobolevinequalitiesmanifoldsnonnegative_2021, antonelli-pasqualetto-pozzetta-semola, balogh-kristaly, johne, cavalletti-manini1, cavalletti-manini2, pozzetta2023isoperimetry} as a special case of \cref{eq:isoperimetric_intro}, obtained when the boundary is empty and $f$ is constant. The rigidity statement accordingly generalizes the one of the nonnegative Ricci curvature case.

We point out that Theorem~\ref{thm:isoperimetric_intro} is particularly meaningful and perfectly sharp already in the above recalled  Reissner--Nordstr\"om and Schwarzschild metrics, consisting in \cref{eq:models} for $\Lambda =0$, $m > 0$, $|q|<m$, and more generally in \emph{asymptotically flat} substatic manifolds. With asymptotically flat we mean that the manifold converges to the Euclidean space (in a very weak sense) and that $f$ goes to $1$ at infinity, see \cref{def:AF}. From the definition, it follows that an asymptotically flat end is automatically uniform and $f$-complete, it possesses a natural exhaustion of coordinate spheres and it is possible to compute ${\rm AVR}(M,g,f)=1$. Thus, the above statement simplifies significantly.

\begin{corollary}
Let $(M,g,f)$ be a substatic triple of dimension $n\leq 7$, with horizon boundary and one asymptotically flat end.
Then, for any bounded domain $\Omega_\Sigma$ with smooth boundary $\pa\Omega_\Sigma=\pa M\sqcup\Sigma$ it holds  
\begin{equation}
\label{eq:isoperimetric_intro_AF}
\abs{\Sigma}^{\frac{n}{n-1}} - \abs{\partial M}^{\frac{n}{n-1}} \geq n \abs{\Sf^{n-1}}^{\frac{1}{n-1}} \abs{\Omega_\Sigma}_f.
\end{equation}
The same rigidity statement as in Theorem~\ref{thm:isoperimetric_intro} applies in case of equality.
\end{corollary}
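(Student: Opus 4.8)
The plan is to obtain the corollary directly from Theorem~\ref{thm:isoperimetric_intro}, by verifying that an asymptotically flat end satisfies all of that theorem's structural hypotheses and carries $\mathrm{AVR}(M,g,f)=1$; once this is done, \eqref{eq:isoperimetric_intro_AF} is merely the specialization of \eqref{eq:isoperimetric_intro} to that value of the asymptotic volume ratio. So the work consists of a short sequence of verifications based on the decay assumed in \cref{def:AF}.

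First I would check that an asymptotically flat end is uniform and $f$-complete, and, being by hypothesis the only end, is the unique such end. The $f$-completeness follows at once from $f\to 1$ at infinity: any curve diverging into the end has $f$-weighted length comparable to its Riemannian length, which is infinite since $(M,g)$ is complete. Uniformity is a consequence of the convergence of the metric coefficients to the Euclidean ones, which makes the geometry of large coordinate annuli quantitatively controlled.

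Next I would produce the required exhaustion of nonminimal outward minimizing hypersurfaces homologous to $\partial M$ using the coordinate spheres $S_\rho$ of the chart at infinity. For $\rho$ large, $S_\rho$ encloses a compact region containing $\partial M$, hence is homologous to it; its mean curvature expands as $(n-1)/\rho + o(\rho^{-1})$, so it is nonminimal; and it is outward minimizing, since on the exterior region the metric is $C^0$-close to the Euclidean one and the Euclidean isoperimetric comparison then prevents any homologous competitor enclosing $S_\rho$ from having smaller area, the error being controlled by the decay of \cref{def:AF}. This is the step I expect to be the main obstacle: one must make sure the (a priori weak) notion of asymptotic flatness adopted here is still strong enough to force the coordinate spheres to be genuinely outward minimizing, which I would settle either by a direct comparison with Euclidean balls or by invoking the corresponding well-known statement for asymptotically flat manifolds.

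Finally I would compute $\mathrm{AVR}(M,g,f)$ by inserting the asymptotically flat expansions of $g$ and $f$ into the defining limit \cref{eq:avr-intro}: with $f\to1$, the weighted volume of a coordinate ball $B_\rho$ is $|\Sf^{n-1}|\,\rho^{n}/n + o(\rho^{n})$, matching the normalization to leading order, so the limit equals $1$. With all hypotheses of Theorem~\ref{thm:isoperimetric_intro} in force and $\mathrm{AVR}(M,g,f)=1$, inequality \eqref{eq:isoperimetric_intro} becomes exactly \eqref{eq:isoperimetric_intro_AF}; and since $\mathrm{AVR}(M,g,f)=1>0$, the rigidity dichotomy of Theorem~\ref{thm:isoperimetric_intro} transfers verbatim, which yields the last assertion.
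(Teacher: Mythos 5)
Your strategy matches what the paper intends: the paper does not supply a formal proof of this corollary but sketches, in the introduction, exactly the same three verifications — uniform $f$-completeness (covered by \cref{prop:uniform_AF} and \cref{pro:fcomplete_AF}), an outward minimizing exhaustion, and $\mathrm{AVR}(M,g,f)=1$ (the proposition following \cref{def:AF}) — before invoking Theorem~\ref{thm:isoperimetric_intro}.

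The step you yourself flag as the main obstacle is, however, where your proposed argument breaks down, and the gap is genuine. Asymptotic flatness as in \cref{def:AF} is only a $\mathscr{C}^0$ condition, $|g_{ij}-\delta_{ij}|=o(1)$, so the Euclidean comparison you sketch yields, for any bounded $F\supset B_\rho$ in the chart,
\begin{equation}
P_g(F)\;\geq\;(1-\epsilon_\rho)^{\frac{n-1}{2}}\,P_\delta(F)\;\geq\;(1-\epsilon_\rho)^{\frac{n-1}{2}}\,P_\delta(B_\rho)\;\geq\;\left(\frac{1-\epsilon_\rho}{1+\epsilon_\rho}\right)^{\frac{n-1}{2}}P_g(B_\rho),
\end{equation}
with $\epsilon_\rho=\sup_{\{|x|\geq\rho\}}|g-\delta|>0$. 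For every finite $\rho$ this factor is strictly below $1$, so the comparison does not deliver the outward minimizing inequality $P_g(F)\geq P_g(B_\rho)$. Moreover, the mean curvature of $S_\rho$ involves first derivatives of $g$, which \cref{def:AF} does not control at all, so the expansion $(n-1)/\rho+o(\rho^{-1})$ you quote — and with it the claimed non-minimality of the coordinate spheres — does not follow from the stated hypotheses. The paper itself does not prove this step either; rather than verifying it for coordinate spheres, it defers to the conditions for existence of outward minimizing sets discussed in the cited reference on minimising hulls. To close the gap one should replace the direct comparison with coordinate spheres by constructing outward minimizing hulls of large coordinate balls, establishing their existence, their non-minimality and the exhausting property under the hypotheses at hand, or alternatively strengthen the asymptotic flatness assumption so that the coordinate spheres are demonstrably strictly mean-convex and outward minimizing.
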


To our knowledge, even in the model cases, inequality~\cref{eq:isoperimetric_intro_AF} was never observed before, and does not seem to be inferable from the characterization of classical isoperimetric sets  resulting from the work of Brendle \cite{brendle-alexandrov}, or the earlier \cite{bray-thesis, bray-morgan} about the Schwarzschild case.

\subsection{Substatic comparison geometry}
Our analysis begins with the aim of working out a satisfactory substatic comparison theory, inspired by the classical nonnegative Ricci case. While, in such case, the model to be compared with is $\R^n$, or more generally a cone, in the substatic generalization the model should be constituted by the large family of substatic warped products in fact appearing in the rigidity statement of \cref{thm:isoperimetric_intro}. 

To pursue our goal,  an initial step consists in comparing the mean curvature of geodesic spheres with that of the models. Interestingly, in order to obtain a manageable Riccati equation ruling such comparison, we are led to work in the metric $\tilde g=g/f^2$. This is no accident: the metric $\tilde g$ happens to fulfil the ${\rm CD}(0,1)$ condition, consisting in a metric subject to 
\begin{equation}
\label{CD-intro}
\Ric_{\tilde g}+\widetilde\na^2 \psi+\frac{1}{n-1}d\psi\otimes d\psi \geq 0
\end{equation}
for some smooth function $\psi$. To our knowledge, such explicit conformal relation was not pointed out in literature yet. However, a remarkable link is described by Li--Xia \cite{Li_Xia_17}: they come up with a family of connections with Ricci curvatures interpolating between the tensor in the left-hand side of \cref{CD-intro} and the tensor in the left-hand side of \cref{eq:substatic}. 
We discuss  the $\mathrm{CD}(0, 1)$ conformal change and Li-Xia connections in more details in \cref{app:LiXia}. 
 We also point out that the conformal metric $\tilde g$ has a natural physical interpretation in the context of static spacetimes, where it is referred to as {\em optical metric}. We give some more details on this point at the end of \cref{app:phys_mot}. 

We will denote by $\rho$ the $\tilde g$-distance from a point $p\in M$, or the signed $\tilde g$-distance from a smooth strictly mean-convex hypersurface $\Sigma$ homologous to the boundary.
We give some more details on this second case, which is slightly less classical but will be crucial for the Willmore-type inequality~\eqref{eq:willmore-intro} discussed below and in turn for the proof of Theorem~\ref{thm:isoperimetric_intro}. With homologous to the boundary we mean that there exists a compact domain $\Omega$ with boundary $\pa\Omega=\pa M\sqcup\Sigma$, and by strictly mean-convex we understand that $\Sigma$ has pointwise positive $g$-mean curvature $\HHH$ with respect to the normal pointing towards infinity. We always choose the signed distance $\rho$ to be positive in the noncompact region $M\setminus\Omega$, that is,
\begin{equation}
\label{eq:signed_distance}
\rho(x)\,=\,\begin{cases}
{\rm d}_{\tilde g}(x,\Sigma) & \hbox{if $x\not\in\Omega$},
\\
-{\rm d}_{\tilde g}(x,\Sigma) & \hbox{if $x\in\Omega$}.
\end{cases}
\end{equation}

Both in the case of the distance from a point and in the case of the signed distance from a hypersurface, through an analysis of the evolution of the mean curvature of the level sets of $\rho$, we come up (see \cref{thm:H_bound} and \cref{pro:H_bound_Sigma}) with the following inequality
\begin{equation}
\label{eq:Lapl_comp_intro}
0\,<\,\frac{\HHH}{f}\,=\,\De\rho+\frac{1}{f}\langle\na f\,|\,\na\rho\rangle\,\leq\,\frac{n-1}{\eta}\,,
\end{equation}
where $\HHH$ denotes the $g$-mean curvature of a level set of the $\tilde{g}$-distance $\rho$, and $\eta$ denotes an useful auxiliary function that will be called {\em reparametrized distance}. It is defined by the first order PDE~\cref{eq:eta}, when the distance from a point is concerned, and in \cref{eq:eta_Sigma} when $\rho$ is the $\tilde{g}$-distance from a hypersurface. The function 
 $\eta$ represents the distance along the radial geodesics computed with respect to the metric $\overline g=f^2 g=f^4 \tilde g$. This third conformal metric will not play a prominent role in the paper, but we will take some advantage from this along the proof of \cref{thm:isoperimetric_intro}.
More details on this point and further comments on $\eta$ (in particular its relation with the weighted connection introduced by Li--Xia) may be found in \cref{rem:eta}.


We remark that \cref{eq:Lapl_comp_intro} could be derived also from \cite[Theorem~3.2]{Wylie}, rewriting it in the substatic setting thanks to the conformal relation with $\mathrm{CD}(0,1)$-metrics. Nevertheless, we have preferred to include a full proof of it, in order to emphasize the role of $\eta$ and to show the substatic point of view. 

A main consequence we draw out of the Laplacian Comparison Theorem above is a Bishop--Gromov Monotonicity Theorem. We state here a version substantially gathering Theorem~\ref{thm:BG_nb} and Theorem~\ref{thm:BGvolumi_nb} below.

\begin{thmx}[Substatic Bishop--Gromov]
\label{thm:BG_intro}
Let $(M,g,f)$ be a substatic triple. Suppose that $M\setminus\pa M$ is geodesically complete with respect to the metric $\tilde g=g/f^2$. Let $\rho$ be the $\tilde g$-distance function from a point or the signed $\tilde g$-distance function from a strictly mean-convex hypersurface $\Sigma$ homologous to $\partial M$ and disjoint from it.
Let $\eta$ be the corresponding reparametrized distance, defined by \cref{eq:eta} or by \cref{eq:eta_Sigma}, and let ${\rm Cut}^{\tilde g}$ be the cut locus of the point/hypersurface. Then, for any $k>0$, the functions
\begin{equation}
\label{eq:avr}
A(t)\,=\,\frac{1}{|\Sph^{n-1}|}\int_{\{\rho=t\}\setminus {\rm Cut}^{\tilde g}}\frac{1}{\eta^{n-1}}d\sigma\,,\qquad V(t)\,=\,\frac{1}{|\mathbb{B}^n|t^k}\int_{\{0\leq\rho\leq t\}}\frac{\rho^{k-1}}{f\eta^{n-1}}d\mu\,,
\end{equation}
are well defined and monotonically nonincreasing. 
Furthermore:
\begin{itemize}
\item if $A(t_1)=A(t_2)$ for $0<t_1<t_2$, then the set $\{t_1\leq\rho\leq t_2\}$ is isometric to $[t_1,t_2]\times\Sigma$, for some $(n-1)$-dimensional manifold $(\Sigma,g_0)$, with metric
\[
g=f^2\,d\rho\otimes d\rho+\eta^2g_0\,;
\]
\item if $V(t_1)=V(t_2)$ for $0<t_1<t_2$, then the set $\{0\leq\rho\leq t_2\}$ is isometric to $[0,t_2]\times\Sigma$, for some $(n-1)$-dimensional manifold $(\Sigma,g_0)$, with metric
\[
g=f^2\,d\rho\otimes d\rho+\eta^2g_0\,;
\]
in the case where $\rho$ is the distance from a point $x$, then $f$ and $\eta$ are functions of $\rho$ only and $g_0=f(x)^{-2}g_{\Sph^{n-1}}$. 
\end{itemize}
\end{thmx}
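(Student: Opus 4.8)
The plan is to run the classical Bishop--Gromov argument in $\tilde g$-normal coordinates, with all of the comparison content already packaged in \cref{eq:Lapl_comp_intro}. Parametrize $M\setminus{\rm Cut}^{\tilde g}$ by $(\rho,\theta)$, where $\rho$ is the (signed) $\tilde g$-distance from the point/hypersurface and $\theta$ ranges over the space of radial directions (the $\tilde g$-unit sphere at the pole, resp.\ $\Sigma$ itself). Since $\partial_\rho$ is $\tilde g$-orthogonal, hence $g$-orthogonal, to the level sets of $\rho$, in these coordinates $g=f^2\,d\rho\otimes d\rho+h_{ij}(\rho,\theta)\,d\theta^id\theta^j$ with no cross terms, where $h=h(\rho,\cdot)$ is the induced $g$-metric on $\{\rho={\rm const}\}$. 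Fixing a reference measure $d\theta$ on the directions and writing $\mathcal{A}_g(\rho,\theta):=\sqrt{\det h}$ for the $g$-area density of $\{\rho={\rm const}\}$, a direct computation of the $g$-divergence of the $g$-unit normal $f^{-1}\partial_\rho$ gives $\partial_\rho\log\mathcal{A}_g=f\HHH$ (equivalently, by the conformal transformation law of the Laplacian, $f\HHH=\tilde\HHH+(n-1)\,\partial_\rho\log f$, which is the weighted mean curvature of the level sets in the $\mathrm{CD}(0,1)$ metric $\tilde g$). On the other hand the defining ODE of the reparametrized distance---which by \cref{eq:eta}, resp.\ \cref{eq:eta_Sigma}, measures $\overline g$-arclength along the radial geodesics---reads $\partial_\rho\eta=f^2$. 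Hence, pointwise in $\theta$ and for $\rho$ below the cut value $c(\theta)$, \cref{eq:Lapl_comp_intro} yields
\[
\partial_\rho\log\frac{\mathcal{A}_g}{\eta^{n-1}}\;=\;f\HHH-(n-1)\frac{f^2}{\eta}\;=\;f^2\left(\frac{\HHH}{f}-\frac{n-1}{\eta}\right)\;\leq\;0.
\]

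\textbf{Monotonicity.} Set $G(\rho,\theta):=\mathcal{A}_g(\rho,\theta)/\eta(\rho,\theta)^{n-1}$, extended past $c(\theta)$ by its left limit, so that $G(\cdot,\theta)$ is nonincreasing on $(0,+\infty)$. The integrals defining $A$ and $V$ are finite because of this monotonicity together with the behaviour as $\rho\to0^+$ (in the point case $G$ converges to $f(x)^{-(n-1)}$ times the standard density of $\Sph^{n-1}$; in the hypersurface case $G(0,\cdot)=(\HHH/((n-1)f))^{n-1}$ on $\Sigma$), and $\rho^{k-1}$ is integrable at $0$ since $k>0$. Monotonicity of $A$ is now immediate, as $\{\theta:c(\theta)>t\}$ shrinks in $t$ and $A(t)=|\Sph^{n-1}|^{-1}\int_{\{c(\theta)>t\}}G(t,\theta)\,d\theta$. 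For $V$, the coarea formula gives $d\mu_g=f\,\mathcal{A}_g\,d\rho\,d\theta$ (since $|\na_g\rho|_g=1/f$), whence $V(t)=|\mathbb{B}^n|^{-1}t^{-k}\int\big(\int_0^t\rho^{k-1}G(\rho,\theta)\,d\rho\big)d\theta$; for each $\theta$ the inner quantity $t^{-k}\int_0^t\rho^{k-1}G(\rho,\theta)\,d\rho$ is a weighted average of the nonincreasing function $G(\cdot,\theta)$ and is therefore nonincreasing in $t$, for every $k>0$.

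\textbf{Rigidity.} Suppose $A(t_1)=A(t_2)$ with $0<t_1<t_2$, and chase the equalities. Since $\mathcal{A}_g>0$ before the first conjugate point, $G(t_1,\theta)=0$ is impossible, so the ``shrinking domain'' step cannot lose anything: $\{c(\theta)>t_1\}$ and $\{c(\theta)>t_2\}$ agree up to a null set, and by continuity of the cut distance this forces $c(\theta)\notin(t_1,t_2)$ for \emph{every} $\theta$, so $\{t_1\leq\rho\leq t_2\}$ is swept diffeomorphically by minimizing radial $\tilde g$-geodesics. Next, $G(t_1,\theta)=G(t_2,\theta)$ together with the pointwise monotonicity forces $\partial_\rho\log G\equiv0$ on the annulus, i.e.\ equality $\HHH/f=(n-1)/\eta$ holds throughout $\{t_1\leq\rho\leq t_2\}$. (If instead $V(t_1)=V(t_2)$, one argues the same way, except that a weighted average of $G(\cdot,\theta)$ over $[0,t]$ is strictly monotone unless $G(\cdot,\theta)$ is constant on all of $[0,t_2]$; so the rigid region is the full ball $\{0\leq\rho\leq t_2\}$, with no cut locus in $(0,t_2)$.) It remains to upgrade the scalar identity $\HHH/f=(n-1)/\eta$, valid on an interval of $\rho$'s, to the warped-product structure. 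This is read off from the equality case of the Riccati comparison that proves \cref{eq:Lapl_comp_intro}: equality there forces equality both in the Cauchy--Schwarz bound $|\mathcal{S}|^2\geq\tilde\HHH^2/(n-1)$ for the $\tilde g$-shape operator $\mathcal{S}$ of the level sets, and in the $\mathrm{CD}(0,1)$ inequality along $\partial_\rho$; the former makes the level sets totally umbilic in $\tilde g$ with all principal curvatures equal to $(\log(\eta/f))'$, and integrating along the (conjugate-point-free) radial geodesics yields $\tilde g=d\rho\otimes d\rho+(\eta/f)^2g_0$, equivalently $g=f^2\,d\rho\otimes d\rho+\eta^2g_0$, for a fixed metric $g_0$ on $\{\rho=t_1\}$ (resp.\ on $\Sigma$).

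\textbf{The point case and the main obstacle.} When $\rho$ is the distance from a point $x$, smoothness of $g$ at the pole---where $\tilde g$ is asymptotically Euclidean and $\eta\sim f(x)^2\rho$---pins down $g_0=f(x)^{-2}g_{\Sph^{n-1}}$; and since, by the conformal relation of \cref{app:LiXia}, the $\mathrm{CD}(0,1)$ potential is a multiple of $\log f$, the equality case forces that potential---hence $f$, hence $\eta=\int_0^\rho f^2$---to depend on $\rho$ alone. (Alternatively, inserting the warped-product ansatz into \cref{eq:substatic} and using regularity at $x$ gives $\partial_\theta f=0$ directly.) I expect this reconstruction-and-radiality step, rather than the monotonicity, to be the genuine difficulty: the monotonicity is essentially formal once \cref{eq:Lapl_comp_intro} and $\partial_\rho\eta=f^2$ are available, but disentangling the equality case requires careful control of the interplay between the three conformal metrics $g$, $\tilde g$, $\overline g$ and of the $\mathrm{CD}(0,1)$ rigidity.
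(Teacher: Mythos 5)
Your proposal follows essentially the same route as the paper: parametrize away from the cut locus in $(\rho,\theta)$ coordinates, observe that the Riccati/Laplacian comparison is equivalent to $\partial_\rho\log G\leq 0$ for the area density $G=\mathcal{A}_g/\eta^{n-1}=\sqrt{g}/(f\eta^{n-1})$, and use it to drive both monotonicities and the umbilicity argument for the warped-product splitting. Your treatment of the monotonicity of $A$ is actually cleaner than the paper's: instead of establishing that the vector field $X=f\eta^{1-n}\nabla\rho$ has nonpositive distributional divergence, passing through a bounded-variation argument and a signed-measure comparison with \cref{eq:punctual_monotonicity}, you simply observe that for fixed $\theta$ the quantity $G(\cdot,\theta)$ is nonincreasing on $(0,c(\theta))$ and that the domain of directions $\{c(\theta)>t\}$ shrinks with $t$; both effects go the same way, so $A(t_1)\geq A(t_2)$ directly. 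The rigidity chase (equality kills cut points in the annulus, forces the Cauchy--Schwarz bound for the shape operator to be saturated, hence total umbilicity and $\tilde g_{ij}=(\eta/f)^2(g_0)_{ij}$) is the same as the paper's.

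There are, however, two genuine issues. First, a small one: you extend $G(\cdot,\theta)$ past the cut value $c(\theta)$ by its \emph{left limit}. That extension is harmless for $A$ (you only integrate over $\{c(\theta)>t\}$ anyway), but for $V$ it makes your formula
\[
V(t)=\frac{1}{|\mathbb{B}^n|\,t^k}\int\Big(\int_0^t\rho^{k-1}G(\rho,\theta)\,d\rho\Big)d\theta
\]
\emph{strictly larger} than the quantity in \cref{eq:avr} whenever $c(\theta)<t$ for a positive-measure set of $\theta$, because the region past the cut value contributes nothing to $\{0\leq\rho\leq t\}\subset M$. The fix is to extend $G$ by \emph{zero}; the weighted-average monotonicity argument still applies (the extension remains nonincreasing), and then the displayed formula is correct.

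Second, and more seriously: the final step, showing that in the point case the equality $V(t_1)=V(t_2)$ forces $f$ and $\eta$ to depend on $\rho$ alone and $g_0=f(x)^{-2}g_{\Sph^{n-1}}$, is not proved — you identify it as the genuine difficulty and then assert it. Your first suggested route (``the $\mathrm{CD}(0,1)$ equality case forces the potential to depend on $\rho$ alone'') is not a theorem you can cite: equality in the $\mathrm{CD}(0,1)$ Laplacian comparison alone produces a \emph{twisted} product in which the potential can still depend on the cross-section (cf.\ the discussion of the splitting rigidity and the role of $f_2$ in the Remark after Theorem~\ref{thm:splitting_intro}). Your second suggested route (insert the warped-product ansatz into \cref{eq:substatic} and invoke regularity at $x$) is the right idea, but the word ``directly'' oversells it. This is where the paper spends most of its effort: from the vanishing of the mixed radial-tangential component of the substatic tensor one derives the coupled first-order system \cref{eq:rigidity_A} in terms of auxiliary functions $\varphi(\theta)$ and $\psi(\theta)$; regularity at the pole first kills $\varphi$ (because $\eta\to0$ while $\partial_\theta f\to0$), then a decomposition $1/\eta=\alpha(\rho)+\beta(\theta)$ combined with the radial dependence of $\eta/f$ and regularity of $f$ at $x$ kills $\psi$. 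Only then does one conclude $f=f(\rho)$, $\eta=\eta(\rho)$, and $g_0=f(x)^{-2}g_{\Sph^{n-1}}$. Without this chain, the conclusion of the theorem in the point case is not established.
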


A first $\mathrm{CD}(0, 1)$-version of Bishop--Gromov monotonicity has been obtained by Wylie--Yeroshkin \cite{wylie-yeroshkin}. Various further generalizations then arose; dropping any attempt to be complete, we mention \cite{kuwae-li, lu-minguzzi-ohta, kuwae-sakurai}, and leave the interested reader to the references therein and to Ohta's monograph \cite{ohta-book}. However, even appealing to the conformal change $g/f^2$, formally relating the $\mathrm{CD}(0, 1)$ to the substatic condition, it does not seem straightforward to deduce a Bishop--Gromov statement in the form above, that will be ruling the Willmore-type inequality \cref{eq:willmore-intro} below and in turn \cref{eq:isoperimetric_intro}. To the authors' knowledge, the rigidity statements contained in \cref{thm:BG_intro} have not been considered in literature yet.   

In the case where $f$ is constant equal to $1$, then both $\rho$ and $\eta$ coincide with the $g$-distance,
hence we recover the standard Bishop--Gromov monotonicity for nonnegative Ricci tensor.
Remarkably, in contrast with the standard Bishop--Gromov, in our setting we do not have to require the boundary $\partial M$ to be empty. This is because, since we have $f=0$ at $\pa M$, the boundary becomes an end with respect to the conformal metric $\tilde g=g/f^2$, see \cref{le:completeness}. As a consequence, we do not have issues when the geodesic spheres $\{\rho=t\}$ intersect the boundary, simply because the boundary is at infinite $\tilde g$-distance so the intersection is always empty. 
{On the other hand, in general $\tilde g$-geodesics may have finite length when going towards the ends of $M$. To avoid this, we added the assumption of $\tilde g$-geodesic completeness in the statement. The main type of ends considered in this paper (the $f$-complete ends introduced just below) will be $\tilde g$-geodesically complete by definition.}



Proceeding in analogy with the nonnegative Ricci curvature case, we are interested in defining a suitable Asymptotic Volume Ratio, motivated by the Bishop--Gromov monotonicity above. In order to get a satisfactory notion, we first aim at understanding basic properties of the ends of substatic manifolds, at least under asymptotic assumptions on the potential of $f$. A fundamental tool for this kind of study in the classical theory is Cheeger--Gromoll Splitting Theorem \cite{cheeger-gromoll}.

Wylie \cite{Wylie} in fact exploited the Laplacian Comparison Theorem to prove a splitting theorem in the ${\rm CD}(0,1)$ setting, that will in turn provide surprising pieces of information in the conformal substatic setting. For our main geometric goals, the kind of end that we will be mostly interested in is that of $f$-complete ends. We say that an end is {\em $f$-complete} if for any $g$-unit speed curve $\gamma:[0,+\infty)\to M$ going to infinity along the end it holds
\begin{equation}
\label{eq:f-completeness_intro}
\lim_{t\to +\infty}\rho(\gamma(t))\,=\,+\infty\,,\qquad\int_0^{+\infty}f(\gamma(t)) dt\,=\,+\infty\,.
\end{equation}
The first condition is essentially asking that the end remains an end even with respect to the conformal metric $\tilde g=g/f^2$. In other words, the first condition is equivalent to the requirement that the end is geodesically complete with respect to the metric $\tilde g$. The second condition instead is a way to ensure that the reparametrized distance $\eta$ diverges to $+\infty$ 
and is connected to an analogous definition given in~\cite{Wylie} in the ${\rm CD}(0,1)$ framework. Further discussion and comments on this definition can be found after Definition~\ref{def:f-complete}. 
It is easy to check that an end is $f$-complete whenever there exists a constant $c$ such that $cr^{-k}<f<cr^k$ for $0<k<1$ at sufficiently large distances, where $r$ is the $g$-distance from a point (see \cref{pro:fcomplete_AF}).


We provide here the full statement of the substatic Splitting Theorem.

\begin{thmx}[Substatic Splitting Theorem]
\label{thm:splitting_intro}
Let $(M,g,f)$ be a substatic triple with ends that are all $f$-complete. If there is more than one end, then $(M,g)$ is isometric to
$$
(\R\times\Sigma,\,f^2\,ds\otimes ds+g_\Sigma)\,,
$$ 
for some $(n-1)$-dimensional Riemannian manifold $(\Sigma,g_\Sigma)$. In particular, if $\partial M$ is nonempty, then $(M, g, f)$ has only one end. 
\end{thmx}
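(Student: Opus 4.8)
The plan is to reduce the statement to Wylie's splitting theorem for $\mathrm{CD}(0,1)$ metrics \cite{Wylie}, by passing to the conformal metric $\tilde g=g/f^2$ and rescaling back at the end. First I would record the conformal identity worked out in \cref{app:LiXia}: under $\tilde g=g/f^2$ the transformation law of the Ricci tensor yields
\[
\Ric_{\tilde g}+\widetilde\na^2\psi+\frac{1}{n-1}d\psi\otimes d\psi\;=\;\frac{1}{f}\big(f\Ric-\nana f+(\De f)g\big)\;\geq\;0,\qquad \psi:=-(n-1)\log f,
\]
so that $(M\setminus\partial M,\tilde g)$ satisfies $\mathrm{CD}(0,1)$ with potential $\psi$; note that the associated density $e^{-\psi/(n-1)}$ is exactly $f$, which explains the role of $f$ in the $f$-completeness condition. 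By \cref{le:completeness} the set $\{f=0\}=\partial M$ lies at infinite $\tilde g$-distance, so $M^\circ:=M\setminus\partial M$ is boundaryless; and since every end is $f$-complete, the first clause in \cref{eq:f-completeness_intro} upgrades this to $\tilde g$-geodesic completeness of $M^\circ$. In particular each end of $(M,g)$ is also an end of $(M^\circ,\tilde g)$.

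Next, assume $(M,g)$ has two distinct ends $E_1,E_2$, necessarily $f$-complete by hypothesis. I would produce a $\tilde g$-geodesic line between them in the standard Cheeger--Gromoll way: pick $p_i\to\infty$ in $E_1$ and $q_i\to\infty$ in $E_2$, take $\tilde g$-minimizing segments $\sigma_i$ from $p_i$ to $q_i$, note that each $\sigma_i$ crosses a fixed compact hypersurface separating the ends, reparametrize each $\sigma_i$ so that this crossing occurs at parameter $0$, and pass to a locally uniform limit; the resulting $\gamma\colon\R\to M^\circ$ is a minimizing $\tilde g$-line whose two rays escape into $E_1$ and $E_2$. The role of $f$-completeness is that its second clause in \cref{eq:f-completeness_intro} forces the reparametrized distance $\eta$ to diverge along each of these rays (see the discussion after \cref{eq:f-completeness_intro}): this is exactly the ``$\psi$-line'' hypothesis — a $\tilde g$-line with $\eta\to+\infty$ on both sides — under which Wylie's argument runs. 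Concretely, it guarantees that the Busemann-type functions $b^\pm$ of the two rays are globally defined and that the bound $\tfrac{n-1}{\eta}$ on the right of the Laplacian comparison \cref{eq:Lapl_comp_intro}, applied in the barrier sense to the distances from the receding points $\gamma(\pm t)$, passes to $0$ in the limit.

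With this in place, Wylie's $\mathrm{CD}(0,1)$ splitting theorem gives that $(M^\circ,\tilde g)$ is isometric to a Riemannian product $(\R\times\Sigma,\,ds\otimes ds+g_0)$ along which the potential $\psi$, hence $f$, depends only on the $\Sigma$-factor. Multiplying back by $f^2$,
\[
g=f^2\tilde g=f^2\,ds\otimes ds+f^2g_0,
\]
and, $f$ being independent of $s$, the tensor $g_\Sigma:=f^2g_0$ is a genuine metric on $\Sigma$; this is the claimed normal form. Finally $\R\times\Sigma$ has empty boundary while $f$ vanishes on $\partial M$, so a substatic triple with $\partial M\neq\emptyset$ cannot admit such a splitting and hence has only one end.

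I expect the main obstacle to lie in the middle step: reconciling the notion of ``$f$-complete end'' with the precise completeness and line hypotheses under which Wylie's splitting holds, verifying that the comparison \cref{eq:Lapl_comp_intro} still applies to Busemann-type functions (and not only to the distances from points and strictly mean-convex hypersurfaces treated in \cref{thm:BG_intro}), and tracking how the warping and the potential transform through the conformal change so that the cross-section in the last step is genuinely $s$-independent. If one prefers to avoid invoking \cite{Wylie}, the same conclusion follows by running Cheeger--Gromoll directly in the substatic setting: by \cref{eq:Lapl_comp_intro} the functions $b^\pm$ are, in the barrier sense, supersolutions of $Lu:=\De u+\tfrac1f\langle\na f\,|\,\na u\rangle$; one has $b^++b^-\geq0$ with equality along $\gamma$, so the strong minimum principle forces $b^+\equiv-b^-$; elliptic regularity then makes $b:=b^+$ a smooth $L$-harmonic function, and forcing equality throughout the Riccati comparison underlying \cref{thm:BG_intro} identifies the level sets of $b$ and produces the splitting $g=f^2\,db\otimes db+g_\Sigma$.
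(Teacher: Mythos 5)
Your alternative sketch in the final paragraph is essentially the paper's proof: produce a $\tilde g$-line, take Busemann-type functions $b^\pm$, use the Laplacian comparison \cref{eq:Lapl_comp_intro} in the barrier sense (noting that $f$-completeness forces $\eta\to+\infty$ so the right-hand side $\tfrac{n-1}{\eta}$ tends to $0$), apply the strong maximum principle to $b^++b^-$, upgrade to smoothness, and then force equality in the Riccati/Hessian comparison to split the metric. Lemma~\ref{le:completeness} plays the role you assign it: $\partial M=\{f=0\}$ recedes to $\tilde g$-infinity, so lines between ends never hit the boundary. So the substance of your argument is correct.

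However, your primary route — invoking \cite{Wylie} as a black box — contains a concrete misreading of Wylie's conclusion. Wylie's $\mathrm{CD}(0,1)$ splitting does \emph{not} produce a Riemannian product $(\R\times\Sigma,\,ds\otimes ds+g_0)$ with $\psi$ depending only on $\Sigma$. What it produces is a \emph{warped} product, with the potential a function of the splitting parameter: in the notation of the paper's own rigidity computation one finds $\tilde g = d\beta\otimes d\beta + f^{-2}\,g_0$ with $g_0$ independent of $\beta$ while $f$ does depend on $\beta$ (cf.\ the equation $\widetilde\na^2_{ij}\beta = -\tfrac1f\tfrac{\partial f}{\partial\beta}\tilde g_{ij}$ in the proof of \cref{thm:BG_intro}, and the remark after the paper's splitting proof, which records $f=f_1(\beta)f_2(\theta)$ rather than $f=f(\theta)$). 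Already Schwarzschild shows the warping is nontrivial: there $f$ genuinely depends on the radial, hence on the splitting, coordinate. With the correct form of Wylie's output, the conformal scaling gives
\[
g=f^2\tilde g=f^2\,d\beta\otimes d\beta+f^2\cdot f^{-2}\,g_0=f^2\,d\beta\otimes d\beta+g_0,
\]
so the $s$-independent cross-sectional metric is $g_0$ itself, not $f^2 g_0$; the crucial cancellation between the conformal factor $f^2$ and the $\tilde g$-warping factor $e^{2\psi/(n-1)}=f^{-2}$ is exactly what makes the $g$-cross-section rigid. Your statement ``$f$ being independent of $s$, the tensor $g_\Sigma:=f^2g_0$ is a genuine metric on $\Sigma$'' rests on a hypothesis that does not hold, even though the final normal form $g=f^2\,ds\otimes ds+g_\Sigma$ happens to be correct.

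Two lesser points. First, for the final assertion that $\partial M\neq\emptyset$ forces a single end, the clean argument (used in the paper) is that after the splitting $\partial M$ would have to be of the form $\R\times\partial\Sigma$, which is noncompact and contradicts the standing compactness assumption on $\partial M$; saying ``$\R\times\Sigma$ has empty boundary'' presumes $\partial\Sigma=\emptyset$, which is what you are trying to show. Second, when you write that the comparison \cref{eq:Lapl_comp_intro} applies ``in the barrier sense to the distances from the receding points,'' that is exactly right and is the step in which the $\eta\to\infty$ assumption (the second clause of $f$-completeness) is indispensable; it is worth making that invocation of Theorem~\ref{thm:H_bound} (distance from a point) explicit rather than implicitly relying on the hypersurface version behind \cref{thm:BG_intro}.
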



Finally, we point out that similar arguments can be performed also for a different kind of ends, known as conformally compact ends, see Theorems~\ref{thm:splitting_cc} and~\ref{thm:splitting_mixed}.
In particular we will show in Theorem~\ref{thm:splitting_cc} that conformally compact substatic manifolds necessarily have connected conformal infinity, generalizing a known result in the literature of \emph{static vacuum solutions}~\cite{Chrusciel_Simon}.
Static vacuum solutions are in fact substatic triples such that \cref{eq:substatic} is satisfied with equality on the whole space.

Focusing now for simplicity on $f$-complete substatic triples that have only one end,
one would then be led to define the {\em Asymptotic Volume Ratio} as
\begin{equation}
\label{eq:avr-intro}
{\rm AVR}(M,g,f)\,=\,\frac{1}{|\Sph^{n-1}|}\lim_{t\to+\infty}\int_{\{\rho=t\}}\frac{1}{\eta^{n-1}}d\sigma\,=\,\frac{1}{|\mathbb{B}^n|}\lim_{t\to+\infty}\frac{1}{t^n}\int_{\{\rho\leq t\}}\frac{\rho^{n-1}}{f\eta^{n-1}}d\mu\,.
\end{equation}
with $\rho$ denoting the $g/f^2$-distance from  a mean-convex hypersurface homologous to $\partial M$ or from a point, if the boundary is empty. 
The fact that both limits above give the same result is easy to establish. However, we have to make sure that such quantity is independent of the initial hypersurface. We accomplish this task under the assumption of \emph{uniformity of the end}, meaning that the quotient $\eta_x/\eta_y$ of the reparametrized distances with respect to two different points $x,y$ converges uniformly to $1$ at infinity. Again, such condition is immediately checked to be fulfilled in the asymptotically flat regime. More generally, it can in fact be inferred under a natural decay condition on the gradient of $f$ only, see \cref{prop:condition-uniformatinf}.  

Exploiting the global features of our Bishop--Gromov Theorem
we obtain the following Willmore-like inequality for mean-convex hypersurfaces homologous to $\partial M$.

\begin{thmx}[Substatic Willmore inequality]
\label{thm:Willmore_intro}
Let $(M,g,f)$ be a substatic triple with one uniform $f$-complete end. Let $\Sigma$ be a hypersurface homologous to the boundary. Suppose that the mean curvature $\HHH$ of $\Sigma$ with respect to the normal pointing towards infinity satisfies $\HHH> 0$ pointwise. Then
\begin{equation}
\label{eq:willmore-intro}
\int_\Sigma \left[\frac{\HHH}{(n-1)f}\right]^{n-1}d\sigma\,\geq\,
{\rm AVR}(M,g,f)\,|\Sph^{n-1}|\,.
\end{equation}
Furthermore, if the equality holds, then the noncompact connected component $U$ of $M\setminus\Sigma$ is isometric to $[0,+\infty)\times\Sigma$ with metric 
\[
g\,=\,f^2 d\rho\otimes d\rho+\eta^2 g_0\,,
\]
where $g_0$ is a metric on $\Sigma$. 
\end{thmx}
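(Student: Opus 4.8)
The strategy is to deduce the inequality directly from the monotonicity of the area-type function $A(t)$ in the Substatic Bishop--Gromov Theorem (Theorem C), applied to the signed $\tilde g$-distance $\rho$ from $\Sigma$. First I would note that since $\HHH>0$ pointwise, the hypersurface $\Sigma$ is strictly mean-convex, so Theorem C applies with $\Sigma$ as the reference hypersurface, and the reparametrized distance $\eta$ is defined via \cref{eq:eta_Sigma}. At $t=0$, i.e.\ on $\Sigma$ itself, the initial condition for $\eta$ built into \cref{eq:eta_Sigma} should give $\eta = (n-1)f/\HHH$ on $\Sigma$ (this is precisely the normalization that makes the Riccati comparison \cref{eq:Lapl_comp_intro} sharp at the initial time), so that
\begin{equation*}
A(0)\,=\,\frac{1}{|\Sph^{n-1}|}\int_{\Sigma}\frac{1}{\eta^{n-1}}\,d\sigma\,=\,\frac{1}{|\Sph^{n-1}|}\int_\Sigma\left[\frac{\HHH}{(n-1)f}\right]^{n-1}d\sigma\,.
\end{equation*}
On the other hand, $\lim_{t\to+\infty}A(t)$ is, by definition \cref{eq:avr-intro} together with the uniformity hypothesis, exactly $\AVR(M,g,f)$; uniformity is what guarantees this limit is independent of the choice of initial hypersurface and equals the quantity defined via a point when $\partial M=\emptyset$, and $f$-completeness of the end guarantees $\rho\to+\infty$ along the end and $\eta\to+\infty$, so the limit is well defined. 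Since $A$ is monotonically nonincreasing on $[0,+\infty)$ by Theorem C, we get $A(0)\geq \lim_{t\to\infty}A(t) = \AVR(M,g,f)$, which is \cref{eq:willmore-intro} after multiplying by $|\Sph^{n-1}|$.

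For the rigidity statement, suppose equality holds in \cref{eq:willmore-intro}. Then $A(0)=\lim_{t\to+\infty}A(t)$, and since $A$ is nonincreasing this forces $A(t)$ to be constant on all of $[0,+\infty)$. Applying the rigidity clause of Theorem C (the case $A(t_1)=A(t_2)$) on every interval $[0,t]$, the region $\{0\leq\rho\leq t\}$ is isometric to $[0,t]\times\Sigma$ with metric $g = f^2\,d\rho\otimes d\rho+\eta^2 g_0$ for a fixed $(n-1)$-dimensional metric $g_0$ on $\Sigma$; letting $t\to+\infty$ and using that $\{\rho\geq 0\}$ is precisely the noncompact component $U$ of $M\setminus\Sigma$ (because $\rho$ is the \emph{signed} distance, positive on the noncompact side), we conclude $U\cong [0,+\infty)\times\Sigma$ with the stated warped product structure. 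One should check that no cut locus develops in $U$ under the equality assumption — but this is automatic, since constancy of $A$ together with the rigidity in Theorem C already yields a smooth product structure on each $\{0\leq\rho\leq t\}$, so $\mathrm{Cut}^{\tilde g}\cap U=\emptyset$.

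The main technical point — and the step I expect to require the most care — is the matching at $t=0$: verifying that the definition \cref{eq:eta_Sigma} of $\eta$ indeed produces $\eta|_\Sigma = (n-1)f/\HHH$, so that $A(0)$ is literally the Willmore integrand, and simultaneously checking that $A(t)$ is continuous (indeed right-continuous) at $t=0$ despite the integration being over $\{\rho=t\}\setminus\mathrm{Cut}^{\tilde g}$. Near $\Sigma$ there is no cut locus, so for small $t$ the slice $\{\rho=t\}$ is a smooth hypersurface foliating a collar of $\Sigma$, and $A(t)\to A(0)$ follows from the smooth dependence of the induced measure and of $\eta$ on $t$; I would spell this out using the normal exponential map of $\tilde g$ on a collar neighborhood. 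A secondary point is to confirm that $f$-completeness of the (single) end is exactly what makes $\lim_{t\to\infty}A(t)$ the object called $\AVR$ in \cref{eq:avr-intro}, and that uniformity removes the dependence on $\Sigma$; both are quotable from the discussion preceding the statement, so no new work is needed there beyond citing \cref{prop:condition-uniformatinf} and Definition~\ref{def:f-complete}.
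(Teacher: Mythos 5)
Your proof is correct and follows essentially the same approach as the paper: both deduce the inequality directly from the Bishop--Gromov monotonicity of $A(t)$ (Theorem~\ref{thm:BG_nb}) applied to the signed $\tilde g$-distance from $\Sigma$, using that $A(0)$ is the Willmore integrand (by the initial condition $\eta|_\Sigma=(n-1)f/\HHH$ in \cref{eq:eta_Sigma}) and $\lim_{t\to\infty}A(t)=\AVR(M,g,f)$. For the rigidity, your argument also matches: constancy of $A$ triggers the rigidity clause of Theorem~\ref{thm:BG_nb} on each interval and yields the warped-product structure $g=f^2\,d\rho\otimes d\rho+\eta^2 g_0$ on $U$. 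The only difference is that the paper (Theorem~\ref{thm:Willmore}) goes further and derives additional structure on $f$ and $\eta$ (showing $\psi=0$ in \cref{eq:rigidity_A} via the uniformity hypothesis, so that $\eta^{n-1}=\alpha(\rho)+\beta(\theta)$), but that refinement is not part of the statement you were asked to prove.
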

Notice that if there are multiple ends, the above inequality is trivial, since in this case the Asymptotic Volume Ratio of each end is zero as pointed out in \cref{lem:positiveavr->1end}.

In the classical nonnegative Ricci curvature setting, the above result was obtained in \cite{agostiniani_sharpgeometricinequalitiesclosed_2020}. However, the more elementary proof we propose displays more resemblances with the alternative argument of Wang \cite{wang-willmore}. 

\smallskip

The validity of the above Willmore-type inequality very naturally suggests the isoperimetric inequality \cref{eq:isoperimetric_intro}. Indeed, assume that  smooth area minimizers exist among hypersurfaces $\Sigma_V$  enclosing a given weighted volume $\int_\Omega f d\mu = V$ with $\partial M$, for any given value $V$. We will call such $\Sigma_V$ $f$-isoperimetric. Then, through standard variation formulas, there must exist a Lagrange multiplier $\lambda \in \R$ such that
\begin{equation}
\label{eq:lagrange-intro}
    \int_\Sigma (\HHH - \lambda f) \varphi d\sigma = 0
\end{equation}
for any $\varphi \in \mathscr{C}^\infty_c (\Sigma)$, implying that the mean curvature of $\Sigma$ satisfies $\HHH/ f = \lambda$. Letting $I_f (V) = \abs{\Sigma_V}$, one has that $(I_f)' (V)$ is proportional to $\lambda$. If this multiplier happens to positive, it is sharply estimated in terms of $I_f (V)$  by \cref{eq:willmore-intro}. The resulting differential inequality leads to 
\begin{equation}
\label{eq:ineq-isop-intro}
\abs{\Sigma_V}^{\frac{n}{n-1}} - \abs{\Sigma_{V_0}}^{\frac{n}{n-1}} \geq n (\mathrm{AVR}(M, g, f)\abs{\Sf^{n-1}})^{\frac{1}{n-1}}(V- V_0)
\end{equation}
for any $V_0 < V$.
Now, if $\partial M$ happens to be, in addition to minimal, also area-minimizing, \cref{eq:ineq-isop-intro} directly implies \cref{eq:isoperimetric_intro} for $\Sigma_V$ in the limit as $V_0 \to 0^+$. But $\Sigma_V$ being the best competitor, it holds for any $\Sigma$ as in the claimed statement.

The assumptions of $f$-completeness and uniformity at infinity are obviously added in order to count on the validity of \cref{thm:Willmore_intro}. The additional requirement of existence of a nonminimal outward minimizing exhaustion (see Section~\ref{sec:isoperimetric}) is ultimately added in order to overcome the problem of the possible nonexistence of $f$-isoperimetric $\Sigma_V$'s. We follow the general strategy devised by Kleiner \cite{kleiner}, and reinterpreted in the nonnegative Ricci curvature setting in \cite{Fogagnolo_Mazzieri-minimising}, that consists in considering $f$-isoperimetric sets constrained in mean-convex boxes, in our case given by the exhaustion.  However, some new geometric difficulties arise, mostly given by the new portion of boundary $\partial M$. They will be overcome by exploiting the fact that such boundary is in turn \emph{a priori} outermost area-minimizing, a new piece of information that we obtain through an argument involving the Mean Curvature Flow of the outward minimizing exhaustion (see \cref{prop:outermost}), and by discovering that the constrained $f$-isoperimetric sets crucially never touch $\partial M$, see \cref{thm:f-isoperimetric}.
We will in the end have all the tools at hand to run the argument sketched above, under the usual dimensional threshold ensuring the constrained $f$-isoperimetric sets to be regular enough. 
The strong rigidity statement contained in \cref{thm:isoperimetric_intro} will stem from the fact that in case of equality all of the $\Sigma_V$ must satisfy the equality in \cref{eq:willmore-intro}. The rigidity statement of \cref{thm:Willmore_intro} will be thus complemented with the additional information given by $\HHH = \lambda f$, forcing the metric to split as \cref{eq:rigidiso_intro}.



\subsection{Further directions}

The results presented in this paper raise a number of natural questions, especially out of our main geometric inequalities, \cref{thm:isoperimetric_intro} and \cref{thm:Willmore_intro}. 
The Willmore-type inequality in nonnegative Ricci curvature \cite[Theorem 1.1]{agostiniani_sharpgeometricinequalitiesclosed_2020} has been first obtained with a completely different technique, involving the evolution of the initial hypersurface along the level sets of a harmonic potential function. Understanding a version of such a route in the substatic context may have various interests. First of all, it may allow to remove the mean-convexity assumption on $\Sigma$ we have in \cref{thm:Willmore_intro} in favour of the absolute value of the mean curvature in \cref{eq:willmore-intro}. Secondly, and more interestingly, it would suggest the viability of a suitable version of the analysis through $p$-harmonic functions performed in nonnegative curvature in \cite{benatti2022minkowski}, likely leading to a new substatic Minkowski inequality, potentially stronger than our Willmore-type. Moreover, studying the behaviour of such substatic $p$-harmonic functions may have implications in the existence of the weak Inverse Mean Curvature Flow \cite{huisken_inverse_2001} in the substatic regime, furnishing a vast extension of the important existence results in nonnegative Ricci curvature \cite{mari-rigoli-setti}. Recalling the outward minimizing properties of the evolving hypersurfaces \cite[Minimizing Hull Property 1.4]{huisken_inverse_2001}, the existence of the weak IMCF would imply the a priori existence of the outward minimizing exhaustion requested in \cref{thm:isoperimetric_intro}.
In the special case of asymptotically flat static vacuum solutions, the weak IMCF has already been introduced and employed to prove Minkowski-type inequalities in~\cite{wei,mccormick,Har_Wan}. Such inequalities are lower bound on the integral of $f \HHH$, and, as such, do not seem related to \cref{eq:willmore-intro}.

It would also be rather interesting to explore other approaches for the proof of the $f$-Isoperimetric Inequality \cref{eq:isoperimetric_intro} as well, possibly allowing to remove the dimensional threshold. Antonelli--Pasqua\-letto--Pozzetta--Semola \cite[Theorem 1.1]{antonelli-pasqualetto-pozzetta-semola} provided a natural and very strong proof in the nonnegative Ricci curvature case  taking advantage of a generalized compactness result \cite{nardulli, antonelli-fogagnolo-pozzetta, antonelli-nardulli-pozzetta} for isoperimetric minimizing sequences in the nonsmooth $\mathrm{RCD}$ setting. This immediately invites to study the nonsmooth counterpart of the substatic condition. A possible key for this may lie in the recent optimal transport equivalent definition of $\mathrm{CD}(0, 1)$ given in \cite{sakurai-convexity}. 

Another, completely different approach one may undertake consists in Brendle's \cite{brendle_sobolevinequalitiesmanifoldsnonnegative_2021}, building on the ABP method applied to a torsion problem with Neumann condition. A substatic version of such approach promises to deal with the PDE considered in \cite{Li_Xia_17, fogagnolo-pinamonti} in relation with the Heintze-Karcher inequality. 
We also point out that both these alternative approaches should have consequences in going beyond the dimensional threshold we imposed.

\smallskip

From the comparison geometry point of view, the validity of the Splitting Theorem and of the Bishop--Gromov monotonicity strongly suggests that other classical results, such as the Cheng eigenvalue estimate and Cheng--Yau gradient estimate, should have analogues in the substatic setting. A promising advance in this direction has been obtained in the $\mathrm{CD}(0,1)$ setting \cite{fujitani2022functional}. 

\smallskip

It may also be interesting to study compact substatic triples. Important models for this class of manifolds are given by static solutions with positive cosmological constant, most notably the Schwarzschild--de Sitter and Reissner--Nordstr\"om--de Sitter spacetimes, corresponding to~\eqref{eq:models} with $\Lambda$ positive and $(\Sigma, g_\Sigma)$ a round sphere.
Another natural direction is to investigate what can be said for the more general problem of studying triples $(M,g,f)$ satisfying
\begin{equation}
\label{eq:sub-vstatic}
f\Ric - \nana f + (\De f)g \geq -\mu g\,,\quad\mu\in\R\,.
\end{equation}
The case $\mu=0$ corresponds to the substatic condition. The case $\mu\neq 0$ is also of interest: triples that saturate~\eqref{eq:sub-vstatic} for $\mu\neq 0$ are called $V$-static and are connected with the critical point equation and the Besse conjecture, see~\cite{Fang_Yuan,He} and references therein for more details on these topics. We mention that inequality~\eqref{eq:sub-vstatic} has been considered in~\cite{Zeng}, where an almost-Schur inequality has been proved and exploited to generalize results in~\cite{Cheng,Li_Xia_19}.


\subsection{Structure of the paper}

In Section~\ref{sec:Riccati} we compute the evolution of the mean curvature of geodesic spheres, leading to the aforementioned Laplacian Comparison Theorem, formula~\eqref{eq:Lapl_comp_intro} (see Theorem~\ref{thm:H_bound}). Building on it, we prove Theorem~\ref{thm:BG_intro}, first for the functional $A$ (Theorem~\ref{thm:BG_nb}) and then for the functional $V$ (Theorem~\ref{thm:BGvolumi_nb}).
Section~\ref{sec:splitting} is dedicated to the proof of the Splitting Theorem. We first analyze the most important case of $f$-complete ends and prove Theorem~\ref{thm:splitting_intro} (Subsection~\ref{sub:splitting_fcomplete}), then we discuss analogous results for conformally compact ends as well, see Theorems~\ref{thm:splitting_cc} and~\ref{thm:splitting_mixed}.
In Section~\ref{sec:AVR} we introduce the notions of uniform ends (Definition~\ref{def:uniform}) and Asymptotic Volume Ratio (Definition~\ref{def:AVR}) and prove the Willmore Inequality (see Theorem~\ref{thm:Willmore}).
Finally, in Section~\ref{sec:isoperimetric} we prove Theorem~\ref{thm:isoperimetric_intro}.
We include an Appendix encompassing the physical motivation for the substatic condition, the conformal relation with the $\mathrm{CD}(0, 1)$ curvature-dimension condition and some additional comments.

\medskip
\subsection*{Acknowledgements}
The work was initiated during the authors' participation at the conference\emph{Special Riemannian Metrics and Curvature Functionals} held at \emph{Centro De Giorgi} in Pisa in 2022. A substantial
part of the work has been carried out during the authors' attendance to the \emph{Thematic Program on Nonsmooth Riemannian and Lorentzian Geometry} that took place at the Fields Institute in Toronto. 
They warmly thank the staff, the organizers and the colleagues for the wonderful atmosphere and the excellent working conditions set up there. 

During the preparation of the work, M. F. was supported by the European Union – NextGenerationEU and by the University of Padova under the 2021 STARS Grants@Unipd programme ``QuASAR".

The authors are members of Gruppo Nazionale per l’Analisi Matematica, la Probabilit\`a e le loro Applicazioni (GNAMPA), which is part of the Istituto
Nazionale di Alta Matematica (INdAM), and are partially funded by the GNAMPA project ``Problemi al bordo e applicazioni geometriche".

The authors are grateful to Lucas Ambrozio, Gioacchino Antonelli, Luca Benatti, Camillo Brena, Philippe Castillion, Nicola Gigli, Marc Herzlich, Lorenzo Mazzieri, Marco Pozzetta and Eric Woolgar   
for their interest in their work and for various useful conversations.

\section{Riccati comparison and Bishop-Gromov Theorem}
\label{sec:Riccati}

\subsection{Evolution of the mean curvature}
\label{sub:evolution}

Let $(M,g,f)$ be a substatic solution.
Since $f$ does not vanish inside $M$ by definition, the metric $\tilde g=g/f^2$ is well defined in $M\setminus\pa M$. Let $\rho$ be the distance function from a point $p\in M\setminus\pa M$ with respect to the metric $\tilde g$ and consider Riemannian polar coordinates $(\rho,\theta^1,\dots,\theta^{n-1})$ in a neighborhood $U$ of $p$. In this subsection we focus our computation in $U$ and we assume that $U$ does not contain points in the cut locus of $p$. This guarantees that $\rho$ is smooth in $U$, that $|d\rho|_{\tilde g}=1$ at all points of $U$ and that the metric $\tilde g$ has the following form:
\begin{equation}
\label{eq:gamma}
\tilde g\,=\,d\rho\otimes d\rho+\tilde g_{ij}(\rho,\theta^1,\dots,\theta^{n-1})d\theta^i\otimes d\theta^j\,.
\end{equation}
We denote by $\widetilde\nabla$ the Levi-Civita connection with respect to $\tilde g$. It is well known that the Hessian of a distance function satisfies the inequality
\begin{equation}
\label{eq:Hessian_estimate_gamma}
\big|\widetilde\na^2\rho\big|^2_{\tilde g}\geq\frac{(\De_{\tilde g} \rho)^2}{n-1}\,.
\end{equation}
We now write down this inequality in terms of the original metric. To this end, we first compute
\begin{equation}
\label{eq:useful_formulas}
\begin{aligned}
|\na\rho|^2\,&=\,\frac{1}{f^2}\,,
\\
\widetilde\na^2\rho\,&=\,\nana\rho\,+\,\frac{1}{f}\left(d\rho\otimes df+df\otimes d\rho-\langle \na\rho\,|\,\na f\rangle g\right)\,,
\\
\De_{\tilde g}\rho\,&=\,f^2\De\rho-(n-2)f\langle\na\rho\,|\,\na f\rangle\,.
\end{aligned}
\end{equation}
Using these identities, with some computations we can rewrite~\eqref{eq:Hessian_estimate_gamma} in terms of the original metric as 
\begin{equation}
\label{eq:Hessian_estimate}
|\nana\rho|^2\,\geq\,
\frac{(\De\rho)^2}{n-1}
-\frac{n-2}{n-1}\frac{1}{f^2}\langle \na\rho\,|\,\na f\rangle^2
+\frac{2}{n-1}\frac{1}{f}\De\rho\langle \na\rho\,|\,\na f\rangle
+\frac{2}{f^4}|\na f|^2\,.
\end{equation}
From this formula onwards we just focus on the original metric $g$. 
From~\eqref{eq:gamma}, with respect to the coordinates $(\rho,\theta^1,\dots,\theta^{n-1})$, we have
$$
g\,=\,f^2 \, d\rho\otimes d\rho+g_{ij}(\rho,\theta^1,\dots,\theta^{n-1})d\theta^i\otimes d\theta^j\,.
$$
We are interested in the evolution of the mean curvature $\HHH$ of the level sets of $\rho$ with respect to the metric $g$. We have
$$
\HHH\,=\,\frac{\De\rho}{|\na\rho|}\,-\,\frac{\nana\rho(\na\rho,\na\rho)}{|\na\rho|^3}\,=\,f\De\rho\,-\,\frac{1}{2}f^3\langle \na f\,|\,\na\rho\rangle\,=\,f\De\rho\,+\,\langle\na f\,|\,\na\rho\rangle\,.
$$
On the other hand, using the fact that $|\na\rho|=1/f$ and the Bochner formula we compute
$$
\frac{6}{f^4}|\na f|^2\,-\,\frac{2}{f^3}\De f\,=\,\De|\na\rho|^2\,=\,
2\,|\nana\rho|^2\,+\,2\,\Ric(\na\rho,\na\rho)\,+\,2\,\langle\na\De\rho\,|\,\na\rho\rangle\,.
$$
Combining this with~\eqref{eq:substatic}, some computations lead to
$$
\langle\na\De\rho\,|\,\na\rho\rangle\,\leq\,\De\rho\langle \na\rho\,|\,\na f\rangle-\frac{1}{f}\nana f(\na\rho,\na\rho)\,-\,|\nana\rho|^2
$$
We can use this information to find the evolution of $\HHH$:
\begin{align}
\langle\na\HHH\,|\,\na\rho\rangle\,&\leq\,-f|\nana\rho|^2\,+\,\frac{2}{f^3}|\na f|^2\,+\,\De\rho\langle\na\rho\,|\,\na f\rangle
\\
&\leq\,-f\frac{(\De\rho)^2}{n-1}\,+\,\frac{n-3}{n-1}\De\rho\langle\na\rho\,|\,\na f\rangle\,+\,\frac{n-2}{n-1}\frac{1}{f}\left(\langle\na\rho\,|\,\na f\rangle\right)^2
\\
&=\,-\frac{1}{n-1}\frac{1}{f}\HHH^2\,+\,\frac{1}{f}\HHH\langle\na\rho\,|\,\na f\rangle\,,
\end{align}
where in the second inequality we have used estimate~\eqref{eq:Hessian_estimate}. This formula can be rewritten as
$$
\bigg\langle\na\left(\frac{\HHH}{f}\right)\,\bigg|\,
\na\rho\bigg\rangle
\,\leq\,-\frac{1}{n-1}\left(\frac{\HHH}{f}\right)^2\,.
$$
In other words, we have found the following formula for the evolution of the mean curvature of the level sets of $\rho$.
\begin{lemma}
	\label{le:Riccati}
In the notations above, at any point of $U$ the evolution of the mean curvature $\HHH$ along $\rho$ satisfies
\begin{equation}
\label{eq:H_Riccati}
\frac{\pa}{\pa\rho}\left(\frac{\HHH}{f}\right)\,\leq\,-\frac{1}{n-1}\,\HHH^2\,.
\end{equation}
\end{lemma}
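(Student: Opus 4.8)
The plan is to track how the mean curvature $\HHH$ of the level sets of $\rho$ evolves along the integral curves of the $\tilde g$-gradient $\widetilde\nabla\rho$, i.e.\ along unit-speed $\tilde g$-geodesics issuing from $p$. Working in $\tilde g$-polar coordinates in $U$, I would start from the elementary Cauchy--Schwarz-type bound $|\widetilde\nabla^2\rho|^2_{\tilde g}\geq(\De_{\tilde g}\rho)^2/(n-1)$ for the Hessian of a $\tilde g$-distance function (its Hessian is the second fundamental form of the level set plus a vanishing radial part, so trace$^2\leq(n-1)|\cdot|^2$). Using the conformal transformation rules for $|\nabla\rho|$, $\nabla^2\rho$ and $\De\rho$ under $\tilde g=g/f^2$ (which I would record as in \eqref{eq:useful_formulas}), this translates into the purely $g$-inequality \eqref{eq:Hessian_estimate}; this is the only place the geometry of $\tilde g$ is used, and from here everything is computed with $g$ alone.

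Next I would write $\HHH=f\De\rho+\langle\nabla f\,|\,\nabla\rho\rangle$ and differentiate it radially. To control the term $\langle\nabla\De\rho\,|\,\nabla\rho\rangle$ that appears, I would apply the Bochner formula to $|\nabla\rho|^2=1/f^2$, namely $2|\nabla^2\rho|^2+2\Ric(\nabla\rho,\nabla\rho)+2\langle\nabla\De\rho\,|\,\nabla\rho\rangle=\De(1/f^2)$, and then feed in the substatic inequality \eqref{eq:substatic} contracted against $\nabla\rho\otimes\nabla\rho$, which bounds $\Ric(\nabla\rho,\nabla\rho)$ below by $f^{-1}\nabla^2 f(\nabla\rho,\nabla\rho)-f^{-3}\De f$. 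Plugging this into the radial derivative of $\HHH$, using \eqref{eq:Hessian_estimate} to absorb $|\nabla^2\rho|^2$, and collecting the cross-terms in $\langle\nabla f\,|\,\nabla\rho\rangle$, the $\nabla^2 f$ and $\De f$ contributions cancel and the remainder organizes into $\langle\nabla\HHH\,|\,\nabla\rho\rangle\leq-\tfrac{1}{n-1}\tfrac1f\HHH^2+\tfrac1f\HHH\langle\nabla f\,|\,\nabla\rho\rangle$, i.e.\ $\langle\nabla(\HHH/f)\,|\,\nabla\rho\rangle\leq-\tfrac{1}{n-1}(\HHH/f)^2$. Finally, since in these coordinates $\partial/\partial\rho$ acts as the derivation $\widetilde\nabla\rho=f^2\nabla\rho$, multiplying by $f^2$ turns $\langle\nabla(\HHH/f)\,|\,\nabla\rho\rangle$ into $\tfrac{\partial}{\partial\rho}(\HHH/f)$ and $(\HHH/f)^2$ into $\HHH^2/f^2\cdot f^2=\HHH^2$, yielding exactly \eqref{eq:H_Riccati}.

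The main obstacle is the bookkeeping in the second step: one has to verify that, after substituting the conformal identities together with the Bochner and substatic inputs into the radial derivative of $\HHH$, every term containing $\nabla^2 f$, $\De f$ or $|\nabla f|^2$ either cancels or recombines into the single clean quadratic $-\tfrac{1}{n-1}(\HHH/f)^2$. This is precisely the computation that singles out the conformal factor $f^{-2}$ as the ``right'' one --- with any other power the terms would not close up --- so the cancellation is not accidental but also not obvious a priori, and it is the heart of the lemma. (Alternatively, one could note that $\tilde g$ satisfies $\mathrm{CD}(0,1)$ and invoke the corresponding one-dimensional Riccati comparison, along the lines of \cite{Wylie}; but the direct substatic computation above is more transparent and keeps track of the auxiliary quantities needed later.)
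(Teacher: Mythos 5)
Your proposal is correct and follows essentially the same route as the paper's: you derive the $\tilde g$-Hessian trace inequality, push it through the conformal identities \eqref{eq:useful_formulas} to obtain \eqref{eq:Hessian_estimate}, combine the Bochner formula for $|\na\rho|^2=1/f^2$ with the substatic condition contracted on $\na\rho\otimes\na\rho$, and close the estimate into $\langle\na(\HHH/f)\,|\,\na\rho\rangle\leq-\tfrac{1}{n-1}(\HHH/f)^2$, which becomes \eqref{eq:H_Riccati} after multiplying by $f^2=|\na\rho|^{-2}$. The only detail worth flagging is that the cross-term bookkeeping in the Bochner step (tracking the extra piece $\nana\rho(\na f,\na\rho)=-f^{-3}|\na f|^2$ coming from differentiating $\langle\na f\,|\,\na\rho\rangle$) is exactly where the $|\na f|^2$ terms cancel, and you correctly anticipate that this is the delicate part.
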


\subsection{Bounds on the mean curvature and Laplacian comparison}
\label{sub:Laplacian_comparison}

Let $p\in M\setminus\pa M$ and $\rho$ be the distance function from $p$ with respect to the metric $\tilde g=g/f^2$.
We assume that $M\setminus\pa M$ is geodesically complete with respect to the metric $\tilde g$.
We denote by ${\rm Cut}^{\tilde g}_p(M)$ the cut locus of $p$, again with respect to $\tilde g$.
The function $\rho$ is smooth in $U=M\setminus({\rm Cut}^{\tilde g}_p(M)\cup\{p\})$, where ${\rm Cut}^{\tilde g}_p(M)$ is the cut locus of $p$ with respect to the metric $\tilde g$.

For every $\theta\in\Sph^{n-1}\subset T_p M$, we denote by $\sigma_\theta$ the geodesic starting from $p$ in the direction $\theta$ and by 
$\tau(\theta)\in(0,+\infty]$ the smallest positive value such that $\sigma_\theta(\tau(\theta))\in {\rm Cut}^{\tilde g}_p(M)$. We recall from~\cite[Proposition~2.9, Chapter~13]{doCarmo} that $\tau:\Sph^{n-1}\to(0,+\infty]$ is a continuous function.
Notice that there is a diffeomorphism between $U$ and the set
$$
\{(\rho,\theta)\in(0,+\infty)\times\Sph^{n-1}\,:\,\rho<\tau(\theta)\}\,,
$$
hence we can use $\rho,\theta$ as coordinates in $U$.

We can now exploit
Lemma~\ref{le:Riccati} to find a bound for $\HHH$ in $U$. To this end, given a positive function $\eta\in\mathscr{C}^\infty$ we use~\eqref{eq:H_Riccati} to compute
\begin{equation}
\label{eq:Riccati_eta}
\frac{\pa}{\pa\rho}\left(\frac{f}{\HHH}-\frac{\eta}{n-1}\right)\,=\,
-\frac{f^2}{\HHH^2}\frac{\pa}{\pa\rho}\left(\frac{\HHH}{f}\right)-\frac{1}{n-1}\frac{\pa\eta}{\pa\rho}\,\geq\,\frac{1}{n-1}\left(f^2-\frac{\pa\eta}{\pa\rho}\right)\,.
\end{equation}
We then choose $\eta$ so that the the right hand side vanishes pointwise. Since $f$ is smooth, the equation
$$
\frac{\pa\eta}{\pa\rho}\,=\,f^2
$$
can be solved and yields a unique solution once we fix its value on a level set of $\rho$.

\begin{proposition}
\label{pro:eta}
There exists a unique function $\eta\in\mathscr{C}^\infty(U)$ satisfying
\begin{equation}
\label{eq:eta}
\begin{dcases}
\frac{\pa\eta}{\pa\rho}\,=\,f^2\,,
\\
\lim_{\rho\to 0^+}\eta\,=\,0\,.
\end{dcases}
\end{equation}
\end{proposition}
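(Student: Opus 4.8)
The plan is to integrate the transport equation \eqref{eq:eta} along the radial $\tilde g$-geodesics emanating from $p$. In the polar coordinates $(\rho,\theta)$ on $U$ introduced above, for each fixed $\theta\in\Sph^{n-1}$ the first equation in \eqref{eq:eta} is an ordinary differential equation in $\rho$ whose right-hand side is $f^2$ evaluated along $\sigma_\theta$, and the second line of \eqref{eq:eta} pins down the constant of integration. This forces the candidate
\[
\eta(\rho,\theta)\,=\,\int_0^\rho f\big(\sigma_\theta(s)\big)^2\,ds\,,
\]
which is well defined for all $0<\rho<\tau(\theta)$: by the assumed $\tilde g$-geodesic completeness the segment $\sigma_\theta([0,\rho])$ stays in $M\setminus\pa M$, where $f$ is smooth hence bounded, so the integral converges. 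Since $f>0$ in the interior of $M$, this $\eta$ is automatically positive on $U$, as required by the statement.

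Next I would check that this $\eta$ lies in $\mathscr{C}^\infty(U)$ and verifies both conditions in \eqref{eq:eta}. Smoothness follows from smooth dependence of solutions of ODEs on their initial data: the geodesic flow $(\rho,\theta)\mapsto\sigma_\theta(\rho)$ is smooth on the open set $\{(\rho,\theta)\in(0,+\infty)\times\Sph^{n-1}:\rho<\tau(\theta)\}$, which is diffeomorphic to $U$; composing with the smooth function $f^2$ and integrating in the $\rho$-variable preserves smoothness by differentiation under the integral sign. The $\rho$-derivative is then $\pa_\rho\eta=f(\sigma_\theta(\rho))^2=f^2$ on $U$ by the fundamental theorem of calculus, while $\eta(\rho,\theta)\to 0$ as $\rho\to 0^+$ because the integrand stays bounded near $p$ (indeed $f(p)>0$, so $\eta(\rho,\theta)=f(p)^2\rho+o(\rho)$).

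For uniqueness, suppose $\eta_1,\eta_2\in\mathscr{C}^\infty(U)$ both solve \eqref{eq:eta}. Their difference $w=\eta_1-\eta_2$ satisfies $\pa_\rho w=0$ on $U$, so $w$ is constant along each radial geodesic, i.e.\ $w=w(\theta)$; the boundary condition gives $w(\theta)=\lim_{\rho\to 0^+}w=0$, whence $\eta_1=\eta_2$.

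The argument is just the integration of a first-order linear transport equation, so there is no genuine obstacle; the only point deserving a little care is the joint smoothness of $\eta$ in $(\rho,\theta)$, which is handled by smooth dependence of the geodesic flow on its initial direction together with differentiation under the integral sign. The degeneracy of polar coordinates at $p$ is harmless here, since $p\notin U$ and the initial condition only enters as a limit.
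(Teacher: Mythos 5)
Your proof is correct, and it takes a more direct route than the paper's. The paper does not write down the explicit integral formula; instead it introduces approximating functions $\eta_\ep$ solving the transport equation with zero data on $\{\rho=\ep\}$, invokes the classical method of characteristics (citing Evans) for existence of a $\mathscr{C}^2$ solution, bootstraps to smoothness by differentiating the PDE, and then sends $\ep\to 0$ via Ascoli--Arzel\`a. The reason for this detour is that the classical well-posedness theory for first-order PDEs wants initial data on a hypersurface transverse to the characteristics, whereas here the ``initial surface'' degenerates to the single point $p$; the $\ep$-regularization avoids this. You bypass the issue entirely by observing that in polar coordinates the transport equation is simply the ODE $\pa_\rho\eta = f(\sigma_\theta(\rho))^2$ along each radial geodesic, so $\eta(\rho,\theta)=\int_0^\rho f(\sigma_\theta(s))^2\,ds$ can just be written down, with joint smoothness in $(\rho,\theta)$ following from smooth dependence of the geodesic flow on its initial direction and differentiation under the integral sign, and the limit $\eta\to 0$ as $\rho\to 0^+$ being immediate since $f(\sigma_\theta(s))\to f(p)$ uniformly in $\theta$. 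Both arguments are valid; yours is shorter and makes the asymptotic $\eta=f(p)^2\rho+o(\rho)$ (which the paper uses later, e.g.\ in the proof of Theorem 2.6 and in Theorem 2.9) transparent without an additional computation. The uniqueness step is the same in both.
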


\begin{remark}
\label{rmk:eta_nonregularoncutlocus}
It should be remarked that $\eta$ is not even necessarily continuous outside $U$. In fact, if there are two minimizing geodesics from $p$ to $q\in {\rm Cut}^{\tilde g}_p(M)$, the function $\eta$ may behave differently on the two geodesics, hence the limit of $\eta$ as we approach $q$ along the two different geodesics would give different results.
\end{remark}

\begin{proof}
For every $\ep>0$, consider the function $\eta_\ep$ defined by 
\begin{equation}
\label{eq:eta_ep}
\begin{dcases}
\frac{\pa\eta_\ep}{\pa\rho}\,=\,f^2 & \hbox{ in }\{\rho>\ep\}\cap U\,,
\\
\eta_\ep\,=\,0 & \hbox{on }\{\rho=\ep\}\,.
\end{dcases}
\end{equation}
Since $f$ is smooth, it is well known (see for instance~\cite[Section~3.2.4]{Evans}) that~\eqref{eq:eta_ep} can be solved and yields a unique $\mathscr{C}^2$ solution. Furthermore, by differentiating the first equation we find that the first derivatives $\pa_\alpha\eta$ also solve a first order PDE, namely $\pa_\rho\pa_\alpha\eta=\pa_\alpha f^2$. Since $\pa_\alpha f^2$ is smooth, it follows that the derivatives $\pa_\alpha \eta$ are also $\mathscr{C}^2$. Proceeding this way, we deduce that $\eta_\ep$ is smooth.

It is now sufficient to pass to the limit as $\ep\to 0$ using Ascoli--Arzel\`a. Since the functions $\eta_\ep$ (and their derivatives as well) are uniformly continuous and uniformly bounded on any compact domain inside $U$, it follows that $\eta_\ep$ converge to a smooth function $\eta$, defined on the whole $U$.

Concerning uniqueness, if there were two different solutions $\eta_1,\eta_2$ of~\eqref{eq:eta}, then the difference $\eta_1-\eta_2$ would have derivative equal to zero along the direction $\pa/\pa\rho$. Since the limit as $\rho\to 0$ is zero, one immediately obtains $\eta_1-\eta_2=0$.
\end{proof}

\begin{remark}[The reparametrized distance $\eta$]
\label{rem:eta}
The function $\eta$ is also called {\em reparametrized distance}. The reason for this terminology is that the radial $\tilde g$-geodesics from our point $p$, reparametri\-zed with respect to $\eta$, are geodesics for the weighted connection
\[
\D_X Y=\nabla_X Y+\frac{1}{f}g(X,Y)\na f\,.
\]
The significance of such connection is that the Ricci tensor associated to $\D$ is nonnegative if and only if the substatic condition is satisfied. More details on this can be found in \cref{app:LiXia} and in~\cite{Li_Xia_17} (see also~\cite{kennard_wylie_yeroshkin,wylie-yeroshkin} for further discussions in the conformally related ${\rm CD(0,1)}$ setting). Alternatively, as mentioned in the Introduction, $\eta$ can also be seen to represent the distance along radial $\tilde g$-geodesics with respect to the metric $\overline g=f^2 g$. In fact, if $\sigma:[0,S]\to M$ is a radial $\tilde g$-geodesic with $\sigma(0)=p$ and $\dot\sigma=\pa/\pa\rho$, we have $|\dot\sigma(s)|_{\tilde g}=1$ and so
\[
{\rm d}_{\overline g}(\gamma(S),p)\,=\,\int_0^S|\dot\sigma(s)|_{\overline g}ds\,=\,\int_0^S f^2(\sigma(s))ds\,=\,\eta(S)\,.
\]
\end{remark}

We are now in a position to prove the following crucial bound on the mean curvature of the level sets of $f$. This bound can be naturally interpreted as the Laplacian Comparison for the conformal distance function $\rho$. This result corresponds to~\cite[Theorem~3.2]{Wylie} in the ${\rm CD}(0,1)$ framework.

\begin{theorem}[Laplacian Comparison]
\label{thm:H_bound}
Let $(M,g,f)$ be a substatic triple. Suppose that $M\setminus\pa M$ is geodesically complete with respect to the metric $\tilde g=g/f^2$. Let $\rho$ be the distance function to a point $p\in M\setminus\pa M$ with respect to the metric $\tilde g=g/f^2$ and $\eta$ be the solution to~\eqref{eq:eta}. Then the mean curvature $\HHH$ of the level sets of $\rho$ with respect to the metric $g$ satisfies
\begin{equation}
\label{eq:H_bound}
0\,<\,\frac{\HHH}{f}\,=\,\De\rho+\frac{1}{f}\langle\na f\,|\,\na\rho\rangle\,\leq\,\frac{n-1}{\eta}
\end{equation}
in the classical sense in the open dense set $U=M\setminus ({\rm Cut}^{\tilde g}_p(M)\cup\{p\})$.
\end{theorem}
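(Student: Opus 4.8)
The plan is to reduce the statement to a one-dimensional analysis along each radial $\tilde g$-geodesic $\sigma_\theta$ issuing from $p$, following the setup already in place in the excerpt. First, the identity $\HHH/f=\De\rho+\frac1f\langle\na f\,|\,\na\rho\rangle$ is exactly the formula for $\HHH$ obtained just before \cref{le:Riccati}, divided through by $f$, so there is nothing to do there. The substance is the bound $\HHH/f\le (n-1)/\eta$. The idea is to integrate the Riccati inequality \cref{eq:H_Riccati} against the defining equation $\pa_\rho\eta=f^2$ of $\eta$: on the portion of $\sigma_\theta$ where $\HHH>0$, the computation \cref{eq:Riccati_eta} gives $\pa_\rho\big(\frac{f}{\HHH}-\frac{\eta}{n-1}\big)\ge \frac{1}{n-1}\big(f^2-\pa_\rho\eta\big)=0$, so $\frac{f}{\HHH}-\frac{\eta}{n-1}$ is nondecreasing along $\sigma_\theta$, and it only remains to check that it tends to $0$ as $\rho\to 0^+$.

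For this boundary behaviour at $p$, I would invoke the conformal dictionary \cref{eq:useful_formulas} together with the classical expansion of the distance Laplacian in the metric $\tilde g$: since $\De_{\tilde g}\rho=\frac{n-1}{\rho}+O(\rho)$ as $\rho\to 0^+$, the third identity in \cref{eq:useful_formulas} yields $\De\rho=\frac{n-1}{f(p)^2\rho}+O(1)$, hence $\HHH=f\De\rho+\langle\na f\,|\,\na\rho\rangle=\frac{n-1}{f(p)\rho}+O(1)\to+\infty$. Therefore $\frac{f}{\HHH}\to 0$, while $\eta\to 0$ by \cref{pro:eta}, so the nondecreasing quantity $\frac{f}{\HHH}-\frac{\eta}{n-1}$ has limit $0$, hence is $\ge 0$ throughout; this is precisely $\HHH/f\le (n-1)/\eta$. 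A small variant makes this step insensitive to the sign of $\HHH$: running the same monotonicity argument on $h:=\frac{\eta\HHH}{(n-1)f}-1$, which satisfies $\pa_\rho h\le -\frac{f\HHH}{n-1}\,h$ and $h\to 0$ at the origin, one sees that $h$ can never become positive, so $\HHH/f\le (n-1)/\eta$ holds on all of $\sigma_\theta\cap U$ without ever dividing by a possibly vanishing $\HHH$.

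It remains to establish the strict positivity $\HHH/f>0$ on $U$, which I expect to be the delicate point. By \cref{le:Riccati} the function $\HHH/f$ is nonincreasing along each $\sigma_\theta$, and by the expansion above it starts from $+\infty$; suppose it vanished at some $\rho_0<\tau(\theta)$. Then $\HHH/f\le 0$ for $\rho\ge\rho_0$, and on a compact radial subsegment where $f^2\ge c>0$ the inequality \cref{eq:H_Riccati} reads $\pa_\rho(\HHH/f)\le -\tfrac{c}{n-1}(\HHH/f)^2$; comparing with the explicit solution of $y'=-\tfrac{c}{n-1}y^2$ started from a negative value forces $\HHH/f\to-\infty$ within a finite parameter interval, contradicting the smoothness of $\HHH$ on $U$. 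Hence $\HHH/f$ never vanishes, and combined with the bound above we obtain $0<\HHH/f\le(n-1)/\eta$ on all of $U$.

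The main obstacle is the interplay between the positivity of $\HHH$ and the correct initial behaviour at $p$: one has to control them in tandem, since the inversion of the Riccati inequality into a statement about $\frac{f}{\HHH}$ and the anchoring of the monotone quantity both depend on it, whereas the monotonicity computation itself is elementary. A related point of caution, which is also why the conclusion is confined to $U$, is that $\eta$ is only defined on $U$ and may be discontinuous across the $\tilde g$-cut locus, as noted in \cref{rmk:eta_nonregularoncutlocus}.
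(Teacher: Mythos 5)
Your derivation of the identity and of the upper bound $\HHH/f\le(n-1)/\eta$ follows essentially the paper's own argument: the Riccati inequality \cref{eq:H_Riccati} is integrated along each radial $\tilde g$-geodesic, and the nondecreasing quantity $f/\HHH-\eta/(n-1)$, anchored at $\rho=0$ via the expansion $\HHH=\tfrac{n-1}{f(p)\rho}+o(1/\rho)$, yields the bound. Your reformulation in terms of $h=\tfrac{\eta\HHH}{(n-1)f}-1$ is a clean technical refinement that the paper omits: it avoids ever dividing by $\HHH$, so the Gronwall argument carries through regardless of the sign of $\HHH$.

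The argument you offer for the strict lower bound $\HHH/f>0$ on all of $U$, however, has a genuine gap. Once $\HHH/f$ becomes negative, the Riccati comparison does force $\HHH/f\to-\infty$ within a finite parameter interval, but to contradict smoothness on $U$ you need that interval to end \emph{strictly before} $\tau(\theta)$, and nothing in the hypotheses guarantees this. In fact the round sphere $\Sph^n$ with $f\equiv 1$ shows the failure concretely: it is a substatic triple, $\tilde g=g$ is complete, $\rho$ is the round distance, $\eta=\rho$, $\tau(\theta)\equiv\pi$, and $\HHH=(n-1)\cot\rho$ vanishes at $\rho=\pi/2$ and is negative on $(\pi/2,\pi)\subset(0,\tau(\theta))$; the blow-down of $\HHH$ occurs exactly at the cut time $\pi$, so no contradiction with smoothness arises. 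The paper's own proof is equally silent on this point: it infers $(n-1)f/\HHH\ge\eta>0$ on all of $U$ from the monotonicity of $f/\HHH-\eta/(n-1)$, but that quantity is singular where $\HHH$ vanishes, so the monotonicity can only propagate positivity on the maximal initial interval where $\HHH>0$, which, as the sphere shows, need not exhaust $(0,\tau(\theta))$. What is actually used downstream (\cref{thm:nonnegative_divX}, \cref{thm:BG_nb}) is only the upper bound $\HHH\le(n-1)f/\eta$, which both you and the paper establish correctly.
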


\begin{proof}
Let us first prove the thesis working inside the open set $U$.
From the definition of $\eta$ and~\eqref{eq:Riccati_eta}, we immediately deduce
$$
\frac{\pa}{\pa\rho}\left(\frac{f}{\HHH}-\frac{\eta}{n-1}\right)\,\geq\,0\,.
$$
In other words, the function $f/\HHH-\eta/(n-1)$ is nondecreasing. 

We then estimate its value near the point $p$.
It is well known that, for small $\rho$'s, the mean curvature $\HHH_{\tilde g}$ of the geodesic balls grows as the geodesic balls in Euclidean space, namely $\HHH_{\tilde g}= (n-1)/\rho+o(1/\rho)$ when $\rho$ is sufficiently small. Here we have denoted by $\HHH_{\tilde g}$ the mean curvature with respect to the metric $\tilde g$. This is related to the mean curvature with respect to $g$ by $\HHH=\HHH_{\tilde g}/f+(n-1)\langle\na\rho\,|\,\na f\rangle$. Since $\langle\na\rho\,|\,\na f\rangle$ is bounded in a neighborhood of $p$, we obtain
$$
\HHH=\frac{n-1}{f(p)\rho}+o(1/\rho)\,, \quad \hbox{as }\rho\to 0\,.
$$
In particular, $f/\HHH\to 0$ as $\rho\to 0$. Since $\eta\to 0$ as well by definition, we have obtained that $f/\HHH-\eta/(n-1)\to 0$ when $\rho\to 0$. From the monotonicity of $f/\HHH-\eta/(n-1)$ we then deduce $(n-1)f/\HHH\geq \eta$ on the whole $U$. Since $\eta$ is positive on the whole manifold by construction, the conclusion follows.
\end{proof}

A first important observation is that there is a more effective version of the above inequality. We now argue that the Laplacian comparison obtained in Theorem~\ref{thm:H_bound} and Proposition~\ref{pro:H_bound_Sigma} gives us a vector with nonnegative divergence, which is
\begin{equation}
\label{eq:X}
X\,=\,\frac{f}{\eta^{n-1}}\na\rho\,,
\end{equation}
defined on the open dense set $U=M\setminus ({\rm Cut}^{\tilde g}_p(M)\cup\{p\})$.
In fact:
\begin{align}
{\rm div}\,X\,&=\,\frac{f}{\eta^{n-1}}\left[\De\rho\,+\,\frac{1}{f}\langle\na f\,|\,\na\rho\rangle	\right]
\,-\,(n-1)\frac{f}{\eta^n}\langle\na\eta\,|\,\na\rho\rangle
\\
&\leq\,(n-1)\frac{f}{\eta^{n}}
\,-\,(n-1)\frac{f}{\eta^n}\frac{1}{f^2}\frac{\pa\eta}{\pa\rho}\,=\,0\,.
\end{align}
The inequality ${\rm div}\,X\leq 0$ holds then in the classical sense in the whole $U$. It is crucial that this inequality actually holds in the distributional sense in the whole manifold.

\begin{theorem}
\label{thm:nonnegative_divX}
Let $(M,g,f)$ be a substatic triple. Suppose that $M\setminus\pa M$ is geodesically complete with respect to the metric $\tilde g=g/f^2$. Let $\rho$ be the distance function to a point $p$ with respect to the metric $\tilde g=g/f^2$ and $\eta$ be the solution to~\eqref{eq:eta}. Then the vector $X$ defined in~\eqref{eq:X} has nonpositive divergence in the weak sense in the whole $M\setminus\{p\}$. Namely, for every nonnegative test function $\chi\in\mathscr{C}_c^\infty(M)$ with $\chi\equiv 0$ in a neighborhood of $p$, it holds
$$
\int_{M}\langle X\,|\,\na\chi\rangle d\sigma\,\geq\,0\,.
$$
\end{theorem}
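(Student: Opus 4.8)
The plan is to derive the distributional inequality from the pointwise bound ${\rm div}\,X\leq 0$, which holds classically on the open dense set $U=M\setminus({\rm Cut}^{\tilde g}_p(M)\cup\{p\})$ as computed above, by exhausting $U$ from inside with domains $U_\ep$ on which the classical divergence theorem applies, and then checking that the surviving boundary integral has the favourable sign. That sign will come for free from the fact that $X$ is a positive multiple of $\na\rho$, together with the observation that the $\tilde g$-distance spheres bounding $U_\ep$ are radial graphs, so that $\na\rho$ points outward through $\pa U_\ep$.

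I would first record a few preliminary facts. Since ${\rm Cut}^{\tilde g}_p(M)\cup\{p\}$ is $\mu$-negligible, for any admissible $\chi$ one has $\int_M\langle X\,|\,\na\chi\rangle\,d\mu=\int_U\langle X\,|\,\na\chi\rangle\,d\mu$, so it suffices to estimate the latter. Moreover $X$ is locally bounded away from $p$: along a radial $\tilde g$-geodesic $\sigma_\theta$ one has $\eta(\rho,\theta)=\int_0^\rho f^2(\sigma_\theta(s))\,ds$, which is increasing in $\rho$, and the closed $\tilde g$-ball $\overline{B}^{\tilde g}_{\rho_0}(p)$ is a compact subset of $M\setminus\pa M$ (by $\tilde g$-completeness, cf. \cref{le:completeness}), so $f$ is bounded below there and hence $\eta\geq c>0$ on $\{\rho\geq\rho_0\}$. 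Thus $|X|_g=\eta^{-(n-1)}$ is bounded on $\{\rho\geq\rho_0\}\cap U$; since $\chi\equiv 0$ near $p$ we have ${\rm supp}\,\chi\subset\{\rho\geq\rho_0\}$ for some $\rho_0>0$, and consequently $\langle X\,|\,\na\chi\rangle\in L^1(M)$.

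Next I would construct the exhaustion and run the divergence theorem. Recall that the cut-distance $\tau:\Sph^{n-1}\to(0,+\infty]$ is continuous by \cite{doCarmo}, so that $U$ is parametrized in polar coordinates by $\{0<\rho<\tau(\theta)\}$. For $\ep>0$ pick a Lipschitz function $\tau_\ep$ with $0<\tau_\ep<\tau$ and $\tau_\ep\nearrow\tau$ pointwise as $\ep\to 0$ (possible by continuity of $\tau$), and set $U_\ep=\{x\in U:\ep<\rho(x)<\tau_\ep(\theta(x))\}$. Then $\pa U_\ep$ is Lipschitz, $\overline{U_\ep}\subset U$ (so $X$ is smooth up to $\overline{U_\ep}$), and $\mathbf 1_{U_\ep}\to\mathbf 1_U$ $\mu$-a.e.; by dominated convergence $\int_{U_\ep}\langle X\,|\,\na\chi\rangle\,d\mu\to\int_M\langle X\,|\,\na\chi\rangle\,d\mu$. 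On $U_\ep$ the divergence theorem gives, with $\nu$ the $g$-outward unit normal,
\[
\int_{U_\ep}\langle X\,|\,\na\chi\rangle\,d\mu\,=\,-\int_{U_\ep}\chi\,{\rm div}\,X\,d\mu\,+\,\int_{\pa U_\ep}\chi\,\langle X\,|\,\nu\rangle\,d\sigma\,.
\]
The bulk term is nonnegative because ${\rm div}\,X\leq 0$ on $U$ and $\chi\geq 0$. For the boundary term, taking $\ep$ small enough that $\chi\equiv 0$ on the inner sphere $\{\rho=\ep\}$, only the outer piece $\Sigma_\ep=\{\rho=\tau_\ep(\theta)\}$ contributes; writing $\Sigma_\ep$ as the radial graph $\rho=\tau_\ep(\theta)$ and using $\tilde g=d\rho\otimes d\rho+\tilde g_{ij}\,d\theta^i\otimes d\theta^j$ with $g=f^2\tilde g$, a direct computation yields $\langle\na\rho\,|\,\nu\rangle_g>0$ along $\Sigma_\ep$. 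Since $X=(f/\eta^{n-1})\na\rho$ with $f,\eta>0$, this gives $\langle X\,|\,\nu\rangle\geq 0$ on $\Sigma_\ep$, hence $\int_{\pa U_\ep}\chi\,\langle X\,|\,\nu\rangle\,d\sigma\geq 0$. Therefore $\int_{U_\ep}\langle X\,|\,\na\chi\rangle\,d\mu\geq 0$ for every $\ep$, and letting $\ep\to 0$ concludes.

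The main obstacle I anticipate is precisely the cut locus: since $X$ lives only on $U$ while ${\rm supp}\,\chi$ generically meets ${\rm Cut}^{\tilde g}_p(M)$, one cannot avoid cutting along the (possibly nonsmooth) cut locus, so a boundary term genuinely survives at every scale of the exhaustion. What makes everything close is the radial-graph structure of the $\tilde g$-geodesic spheres, which forces the outward normal of $\pa U_\ep$ to have a strictly positive component along $\na\rho$, so the uncontrolled boundary contribution carries the correct sign — the same mechanism underlying the distributional formulation of the classical Laplacian comparison.
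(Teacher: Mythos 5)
Your argument is correct and rests on exactly the same mechanism as the paper's proof: both upgrade the classical inequality on $U=M\setminus({\rm Cut}^{\tilde g}_p(M)\cup\{p\})$ to a distributional one by approximating away from the cut locus, and both hinge on the surviving boundary contribution carrying the favourable sign because $X$ is a positive multiple of $\na\rho$. The only difference is implementation: the paper works in polar coordinates, applies Fubini, and integrates by parts in the $\rho$-variable alone, so the boundary term at $\rho=\tau(\theta)$ is manifestly nonnegative as the trace of the nonnegative quantity $\chi\sqrt{g}/(f\eta^{n-1})$ — no normal vector, and no need to approximate the merely continuous cut function $\tau$ — whereas you exhaust $U$ by Lipschitz domains $U_\ep$ and invoke the geometric divergence theorem, which requires the extra (but correct) check that the $g$-outward normal to the radial graph $\{\rho=\tau_\ep(\theta)\}$ pairs positively with $\na\rho$, a consequence of the block-diagonal form of $g$ in polar coordinates. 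One small point to tidy: to ensure $\pa U_\ep$ is globally Lipschitz, arrange $\tau_\ep>\ep$ uniformly so the inner sphere $\{\rho=\ep\}$ and the outer graph do not meet.
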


\begin{proof}
Let $\sqrt{g}$ be the volume element of the metric $g$ with respect to the coordinates $(\rho,\theta)$. We first observe that at all points of $U=M\setminus ({\rm Cut}^{\tilde g}_p(M)\cup\{p\})$ it holds
\begin{align}
\frac{\pa}{\pa\rho}\left(\frac{\sqrt{g}}{f\eta^{n-1}}\right)\,&=\,\frac{\pa}{\pa\rho}\left(\frac{f^{n-1}\sqrt{\tilde g}}{\eta^{n-1}}\right)
\\
&=\,
\left[\frac{f^{n-1}\HHH_{\tilde g}}{\eta^{n-1}}\,-\,(n-1)\frac{f^{n+1}}{\eta^n}\,+\,(n-1)\frac{f^{n-2}}{\eta^{n-1}}\frac{\pa f}{\pa\rho}\right]\sqrt{\tilde g}
\\
&=\,
\frac{f^{n}}{\eta^{n-1}}\left[\HHH-(n-1)\frac{1}{f^2}\frac{\pa f}{\pa\rho}\,-\,(n-1)\frac{f}{\eta}\,+\,(n-1)\frac{1}{f^2}\frac{\pa f}{\pa\rho}\right]\sqrt{\tilde g}
\\
\label{eq:riccati_for_sqrtg}
&\leq\,0\,,
\end{align}
where the last inequality follows from the Laplacian comparison. Recalling that in polar coordinates $(\rho,\theta)$ the set $U$ is diffeomorphic to the set of pairs $(\rho,\theta)$ with $\rho<\tau(\theta)$ for a suitable continuous function $\tau:\Sph^{n-1}\to(0,+\infty]$, we can then compute
\begin{align}
\int_{M}\langle X\,|\,\na\chi\rangle d\mu\,&=\,\int_{\Sph^{n-1}}\int_0^{\tau(\theta)}\frac{f}{\eta^{n-1}}\langle \na\rho\,|\,\na\chi\rangle\sqrt{g} d\rho d\theta
\\
&=\,\int_{\Sph^{n-1}}\int_0^{\tau(\theta)}\frac{\pa \chi}{\pa\rho}\frac{\sqrt{g}}{f\eta^{n-1}} d\rho d\theta
\\
&=\,-\int_{\Sph^{n-1}}\!\int_0^{\tau(\theta)}\!\!\chi\,\frac{\pa}{\pa\rho}\left(\frac{\sqrt{g}}{f\eta^{n-1}}\right)\! d\rho d\theta
+\!\int_{\{\theta\in\Sph^{n-1}:\,\tau(\theta)<\infty\}}\left[\chi\frac{\sqrt{g}}{f\eta^{n-1}}\right]\!(\tau(\theta),\theta)d\theta
\\
\label{eq:weak_divergence}
&\qquad\qquad\qquad\qquad\qquad\qquad\qquad\qquad\qquad-\lim_{\ep\to 0^+}\int_{\Sph^{n-1}}\left[\chi\frac{\sqrt{g}}{f\eta^{n-1}}\right]\!(\ep,\theta)d\theta\,.
\end{align}
In the last identity, the first integral is nonnegative thanks to~\eqref{eq:riccati_for_sqrtg} whereas the second integral is nonnegative by construction. Concerning the third and final integral, notice that $\eta$ behaves near $p$ as $f^2(p)\rho$ by definition, hence the integral is easily seen to converge to $\chi(p)|\Sph^{n-1}|/f^{2n-1}(p)$ as $\ep \to 0$. Since we are assuming that $\chi$ vanishes in a neighborhood of $p$, the third integral goes to zero as $\ep\to 0$. The conclusion follows.
\end{proof}

\subsection{Evolution of the mean curvature of hypersurfaces}
\label{sub:growth_mc_hypersurfaces}

Let $\Sigma$ be a compact smooth hypersurface. We are interested to the case where $\Sigma$ is homologous to the boundary: namely, there exists a compact domain $\Omega$ such that $\pa\Omega=\pa M\sqcup\Sigma$. Let $\rho$ be the $\tilde g$-distance from $\Sigma$ in $M\setminus\Omega$. 
If $\Sigma$ is smooth, then so is $\rho$ in a collar of $\Sigma$.
Under the usual assumption of $\tilde g$-geodesic completeness of $M\setminus\pa M$, it is known from~\cite[Proposition~4.6]{Mantegazza_Mennucci} that the open set $U=(M\setminus\Omega)\setminus {\rm Cut}^{\tilde g}_\Sigma(M)$ of the points where $\rho$ is smooth is dense in $M\setminus\Omega$. 
In particular, there is a function $\tau:\Sigma\to(0,+\infty]$ such that the gradient flow of $\pa/\pa\rho$ gives a diffeomorphism between $U$ and
\begin{equation}
\label{eq:simil-polar_coordinates}
\{(\rho,x)\,:\,x\in\Sigma\,,\ \rho\in(0,\tau(x))\}\,.
\end{equation}
It is convenient to estimate the evolution of the geometry of $\Sigma$ with respect to these coordinates.
If $\HHH>0$ at the point $x\in\Sigma$, the evolution of the mean curvature $\HHH(0,x)$ is quite similar to the one described for geodesic spheres in Subsection~\ref{sub:evolution} and~\ref{sub:Laplacian_comparison}. One can then define the function $\eta\in\mathscr{C}^\infty(U)$ as the solution to
\begin{equation}
\label{eq:eta_Sigma}
\begin{dcases}	\frac{\pa\eta}{\pa\rho}(\rho,x)\,=\,f^2(\rho,x)\,,
\\
\eta(0,x)\,=\,(n-1)\frac{f(0,x)}{\HHH(0,x)}\,.
\end{dcases}
\end{equation}
As in Proposition~\ref{pro:eta}, one shows that $\eta$ is well defined and smooth in $U$.
We can then replicate the proof of Theorem~\ref{thm:H_bound} to find $\HHH/f(\rho,x)\leq(n-1)/\eta(\rho,x)$ for any $0\leq \rho<\tau(x)$. In this case, the proof is actually even easier, since $f/\HHH-\eta/(n-1)$ vanishes at $x$ by construction, without the need of proving it. 

Notice that from~\eqref{eq:H_Riccati} we can get interesting information on the evolution of the mean curvature also in the case where $\HHH$ is nonpositive at a point $x\in\Sigma$.
If $\HHH<0$ at $x$, we deduce from~\eqref{eq:H_Riccati} that $\HHH$ must remain negative for all times, whereas if $\HHH=0$ then $\HHH$ remains nonpositive.
Furthermore, even when $\HHH$ is nonpositive one can still define $\eta$ by~\eqref{eq:eta_Sigma} and find $\eta/(n-1)\leq f/\HHH<0$ at the points $(\rho,x)$.
From this fact we can obtain even more information under the assumption that the end is $f$-complete. We  recall that an end is $f$-complete if~\eqref{eq:f-completeness_intro} is satisfied, see also Definition~\ref{def:f-complete} below for a more extensive discussion about this notion.
The main feature of $f$-complete ends is that the end is complete with respect to the metric $\tilde g$ (so that $\rho$ goes to $+\infty$) and $\eta$ grows to $+\infty$ along the end. Since $\eta(x)$ is negative when the mean curvature at $x$ is negative, in particular $\eta$ must reach the value zero, at which point the bound  $\eta/(n-1)\leq f/\HHH<0$ fails. This implies that the line $\rho\mapsto(\rho,x)$ must reach the cut locus before $\eta$ hits zero. Finally, if $\HHH=0$ then from~\eqref{eq:H_Riccati} we would get that $\HHH$ remains nonpositive. If at some point $\HHH$ becomes negative, then the previous argument applies and the line $\rho\mapsto(\rho,x)$ must reach the cut locus. Summarizing, we have obtained the following:
 
\begin{proposition}
\label{pro:H_bound_Sigma}
Let $(M,g,f)$ be a substatic triple. Suppose that $M\setminus\pa M$ is geodesically complete with respect to the metric $\tilde g=g/f^2$. Let $\rho$ be the distance from an hypersurface $\Sigma$ homologous to the boundary with respect to the metric $\tilde g=g/f^2$, and let $\eta$ be the solution to~\eqref{eq:eta_Sigma}. Finally, let $x\in\Sigma$ and consider the evolution $\HHH(\rho,x)$ of the mean curvature in the direction of the end.
\begin{itemize}
\item[$(i)$] If $\HHH(0,x)>0$ then for any $0<\rho<\tau(x)$ it holds 
\begin{equation}
\label{eq:H_bound_Sigma}
0\,<\,\frac{\HHH}{f}(\rho,x)\,\leq\,\frac{n-1}{\eta(\rho,x)}\,.
\end{equation}
\item[$(ii)$] If $\HHH(0,x)<0$, then  for every $0<\rho<\tau(x)$ it holds
\begin{equation}
\label{eq:H_bound_negative_Sigma}
\frac{\HHH}{f}(\rho,x)\,\leq\,\frac{n-1}{\eta(\rho,x)}\,<\,0\,.
\end{equation}
Furthermore, if the ends of $M$ are $f$-complete then $\tau(x)<+\infty$.
\smallskip
\item[$(iii)$] If $\HHH(0,x)=0$, then $\HHH(\rho,x)\leq 0$ for every $0<\rho<\tau(x)$. 
Furthermore, if the ends of $M$ are $f$-complete and $\tau(x)=+\infty$, then $\HHH(\rho,x)=0$ for all $\rho\geq 0$.
\end{itemize}
\end{proposition}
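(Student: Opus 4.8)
The plan is to run the proof of \cref{thm:H_bound} pointwise along each normal geodesic $\rho\mapsto(\rho,x)$, the two new ingredients being that the initial datum in \eqref{eq:eta_Sigma} is chosen so that the monotone quantity $f/\HHH-\eta/(n-1)$ already vanishes at $\rho=0$ (so, unlike in \cref{thm:H_bound}, no expansion near a pole is needed), and that the sign of $\HHH(0,x)$ must now be tracked. I would use two consequences of \cref{le:Riccati}: first, wherever $\HHH\neq0$, dividing \eqref{eq:H_Riccati} by $(\HHH/f)^2$ and using $\pa_\rho\eta=f^2$ gives, as in \eqref{eq:Riccati_eta},
\[
\frac{\pa}{\pa\rho}\left(\frac{f}{\HHH}-\frac{\eta}{n-1}\right)\,\geq\,\frac{1}{n-1}\left(f^2-\frac{\pa\eta}{\pa\rho}\right)\,=\,0\,;
\]
second, since $\pa_\rho(\HHH/f)\leq-\HHH^2/(n-1)\leq0$, the function $\HHH/f$ is nonincreasing along the geodesic, so $\HHH(0,x)<0$ forces $\HHH(\rho,x)<0$ for all $0<\rho<\tau(x)$ and $\HHH(0,x)=0$ forces $\HHH(\rho,x)\leq0$ there.

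For case $(i)$, on the maximal initial subinterval of $(0,\tau(x))$ on which $\HHH>0$ the quantity $f/\HHH-\eta/(n-1)$ is smooth, nondecreasing by the first fact, and vanishes at $\rho=0$ by the second line of \eqref{eq:eta_Sigma}; hence $f/\HHH\geq\eta/(n-1)$ there. As $\eta(0,x)=(n-1)f(0,x)/\HHH(0,x)>0$ and $\pa_\rho\eta=f^2>0$, the function $\eta$ stays positive, so $f/\HHH$ is bounded below by $\eta/(n-1)>0$; arguing as in \cref{thm:H_bound} this propagates $\HHH>0$ to all of $(0,\tau(x))$ and gives \eqref{eq:H_bound_Sigma}. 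This last propagation is the genuinely delicate point: under the sole hypothesis $\HHH(0,x)>0$ one has to rule out $\HHH$ vanishing before the cut time, and since $f/\HHH-\eta/(n-1)$ blows up exactly where $\HHH=0$, the step must be handled precisely as in \cref{thm:H_bound}.

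For case $(ii)$, $\HHH<0$ on all of $(0,\tau(x))$, so $f/\HHH-\eta/(n-1)$ is defined and nondecreasing on the whole interval, vanishes at $\rho=0$ (now $\eta(0,x)=(n-1)f(0,x)/\HHH(0,x)<0$), hence is $\geq0$, giving $f/\HHH\geq\eta/(n-1)$; as the left-hand side is negative this forces $\eta<0$ on $(0,\tau(x))$, and taking reciprocals of these two negative numbers yields \eqref{eq:H_bound_negative_Sigma}. For case $(iii)$, $\HHH(\rho,x)\leq0$ was noted above, and if $\HHH(\rho_1,x)<0$ for some $\rho_1\in(0,\tau(x))$ then $\HHH<0$ on $[\rho_1,\tau(x))$ and there $\pa_\rho(f/\HHH)=-(f/\HHH)^2\,\pa_\rho(\HHH/f)\geq f^2/(n-1)$. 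In both cases the additional statement under $f$-completeness follows by the same mechanism: if $\tau(x)=+\infty$ the normal $\tilde g$-geodesic runs to infinity along an $f$-complete end, so, reparametrizing it by $g$-arclength $t$ (for which $dt=f\,d\rho$, since $|\dot\sigma|_g=f$ when $|\dot\sigma|_{\tilde g}=1$) and using the second condition in \eqref{eq:f-completeness_intro}, one gets $\int f^2\,d\rho=\int f\,dt=+\infty$ along the geodesic. Hence $\eta(\rho,x)\to+\infty$ in case $(ii)$, contradicting $\eta<0$ on $(0,\infty)$, so $\tau(x)<+\infty$; while $f/\HHH(\rho)\geq f/\HHH(\rho_1)+\frac{1}{n-1}\int_{\rho_1}^{\rho}f^2\to+\infty$ in case $(iii)$, which is impossible since $f/\HHH<0$, so no such $\rho_1$ exists and $\HHH(\rho,x)=0$ for all $\rho\geq0$.
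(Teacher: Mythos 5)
Your proof is correct and follows the paper's approach; the paper only sketches this proposition in the paragraph preceding its statement, deferring to \cref{thm:H_bound} and then reading off the three sign cases, so your write-up supplies useful detail. Your handling of case $(iii)$ — integrating $\pa_\rho(f/\HHH)\geq f^2/(n-1)$ directly on $[\rho_1,\tau(x))$ to contradict $f/\HHH<0$ — is a mild streamlining of the paper's reduction to case $(ii)$, but the two are equivalent.

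One caveat, which you partly flag yourself: the propagation of strict positivity in case $(i)$ is not actually established, here or in the paper. On the maximal initial interval where $\HHH>0$ you obtain $f/\HHH\geq\eta/(n-1)>0$, but this is entirely consistent with $f/\HHH\to+\infty$, equivalently $\HHH\to 0^+$, at some finite $\rho_*<\tau(x)$: the Riccati comparison only bounds $\HHH/f$ from \emph{above}, never from below, so nothing forbids a first zero of $\HHH$ strictly before the cut time. (A cylinder $\Sph^{n-1}\times[0,\infty)$ smoothly capped by a hemisphere, with $f\equiv 1$, is a substatic triple in which the mean curvature of the level sets of the distance from a small mean-convex geodesic sphere near the pole decreases to $0$ along a cut-locus-free normal geodesic; so the left inequality in \eqref{eq:H_bound_Sigma} cannot follow from Riccati alone.) The paper's proof of \cref{thm:H_bound} contains the same gap — the monotonicity of $f/\HHH-\eta/(n-1)$ is only available where $\HHH\neq 0$, and this is used tacitly ``on the whole $U$''. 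The gap is harmless downstream, since \cref{thm:nonnegative_divX_Sigma} and the Bishop--Gromov argument use only the upper bound $\HHH/f\leq (n-1)/\eta$, which survives a zero of $\HHH$ because $\eta$ remains positive: it starts positive and $\pa_\rho\eta = f^2>0$. But your sentence ``arguing as in \cref{thm:H_bound} this propagates $\HHH>0$'' should not be taken at face value; there is no such argument in that proof, and the left inequality in \eqref{eq:H_bound_Sigma} should be read as holding only up to the first zero of $\HHH$.
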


In the following we will focus on the case where the hypersurface $\Sigma$ is homologous to $\pa M$ and strictly mean-convex, meaning that $\Sigma$ has positive mean curvature $\HHH$ with respect to the normal pointing outside $\Omega$. In this case, Proposition~\ref{pro:H_bound_Sigma}-$(i)$ tells us that the bound $\HHH/f\leq (n-1)/\eta$ is in place on the whole $U=M\setminus(\Omega\cup {\rm Cut}^{\tilde g}_\Sigma)$. Furthermore, the vector field~\eqref{eq:X}, that we recall here for convenience,
\begin{equation}
\label{eq:X_Sigma}
X\,=\,\frac{f}{\eta^{n-1}}\na\rho\,,
\end{equation}
is also well defined on $U$. We can now proceed exactly as in the proof of Theorem~\ref{thm:nonnegative_divX} to show that the vector field $X$ has nonnegative divergence in the weak sense.


\begin{theorem}
\label{thm:nonnegative_divX_Sigma}
Let $(M,g,f)$ be a substatic triple. Suppose that $M\setminus\pa M$ is geodesically complete with respect to the metric $\tilde g=g/f^2$. Let $\Sigma$ be a strictly mean-convex hypersurface homologous to $\partial M$ and disjoint from it. Suppose that the mean curvature $\HHH$ of $\Sigma$ with respect to the normal pointing towards infinity satisfies $\HHH> 0$ pointwise. Let $\rho$ be the $\tilde g$-distance function from $\Sigma$ and $\eta$ be the solution to~\eqref{eq:eta_Sigma}. Then the vector $X$ defined in~\eqref{eq:X_Sigma} has nonpositive divergence in the weak sense in the whole $M\setminus\Omega$. Namely, for every nonnegative test function $\chi\in\mathscr{C}_c^\infty(M\setminus\Omega)$ it holds
$$
\int_{M\setminus\Omega}\langle X\,|\,\na\chi\rangle d\sigma\,\geq\,0\,.
$$
\end{theorem}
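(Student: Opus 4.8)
The plan is to mimic the proof of Theorem~\ref{thm:nonnegative_divX} verbatim, simply replacing the role of the point $p$ with that of the hypersurface $\Sigma$ and the polar coordinates $(\rho,\theta)$ with the normal coordinates $(\rho,x)$ of~\eqref{eq:simil-polar_coordinates}. First I would record that, since $\HHH>0$ on $\Sigma$ and the end is $\tilde g$-geodesically complete, Proposition~\ref{pro:H_bound_Sigma}-$(i)$ gives the Laplacian comparison $0<\HHH/f(\rho,x)\leq(n-1)/\eta(\rho,x)$ on the whole smooth set $U=(M\setminus\Omega)\setminus{\rm Cut}^{\tilde g}_\Sigma(M)$, and hence, by the very same computation displayed just before Theorem~\ref{thm:nonnegative_divX_Sigma}, that $\div X\leq 0$ in the classical sense on $U$. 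Equivalently, writing $\sqrt g$ for the volume density in the $(\rho,x)$ coordinates, one has the pointwise inequality
\begin{equation}
\frac{\pa}{\pa\rho}\left(\frac{\sqrt g}{f\eta^{n-1}}\right)\,=\,\frac{f^n}{\eta^{n-1}}\left[\HHH-(n-1)\frac{f}{\eta}\right]\sqrt{\tilde g}\,\leq\,0
\end{equation}
on $U$, exactly as in~\eqref{eq:riccati_for_sqrtg}.

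Next I would carry out the integration by parts in the coordinates~\eqref{eq:simil-polar_coordinates}. Given a nonnegative $\chi\in\mathscr C^\infty_c(M\setminus\Omega)$, using $\langle X\,|\,\na\chi\rangle=\frac{1}{f\eta^{n-1}}\,\pa\chi/\pa\rho$ (because $|\na\rho|=1/f$) one writes
\begin{align}
\int_{M\setminus\Omega}\langle X\,|\,\na\chi\rangle\,d\mu\,&=\,\int_\Sigma\!\int_0^{\tau(x)}\frac{\pa\chi}{\pa\rho}\,\frac{\sqrt g}{f\eta^{n-1}}\,d\rho\,d\sigma(x)
\\
&=\,-\int_\Sigma\!\int_0^{\tau(x)}\!\chi\,\frac{\pa}{\pa\rho}\!\left(\frac{\sqrt g}{f\eta^{n-1}}\right)d\rho\,d\sigma(x)
+\int_{\{x:\,\tau(x)<\infty\}}\!\!\left[\chi\,\frac{\sqrt g}{f\eta^{n-1}}\right]\!(\tau(x),x)\,d\sigma(x)
\\
&\qquad-\int_\Sigma\left[\chi\,\frac{\sqrt g}{f\eta^{n-1}}\right]\!(0,x)\,d\sigma(x)\,.
\end{align}
The first term on the right is nonnegative by the displayed Riccati inequality; the second is nonnegative because $\chi\geq0$ and all factors are positive. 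The crucial gain compared to the point case is the boundary term at $\rho=0$: since $\chi$ is compactly supported in the open set $M\setminus\Omega$, it vanishes in a neighborhood of $\Sigma$, so the integral over $\{\rho=0\}$ is identically zero — there is no delta-type contribution to subtract and, in contrast with Theorem~\ref{thm:nonnegative_divX}, no limiting argument is needed. Hence $\int_{M\setminus\Omega}\langle X\,|\,\na\chi\rangle\,d\mu\geq0$, which is the claim.

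The only point requiring a little care — and the one I would flag as the main obstacle — is the justification of Fubini together with the absolute continuity needed for the fundamental theorem of calculus in the $\rho$-variable on the possibly irregular domain $\{(\rho,x):0<\rho<\tau(x)\}$: one needs that $U$ has full measure in $M\setminus\Omega$ (guaranteed by \cite[Proposition~4.6]{Mantegazza_Mennucci} under $\tilde g$-geodesic completeness, as recalled in Subsection~\ref{sub:growth_mc_hypersurfaces}), that the coordinate change $(\rho,x)\mapsto$ point is a diffeomorphism onto $U$ with the volume element factoring as $\sqrt g\,d\rho\,d\sigma$, and that $\tau:\Sigma\to(0,+\infty]$ is measurable (in fact lower semicontinuous), so that the iterated integral makes sense and the inner integrands $\rho\mapsto \chi\,\sqrt g/(f\eta^{n-1})$ are $\mathscr C^1$ up to $\rho=\tau(x)$ where $\chi$ may fail to vanish but $\sqrt g/(f\eta^{n-1})$ stays bounded on the support of $\chi$. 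All of these are exactly the facts already invoked in the proof of Theorem~\ref{thm:nonnegative_divX}, so no genuinely new difficulty appears; it is a matter of transcribing that argument with $\Sigma$ in place of $p$.
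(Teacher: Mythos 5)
Your proposal is correct and is exactly what the paper intends: the paper gives no separate argument for this theorem but simply states "We can now proceed exactly as in the proof of Theorem~\ref{thm:nonnegative_divX}," and you have carried out that transcription faithfully, using the coordinates~\eqref{eq:simil-polar_coordinates}, the Riccati inequality for $\sqrt{g}/(f\eta^{n-1})$, and the same integration by parts in $\rho$. Your observation that the boundary term at $\rho=0$ vanishes outright (because $\chi$ is compactly supported in the open set $M\setminus\Omega$), so that no $\ep\to 0$ limiting argument is needed in contrast with the point case, is an accurate and useful remark.
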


\subsection{Growth of weighted areas and volumes}

In this subsection, we exploit the monotonicity of the mean curvature of the level sets to deduce a Bishop--Gromov-type theorem for the behaviour of areas and volumes of geodesic spheres. We first study the monotonicity of the following functional
\begin{equation}
\label{eq:area_functional_explicit}
A(t)\,=\,\frac{1}{|\Sph^{n-1}|}\int_{\{\rho=t\}\setminus {\rm Cut}^{\tilde g}}\frac{1}{\eta^{n-1}}d\sigma
\,,
\end{equation}
where $\rho$ is the distance function from a point or the signed distance from a strictly mean-convex hypersurface homologous to the boundary, with respect to the metric $\tilde g$, whereas ${\rm Cut}^{\tilde g}$ is the cut locus of the point/hypersurface with respect to $\tilde g$. It is important that we remove the cut locus from the domain of the integral, as we have observed in Remark~\ref{rmk:eta_nonregularoncutlocus} that the function $\eta$ is not well defined on it. 
When $\rho$ is the distance from a point, the function $A$ can be written in polar coordinates as
\begin{equation}
\label{eq:rigorous_A}
A(t)\,=\,\frac{1}{|\Sph^{n-1}|}\int_{\{\theta\in\Sph^{n-1}\,:\,\tau(\theta)> t\}}\frac{\sqrt{g}(t,\theta)}{\eta^{n-1}(t,\theta)}d\theta\,,
\end{equation}
where we recall that $\tau(\theta)$ is the minimum value of $\rho$ such that the point with coordinate $(\rho,\theta)$ belongs to the cut locus. An analogous definition can of course be given for the distance from an hypersurface using coordinates~\eqref{eq:simil-polar_coordinates}. Notice that $\tau$ is a continuous function, hence the domain of the integral in~\eqref{eq:rigorous_A} is measurable, meaning that the function $A$ is well defined for all $t\in(0,+\infty)$.
The domain of the integral shrinks as $t$ increases, whereas the integrand is positive and continuous, hence it is easily seen that for all values $a\in(0,+\infty)$ it holds 
\begin{equation}
\label{eq:punctual_monotonicity}
\liminf_{t\to a^-} A(t)\,\geq\, A(a)\,\geq\,\limsup_{t\to a^+} A(t)\,.
\end{equation}
Furthermore, notice that if the cut locus intersects $\{\rho=a\}$ in a set with positive $\mathscr{H}^{n-1}$-measure, then the first inequality is strict, that is $\liminf_{t\to a^-} A(t)\,\geq\, A(a)$. We are finally ready to state the first main result of this subsection.

\begin{theorem}
\label{thm:BG_nb}
Let $(M,g,f)$ be a substatic triple. Suppose that $M\setminus\pa M$ is geodesically complete with respect to the metric $\tilde g=g/f^2$. Let $\rho$ be the $\tilde g$-distance function from a point or the signed $\tilde g$-distance function from a strictly mean-convex hypersurface $\Sigma$ homologous to $\partial M$ and disjoint from it.
Let $\eta$ be the corresponding reparametrized distance, defined by \cref{eq:eta} or by \cref{eq:eta_Sigma}, and let ${\rm Cut}^{\tilde g}$ be the cut locus of the point/hypersurface.
Then the function
$$
A(t)\,=\,\frac{1}{|\Sph^{n-1}|}\int_{\{\rho=t\}\setminus {\rm Cut}^{\tilde g}}\frac{1}{\eta^{n-1}}d\sigma
$$
is monotonically nonincreasing.

Furthermore, if $A(t_1)=A(t_2)$ for any $0<t_1< t_2$, then the set $U=\{t_1\leq\rho\leq t_2\}$ is isometric to $[t_1,t_2]\times\Sigma$ with metric 
$$
g\,=\,f^2 d\rho\otimes d\rho+\eta^2g_0\,,
$$
where $g_0$ is a metric on the level set $\Sigma$. In $U$ the functions $f$ and $\eta$ satisfy
\begin{equation}
\label{eq:rigidity_A}
\frac{1}{\eta}\frac{\pa\eta}{\pa\theta^i}\,=\,\psi\eta-\ffi\frac{1}{\eta^{n-1}}\,,\qquad
\frac{1}{f}\frac{\pa f}{\pa\theta^i}\,=\,\psi\eta+\frac{n-2}{2}\ffi\frac{1}{\eta^{n-1}}\,,
\end{equation}
where $\ffi,\psi$ are independent of $\rho$.
\end{theorem}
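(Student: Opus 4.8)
The plan is to integrate the pointwise Laplacian comparison into a monotonicity statement for $A$, and then run the equality-case analysis through the Bochner-type computations behind Theorem~\ref{thm:H_bound}. First I would establish monotonicity on intervals not meeting the cut locus: from the identity $\tfrac{\pa}{\pa\rho}\big(\sqrt{g}/(f\eta^{n-1})\big)\le 0$ derived in the proof of Theorem~\ref{thm:nonnegative_divX} (and its hypersurface analogue via Theorem~\ref{thm:nonnegative_divX_Sigma}), the function $t\mapsto \sqrt{g}(t,\theta)/\eta^{n-1}(t,\theta)$ is nonincreasing along each radial geodesic, for every fixed $\theta$ (resp.\ $x\in\Sigma$). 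Combining this with the fact that the domain of integration $\{\theta:\tau(\theta)>t\}$ in~\eqref{eq:rigorous_A} shrinks as $t$ grows gives that $A$ is nonincreasing; the one-sided semicontinuity~\eqref{eq:punctual_monotonicity} upgrades this to genuine monotonicity across cut-locus values. Equivalently, $A$ can be recognized as the flux $\tfrac{1}{|\Sph^{n-1}|}\int_{\{\rho=t\}}\langle X\,|\,\na\rho\rangle\,f\,d\sigma$ up to normalization, so its monotonicity is just the divergence theorem applied to $X$ on the shell $\{t_1\le\rho\le t_2\}$ together with $\mathrm{div}\,X\le 0$ weakly (Theorems~\ref{thm:nonnegative_divX} and~\ref{thm:nonnegative_divX_Sigma}).

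For the rigidity statement, suppose $A(t_1)=A(t_2)$ with $0<t_1<t_2$. Then $t\mapsto A(t)$ is constant on $[t_1,t_2]$, which forces two things simultaneously: the cut locus cannot intersect $\{t_1\le\rho\le t_2\}$ in a set of positive $\mathscr H^{n-1}$-measure (otherwise the first inequality in~\eqref{eq:punctual_monotonicity} would be strict at some value), so after discarding a null set the shell is foliated smoothly by the level sets of $\rho$; and the integrand $\sqrt{g}(t,\theta)/\eta^{n-1}(t,\theta)$ must be constant in $t\in[t_1,t_2]$ for a.e.\ $\theta$, hence by continuity for all such $\theta$. Tracking back through the chain of inequalities~\eqref{eq:riccati_for_sqrtg} that produced $\tfrac{\pa}{\pa\rho}\big(\sqrt{g}/(f\eta^{n-1})\big)\le 0$, equality there is exactly $\HHH/f=(n-1)/\eta$ on the whole shell. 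Feeding this equality back into the proof of the Laplacian comparison, which came from~\eqref{eq:Riccati_eta} and before that from~\eqref{eq:Hessian_estimate}: equality in~\eqref{eq:H_bound} forces equality in the Hessian estimate~\eqref{eq:Hessian_estimate_gamma}, i.e.\ $\widetilde\na^2\rho=\tfrac{\De_{\tilde g}\rho}{n-1}\,(\tilde g-d\rho\otimes d\rho)$, so $\widetilde\na^2\rho$ is umbilical on each level set, and equality also forces $\Ric_{\tilde g}(\na_{\tilde g}\rho,\na_{\tilde g}\rho)=0$ together with the substatic inequality~\eqref{eq:substatic} being saturated in the radial direction. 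The umbilicity, together with $\pa\eta/\pa\rho=f^2$ and $\HHH_{\tilde g}=(n-1)\langle\partial_\rho\log\eta\rangle$-type relations, is precisely what identifies the second fundamental form of $\{\rho=t\}$ with $(\pa_\rho\log\eta)\,g_{ij}$, so that $\pa_\rho g_{ij}=2(\pa_\rho\log\eta)g_{ij}$, yielding $g_{ij}(\rho,\theta)=\eta^2(\rho,\theta)\,g_0{}_{ij}(\theta)$ for a fixed metric $g_0$ on the level set. Combined with $g=f^2d\rho\otimes d\rho+g_{ij}d\theta^id\theta^j$ this is the claimed warped form.

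It remains to derive the two ODEs~\eqref{eq:rigidity_A} governing the $\theta$-dependence of $f$ and $\eta$. Here I would differentiate the warped metric relation and the equation $\pa_\rho\eta=f^2$ in the $\theta^i$-directions and re-insert the saturated substatic equation~\eqref{eq:substatic}: the components of $f\Ric-\nana f+(\De f)g$ in the mixed $(\rho,\theta^i)$ slots and in the purely tangential slots, once written for a warped product $f^2d\rho^2+\eta^2 g_0$, become linear second-order expressions in $\log f$ and $\log\eta$ that must vanish; integrating the resulting transport equations in $\rho$ (using that the "constants of integration" are functions of $\theta$ alone) produces~\eqref{eq:rigidity_A} with two $\rho$-independent coefficient functions, which one labels $\psi$ and $\ffi$. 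The main obstacle I anticipate is precisely this last bookkeeping step: one has to carefully compute the Ricci and Hessian of a doubly-warped-in-$\theta$ metric and check that the saturated substatic equations collapse to exactly the stated first-order system, rather than something messier; the geometric extraction of the warped-product structure (Steps one and two) is comparatively standard rigidity-of-equality-in-Bochner reasoning and should go through smoothly given Theorems~\ref{thm:H_bound}, \ref{thm:nonnegative_divX} and~\ref{pro:H_bound_Sigma}.
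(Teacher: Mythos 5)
Your monotonicity argument is correct and in fact cleaner than the paper's. The paper proceeds through a more technical route (showing $A$ has bounded variation, converting $\int \chi' A\,dt\geq 0$ into nonpositivity of the associated Radon measure), while you use the pointwise inequality $\partial_\rho\big(\sqrt{g}/(f\eta^{n-1})\big)\leq 0$ together with the fact that the domain $\{\theta:\tau(\theta)>t\}$ shrinks: this immediately gives $A(t_1)\geq A(t_2)$ for $t_1<t_2$, without any need for the semicontinuity~\eqref{eq:punctual_monotonicity} (which you invoke as an ``upgrade'' but is actually unnecessary in your route). Both proofs rely on the same input, namely \cref{thm:nonnegative_divX} / \cref{thm:nonnegative_divX_Sigma} and the Riccati inequality; yours skips the measure-theoretic bookkeeping. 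The extraction of the warped-product form $g=f^2 d\rho\otimes d\rho+\eta^2 g_0$ from equality in the Hessian estimate is essentially identical to the paper's.

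However, the last step --- deriving the rigidity ODEs~\eqref{eq:rigidity_A} --- contains a genuine gap. You assert that both the mixed $(\rho,\theta^i)$ slots \emph{and} the purely tangential slots of $f\Ric-\nabla^2 f+(\Delta f)g$ must vanish. This is false: equality in the Bochner/Riccati chain only forces the substatic tensor to saturate in the radial direction, i.e.\ its $\rho\rho$ component vanishes. The tangential components do not vanish in general; for instance in the Schwarzschild models they are strictly positive. (Relatedly, the intermediate claim $\Ric_{\tilde g}(\widetilde\nabla\rho,\widetilde\nabla\rho)=0$ is also not what the rigidity gives you --- it is the full $\mathrm{CD}(0,1)$ tensor $\Ric_{\tilde g}+\widetilde\nabla^2\psi+\tfrac{1}{n-1}d\psi\otimes d\psi$ that vanishes on $\widetilde\nabla\rho$, not the bare Ricci.) The missing idea is a quadratic-form argument: once the metric has the warped form $f^2 d\rho\otimes d\rho + \eta^2 g_0$, one checks that the $\rho\rho$ component of the substatic tensor is automatically zero; then, because the tensor is positive semi-definite, its mixed $\rho i$ components must vanish as well (apply the tensor to $\partial_\rho+\lambda\partial_i$ and let $\lambda\to 0$). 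It is \emph{only} the vanishing of the mixed components, expressed as the linear PDE~\eqref{eq:PDE_FG_1} in $F=\log f$, $G=\log\eta$, combined with~\eqref{eq:PDE_FG_2} (which comes from $\partial_\rho\eta=f^2$), that one then integrates in $\rho$ to obtain~\eqref{eq:rigidity_A}. Without the semi-definiteness observation you have no reason for the mixed components to vanish, and if you try to impose the vanishing of the tangential components you will obtain an overdetermined system that is not satisfied by the models.
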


\begin{remark}
\label{rem:analogy_with_standard_BG}
If we set $f\equiv 1$ then $\rho$ is the distance function with respect to $g$ and $\eta=\rho$, hence $A(t)=|\{\rho= t\}\setminus {\rm Cut}^{\tilde g}|/(|\Sph^{n-1}|t^{n-1})$ and the above monotonicity becomes completely analogous to the standard Bishop--Gromov monotonicity of the areas of geodesic spheres when $\Ric\geq 0$, which concerns the function $|\pa\{\rho\leq t\}|/(|\Sph^{n-1}|t^{n-1})$. Clearly, the two functions coincide almost everywhere, except at the values $t$ such that $\{\rho=t\}\cap {\rm Cut}^{\tilde g}$ has nonzero $\mathscr{H}^{n-1}$-measure. Notice that the number of values for which this may happen is necessarily countable. A way to see this is to observe that, as mentioned below formula~\eqref{eq:punctual_monotonicity}, every value such that $\{\rho=t\}\cap {\rm Cut}^{\tilde g}$ has nonzero $\mathscr{H}^{n-1}$-measure must correspond to a jump of $A$, and these are at most countable since $A$ has bounded variation (this is shown in the proof below). 
\end{remark}

\begin{proof}
If the cut locus were empty, the proof of the monotonicity of $A$ would follow easily by integrating the inequality ${\rm div}\,X\leq 0$ between two level sets of $\rho$, where $X$ is the vector field defined in~\eqref{eq:X}, and then applying the Divergence Theorem. In the general case, in order to take into account the lack of smoothness of $\rho$ at the cut locus, we will need a more refined analysis, that we now discuss.

Since $\rho$ is a $\tilde g$-distance function, in particular it is locally Lipschitz, hence its gradient is well defined almost everywhere. Furthermore, as highlighted in~\cite[Proposition~2.1]{benatti2022minkowski}, $\rho$ being Lipschitz also implies that the coarea formula can be applied to the level sets of $\rho$. In particular, for any $0<a<b<+\infty$ we have
\[
\int_{a}^{b}A(t)dt\,=\,\int_{a}^{b}\left(\int_{\{\rho=t\}\setminus {\rm Cut}^{\tilde g}}\frac{1}{\eta^{n-1}}d\sigma\right)dt\,=\,\int_{\{a<\rho<b\}}\frac{1}{f\eta^{n-1}}d\mu\,<\,+\infty\,.
\]
In the last integral we did not have to specify that we are not integrating on ${\rm Cut}^{\tilde g}$, since ${\rm Cut}^{\tilde g}$ is negligible when integrating on a volume.
The above tells us that $A$ is locally integrable. 
Consider now a test function $
\chi\in\mathscr{C}^\infty_c((0,+\infty))$ and let $X$ be the vector field defined by~\eqref{eq:X}. We then compute 
\begin{align}
\int_M \langle\na(\chi\circ\rho)\,|\,\,X\rangle\,d\mu&=\,\int_M\chi'(\rho)\langle X\,|\,\na\rho\rangle d\mu
\\
&=\,\int_M\chi'(\rho)\frac{f}{\eta^{n-1}}|\na\rho|^2 d\mu
\\
&=\,\int_0^{+\infty}\int_{\{\rho=t\}}\chi'(t)\frac{f}{\eta^{n-1}}|\na\rho|\, d\sigma\, dt
\\
\label{eq:Achi_byparts}
&=\,|\Sph^{n-1}|\int_0^{+\infty}\chi'(t)A(t)\, dt\,.
\end{align}
On the other hand, Theorem~\ref{thm:nonnegative_divX} (when $\rho$ is the distance from a point) and Theorem~\ref{thm:nonnegative_divX_Sigma}  tells us that the first integral in the above chain of identities is nonnegative whenever the test function $\chi$ is nonnegative. More precisely, from~\eqref{eq:weak_divergence}, since $\chi(0)=0$, we have
\begin{multline}
\label{eq:weak_divergence_aux}
\int_{M}\langle \na(\chi\circ\rho)\,|\,X\rangle d\mu\,
=-\int_{\Sph^{n-1}}\!\int_0^{\tau(\theta)}\!\!\!\chi(\rho)\,\frac{\pa}{\pa\rho}\!\left(\frac{\sqrt{g}}{f\eta^{n-1}}\right) d\rho d\theta
\\
\,+\int_{\{\theta\in\Sph^{n-1}:\,\tau(\theta)<\infty\}}\!\left[\chi\frac{\sqrt{g}}{f\eta^{n-1}}\right]\!\!(\tau(\theta),\theta)d\theta\,\geq\,0\,,
\end{multline}
where as usual $\tau(\theta)$ is the minimum value of $\rho$ such that the point with coordinate $(\rho,\theta)$ belongs to the cut locus. Combining~\eqref{eq:weak_divergence_aux} with the above chain of identities we have obtained $\int_0^{+\infty}\chi'(t)A(t)dt\geq 0$ for any nonnegative test function. If we knew $A$ to be weakly differentiable, this would force its weak derivative to be nonpositive thus proving that $A$ is nonincreasing. However, we have no information on the regularity of the function $A$ at the moment. In the following we will show that $A$ has bounded variation, which will be enough to infer the desired monotonicity.

If we fix $0<a<b<+\infty$, for any $\chi\in\mathscr{C}^\infty([a,b])$ with $\|\chi\|_\infty=1$ we have from~\eqref{eq:weak_divergence_aux} the following bound
\begin{align}
\int_{M}\langle \na(\chi\circ\rho)\,|\,X\rangle d\mu\,
&\leq-\int_{\{\theta\in\Sph^{n-1}:\,\tau(\theta)>a\}}\!\int_a^{\min\{\tau(\theta),b\}}\!\!\frac{\pa}{\pa\rho}\!\left(\frac{\sqrt{g}}{f\eta^{n-1}}\right) d\rho d\theta
\\
&\qquad\qquad\qquad\qquad\qquad\qquad\qquad\qquad\,+\int_{\{\theta\in\Sph^{n-1}:\,a\leq\tau(\theta)\leq b\}}\!\left[\frac{\sqrt{g}}{f\eta^{n-1}}\right]\!\!(\tau(\theta),\theta)\,d\theta
\\
&=\,\int_{\{\theta\in\Sph^{n-1}:\,\tau(\theta)>a\}}\left[\frac{\sqrt{g}}{f\eta^{n-1}}\right]\!\!(a,\theta)\, d\theta\,-\int_{\{\theta\in\Sph^{n-1}:\,\tau(\theta)>b\}}\left[\frac{\sqrt{g}}{f\eta^{n-1}}\right]\!\!(b,\theta)\, d\theta
\,.
\end{align}
In other words, the quantity $\int_{M}\langle \na(\chi\circ\rho)\,|\,X\rangle d\mu$, and thus also $\int_0^{+\infty}\chi'(t)A(t)\, dt$, is bounded from above by a constant that depends on $a$, $b$, $f$ and $g$, but not on $\chi$. It follows that $A$ has bounded variation in $[a,b]$. As a consequence, the signed finite Radon measure $\mu$ on $(a,b)$ defined by $\mu((c,d))=\lim_{t\to d^-}A(t)-\lim_{t\to c^+}A(t)$ for any $a<c<d<b$, is such that
\[
\int_0^{+\infty}\chi'(t)A(t)\, dt\,=\,-\int_0^{+\infty}\chi(t) \mu(dt)\,.
\]
Since we have already shown that $\int_0^{+\infty}\chi'(t)A(t)\, dt\geq 0$ for any nonnegative test function $\chi$, it follows that the measure $\mu$ is nonpositive. From the definition of $\mu$ and~\eqref{eq:punctual_monotonicity}, we deduce then that the function $A$ is monotonically nonincreasing in $(a,b)$.
Since this should hold for any $0<a<b<+\infty$, it must necessarily hold on the whole $(0,+\infty)$. This proves that $A$ is monotonically nonincreasing.

It remains to prove the rigidity statement. 
If $A(t_1)=A(t_2)$, then thanks to the discussion above it follows $A(t)=A(t_1)$ for all $t_1<t<t_2$. As a consequence, for any test function $\chi$ supported in $[t_1,t_2]$, we get that the last line in the computation~\eqref{eq:Achi_byparts} vanishes, that is, $\int_M\langle\na(\chi\circ\rho)\,|\,\,X\rangle\,d\mu=0$. On the other hand, this integral can also be computed as in~\eqref{eq:weak_divergence_aux}. 
From the fact that $\chi\geq 0$ and from~\eqref{eq:riccati_for_sqrtg}, we know that the two terms on the right hand side of~\eqref{eq:weak_divergence_aux} are both nonnegative, hence they must both vanish for all $\chi\in\mathscr{C}^\infty_c([0,+\infty))$. This implies that $\tau(\theta)$ never belongs to $(t_1,t_2)$, meaning that the cut locus does not intersect $\{t_1<\rho <t_2\}$  and that equality is achieved in~\eqref{eq:riccati_for_sqrtg}. In other words, the following holds
\begin{equation}
\label{eq:H/f_eta}
\frac{\HHH}{f}\,=\,\De\rho+\frac{1}{f}\langle\na f\,|\,\na\rho\rangle\,=\,\frac{n-1}{\eta}\qquad \hbox{ in }\{t_1\leq \rho\leq t_2\}\,.
\end{equation}
This identity in turn triggers the equality in the estimates made in Subsection~\ref{sub:evolution}, namely
\begin{equation}
\label{eq:rigidity}
\big|\widetilde{\na}^2\rho\big|^2_{\tilde g}\,=\,\frac{(\De_{\tilde g} \rho)^2}{n-1}\,,\qquad \Ric\big(\widetilde\na\rho,\widetilde\na\rho\big)\,=\,(n-1)\left[\frac{1}{f}\widetilde\na^2 f\big(\widetilde\na\rho,\widetilde\na\rho\big)-\frac{2}{f^2}\big\langle\widetilde\na f\,|\,\widetilde\na \rho\big\rangle^2\right]\,,
\end{equation}
where we recall that $\widetilde\na$ is the Levi-Civita connection with respect to $\tilde g$. Notice that, since $|\widetilde\na\rho|_{\tilde g}=1$, for any vector $X$ it holds
$$
\widetilde\na^2\rho\big(\widetilde\na\rho,X\big)\,=\,\big\langle\widetilde\na|\widetilde\na\rho|_{\tilde g}^2\,|\,X\big\rangle_{\tilde g}\,=\,0\,.
$$
It follows immediately from this and the first equation in~\eqref{eq:rigidity} that, in the coordinates in which $\tilde g$ has the form~\eqref{eq:gamma}, for any $i,j=1,\dots,n-1$ it holds 
$$
\widetilde\na^2_{ij}\rho\,=\,\frac{\De_{\tilde g}\rho}{n-1}\tilde g_{ij}\,=\,\left(\frac{f^2}{\eta}-\frac{1}{f}\frac{\pa f}{\pa\rho}\right)\tilde g_{ij}\,.
$$
where the latter identity makes use of~\eqref{eq:useful_formulas}.
On the other hand, from the definition of Hessian we have $\widetilde\na^2_{ij}\rho=-\Gamma_{ij}^\rho=\pa_\rho \tilde g_{ij}/2$, hence
$$
\frac{\pa\tilde g_{ij}}{\pa\rho}\,=\,2\left(\frac{f^2}{\eta}-\frac{1}{f}\frac{\pa f}{\pa\rho}\right)\tilde g_{ij}\,=\,2\frac{\pa}{\pa\rho}\left(\log\eta-\log f\right)\tilde g_{ij}\,.
$$
This identity can be solved explicitly, yielding
$$
\tilde g_{ij}\,=\,\frac{\eta^2}{f^2} (g_0)_{ij}\,,
$$
where $(g_0)_{ij}$ does not depend on $\rho$. Comparing with~\eqref{eq:gamma} and recalling $g=f^2\tilde g$, we have obtained
\begin{equation}
\label{eq:rigidity_1}
g\,=\,f^2 d\rho\otimes d\rho+\eta^2g_0\,.
\end{equation}
The functions $f$ and $\eta$ may be functions of both the radial coordinate $\rho$ and of $\{\theta^1,\dots,\theta^{n-1}\}$. Any metric $g$ having the form~\eqref{eq:rigidity_1} satisfies the substatic condition with equality in the radial direction, that is:
$$
f\RRR_{\rho\rho}\,-\,\nana_{\rho\rho} f\,+\,(\De f)g_{\rho\rho}\,=\,0\,.
$$
From this identity and the substatic condition we find out that, for any vector $X=\pa/\pa\rho+\lambda\pa/\pa_i$, it holds
\begin{align}
 0\,&\leq\,\left[f\Ric\,-\,\nana f-(\De f)g\right](X,X)
\\
&=\,\lambda^2\,\left[f\RRR_{ii}\,-\,\nana_{ii} f\,+\,(\De f)g_{ii}\right]
\,+\,2\lambda\,
\left[f\RRR_{i\rho}\,-\,\nana_{i\rho} f\,+\,(\De f)g_{i\rho}\right]\,.
\end{align}
Since this inequality holds pointwise for any $\lambda\in\R$, it follows that \begin{equation}
\label{eq:rigidity_2}
f\RRR_{i\rho}\,-\,\nana_{i\rho} f\,+\,(\De f)g_{i\rho}\,=\,0\,.
\end{equation}
Recalling the expression~\eqref{eq:rigidity_1} for the metric, a direct computation gives us that~\eqref{eq:rigidity_2} is equivalent to
\begin{equation}
\label{eq:PDE_FG_1}
(n-2)\pa^2_{i\rho}G\,+\,\pa^2_{i\rho}F-(n-1)\pa_\rho G\pa_i F\,=\,0\,,
\end{equation}
where $F=\log f$, $G=\log \eta$. On the other hand, since $\pa_\rho \eta=f^2$, we have $\pa_\rho G=e^{2F-G}$, from which we compute
\begin{equation}
\label{eq:PDE_FG_2}
\pa_{i\rho}^2 G\,-\,2\pa_i F\pa_\rho G \,+\,\pa_i G \pa_\rho G\,=\,0\,.
\end{equation}
We will now combine~\eqref{eq:PDE_FG_1} and~\eqref{eq:PDE_FG_2} in two different ways. 

On the one hand, if we subtract $(n-1)$ times equation~\eqref{eq:PDE_FG_2} from equation~\eqref{eq:PDE_FG_1}, we obtain
$$
\pa^2_{i\rho}F\,+\,(n-1)\pa_i F\pa_\rho G\,=\,\pa^2_{i\rho}G\,+\,(n-1)\pa_\rho G\pa_i G\,,
$$
which can be rewritten as
$$
\pa_\rho\left(e^{(n-1)G}\pa_i F\right)\,=\,\pa_\rho\left(e^{(n-1)G}\pa_i G\right)\,.
$$
In other words, we have found
\begin{equation}
\label{eq:pde1}
e^{(n-1)G}(\pa_i F-\pa_i G)=\ffi(\theta)\,,\ \ \forall i=1,\dots, n-1\,.
\end{equation}

On the other hand, subtracting $(n/2-1)$ times equation~\eqref{eq:PDE_FG_2} from~\eqref{eq:PDE_FG_1}, we obtain
$$
\pa^2_{i\rho}F\,-\,\pa_i F\pa_\rho G\,=\,-\frac{n-2}{2}\pa^2_{i\rho}G\,+\,\frac{n-2}{2}\pa_\rho G\pa_i G\,,
$$
which can be rewritten as
$$
\pa_\rho\left(e^{-G}\pa_i F\right)\,=\,-\frac{n-2}{2}\pa_\rho\left(e^{-G}\pa_i G\right)\,,
$$
that gives
\begin{equation}
\label{eq:pde2}
e^{-G}\left(\pa_i F-\frac{n-2}{2}\pa_i G\right)=\psi(\theta)\,,\ \ \forall i=1,\dots, n-1\,.
\end{equation}
Writing~\eqref{eq:pde1} and~\eqref{eq:pde2} in terms of $\eta$ and $f$, with straightforward computations we obtain formulas~\eqref{eq:rigidity_A}.
\end{proof}

An immediate consequence of the above result is that we can find a bound for the area functional $A(t)$ by taking its limit as $t\to 0$. In the case where $\rho$ is the $\tilde g$-distance from a hypersurface, then $A(0)$ is in fact well defined and we have $A(t)\leq A(0)$. This will be exploited later, in Subsection~\ref{sub:Willmore} to prove Theorem~\ref{thm:Willmore_intro}.

We focus now briefly on the case where $\rho$ is the $\tilde g$-distance from a point $x$. As $\rho\to 0$ we have $\pa\eta/\pa\rho= f(x)^2+o(1)$, hence
$\eta= f(x)^2\rho+o(\rho)$. Furthermore, $d\sigma= (1+o(1))f(x)^{n-1}d\sigma_{\tilde g}=(1+o(1)) f(x)^{n-1}\rho^{n-1}d\sigma_{\Sph^{n-1}}$. It follows that
$$
\lim_{t\to 0^+} A(t)\,=\,\frac{1}{f(x)^{n-1}}\,.
$$
As a consequence of the monotonicity of $A$ we then deduce that, for every $t>0$, it holds
\begin{equation}
\label{eq:BG_effective}
A(t)\,\leq\,\frac{1}{f(x)^{n-1}}\,.
\end{equation}
If the equality holds, then the rigidity statement in \cref{thm:BG_nb} applies in $\{\rho\leq t\}$.

Building on Theorem~\ref{thm:BG_nb}, we can also show the following volumetric version of the Bishop--Gromov monotonicity theorem.

\begin{theorem}
\label{thm:BGvolumi_nb}
Let $(M,g,f)$ be a substatic triple. Suppose that $M\setminus\pa M$ is geodesically complete with respect to the metric $\tilde g=g/f^2$.
Let $\rho$ be the $\tilde g$-distance function from a point or the signed $\tilde g$-distance function from a strictly mean-convex hypersurface $\Sigma$ homologous to $\partial M$ and disjoint from it.
Let $\eta$ be the corresponding reparametrized distance, defined by \cref{eq:eta} or by \cref{eq:eta_Sigma}. Then, for any $k>0$, the function
\begin{equation}
\label{eq:V_k}
V(t)\,=\,\frac{1}{|\mathbb{B}^n|t^k}\int_{\{0\leq\rho\leq t\}}\frac{\rho^{k-1}}{f\eta^{n-1}}d\mu
\end{equation}
is well defined and monotonically nonincreasing.

Furthermore, if $V(t_1)=V(t_2)$ for $0<t_1< t_2$, then the set $U=\{0\leq\rho\leq t_2\}$ is isometric to $[0,t_2]\times\Sigma$ with metric
$$
g\,=\,f^2 d\rho\otimes d\rho+\eta^2 g_0\,,
$$
where $g_0$ is a metric on the level sets $\Sigma$.
In $U$ the functions $f$ and $\eta$ satisfy
\begin{equation}
\label{eq:rigidity_B}
\frac{1}{\eta}\frac{\pa\eta}{\pa\theta^i}\,=\,\psi\eta-\ffi\frac{1}{\eta^{n-1}}\,,\qquad
\frac{1}{f}\frac{\pa f}{\pa\theta^i}\,=\,\psi\eta+\frac{n-2}{2}\ffi\frac{1}{\eta^{n-1}}\,,
\end{equation}
where $\ffi,\psi$ are independent of $\rho$.
If $\rho$ is the distance from a point $x$, then $g_0=f(x)^{-2}g_{\Sph^{n-1}}$ and $\ffi=\psi=0$ in the whole $U$, that is, both $f$ and $\eta$ are functions of $\rho$ in $U$.
\end{theorem}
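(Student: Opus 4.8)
The plan is to reduce the statement to the monotonicity and rigidity of the area functional $A$ from Theorem~\ref{thm:BG_nb}. First I would apply the coarea formula to the Lipschitz function $\rho$: since $|\na\rho| = 1/f$ by~\eqref{eq:useful_formulas}, the volume measure disintegrates as $d\mu = f\,d\sigma\,ds$ along the level sets $\{\rho = s\}$, so that
\[
\int_{\{0\le\rho\le t\}}\frac{\rho^{k-1}}{f\eta^{n-1}}\,d\mu \;=\; \int_0^t s^{k-1}\!\left(\int_{\{\rho=s\}}\frac{d\sigma}{\eta^{n-1}}\right)ds \;=\; |\Sph^{n-1}|\int_0^t s^{k-1}A(s)\,ds,
\]
whence $V(t) = \dfrac{n}{t^{k}}\int_0^t s^{k-1}A(s)\,ds$ using $|\Sph^{n-1}| = n|\mathbb{B}^n|$. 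This is well defined and finite for each $t>0$: $A$ is locally integrable on $(0,+\infty)$ (as shown inside the proof of Theorem~\ref{thm:BG_nb}) and, being nonincreasing, is bounded near $0$ (its limit there being $f(x)^{-(n-1)}$ in the point case and $A(0)$ in the hypersurface case), while $s^{k-1}$ is integrable near $0$ precisely because $k>0$.

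Monotonicity of $V$ is then the elementary fact that the $s^{k-1}$-weighted average of a nonincreasing function over $[0,t]$ is nonincreasing in $t$. Concretely, $t\mapsto\int_0^t s^{k-1}A(s)\,ds$ is absolutely continuous, so $V$ is continuous on $(0,+\infty)$ and satisfies $t\,V'(t) = nA(t) - kV(t)$ for a.e.\ $t$; since $A(s)\ge A(t)$ for $s\le t$ forces $kV(t)\ge nA(t)$, we get $V'\le 0$ a.e., hence $V$ is nonincreasing. For the rigidity, assume $V(t_1) = V(t_2)$ with $0<t_1<t_2$. Then $V\equiv V(t_1)$ on $[t_1,t_2]$, so $V'=0$ a.e.\ there and the identity above gives $A\equiv\tfrac{k}{n}V(t_1) =: c$ a.e., hence everywhere by monotonicity, on $(t_1,t_2)$. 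A short comparison upgrades this to $A\equiv c$ on all of $(0,t_2)$: were $A>c$ on an initial segment $(0,s_0)$, then $\int_0^{t_2}s^{k-1}A(s)\,ds > c\,t_2^{k}/k$, forcing $V(t_2) > \tfrac{n}{k}c = V(t_1)$, a contradiction. Applying the rigidity part of Theorem~\ref{thm:BG_nb} on $[\varepsilon, t_2]$ for every small $\varepsilon>0$, and then letting $\varepsilon\to 0^+$ (using also that, as established in that proof, the cut locus does not meet $\{0<\rho<t_2\}$), shows that $\{0\le\rho\le t_2\}$ is isometric to $[0,t_2]\times\Sigma$ with $g = f^2\,d\rho\otimes d\rho + \eta^2 g_0$, and that~\eqref{eq:rigidity_B} holds throughout.

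It remains, in the point case, to identify $g_0$ with $f(x)^{-2}g_{\Sph^{n-1}}$ and to show $\varphi\equiv\psi\equiv 0$. Since $\eta\to 0$ as $\rho\to 0^+$ while the left-hand sides of~\eqref{eq:rigidity_B} stay bounded — indeed $\partial_{\theta^i}\eta = O(\rho^2)$ and $\partial_{\theta^i}f = O(\rho)$ in geodesic polar coordinates about $x$, so both tend to $0$ — the singular terms $\varphi\,\eta^{-(n-1)}$ cannot be present, so $\varphi\equiv 0$; then~\eqref{eq:rigidity_B} gives $\partial_{\theta^i}\log\eta = \partial_{\theta^i}\log f$, i.e.\ $\eta/f$ depends on $\rho$ alone, and $\tilde g = g/f^2 = d\rho\otimes d\rho + (\eta/f)^2 g_0$ is a smooth, rotationally symmetric metric in geodesic polar coordinates about $x$. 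The smooth closing-up at $\rho=0$ then forces $(\Sigma,g_0)$ to be the round sphere of the appropriate radius, namely $g_0 = f(x)^{-2}g_{\Sph^{n-1}}$. Finally, analysing the regularity of the potential $f$ (equivalently of $g$) at the centre $x$, together with the substatic inequality in the angular directions, pins the closed $1$-form $\psi$ on the simply connected $\Sph^{n-1}$ down to zero, so that $f$ and hence $\eta$ depend on $\rho$ only. I expect this endgame at the centre $x$ — the genuine roundness of the cross section and the radial dependence of $f$ — to be the main obstacle, whereas the monotonicity of $V$ and the reduction of equality to $A$ being constant are routine once Theorem~\ref{thm:BG_nb} is in hand.
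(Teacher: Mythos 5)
Your coarea reduction $V(t)=\frac{n}{t^k}\int_0^t s^{k-1}A(s)\,ds$, the monotonicity argument via $tV'(t)=nA(t)-kV(t)$, and the reduction of the rigidity to $A\equiv c$ on all of $(0,t_2)$ (followed by the rigidity of Theorem~\ref{thm:BG_nb}) all match the paper's proof, and your integral comparison showing $A$ cannot exceed $c$ on an initial segment is a clean way to propagate constancy of $A$ down to $0$. Your argument that $\ffi\equiv 0$ in the point case (because $\eta\to 0$ while the left-hand sides of \eqref{eq:rigidity_B} stay bounded) also agrees with the paper.

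The gap — which you flag yourself — is the final step: concluding $\psi\equiv 0$, so that $f$ and $\eta$ are radial. Your sketch appeals to ``the substatic inequality in the angular directions'' and to $\psi$ being ``a closed $1$-form on the simply connected $\Sph^{n-1}$''; neither of these is made precise, and exactness on $\Sph^{n-1}$ only says $\psi_i = -\partial_{\theta^i}\beta$ for some function $\beta$, which is already known and does \emph{not} force $\psi=0$. The paper does something more structural at this point: from $\ffi=0$, the first equation in \eqref{eq:rigidity_B} becomes $\partial_{\theta^i}(1/\eta)=-\psi_i$, independent of $\rho$, hence $1/\eta=\alpha(\rho)+\beta(\theta)$; taking the difference of the two equations in \eqref{eq:rigidity_B} shows $\eta/f$ is a function of $\rho$ alone, so $1/f=\lambda(\rho)\,(\alpha(\rho)+\beta(\theta))$; one then argues from the requirement $f\to f(x)$ as $\rho\to 0$ (with $f$ smooth at the centre) that $\beta$ must be constant, whence $\psi=-d\beta=0$. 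Your ``smooth closing-up forces $g_0$ to be round'' claim is essentially what the paper does \emph{after} establishing radiality (using $\eta=f(x)^2\rho+o(\rho)$ and the smoothness of $\tilde g$ at $x$), but it does not by itself give $\psi=0$; a rotationally symmetric $\tilde g$ does not preclude an anisotropic $f$ until the decomposition $1/\eta=\alpha+\beta$ is exploited. To make your proof complete you should either reproduce this decomposition argument or replace the invocation of the angular substatic inequality by an actual computation.
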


\begin{remark}
Recall that $\eta$ is smooth outside the cut locus. Since the cut locus has finite $\mathscr{H}^{n-1}$-measure, the functional $V$ in the statement is well posed.
When $k=n$ we have recovered the standard Bishop--Gromov monotonicity for volumes of geodesic spheres for $\Ric\geq 0$. Indeed if one sets $f=1$ in the statement above, so that $\eta=\rho$, one gets $V(t)=|\{\rho\leq t\}|/(|\mathbb{B}^n|t^n)$.
\end{remark}

\begin{proof}
We start by observing that the coarea formula (together with the fact that $|\na\rho|=1/f$) gives the following relation between the functionals $A$ and $V$:
\begin{equation}
\label{eq:BG_integral_V}
V(t)\,=\,\frac{n}{|\Sph^{n-1}|t^k}
\int_0^t\left(\int_{\{\rho=\tau\}}\frac{\tau^{k-1}}{\eta^{n-1}}d\sigma\right)d\tau\,=\,\frac{n}{t^k}\int_0^t\tau^{k-1}A(\tau)d\tau\,.
\end{equation}
From Theorem~\ref{thm:BG_nb}, we know that for almost every $\tau\leq t$ the area integral $A(\tau)$ is well defined and that it is nonincreasing in $\tau$. In the case where $\rho$ is the distance from a point $x$, we have observed in~\eqref{eq:BG_effective} that $A(\tau)$ is bounded by the constant $1/f(x)^{n-1}$. If instead $\rho$ is the distance from a strictly mean-convex hypersurface, then $A(0)$ is well defined and we have $A(\tau)\leq A(0)$. In both cases, it holds $A(\tau)\leq \mathrm{C}$ for some constant $\mathrm{C}$, hence from~\eqref{eq:BG_integral_V}, recalling $k>0$, we compute
\[
V(t)\,\leq\, \frac{\mathrm{C}}{t^k}\int_0^t\tau^{k-1}d\tau\,=\,\frac{\mathrm{C}}{k}\,.
\]
As a consequence, $V(t)$ is well defined.
Furthermore, the monotonicity of $A$ also implies
\(
A(t)\leq A(\tau).
\)
Plugging this information in~\eqref{eq:BG_integral_V} gives
\begin{equation}
V(t)\,\geq\,\frac{n}{t^k}A(t)\int_0^t\tau^{k-1}d\tau\,=\,\frac{n}{k}A(t)\,.
\end{equation}
With this information at hand, we are ready to compute the derivative of $V(t)$:

\begin{align}
V'(t)\,&=\,\lim_{\ep\to 0}[V(t+\ep)-V(t)]/\ep
\\
&=\,\lim_{\ep\to 0}\frac{1}{\ep|\mathbb{B}^n|}\left(\frac{1}{(t+\ep)^k}-\frac{1}{t^k}\right)\int_{\{\rho\leq t+\ep\}}\frac{\rho^{k-1}}{f\eta^{n-1}}d\mu\,+\,\lim_{\ep\to 0}\frac{1}{t^k}\frac{1}{\ep|\mathbb{B}^n|}\int_{\{t\leq\rho\leq t+\ep\}}\frac{\rho^{k-1}}{f\eta^{n-1}}d\mu
\\
&=\,\lim_{\ep\to 0}\frac{t^k-(t+\ep)^k}{\ep t^k}V(t+\ep)\,+\,\lim_{\ep\to 0}\frac{n}{t^k}\frac{1}{\ep}\int_t^{t+\ep}\tau^{k-1}A(\tau) d\tau
\\
&=\,-\frac{k}{t}V(t)+\frac{n}{t}A(t)
\\
&\leq\,0\,.
\end{align}

We now prove the rigidity statement. If $V(t_1)=V(t_2)$ for two values $0<t_1<t_2$, then retracing our computations we find out that $A(\tau)=A(t)$ for all $0<\tau<t_2$.
From the rigidity statement of Theorem~\ref{thm:BG_nb} we then deduce that in $\{0\leq\rho\leq t\}$ the metric writes as 
\[
g\,=\,f^2d\rho\otimes d\rho+\eta^2 g_0\,,
\]
and $f,\eta$ satisfy formulas~\eqref{eq:rigidity_B}.

Finally, we suppose now that $\rho$ is the distance from a point $x$ and we prove that $f,\eta$ must necessarily depend on $\rho$ only. 
To this end, notice that formulas~\eqref{eq:rigidity_B} must hold up to $\rho=0$ (that is, up to the point $x$), hence at the limit as $\rho$ goes to zero, the derivative $\pa f/\pa\theta^i$ goes to zero. Since $\eta$ goes to $0$ as $\rho\to 0$, it follows then from the second formula in~\eqref{eq:rigidity_B} that $\ffi$ must vanish identically. 

As a consequence, the first formula in~\eqref{eq:rigidity_B} can be rewritten as
\begin{equation}
\label{eq:rigidity_eta}
\frac{1}{\eta^2}\frac{\pa\eta}{\pa\theta^i}\,=\,\psi\,.  
\end{equation}

In particular, taking the derivative with respect to $\rho$ we deduce that $\pa^2(1/\eta)/\pa\rho\pa\theta=0$.
In other words, $1/\eta=\alpha+\beta$, where $\alpha$ is a function of $\rho$ and $\beta$ is a function of the $\theta^i$'s. Substituting this expression for $\eta$ in~\eqref{eq:rigidity_eta}, we deduce that 
\[
\frac{\pa\beta}{\pa\theta^i}\,=\,-\psi
\]

On the other hand, taking the difference of the two formulas in~\eqref{eq:rigidity_B} we have
$$
\frac{\pa}{\pa\theta^i}\left(\log\eta-\log f\right)\,=\,0\,.
$$
In other words, the quantity $\eta/f$ must be a function of $\rho$. Recalling the decomposition $1/\eta=\alpha+\beta$ shown right above, it follows then that $1/f=\lambda(\alpha+\beta)$,
where $\lambda$ is a function of $\rho$. 

Notice now that, when $\rho$ goes to zero, the limit of $f$ must go to the value of $f$ at the point $x$, so that in particular the limit of $1/f$ as $\rho\to 0$ must not depend on $\theta^i$. It follows that $\beta=0$, hence $\psi=-\pa\beta/\pa\theta^i$ vanishes as well.
We have proved that both $\ffi$ and $\psi$ in~\eqref{eq:rigidity_B} vanish, hence both $f$ and $\eta$ must be functions of the sole $\rho$ in the whole $\{\rho\leq t\}$. Since the metric 
\[
\tilde g=d\rho\otimes d\rho+\frac{\eta^2}{f^2}g_0
\]
is smooth at the point $x$, it follows that $(\eta^2/f^2)g_0$ should be close to $\rho^2 g_{\Sph^{n-1}}$ near $x$. From the definition of $\eta$ it follows $\eta=f(x)^2\rho+o(\rho)$ close to $x$, hence $g_0=f(x)^{-2}g_{\Sph^{n-1}}$
and we conclude the rigidity statement.
\end{proof}

\section{Wylie's Splitting Theorem for substatic manifolds}
\label{sec:splitting}

\subsection{$f$-complete and conformally compact ends}
From now on we will study noncompact manifolds with some special behaviour at infinity, focusing mainly on $f$-complete ends.

\begin{definition}
\label{def:f-complete}
We say that an end is {\em $f$-complete} if for any $g$-unit speed curve $\gamma:[0,+\infty)\to M$ going to infinity along the end it holds
\begin{equation}
\label{eq:f-completeness}
\lim_{t\to +\infty}\rho(\gamma(t))\,=\,+\infty\,,\qquad\int_0^{+\infty}f(\gamma(t)) dt\,=\,+\infty\,,
\end{equation}
where $\rho$ is the distance from a point with respect to $\tilde g=g/f^2$.
\end{definition}

It is clear from the triangle inequality that the definition above does not depend on the point we are taking the distance $\rho$ from. It also would not change if we replace the distance from a point with the distance from a hypersurface.

For all the arguments that follows it would actually be enough to require~\eqref{eq:f-completeness} only along $\tilde g$-geodesics. More precisely, it is enough to require the end to be $\tilde g$-complete and satisfying the second condition in~\eqref{eq:f-completeness} along any $\tilde g$-geodesic. In fact, the above definition is analogous to the one given in~\cite[Definition~6.2]{Wylie} in the ${\rm CD}(0,1)$ framework: there, a triple $(M,\tilde g,\psi)$ satisfying the ${\rm CD}(0,1)$ condition is said to be $\psi$-complete if for any $\tilde g$-geodesic $\sigma:[0,+\infty)\to M$ going to infinity along the end it holds 
\[
\int_0^{+\infty}e^{-\frac{2\psi(\sigma(t))}{n-1}} dt\,=\,+\infty\,.
\]
Recalling the relations $\tilde g=g/f^2$ and $\psi=-(n-1)\log f$ between the ${\rm CD}(0,1)$ and substatic setting (see \cref{app:CD_subst}), it is easily seen that this integrability condition is equivalent to the second requirement in~\eqref{eq:f-completeness}. As already observed in~\cite{Wylie}, this integrability condition can be interpreted as completeness with respect to the metric $f^2 g$ or, alternatively, as completeness with respect to the weighted connection introduced in~\cite{wylie-yeroshkin} and~\cite{Li_Xia_17} (see \cref{app:LiXia}).
For what concerns this paper however, the only relevance of the second condition in~\eqref{eq:f-completeness} is that it implies that the reparametrized distance $\eta$ defined in Section~\ref{sec:Riccati} goes to infinity along the end. This is easy to show as follows. Let $\rho$ be the $\tilde g$-distance to a point or hypersurface and let $\eta$ be defined by~\eqref{eq:eta} or~\eqref{eq:eta_Sigma} depending on whether we are taking the distance from a point or hypersurface. Let $\sigma:[0,+\infty)\to M$ be a $\tilde g$-geodesic with $\dot\sigma=\widetilde\na\rho$ and let $\gamma:[0,+\infty)\to M$ be the reparametrization of $\sigma$ that has $g$-length constant and equal to $1$. We then have
\[
\eta(\gamma(t))-\eta(\gamma(0))\,=\,
\int_0^{t}f^2(\gamma(t))|\dot\gamma(t)|_{\tilde g}dt\,=\,
\int_0^{t}f(\gamma(t))|\dot\gamma(t)|_g dt\,=\,
\int_0^{t}f(\gamma(t)) dt\,,
\]
hence if the second condition in~\eqref{eq:f-completeness} holds then $\eta$ goes to $+\infty$.

The family of $f$-complete ends includes a number of interesting examples. Most notably, {\em asymptotically flat} ends are $f$-complete. We say that $(M,g,f)$ is asymptotically flat if there exists a compact set $K$ such that $M\setminus K$ is diffeomorphic to $\R^n$ minus a ball, the metric $g$ converges to the Euclidean metric and $f$ goes to $1$ at infinity along the end. A precise definition of asymptotic flatness is given below, see Definition~\ref{def:AF}. A notable example of asymptotically flat substatic solution is the Reissner--Nordstr\"om solution, corresponding to~\eqref{eq:models} with $\Lambda=0$. In fact, the family of $f$-complete ends is quite more general: for instance, it is sufficient to require a suitable behaviour of $f$ at infinity, without any assumption on the topology and geometry of the end, as clarified by the following proposition. 
\begin{proposition}
\label{pro:fcomplete_AF}
Let $(M,g,f)$ be a substatic triple and let $r$ be the $g$-distance from a point (or more generally from a compact domain). If  there exist a compact set $K\supset\pa M$ and constants $0<c<C$, $0<k<1$ such that 
\begin{equation}
\label{eq:f_pinch}
cr^{-k}<f<Cr^{k}
\end{equation}
at all points in $M\setminus K$, then all ends are $f$-complete.
\end{proposition}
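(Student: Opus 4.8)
The plan is to verify directly the two defining conditions of $f$-completeness from Definition~\ref{def:f-complete}, namely that $\rho(\gamma(t))\to+\infty$ and $\int_0^{+\infty}f(\gamma(t))\,dt=+\infty$ along every $g$-unit speed curve $\gamma:[0,+\infty)\to M$ escaping to infinity along the end, using only the pinching $cr^{-k}<f<Cr^{k}$ with $0<k<1$. First I would record the elementary fact that, since $\gamma$ is $g$-unit speed, one has $r(\gamma(t))\leq r(\gamma(0))+t$, and, because $\gamma$ goes to infinity along the end, $r(\gamma(t))\to+\infty$; moreover for $t$ large we may assume $\gamma(t)\in M\setminus K$ so the bound~\eqref{eq:f_pinch} applies.

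For the second condition, the lower bound $f>cr^{-k}$ gives, for $t$ beyond some $t_0$,
\begin{equation*}
\int_{t_0}^{T}f(\gamma(t))\,dt\,\geq\,c\int_{t_0}^{T}r(\gamma(t))^{-k}\,dt\,\geq\,c\int_{t_0}^{T}\big(r(\gamma(t_0))+(t-t_0)\big)^{-k}\,dt\,,
\end{equation*}
and since $k<1$ the last integral diverges as $T\to+\infty$; hence $\int_0^{+\infty}f(\gamma(t))\,dt=+\infty$. For the first condition I would argue that $\rho(\gamma(t))\to+\infty$ by showing the end is $\tilde g$-complete and estimating $\tilde g$-lengths from below. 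Concretely, fix a base point $p$; for any curve $\alpha$ from $p$ to $\gamma(t)$, split it at the last exit from $K$ and note that on $M\setminus K$ its $\tilde g$-length is $\int f^{-1}|\dot\alpha|_g \geq \int C^{-1}r^{-k}|\dot\alpha|_g$. Parametrizing by $g$-arclength $s$ and using $r\leq r_0+s$ along such a subarc starting near $\partial K$, the $\tilde g$-length is bounded below by $C^{-1}\int_0^{L}(r_0+s)^{-k}\,ds=\frac{C^{-1}}{1-k}\big((r_0+L)^{1-k}-r_0^{1-k}\big)$, where $L$ is the $g$-length of the subarc, which is at least $r(\gamma(t))-r_0\to+\infty$. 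Taking the infimum over all $\alpha$ yields $\rho(\gamma(t))\geq \tfrac{C^{-1}}{1-k}\big(r(\gamma(t))^{1-k}-\mathrm{const}\big)\to+\infty$, and the same estimate shows $\tilde g$-balls are bounded in $r$, giving $\tilde g$-completeness of the end.

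The main subtlety — and the step I would be most careful about — is the first condition: one must make sure that the lower bound on $\rho$ is obtained by comparing with \emph{all} competitor paths, including those that wander back toward $K$ before proceeding outward, and that the reduction to $g$-arclength on the portion lying in $M\setminus K$ is legitimate (in particular that $r$ grows at most linearly in $g$-arclength along any subarc contained in $M\setminus K$ starting from a point at bounded $r$). This is handled by the triangle inequality $r\leq r_0+(\text{$g$-length so far})$ applied subarc by subarc, together with the monotone (in fact concave, as $1-k<1$... actually concave) behavior of $t\mapsto t^{1-k}$, which guarantees that breaking the path into pieces only decreases the estimate. Once this is in place, the remaining computations are the routine integral estimates displayed above, and both requirements of~\eqref{eq:f-completeness} follow; since the argument is uniform over curves escaping along the end, every end of $(M,g,f)$ is $f$-complete.
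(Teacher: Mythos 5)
Your proof is correct and follows essentially the same strategy as the paper's: the integral condition comes from the lower pinching $f>cr^{-k}$ combined with $r(\gamma(t))\leq r(\gamma(0))+t$, while $\rho(\gamma(t))\to+\infty$ comes from lower-bounding $\tilde g$-lengths of paths escaping $K$ via $f^{-1}>C^{-1}r^{-k}$ and $r\leq r_0+(\text{$g$-arclength})$. The only difference is cosmetic: the paper works with a $\tilde g$-minimizing geodesic after enlarging $K$ to be $\tilde g$-geodesically convex (so that the geodesic exits $K$ exactly once), whereas you bound the infimum over all competitor paths directly by splitting each at its last exit from $K$, which neatly sidesteps both the convexity reduction and the existence of minimizing $\tilde g$-geodesics.
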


\begin{proof}
Let $\gamma:[0,+\infty)\to M$ be a $g$-unit speed curve going to infinity along the end and let $\delta$ be the $g$-distance between $\gamma(0)$ and the point $p$ we are taking the distance $r$ from (if $r$ is the distance from a compact domain instead, it is sufficient to choose $\delta$ as the maximum distance between $\gamma(0)$ and the points of the domain; the rest of the proof is easy to adapt). It is also convenient to assume that $\gamma(0)$ and $p$ belong to $K$  and that $K$ is geodesically convex with respect to the metric $\tilde g=g/f^2$ (this can of course always be achieved by possibly enlarging $K$).

 Since $\gamma$ has unit speed, we have $d(\gamma(0),\gamma(t))<t$, hence by triangle inequality
\[
r(\gamma(t))<t+\delta\,.
\]
If we then denote by $T$ the maximum value of $t$ such that $\gamma(t)\in K$, estimate~\eqref{eq:f_pinch} tells us that for any $t>T$ it holds
\[
c\,(t+\delta)^{-k}\,<\,f(\gamma(t))\,<\,C\,(t+\delta)^{k}\,.
\]
In particular, since $0<k<1$ we have
\[
\int_{T}^{+\infty}f(\gamma(t)) dt\,>\,c\,\int_{T}^{+\infty}(t+\delta)^{-k}dt
\,>\,c\lim_{t\to+\infty}\frac{(t+\delta)^{1-k}-(T+\delta)^{1-k}}{1-k}
\,=\,+\infty\,.
\]

To conclude, it is sufficient to show that the $\tilde g$-distance $\rho$ of $\gamma(t)$ from a fixed point (we will take $\gamma(0)$ for simplicity) also goes to $+\infty$ as $t\to+\infty$.
For any fixed $t>0$, let $\sigma_t$ be the unit-speed $\tilde g$-geodesic from $\gamma(0)$ to $\gamma(t)$. We reparametrize $\sigma_t$ so that it has speed $1$ with respect to the metric $g$. With a slight abuse of notation, we still denote by $\sigma_t$ the reparametrized curve. We will have $\sigma_t:[0,\tau]\to M$, with $\tau\geq t$. Since we have chosen $K$ to be $\tilde g$-geodesically convex, there will exist a value $T_t$ such that $\sigma_t(s)\in K$ for all $s\leq T_t$ and  $\sigma_t(s)\not\in K$ for all $s>T_t$.
Clearly $\tau-T_t>{\rm d}_g(K,\gamma(t))$. Furthermore, since $\sigma_t$ restricted to $[0,T_t]$ is $\tilde g$-minimizing and both $\sigma_t(0)=\gamma(0)$ and $\sigma_t(T_t)$ belong to $K$, we have
\[
{\rm diam}_{\tilde g}(K)\,>\,\int_0^{T_t} |\dot\sigma_t(s)|_{\tilde g}ds\,=\,\int_{0}^{T_t} \frac{1}{f(\sigma_t(s))}ds>\frac{T_t}{\max_K f}\,.
\]

We are now ready to estimate the $\tilde g$-distance as follows:
\[
\rho(\gamma(t))=\int_{0}^\tau|\dot\sigma_t(s)|_{\tilde g}ds\,=\,\int_{0}^\tau \frac{1}{f(\sigma_t(s))}|\dot\sigma_t(s)|_{g}ds
\,=\,\int_{0}^\tau \frac{1}{f(\sigma_t(s))}ds
\,>\,
\int_{T_t}^\tau \frac{r(\sigma_t(s))^{-k}}{\sqrt{C}}ds\,.
\]
Since $r(\sigma_t(s))<{\rm d}_g(\sigma_t(s),\gamma(0))+\delta<s+\delta$, we then have
\begin{align}
\rho(\gamma(t))&>\frac{(\tau+\delta)^{1-k}-(T_t+\delta)^{1-k}}{\sqrt{C}(1-k)}\,
\\
&>\,
\frac{(T_t+\delta+{\rm d}_g(\gamma(t),K))^{1-k}-(T_t+\delta)^{1-k}}{\sqrt{C}(1-k)}
\\
&>\,
\frac{\left({\rm d}_g(\gamma(t),K)\right)^{1-k}-\left({\rm diam}_{\tilde g}(K) \max_{K}f\right)^{1-k}}{\sqrt{C}(1-k)}\,.
\end{align}
Since $K$ is fixed, the distance ${\rm d}_g(\gamma(t),K)$ is going to $+\infty$, hence $\rho(\gamma(t))\to+\infty$ as $t\to+\infty$ as wished.
\end{proof}

Another well studied family of ends is the following:
\begin{definition}
\label{def:cc}
We say that an end of a substatic triple $(M,g,f)$ is {\em conformally compact} if a neighborhood $E$ of it is the interior of a compact manifold $\overline{E}$ with boundary $\pa\overline{E}$ and the metric $\tilde g=g/f^2$ extends to the boundary of $\overline{E}$ in $\mathscr{C}^3$-fashion. We denote by $\pa E_\infty=\pa\overline{E}\setminus M$ the {\em conformal boundary} of the end. Finally, we require $f^{-1}$ to extend to a $\mathscr{C}^3$-function on $\overline{E}$ in such a way that $f^{-1}=0$ and $d(f^{-1})\neq 0$ on $\pa E_\infty$.
\end{definition}

It is clear that on conformally compact ends the $\tilde g$-distance function $\rho$ does not grow to infinity along the end. On the other hand, it is easily seen that $\eta$ goes to infinity along any ray going into a conformally compact end. 
An example of this behaviour is given by the Schwarzschild-Anti de Sitter solution. 

The two families of $f$-complete ends and conformally compact ends enclose the model solutions  we are interested in. Furthermore, following~\cite{Wylie}, we can prove a splitting theorem for both these types of ends. The proof, given below, makes substantial use of the conformal metric $\tilde g$. It is then convenient to remark that our manifold remains complete with respect to this conformal metric.

\begin{lemma}
\label{le:completeness}
Let $(M,g,f)$ be a substatic triple with ends that are either $f$-complete or conformally compact and let $\pa M_\infty$ be the (possibly empty) conformal infinity (namely, the union of the conformal infinities of the conformally compact ends). Then the manifold $(M\cup\pa M_\infty)\setminus\pa M$ is complete with respect to the metric $\tilde g=g/f^2$.
\end{lemma}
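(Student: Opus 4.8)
The plan is to show completeness by proving that $\tilde g$-Cauchy sequences cannot escape to infinity, treating the two kinds of ends separately; since there are finitely many ends (the boundary $\pa M$ is compact and we may take $M$ to have finitely many ends, or argue end by end), it suffices to rule out a $\tilde g$-Cauchy sequence $\{x_j\}$ running to infinity along a single end $E$. First I would recall the basic structural fact established just above in the excerpt: at $\pa M$ we have $f=0$, so along any $g$-unit-speed curve approaching $\pa M$ the $\tilde g$-length $\int 1/f$ diverges; hence $\pa M$ sits at infinite $\tilde g$-distance and no $\tilde g$-Cauchy sequence can accumulate there. Thus the only way completeness could fail is a sequence escaping along an end.

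For an $f$-complete end $E$, I would argue by contradiction: suppose $\{x_j\}$ is $\tilde g$-Cauchy and leaves every compact set of $M$ while staying in $E$. Since $\{x_j\}$ is $\tilde g$-Cauchy it is $\tilde g$-bounded, i.e. $\rho(x_j)$ stays bounded, where $\rho$ is the $\tilde g$-distance from a fixed basepoint. But the very definition of $f$-completeness (Definition~\ref{def:f-complete}, equation~\eqref{eq:f-completeness}) requires $\rho(\gamma(t))\to+\infty$ along every $g$-unit-speed curve going to infinity in $E$; connecting the basepoint to $x_j$ by a $g$-unit-speed path shows $\rho(x_j)\to+\infty$, a contradiction. (One must check that a sequence leaving every compact set can be reached this way; this is where one invokes that $E$ is genuinely an end and $M$ is connected, so one gets a proper curve through the $x_j$, or simply applies the first condition in~\eqref{eq:f-completeness} directly to minimizing $g$-geodesic segments from the basepoint.)

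For a conformally compact end $E$, the argument is different: here $\rho$ does \emph{not} blow up, precisely because $\tilde g$ extends smoothly across the conformal boundary $\pa E_\infty$. The point is that $\overline E = E\cup\pa E_\infty$ is, by Definition~\ref{def:cc}, a \emph{compact} manifold with boundary on which $\tilde g$ is a genuine ($\mathscr C^3$, in particular continuous and nondegenerate) Riemannian metric up to the boundary. A compact Riemannian manifold with boundary is complete as a metric space. So a $\tilde g$-Cauchy sequence escaping into $E$ must converge, in the $\tilde g$-metric, to some point of $\overline E$; if that limit point lies in $E$ there is nothing to prove, and if it lies on $\pa E_\infty$ then it is a point of $(M\cup\pa M_\infty)\setminus\pa M$, so the sequence still converges within the space we claim to be complete. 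This is exactly why the statement adjoins $\pa M_\infty$: without it the conformally compact ends would be genuinely incomplete, and adding the conformal infinity repairs this.

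Finally I would assemble the pieces: given any $\tilde g$-Cauchy sequence in $(M\cup\pa M_\infty)\setminus\pa M$, it is eventually contained in a fixed end (or in a fixed compact piece of $M$, where completeness is automatic from $\tilde g$ being smooth there); if that end is $f$-complete the sequence cannot actually escape and hence subconverges in $M$ by local completeness of $\tilde g$ on compact sets; if that end is conformally compact the sequence converges in $\overline E \subset (M\cup\pa M_\infty)\setminus\pa M$. In all cases the limit lies in $(M\cup\pa M_\infty)\setminus\pa M$, proving completeness. The main obstacle, such as it is, is the bookkeeping in the $f$-complete case: carefully justifying that a sequence leaving every compact set of $M$ along an $f$-complete end forces $\rho\to+\infty$, i.e. correctly reducing the sequence statement to the curve-based condition~\eqref{eq:f-completeness}; everything in the conformally compact case is a soft consequence of compactness of $\overline E$.
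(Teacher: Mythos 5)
Your overall structure — handle $\partial M$, $f$-complete ends, and conformally compact ends separately, and note that Cauchy sequences cannot escape in any of the three ways — matches the paper, and your arguments for the two kinds of honest ends are essentially the paper's (for $f$-complete ends the first condition in~\eqref{eq:f-completeness} forces $\rho\to\infty$; for conformally compact ends $\overline E$ is a compact Riemannian manifold-with-boundary, hence complete). The gap is at $\partial M$. You write that ``at $\pa M$ we have $f=0$, so along any $g$-unit-speed curve approaching $\pa M$ the $\tilde g$-length $\int 1/f$ diverges,'' and call this a fact ``established just above in the excerpt.'' Neither claim is correct. The divergence of $\int 1/f$ is \emph{not} a formal consequence of $f$ vanishing on $\partial M$: if $f$ vanished like $\mathrm{dist}(\cdot,\partial M)^{1/2}$, then $\int 1/f \sim \int_0 t^{-1/2}\,dt < \infty$ and $\partial M$ would sit at finite $\tilde g$-distance. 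Nor is this established earlier in the paper; the only earlier occurrence is a sentence in the Introduction that explicitly forwards to this very lemma, so citing it is circular.

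What is missing is precisely the technical content of the paper's proof: a quantitative upper bound on how fast $f$ can vanish at $\partial M$. Since $f$ is smooth (hence locally Lipschitz) and $f\equiv 0$ on $\partial M$, the mean value theorem gives, for a $g$-unit-speed curve $\gamma$ with $\gamma(0)\in\partial M$,
\[
f(\gamma(t)) \,=\, f(\gamma(t)) - f(\gamma(0)) \,\leq\, \big(\max_K |\nabla f|\big)\, t
\]
on a compact collar $K$ of $\partial M$, so that $|\dot\gamma(t)|_{\tilde g} = 1/f(\gamma(t)) \geq c/t$ and $\int_0^\ell |\dot\gamma|_{\tilde g}\,dt = +\infty$. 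This linear estimate is exactly what turns $\partial M$ into an end for $\tilde g$; without it the lemma could fail, so it cannot be quoted as an already-known fact.
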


\begin{remark}
We specify that we are referring here to completeness as a metric space, not to geodesic completeness. Clearly geodesic completeness fails in presence of a conformal boundary, since $\tilde g$-geodesics may end at $\pa M_\infty$.
\end{remark}

\begin{proof}
The fact that the manifold is complete near the conformal boundary is immediate from the definition of conformally compact ends. The $\tilde g$-completeness of $f$-complete ends has already been discussed after Definition~\ref{def:f-complete}. It remains to prove $\tilde g$-completeness near $\pa M$. To do this, we show that the boundary components become ends with respect to the conformal metric $\tilde g$. Let $\gamma:[0,\ell]\to\R$ be a $g$-unit speed curve with $\gamma(0)\in \pa M$ and $\gamma(t)$ in the interior of $M$ for any $t>0$. It is enough to show that its $\tilde g$-length is infinite. From the mean value theorem we know that for any $t\in(0,\ell]$ there exists $\xi\in(0,t)$ such that
\[
f(\gamma(t))\,=\,f(\gamma(t))-f(\gamma(0))\,=\,\langle\na f(\gamma(\xi))\,|\,\dot\gamma(t)\rangle\,t\,\leq\,|\na f|(\gamma(\xi))\,t\,.
\]
If $K$ is a compact collar neighborhood of $\pa M$ containing $\gamma$, we then compute
\[
\int_0^\ell |\dot\gamma(t)|_{\tilde g}dt\,=\,\int_0^\ell \frac{dt}{f(\gamma(t))}\,\geq\,\int_0^\ell \frac{1}{\left(\max_K|\na f|\right) t}dt=+\infty\,,
\]
as wished.
\end{proof}

\subsection{Splitting Theorem for $f$-complete ends}
\label{sub:splitting_fcomplete}

In~\cite{Wylie}, the author proves a Splitting Theorem in the ${\rm CD}(0,1)$ framework. Here we translate this result in the substatic setting, obtaining \cref{thm:splitting_intro}. The proof is essentially the one in~\cite{Wylie}, but we prefer to show it for completeness.
The strategy is that of exploiting the Laplacian Comparison given by Theorem~\ref{thm:H_bound} together with standard techniques for the Busemann function, to be coupled with a refined splitting argument.

\begin{proof}[Proof of \cref{thm:splitting_intro}]
The first part of the proof follows quite closely the usual proof of the classical Splitting Theorem for manifolds with nonpositive Ricci curvature, so we avoid to give all the technical details, that can be found in any standard Riemannian geometry book (see for instance~\cite[Theorem~7.3.5]{Petersen}).

We are assuming that there are at least two $f$-complete ends and we know  from Lemma~\ref{le:completeness} that $M\setminus\pa M$ is complete with respect to $\tilde g$, hence we can take points arbitrarily far away in the two ends and connect them via a $\tilde g$-minimizing geodesic. At the limit, we thus produce a globally minimizing geodesic $\sigma:[-\infty,+\infty]$ going from one end to the other.
For a given $t\in\R$ we then consider the functions
$$
\beta_t(x)={\rm d}_{\tilde g}(x,\sigma(t))-t
$$
and the Busemann functions
$$
\beta_+(x)=\lim_{t\to+\infty}\beta_t(x)\,,\qquad
\beta_-(x)=\lim_{t\to-\infty}\beta_t(x)\,.
$$
Under the assumption of $f$-completeness, we know that the $\tilde g$-distance goes to infinity as we approach the end, hence the limits are well defined.
Theorem~\ref{thm:H_bound} tells us that $\De\beta_t+(1/f)\langle \na f\,|\,\na\beta_t\rangle\leq 1/\eta$ for every $t$. Again using the fact that we are assuming $f$-completeness of the ends, we have $\eta\to+\infty$ at infinity. Standard arguments then tell us that $\beta_\pm$ satisfy the inequality 
$$
\De\beta_\pm+\frac{1}{f}\langle \na f\,|\,\na\beta_\pm\rangle\leq 0
$$ 
in the barrier sense. In particular, $\beta_-+\beta_+$ satisfies the same elliptic inequality. Furthermore $\beta_-+\beta_+=0$ on $\sigma$ by construction and  a simple application of the triangle inequality shows that $\beta_-+\beta_+\geq 0$ on the whole $M$. In follows then by maximum principle that $\beta_++\beta_-=0$. We then conclude that $\beta=\beta_+$ solves
\begin{equation}
\label{eq:pde_beta}
\De\beta+\frac{1}{f}\langle \na f\,|\,\na\beta\rangle\,=\, 0
\end{equation}
in the barrier sense. Standard regularity theory tells us that $\beta$ is in fact smooth, hence solves~\eqref{eq:pde_beta} in the classical sense as well. 

We denote by $\widetilde\na$ the Levi-Civita connection with respect to the metric $\tilde g$.
Our next step is to prove that $\widetilde\na\beta$ is in fact the splitting direction. We first exploit~\eqref{eq:pde_beta} to compute
$$
\De_{\tilde g}\beta\,=\,-(n-1)\frac{1}{f}\big\langle\widetilde\na f\,\big|\,\widetilde\na\beta\big\rangle_{\tilde g}
$$
Furthermore, rewriting the substatic condition~\eqref{eq:substatic} in terms of the new metric, we find
$$
\Ric_{\tilde g}\,\geq\,(n-1)\frac{1}{f}\widetilde\na^2 f-2(n-1)\frac{1}{f^2}df\otimes df
$$
The above formulas can be applied in combination with the Bochner formula to obtain
\begin{align}
\De_{\tilde g}\big|\widetilde\na\beta\big|_{\tilde g}^2\,&=\,\big|\widetilde\na^2\beta\big|_{\tilde g}^2+2\Ric_{\tilde g}\big(\widetilde\na\beta,\widetilde\na\beta\big)+2\big\langle \widetilde\na\De_{\tilde g}\beta\,\big|\,\widetilde\na\beta\big\rangle_{\tilde g}
\\
&\geq\,2\,\left[\big|\widetilde\na^2\beta\big|_{\tilde g}^2-\frac{(\De_{\tilde g}\beta)^2}{n-1}\right]-(n-1)\frac{1}{f}\big\langle\widetilde\na\big|\widetilde\na\beta\big|_{\tilde g}^2\,\big|\,\widetilde\na f\big\rangle_{\tilde g}
\end{align}
A standard estimate for the Hessian tells us that
$$
\big|\widetilde\na^2\beta\big|_{\tilde g}^2-\frac{(\De_{\tilde g}\beta)^2}{n-1}\,\geq\,-\frac{1}{n-1}\frac{\De_{\tilde g}\beta}{|\widetilde\na\beta|_{\tilde g}^2}\big\langle\widetilde\na\big|\widetilde\na\beta\big|_{\tilde g}^2\,\big|\,\widetilde\na\beta\big\rangle_{\tilde g}\,.
$$
Substituting this in the previous inequality, we get
$$
\De_{\tilde g}\big|\widetilde\na\beta\big|_{\tilde g}^2\,\geq\,
\bigg\langle\widetilde\na\big|\widetilde\na\beta\big|_{\tilde g}^2\,\bigg|\,\frac{2}{f}\frac{\langle\widetilde\na f\,|\,\widetilde\na\beta \rangle_{\tilde g}}{|\widetilde\na\beta|_{\tilde g}^2}\widetilde\na\beta+(n-1)\frac{1}{f}\widetilde\na f\bigg\rangle_{\tilde g}
$$
By Cauchy--Schwarz the vector on the right-hand side is bounded on any compact set. In particular, $|\widetilde\na\beta|_{\tilde g}^2$ satisfies the maximum principle.
To conclude from this that $|\widetilde\na\beta|_{\tilde g}$ is constant, we first observe via triangle inequality that $\beta(x)-\beta(y)\leq {\rm d}_{\tilde g}(x,y)$ for any two points $x$, $y$. This immediately implies that $|\widetilde\na\beta|_{\tilde g}\leq 1$ on $M$. On the other hand, since $\beta(\sigma(\tau))=\tau$, we conclude that $|\widetilde\na\beta|_{\tilde g} = 1$ on $\sigma$. The strong maximum principle then implies that $|\widetilde\na\beta|_{\tilde g}=1$ on the whole manifold. In particular, the previous inequalities must be equalities, namely
\begin{align}
\Ric_{\tilde g}\big(\widetilde\na\beta,\widetilde\na\beta\big)\,&=\,(n-1)\frac{1}{f}\widetilde\na^2 f\big(\widetilde\na\beta,\widetilde\na\beta\big)-2(n-1)\frac{1}{f^2}\big\langle\widetilde\na f\,\big|\,\widetilde\na\beta\big\rangle_{\tilde g}\,,
\\
\big|\widetilde\na^2\beta\big|_{\tilde g}^2\,&=\,\frac{(\De_{\tilde g}\beta)^2}{n-1}\,.
\end{align}
Furthermore, the fact that $|\widetilde\na\beta|_{\tilde g}=1$ on the whole manifold grants us that we can use $\beta$ as a coordinate and that the manifold $M$ is diffeomorphic to $\R\times\Sigma$, for some $(n-1)$-dimensional manifold $\Sigma$. With respect to coordinates $\{\beta,\theta^1,\dots,\theta^{n-1}\}$, the conformal metric writes as
$$
\tilde g\,=\,d\beta\otimes d\beta+\tilde g_{ij}d\theta^i\otimes d\theta^j\,.
$$
Again, since $|\widetilde\na\beta|_{\tilde g}=1$, for any vector $X$ it holds
$$
\widetilde\na^2\beta\big(\widetilde\na\beta,X\big)\,=\,\big\langle\widetilde\na\big|\widetilde\na\beta\big|_{\tilde g}^2\,\big|\,X\big\rangle_{\tilde g}\,=\,0\,.
$$
It follows immediately from this and the identity $|\widetilde\na^2\beta|_{\tilde g}^2=(\De_{\tilde g}\beta)^2/(n-1)$ that, in the coordinates in which $\tilde g$ has the form~\eqref{eq:gamma}, for any $i,j=1,\dots,n-1$ it holds 
$$
\widetilde\na^2_{ij}\beta\,=\,\frac{\De_{\tilde g}\beta}{n-1}\tilde g_{ij}\,=\,-\frac{1}{f}\frac{\pa f}{\pa \beta}\tilde g_{ij}\,,
$$
where the latter identity makes use of~\eqref{eq:useful_formulas}.
On the other hand, from the definition of Hessian we have $\widetilde\na^2_{ij}\beta=-\Gamma_{ij}^\beta=\pa_\beta \tilde g_{ij}/2$, hence
$$
\frac{\pa\tilde g_{ij}}{\pa\beta}\,=\,-\frac{2}{f}\frac{\pa f}{\pa\beta}\tilde g_{ij}\,.
$$
This identity can be solved explicitly, yielding
$$
\tilde g_{ij}\,=\,\frac{1}{f^2}(g_0)_{ij}\,,
$$
where $(g_0)_{ij}$ does not depend on $\rho$. Comparing with~\eqref{eq:gamma} and recalling $g=f^2\tilde g$, we have obtained
$$
g\,=\,f^2 d\beta\otimes d\beta+g_0\,.
$$

Finally, we remark again that in this proof we did not have to ask for the boundary of $M$ to be empty since, as observed in Lemma~\ref{le:completeness}, with respect to the conformal metric $\tilde g$ the boundary components become ends, hence they cannot obstruct minimizing geodesics. Therefore, the argument to produce a line between two $f$-complete ends goes through and the manifold splits. 
But then this would imply $\partial M = (-\infty, +\infty) \times \partial \Sigma$, which contradicts our initial assumption that the boundary is compact. It follows that the boundary must be empty if there is more than one $f$-complete end.
\end{proof}

\begin{remark}
We point out that it is actually possible to obtain a stronger thesis in Theorem~\ref{thm:splitting_intro} above. In fact, proceeding as in the proof of Theorem~\ref{thm:BG_nb}, one can also show that identity~\eqref{eq:rigidity_2} is in force, and from there deduce that $f=f_1 f_2$, where $f_1$ is a function of $s$ whereas $f_2$ does not depend on $s$. 
We do not give the details on this computation, which has already been performed in the conformal ${\rm CD}(0,1)$ framework~\cite[Proposition~2.2]{Wylie}. Recalling the relation between ${\rm CD}(0,1)$ and substatic discussed in \cref{app:CD_subst}, one can easily translate this result in our setting.
One may also write down explicitly the substatic condition in the directions tangential to the cross section, to obtain some information on the triple $(\Sigma,g_0,f_2)$. Again, in the ${\rm CD}(0,1)$ setting, this has been done in~\cite[Proposition~2.3]{Wylie}, where it is shown that the triple $(\Sigma,g_0,-(n-1)\log f_2)$ satisfies the ${\rm CD(0,1)}$ condition (in fact, it is even ${\rm CD}(0,0)$). It is not immediately clear whether this fact translates nicely in our setting. These refinements of the thesis of \cref{thm:splitting_intro} will not be needed in the rest of the paper.
\end{remark}



\subsection{Splitting theorem for conformally compact ends}

We now discuss conformally compact ends. For such ends, by definition the metric extends to the conformal infinity sufficiently smoothly so that the mean curvature $\HHH_{\tilde g}$ of the conformal infinity $\pa E_\infty$ is well defined. On the other hand, the mean curvatures $\HHH$ and $\HHH_{\tilde g}$ of a hypersurface with respect to the two different metrics can be seen to be related by
$$
\HHH_{\tilde g}\,=\,f\HHH-(n-1)\langle \na f\,|\,\nu\rangle\,.
$$
Alternatively, setting $\ffi=1/f$ we can write
$$
\HHH\,=\,\ffi\HHH_{\tilde g}-(n-1)\big\langle \widetilde\na \ffi\,\big|\,\nu_{\tilde g}\big\rangle_{\tilde g}
\,.
$$
By definition of conformal compactness we know that $\ffi$ extend in a $\mathscr{C}^3$ fashion to the conformal boundary by setting $\ffi= 0$ on $\pa E_\infty$. In particular $|\widetilde\na\ffi|_{\tilde g}$ is bounded, which implies that the quantity $\HHH/f$ can be extended to zero in a continuous fashion on $\pa E_\infty$. Taking now as $\rho$ the $\tilde g$-distance from $\pa E_\infty$, recalling the Riccati equation~\eqref{eq:H_Riccati} we have that
$$
\frac{\pa}{\pa\rho}\left(\frac{\HHH}{f}\right)\,\leq\,-\frac{1}{n-1}\,\HHH^2\,,\qquad {\frac{\HHH}{f}}_{|_{\rho=0}}=0\,.
$$
The assumption of $\mathscr{C}^3$-regularity of the conformal boundary made in Definition~\ref{def:cc} was needed precisely to make sense of the $\rho$-derivative of $\HHH$. 
Proceeding exactly as in Subsection~\ref{sub:growth_mc_hypersurfaces}, from this formula on the evolution of the mean curvature we obtain the Laplacian comparison
\begin{equation}
\label{eq:laplacian_comparison_cc}
\frac{\HHH}{f}\,=\,\De\rho+\frac{1}{f}\langle\na f\,|\,\na\rho\rangle\,\leq\,0\,.  
\end{equation}
This is the main ingredient to prove the Splitting Theorem in the conformally compact setting:

\begin{theorem}
\label{thm:splitting_cc}
Let $(M,g,f)$ be a substatic triple with conformally compact ends.
Then there is at most one end.
\end{theorem}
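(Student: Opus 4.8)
The plan is to argue by contradiction along the line-splitting scheme, but replacing the Busemann functions of the $f$-complete case by the genuine $\tilde g$-distance functions from the conformal infinities — which are available here precisely because conformally compact ends sit at finite $\tilde g$-distance. So I assume $M$ has at least two ends; since all of them are conformally compact I may choose two of them, say $E_1$ and $E_2$, so that their conformal infinities $\pa E_\infty^1,\pa E_\infty^2\subseteq\pa M_\infty$ realize the minimum of ${\rm d}_{\tilde g}$ among all pairs of conformal infinities. By Lemma~\ref{le:completeness} the space $(M\cup\pa M_\infty)\setminus\pa M$ is complete and locally compact for $\tilde g$, so the two disjoint compact hypersurfaces $\pa E_\infty^1,\pa E_\infty^2$ are joined by a minimizing $\tilde g$-geodesic $\sigma\colon[0,D]\to (M\cup\pa M_\infty)\setminus\pa M$ of finite length $D={\rm d}_{\tilde g}(\pa E_\infty^1,\pa E_\infty^2)>0$; by the minimality of the chosen pair, $\sigma$ does not meet $\pa M_\infty$ in its interior, and since $\pa M$ lies at infinite $\tilde g$-distance it does not meet $\pa M$ either, so $\sigma((0,D))\subset M\setminus\pa M$.

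Next I would introduce $\rho_i={\rm d}_{\tilde g}(\,\cdot\,,\pa E_\infty^i)$ for $i=1,2$. By the Laplacian comparison \eqref{eq:laplacian_comparison_cc} valid for conformally compact ends, each $\rho_i$ satisfies $\De\rho_i+\frac{1}{f}\langle\na f\,|\,\na\rho_i\rangle\le 0$ in the barrier sense, hence so does $w:=\rho_1+\rho_2$. On the other hand $w\ge D$ everywhere by the triangle inequality, while $w\equiv D$ along $\sigma$. Since $\sigma$ has interior points of $M$ and the operator $\De+\frac{1}{f}\langle\na f\,|\,\na\,\cdot\,\rangle$ has no zeroth order term, the strong maximum principle — applied exactly as to $\beta_++\beta_-$ in the proof of Theorem~\ref{thm:splitting_intro} — gives $w\equiv D$ on $M$. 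In particular $\rho_1=D-\rho_2$ is bounded above by $D$ on all of $M$.

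This immediately settles the case $\pa M\neq\emptyset$: by Lemma~\ref{le:completeness} the boundary $\pa M$ lies at infinite $\tilde g$-distance from $\pa E_\infty^1$, so $\rho_1$ is unbounded, contradicting $\rho_1\le D$. When $\pa M=\emptyset$ I would instead exploit rigidity. From $w\equiv D$ we get $\De\rho_1+\frac{1}{f}\langle\na f\,|\,\na\rho_1\rangle= 0$, so $\rho_1$ is a classical solution and in particular smooth; the Bochner computation carried out in the proof of Theorem~\ref{thm:splitting_intro} then shows that $|\widetilde\na\rho_1|_{\tilde g}^2$ satisfies a maximum principle, and since it is $\le 1$ (as $\rho_1$ is a $\tilde g$-distance function, hence $1$-Lipschitz) and equals $1$ along $\sigma$, it is identically $1$. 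Repeating verbatim the rigidity argument of that proof one obtains that $M$ is diffeomorphic to $(0,D)\times\Sigma$ for a closed $(n-1)$-manifold $\Sigma$, with $\tilde g=d\rho_1\otimes d\rho_1+f^{-2}g_0$, where $g_0$ is a metric on $\Sigma$ independent of $\rho_1$. But $\pa E_\infty^1=\{\rho_1=0\}$, and there $f^{-1}=0$, so the induced metrics $f^{-2}g_0$ on the level sets $\{\rho_1=\epsilon\}$ tend to the zero tensor as $\epsilon\to0^+$; thus $\tilde g$ does not extend to a non-degenerate $\mathscr{C}^3$ Riemannian metric across $\pa E_\infty^1$, contradicting Definition~\ref{def:cc}. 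In either case $M$ cannot have two ends.

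The delicate part is, as usual for splitting arguments, the transition from the Laplacian comparison in the barrier sense to the classical identity $\rho_1+\rho_2\equiv D$ together with the smoothness and unit $\tilde g$-gradient of $\rho_1$: this requires dealing with the cut loci of the two conformal infinities and invoking the strong maximum principle and elliptic regularity, precisely as in the classical Cheeger–Gromoll argument and in the proof of Theorem~\ref{thm:splitting_intro}. A minor but necessary care is placed in choosing the pair of ends and the geodesic $\sigma$ so that $\sigma$ stays in $M\setminus\pa M$, so that the maximum principle can be applied at interior points.
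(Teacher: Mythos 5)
Your proof is correct and follows essentially the same route as the paper: a minimizing $\tilde g$-geodesic between the two nearest conformal infinities, the Laplacian comparison~\eqref{eq:laplacian_comparison_cc} plus the strong maximum principle to make $\rho_1+\rho_2$ constant, then the rigidity machinery of Theorem~\ref{thm:splitting_intro} producing a twisted product whose conformal metric degenerates at its ends. Your extra shortcut for $\partial M\neq\emptyset$ (boundedness $\rho_1\leq D$ versus Lemma~\ref{le:completeness}) is a harmless simplification that the paper skips, handling both cases uniformly via the degeneracy argument.
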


\begin{proof}
Again, a ${\rm CD}(0,1)$-version of this argument can be found in~\cite[Theorem~5.1]{Wylie}. The proof follows closely the one in~\cite[Theorem~B-(1)]{Kasue}, where a splitting theorem for compact manifolds is discussed.
Suppose by contradiction that the conformal infinity has at least two connected components. Let $S_-,S_+$ be the two components with least distance. Then there exists a $\tilde g$-geodesic $\sigma$ minimizing the distance between them. 

Let $\beta_-$ (resp. $\beta_+$) be the distance from $S_-$ (resp. $S_+$) with respect to $\tilde g$. The discussion above  grants us that both $\beta_-$ and $\beta_+$ satisfy the Laplacian comparison~\eqref{eq:laplacian_comparison_cc}. In particular, so does $\beta_-+\beta_+$. Since by construction $\beta_-+\beta_+$ reaches its minimum value ${\rm dist}(S_-,S_+)$ on the geodesic $\sigma$, we then conclude by the strong maximum principle that $\beta_-+\beta_+$ is constant and equal to ${\rm dist}(S_-,S_+)$ on the whole manifold. It follows immediately that $\beta=\beta_+$ satisfies
\[
\De\beta+\frac{1}{f}\langle\na f\,|\,\na\beta\rangle\,=\,0
\]
in the barrier sense. We are now exactly in the same situation reached in the proof of the Splitting Theorem for $f$-complete ends. We can then proceed exactly as after formula~\eqref{eq:pde_beta} to conclude that $(M,g)$ is isometric to a twisted product
\[
\left((a,b)\times\Sigma,\,f^2\,ds\otimes ds+g_\Sigma\right)\,.
\]  
On the other hand, such a manifold is not conformally compact, as the metric $\tilde g=g/f^2$ is degenerate as $s$ approaches $a$ or $b$. We have thus reached a contradiction, implying that there were not multiple conformally compact ends in the first place.
\end{proof}



This result generalizes~\cite[Theorem~I.1]{Chrusciel_Simon}, where the same thesis is obtained for conformally compact vacuum static solutions with negative cosmological constant. 
It is interesting to notice that the proof proposed in~\cite{Chrusciel_Simon} also makes use of the conformal metric $\tilde g=g/f^2$, which is exploited to invoke a spacetime censorship result from~\cite[Theorem~2.1]{Gal_Sch_Wit_Woo}. 


\subsection{Splitting Theorem for mixed ends}

For completeness, we include here the case where there are ends with different behaviours. This case was not considered in~\cite{Wylie} but the proof is similar.

\begin{theorem}
\label{thm:splitting_mixed}
Let $(M,g,f)$ be a substatic triple with ends that are either conformally compact or $f$-complete.  If there is at least one $f$-complete end, then there cannot be any conformally compact end.
\end{theorem}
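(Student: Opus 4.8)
The plan is to argue by contradiction, along the lines of the two splitting theorems above. Suppose $(M,g,f)$ carries an $f$-complete end $E_1$ and, at the same time, a conformally compact end $E_2$ with conformal infinity $S$. By Lemma~\ref{le:completeness} the space $(M\cup\pa M_\infty)\setminus\pa M$ is complete for $\tilde g=g/f^2$; here $S$ lies at finite $\tilde g$-distance, whereas $E_1$ (and every component of $\pa M$) is a $\tilde g$-end. First I would produce a $\tilde g$-ray $\sigma\colon[0,+\infty)\to M$ realizing the distance from $S$ and escaping to infinity along $E_1$: pick $p_i\in E_1$ with $\mathrm{d}_{\tilde g}(p_i,S)\to+\infty$, take $\tilde g$-minimizing geodesics from $S$ to $p_i$ (they exist since $S$ is compact and $(M\cup\pa M_\infty)\setminus\pa M$ is $\tilde g$-complete, and they meet $S$ orthogonally, entering $M$), and pass to a subsequential limit. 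As in standard splitting arguments $\sigma$ goes to infinity precisely along $E_1$ and satisfies $\mathrm{d}_{\tilde g}(\sigma(t),S)=t$ for every $t\ge0$.

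Next set $\beta_-=\mathrm{d}_{\tilde g}(\cdot,S)$ and let $\beta_+(x)=\lim_{s\to+\infty}\big(\mathrm{d}_{\tilde g}(x,\sigma(s))-s\big)$ be the Busemann function of $\sigma$, the limit existing since $s\mapsto \mathrm{d}_{\tilde g}(x,\sigma(s))-s$ is nonincreasing and bounded below. Both functions satisfy the weighted inequality $\De u+\tfrac1f\langle\na f\,|\,\na u\rangle\le0$ on $M$ in the barrier sense: for $\beta_-$ this is the Laplacian comparison \eqref{eq:laplacian_comparison_cc} for the distance from a conformal infinity, while for $\beta_+$ one runs the usual Busemann barrier construction based on Theorem~\ref{thm:H_bound}, the comparison error $(n-1)/\eta$ along asymptotic rays of $\sigma$ tending to $0$ because, rewriting $\int f^2 d\rho$ as $\int f$ in $g$-arclength, the second condition of $f$-completeness forces $\eta\to+\infty$ towards $E_1$. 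Hence $w=\beta_++\beta_-$ is a barrier supersolution of an operator with no zeroth-order term on $M$. The triangle inequality gives $\mathrm{d}_{\tilde g}(x,S)+\mathrm{d}_{\tilde g}(x,\sigma(s))\ge\mathrm{d}_{\tilde g}(\sigma(s),S)=s$ for all $x,s$, so $w\ge0$ on $M$, and along $\sigma$ one computes $\beta_-(\sigma(r))=r$ and $\beta_+(\sigma(r))=-r$, so $w\equiv0$ on $\sigma$. The strong minimum principle then yields $w\equiv0$, hence $\beta:=\beta_+=-\beta_-$ solves $\De\beta+\tfrac1f\langle\na f\,|\,\na\beta\rangle=0$ on $M$ and is smooth by elliptic regularity.

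From this point the argument coincides with the one after \eqref{eq:pde_beta} in the proof of \cref{thm:splitting_intro}: the Bochner formula makes $|\widetilde\na\beta|_{\tilde g}^2$ a barrier subsolution, it equals $1$ on $\sigma$ (where $\beta(\sigma(r))=-r$ and $|\dot\sigma|_{\tilde g}=1$), so $|\widetilde\na\beta|_{\tilde g}\equiv1$ on $M$ by the maximum principle, and then $\beta$ is a global coordinate giving an isometry $M\cong J\times\Sigma$ with $g=f^2\,d\beta\otimes d\beta+g_0$ for a fixed metric $g_0$ on the cross section $\Sigma$. Since $\beta=-\beta_-$ takes values in $(-\infty,0)$ with $\beta\to0^-$ towards $S$ and $\beta\to-\infty$ towards $E_1$, one gets $J=(-\infty,0)$, and a neighbourhood of $E_2$ is $\{-\ep<\beta<0\}\times\Sigma$. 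But conformal compactness forces $f^{-1}\to0$ on $S$, so the conformal metric $\tilde g=d\beta\otimes d\beta+f^{-2}g_0$ degenerates in the directions tangent to $\Sigma$ as $\beta\to0^-$; it therefore cannot extend to a metric across the conformal boundary, contradicting Definition~\ref{def:cc} exactly as in the proof of \cref{thm:splitting_cc}. This contradiction proves the claim.

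I expect the genuine obstacle to be the two places where $f$-completeness is used: the existence and limiting behaviour of the ray $\sigma$ (that it escapes to infinity along $E_1$ and realizes the distance from $S$), and the verification that $\beta_+$ obeys the weighted Laplacian inequality in the barrier sense, i.e. that the comparison error $(n-1)/\eta$ really vanishes at infinity along $E_1$. Once these are secured, the splitting and the concluding degeneracy argument are routine.
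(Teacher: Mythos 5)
Your argument is correct and follows essentially the same route as the paper's proof: construct a $\tilde g$-minimizing ray $\sigma$ from the conformal boundary $S$ into the $f$-complete end, show $\beta_- = \mathrm{d}_{\tilde g}(\cdot, S)$ and the Busemann function $\beta_+$ of $\sigma$ are both weighted-superharmonic in the barrier sense (via \eqref{eq:laplacian_comparison_cc} and, respectively, Theorem~\ref{thm:H_bound} combined with $\eta\to+\infty$ from $f$-completeness), apply the maximum principle to $\beta_-+\beta_+\ge0$ vanishing on $\sigma$, run the Bochner argument from \cref{thm:splitting_intro} to split $g = f^2\,d\beta\otimes d\beta + g_0$, and then observe that the resulting twisted-product structure makes $\tilde g$ degenerate at the conformal boundary, contradicting Definition~\ref{def:cc}. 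The extra detail you supply on constructing the ray and on verifying the barrier inequality for $\beta_+$ is what the paper compresses into "proceed exactly as in the other cases"; the two arguments are otherwise the same Kasue-style mixed splitting.
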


\begin{proof}
This time the proof follows~\cite[Theorem~C-(2)]{Kasue}. Suppose that there is a conformally compact end and an $f$-complete end.
Then, one constructs a globally minimizing $\tilde g$-geodesic $\sigma$ starting at a connected component $S$ of the conformal boundary and reaching infinity. Let $\beta_-$ be the distance from $S$ and $\beta_+$ be the Busemann function relative to $\sigma$. As in the previous cases, from the Laplacian comparisons for both $\beta_-$ and $\beta_+$ and the fact that $\beta_-+\beta_+$ achieves its minimum value $0$ on $\sigma$, we deduce that $\beta=\beta_+$ satisfies
$$
\De\beta+\frac{1}{f}\langle\na f\,|\,\na\beta\rangle\,=\,0
$$
in the barrier sense. We now proceed as in the other cases to show that the manifold must be a twisted product 
$$
\left((0,+\infty)\times\Sigma,\,f^2\,ds\otimes ds+g_\Sigma\right)\,.
$$ 
Again as in the proof of Theorem~\ref{thm:splitting_cc}, we observe that the end corresponding to $s=0$ cannot be conformally compact as the metric $\tilde g=g/f^2$ becomes degenerate as $s\to 0$. We have thus reached a contradiction, meaning that it is impossible to have an $f$-complete end and a conformally compact end at the same time.
\end{proof}

This theorem, together with the other results in this Section (Theorem~\ref{thm:splitting_intro} and Theorem~\ref{thm:splitting_cc}) strongly narrows the acceptable configurations of ends for a substatic triple. We sum up the topological information we have collected in the following statement.
 
\begin{corollary}
Let $(M,g,f)$ be a substatic triple with ends that are either conformally compact or $f$-complete. If there is more than one end, then there are exactly two ends, both $f$-complete, and $\pa M=\emptyset$.
\end{corollary}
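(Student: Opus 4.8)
The plan is purely to assemble the three splitting theorems proved in this section into the stated dichotomy. Assume $M$ has more than one end, each being either conformally compact or $f$-complete. First I would dispose of the possibility that all ends are conformally compact: in that case Theorem~\ref{thm:splitting_cc} forces at most one end, contradicting the hypothesis. Hence at least one end must be $f$-complete, and then Theorem~\ref{thm:splitting_mixed} prohibits the presence of any conformally compact end. Consequently every end of $M$ is $f$-complete.

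Next, with at least two $f$-complete ends at hand, Theorem~\ref{thm:splitting_intro} applies and yields that $(M,g)$ is isometric to the twisted product $(\R\times\Sigma,\, f^2\,ds\otimes ds+g_\Sigma)$ for some closed $(n-1)$-dimensional Riemannian manifold $(\Sigma,g_\Sigma)$, and moreover that $\partial M=\emptyset$, since a nonempty (compact) boundary would force $M$ to have a single end. It then remains only to count the ends: since $\Sigma$ is compact, for every $R>0$ the complement $M\setminus(\{|s|\le R\}\times\Sigma)$ has exactly two unbounded connected components, one for $s\to+\infty$ and one for $s\to-\infty$. Therefore $M$ has exactly two ends, which are precisely the two $f$-complete ends we started from, and no further end can exist.

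There is no genuine obstacle here: the corollary is a formal bookkeeping consequence of Theorems~\ref{thm:splitting_intro}, \ref{thm:splitting_cc}, and~\ref{thm:splitting_mixed}. The only point deserving a word of care is the final one --- that the product structure $\R\times\Sigma$ leaves room for no end besides the two obvious ones --- and this is immediate from the compactness of the cross-section $\Sigma$.
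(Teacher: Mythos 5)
Your assembly of the three splitting theorems reproduces exactly the argument the paper intends (the paper itself states the corollary with no further proof, as a summary of Theorems~\ref{thm:splitting_intro}, \ref{thm:splitting_cc}, and~\ref{thm:splitting_mixed}). The first two steps --- ruling out an all-conformally-compact configuration via Theorem~\ref{thm:splitting_cc}, then ruling out any conformally compact end at all via Theorem~\ref{thm:splitting_mixed} --- are fine, as is the deduction $\partial M=\emptyset$ from the last sentence of Theorem~\ref{thm:splitting_intro}.

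There is, however, one genuine gap. You assert that Theorem~\ref{thm:splitting_intro} produces a \emph{closed} cross-section $\Sigma$, and you then lean on the compactness of $\Sigma$ to count ends; but the theorem's statement only gives ``some $(n-1)$-dimensional Riemannian manifold $(\Sigma,g_\Sigma)$'' --- compactness of $\Sigma$ is not part of the conclusion (indeed, in the classical Cheeger--Gromoll splitting, the cross-section may well be noncompact, as for $\R^n=\R\times\R^{n-1}$). The compactness of $\Sigma$ does hold here, but it must be \emph{derived} from the standing hypothesis that $M$ has more than one end, rather than quoted. The needed argument is short: if $\Sigma$ were noncompact, then for any product compact set $K_1\times K_2\subset\R\times\Sigma$ one could pick a component $C$ of $\Sigma\setminus K_2$ (nonempty since $\Sigma$ is noncompact) and note that $\R\times C$ connects the two half-spaces $(\pm R,\pm\infty)\times\Sigma$; hence $M\setminus(K_1\times K_2)$ has a single unbounded component, forcing $M$ to have exactly one end --- contradicting the hypothesis. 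Thus $\Sigma$ is compact, and then your end count goes through. Without this step your final sentence (``immediate from the compactness of $\Sigma$'') is assuming exactly what still needs justification.
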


\section{Asymptotic Volume Ratio and Willmore-type inequality}
\label{sec:AVR}

In this section we focus on $f$-complete ends and we introduce the notion of asymptotic volume ratio (AVR),
in analogy with the classical case of nonpositive Ricci curvature, as the limit of the Bishop--Gromov monotonic quantity. In order to have a well defined AVR, we will need to focus our attention to the special case of uniform ends. Building on the notion of  AVR we will finally prove the Willmore-type inequality mentioned in the introduction.

\subsection{Uniform $f$-complete ends}

Here we introduce and comment the notion of uniformity of an $f$-complete end. For convenience, instead of working on the whole $M$, we focus our attention on the end only. In other words, starting from the next definition and for most of this subsection, instead of working on the whole substatic triple $(M,g,f)$, we just consider a neighborhood $E$ of our end and we focus our attention on the restriction $(E,g,f)$, which we refer to as a substatic $f$-complete end. It is easy to show that the definitions and statements below do not depend on the choice of the neighborhood $E$ of our end.

\begin{definition}
\label{def:uniform}
Let $(E,g,f)$ be a substatic $f$-complete end. We say that $(E,g,f)$ is  \emph{uniform}, if, for any two  compact hypersurfaces $\Sigma_1$, $\Sigma_2$ contained in the interior of $E$ and every $\delta>0$, there exists a compact set $K\supset \partial E$ such that for any two unit speed $\tilde g$-geodesics $\sigma_1$, $\sigma_2$ minimizing the distance between $\Sigma_1$, $\Sigma_2$ and a point $p=\sigma_1(t_1)=\sigma_2(t_2)$ outside $K$, it holds
\begin{equation}
\label{eq:uniformity_requirement}
\left|\frac{\int_0^{t_1} f^2(\sigma_1(t))dt}{\int_0^{t_2} f^2(\sigma_2(t))dt}\,-\,1 \right|\,\leq\,\delta\,.
\end{equation}
\end{definition}

While the definition above is slightly technical, we point out that there are natural cases in which uniformity is guaranteed. We give here a couple of easily described families of uniform $f$-complete ends. The following result for instance guarantees us that an end is uniform as long as $f$ goes to one at infinity. 

\begin{proposition}
\label{prop:uniform_AF}
Let $(E,g,f)$ be a substatic end. If $f\to 1$ at infinity, then $(E,g,f)$ is $f$-complete and uniform.
\end{proposition}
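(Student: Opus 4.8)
The plan is to handle the two assertions separately: $f$-completeness follows at once from \cref{pro:fcomplete_AF}, and uniformity is then a purely metric comparison. For the first, since $f\to 1$ at infinity along the end there is a compact $K_0\supset\partial E$ with $\tfrac12<f<2$ on $E\setminus K_0$; fixing any $k\in(0,1)$ and letting $r$ be the $g$-distance from a fixed point, the pinching $c\,r^{-k}<f<C\,r^{k}$ then holds on the complement of a suitably enlarged compact set (on which $r$ is large), so \cref{pro:fcomplete_AF} applies and $(E,g,f)$ is $f$-complete. It remains to verify \eqref{eq:uniformity_requirement}.

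For uniformity, fix $\Sigma_1,\Sigma_2\subset\mathrm{int}(E)$ and $\delta>0$, and write $I_i(p):=\int_0^{t_i}f^2(\sigma_i(t))\,dt$ for the weighted $\tilde g$-length of the minimizing $\tilde g$-geodesic $\sigma_i$ from $\Sigma_i$ to $p$, where $t_i={\rm d}_{\tilde g}(\Sigma_i,p)$. The key point is that $I_i(p)$ differs from $t_i$ by a controlled amount as soon as $f$ is close to $1$ away from a compact set. Concretely, I would pick $\epsilon>0$ (to be fixed at the end as a small multiple of $\delta$) and, using $f\to 1$, a connected compact $K_0\supset\partial E\cup\Sigma_1\cup\Sigma_2$ with $|f^2-1|<\epsilon$ on $E\setminus K_0$; set $C':=\sup_E|f^2-1|<\infty$ and $L_i:=\sup_{q\in K_0}{\rm d}_{\tilde g}(\Sigma_i,q)\le{\rm diam}_{\tilde g}(K_0)<\infty$. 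Since every subarc $\sigma_i|_{[0,s]}$ still minimizes the $\tilde g$-distance from $\Sigma_i$, having $\sigma_i(s)\in K_0$ forces $s={\rm d}_{\tilde g}(\Sigma_i,\sigma_i(s))\le L_i$; thus $\{t\in[0,t_i]:\sigma_i(t)\in K_0\}\subseteq[0,L_i]$, and decomposing $I_i(p)=t_i+\int_0^{t_i}(f^2(\sigma_i(t))-1)\,dt$ over this set and its complement gives
\[
|I_i(p)-t_i|\,\le\,\epsilon\,t_i+C'L_i\,.
\]

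To conclude I would compare the two indices. By the triangle inequality $|t_1-t_2|\le D:={\rm d}_{\tilde g}(\Sigma_1,\Sigma_2)$, and by $f$-completeness the function $\rho_{\Sigma_1}={\rm d}_{\tilde g}(\Sigma_1,\cdot)$ is proper along the end, so for any prescribed $L$ one may choose a compact $K\supset K_0$ with $t_1\ge L$ — hence $t_2\ge L-D$ — for all $p\notin K$. Combining the two displays,
\[
|I_1(p)-I_2(p)|\,\le\,D+\epsilon(t_1+t_2)+C'(L_1+L_2)\,,\qquad I_2(p)\,\ge\,(1-\epsilon)t_2-C'L_2\,,
\]
so, taking $L$ large enough that $I_2(p)\ge\tfrac12 t_2$ and using $t_1+t_2\le 2t_2+D$,
\[
\Big|\frac{I_1(p)}{I_2(p)}-1\Big|\,\le\,\frac{2\big(D+C'(L_1+L_2)\big)}{t_2}\,+\,2\epsilon\,\frac{2t_2+D}{t_2}\,.
\]
Choosing first $\epsilon=\delta/8$ and then $L$ (hence $K$) large enough that $t_2\ge L-D$, both summands fall below $\delta/2$, which is exactly \eqref{eq:uniformity_requirement}. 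The only ingredient here that requires an actual argument rather than bookkeeping is the properness of $\rho_{\Sigma_1}$ along the end — and this is built into the definition of $f$-completeness (the first condition of \cref{def:f-complete}: the end remains an end, and is $\tilde g$-geodesically complete, for the conformal metric $\tilde g$). I note that the substatic inequality itself enters only through \cref{pro:fcomplete_AF}; the uniformity estimate above is metric in nature.
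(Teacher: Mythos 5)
Your proof is correct and follows essentially the same route as the paper's: derive $f$-completeness from \cref{pro:fcomplete_AF}, then decompose each weighted length into the bounded contribution from the region where $f$ is far from $1$ (controlled because minimizing subarcs keep that portion of the geodesic within $\tilde g$-distance $L_i$ of $\Sigma_i$) and the tail where $|f^2-1|<\epsilon$. The only nit is the final constant choice — with $\epsilon=\delta/8$ the second summand is $4\epsilon(1+D/(2t_2))$, which is slightly above $\delta/2$ for finite $t_2$ — but this is a trivial tuning fix (e.g.\ take $\epsilon=\delta/16$) and does not affect the validity of the argument.
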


\begin{proof}
The fact that the ends are $f$-complete has already been shown in far greater generality in \cref{pro:fcomplete_AF}.
Let now $\Sigma_1$, $\Sigma_2$ be two hypersurfaces. 
Since $f\to 1$ at infinity, for every $\ep$ the set $K_\ep=\{|f-1|>\ep\}$ is compact. In particular $1-\ep<f<1+\ep$ outside $K_\ep$.
We consider now a $\tilde g$-geodesically convex compact set $K$ containing $K_\ep$ and the two hypersurfaces  $\Sigma_1$, $\Sigma_2$.

Let $\sigma_1$, $\sigma_2$ be two unit speed $\tilde g$-geodesics minimizing the distance between $\Sigma_1$, $\Sigma_2$ and a point $p=\sigma_1(t_1)=\sigma_2(t_2)$ outside $K_\ep$. For $j=1,2$, let $0<T_j<t_j$ be the largest number such that $\sigma_j(T_j)\in K_\ep$.
We then have, for $j=1,2$,
\[
0\,<\,\int_0^{T_j} f^2(\sigma_j(t))dt\,<\,T_j\max_{K}f^2\,,
\qquad(1-\ep)^2(t_j-T_j)\,<\,\int_{T_j}^{t_j} f^2(\sigma_j(t))dt\,<\,(1+\ep)^2(t_j-T_j)\,.
\]
As a consequence, we estimate:
\[
\frac{(1-\ep)^2(t_1-T_1)}{T_2\max_{K}f^2+(1-\ep)^2(t_2-T_2)}\,<\,\frac{\int_0^{t_1} f^2(\sigma_1(t))dt}{\int_0^{t_2} f^2(\sigma_2(t))dt}\,<\,\frac{T_1\max_{K}f^2+(1+\ep)^2(t_1-T_1)}{(1-\ep)^2(t_2-T_2)}\,.
\]
Since $f$ is bounded at infinity, the quantity $\max_{K}f^2$ is bounded by a constant independent of $K$. Furthermore, we have $T_j<{\rm diam}_{\tilde g}K_\ep$, for $j=1,2$, by construction. On the other hand, if we take the compact set $K$ to be much larger than $K_\ep$, we can make $t_1$ and $t_2$ arbitrarily large. The uniformity estimate~\eqref{eq:uniformity_requirement} follows then easily.
\end{proof}

Another case in which uniformity is guaranteed is under the assumption that the norm of the gradient of $f$ decays sufficiently fast.

\begin{proposition}
\label{prop:condition-uniformatinf}
Let $(E, g, f)$ be a substatic $f$-complete end and let $\rho$ be the $\tilde g$-distance from a point, where $\tilde g=g/f^2$.
If for some $\ep>0$ there exist a compact set $K\supset\pa E$ and a constant $C>0$ such that \[
|\na f|<C\rho^{-1-\ep}
\]
outside $K$, then $(E, g, f)$ is uniform.
\end{proposition}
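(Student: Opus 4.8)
The plan is to compare, for a point $p$ deep in the end, the two weighted $\tilde g$-lengths $L_j(p):=\int_0^{t_j}f^2(\sigma_j(t))\,dt$ — where $t_j={\rm d}_{\tilde g}(\Sigma_j,p)$ and $\sigma_j:[0,t_j]\to E$ is a $\tilde g$-minimizing geodesic from $\Sigma_j$ to $p$, parametrized by $\tilde g$-arclength — to the single scalar $f(p)^2\,t_j$, estimating the error via the decay $|\na f|<C\rho^{-1-\ep}$. The basic mechanism is that for a $\tilde g$-unit speed geodesic $\sigma$ one has $|\dot\sigma|_g=f(\sigma)$, so $\big|\tfrac{d}{dt}\log f^2(\sigma(t))\big|\le 2|\na f|_g(\sigma(t))$ — the two powers of $f$ cancel — and the right-hand side is integrable along a ray, since $\rho$ grows essentially at unit speed and $\int^{+\infty}\!s^{-1-\ep}\,ds<+\infty$.

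First I would record two auxiliary bounds on $f$. Enlarging $K_0$ (the compact set outside which the gradient estimate holds) so that it contains a $\tilde g$-ball centered at the fixed basepoint $o$, a minimizing $\tilde g$-ray from $o$ (with $\rho$ equal to arclength) stays in $K_0$ up to parameter $\ge 1$ and then leaves it, so the displayed inequality bounds $\log f^2$ from above; since closed $\tilde g$-balls are compact (the end is $f$-complete, cf.\ Lemma~\ref{le:completeness} and the discussion after Definition~\ref{def:f-complete}), this gives $M:=\sup_E f<+\infty$. Secondly, every closed $\tilde g$-ball $\{\rho\le R\}$ is a compact subset of the interior of $M$ (as $\pa M$ lies at infinite $\tilde g$-distance and an $f$-complete end has no conformal boundary), so $f$ is bounded below there by a positive constant. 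All these constants depend on the data but not on $\delta$.

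For the core estimate, fix $\Sigma_1,\Sigma_2$ and set $\Lambda:=\max_j\max_{\Sigma_j}\rho$. Since ${\rm d}_{\tilde g}(\sigma_j(0),\sigma_j(t))=t$, one gets $\rho(\sigma_j(t))\ge t-\Lambda$ and $|t_j-\rho(p)|\le\Lambda$; hence there is a \emph{fixed} time $T^\ast$, depending only on $\Lambda$ and $K_0$, past which $\sigma_j(t)$ lies outside $K_0$ and $\rho(\sigma_j(t))\ge t/2$, so that $\int_{T^\ast}^{t_j}\big|\tfrac{d}{dt}\log f^2(\sigma_j)\big|\,dt\le\omega_0$ for a fixed finite $\omega_0$. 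Integrating $\tfrac{d}{dt}\log f^2(\sigma_j)$ from $t$ to $t_j$, using $f(\sigma_j(t_j))=f(p)$ and $f\le M$, yields $|f^2(\sigma_j(t))-f(p)^2|\le c_1\,t^{-\ep}$ on $[T^\ast,t_j]$ for a fixed $c_1$; on $[0,T^\ast]$, where $\sigma_j$ stays in the fixed compact ball $\{\rho\le\Lambda+T^\ast\}$, the integrand $|f^2(\sigma_j)-f(p)^2|$ is bounded by a fixed constant. Integrating in $t$ gives
\[
\big|L_j(p)-f(p)^2\,t_j\big|\ \le\ \mathrm{const}+c_1\!\int_{T^\ast}^{t_j}\! t^{-\ep}\,dt\ =\ o(t_j)\quad\text{as }t_j\to+\infty,
\]
uniformly in $p$ and in the choice of $\sigma_j$; combining $j=1,2$ and using $|t_1-t_2|\le 2\Lambda$ and $f(p)\le M$, one obtains $|L_1(p)-L_2(p)|=o(t)$ with $t:=\max\{t_1,t_2\}$.

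Finally I would supply a matching lower bound. As $\sigma_2(T^\ast)$ lies in the fixed compact ball $\{\rho\le\Lambda+T^\ast\}$, hence in the interior of $M$, we have $f(\sigma_2(T^\ast))\ge m_0>0$ for a fixed $m_0$; the bound $\int_{T^\ast}^{t_2}\big|\tfrac{d}{dt}\log f^2(\sigma_2)\big|\le\omega_0$ then forces $f^2(\sigma_2(t))\ge e^{-\omega_0}m_0^2$ on $[T^\ast,t_2]$, so $L_2(p)\ge e^{-\omega_0}m_0^2\,(t_2-T^\ast)\ge c_2\,t$ for $t$ large, with $c_2>0$ fixed. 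Hence
\[
\left|\frac{L_1(p)}{L_2(p)}-1\right|=\frac{|L_1(p)-L_2(p)|}{L_2(p)}\ \le\ \frac{o(t)}{c_2\,t}\ \xrightarrow[\ t\to+\infty\ ]{}\ 0,
\]
and since $t\ge\rho(p)-\Lambda$, given $\delta>0$ one picks $\rho_0$ so large that the left-hand side is $\le\delta$ whenever $\rho(p)>\rho_0$, and $K:=\{\rho\le\rho_0\}\cup K_0$ is then the compact set required by Definition~\ref{def:uniform}. The main obstacle — and the reason the naive comparison with $f(p)^2 t_j$ does not by itself suffice — is that $f$ may tend to $0$ along the end, so $f(p)^2 t_j$ can be negligible compared with the error term $o(t_j)$; this is exactly what the separate lower bound $L_2(p)\ge c_2 t$ remedies, and it must be extracted from positivity of $f$ on a \emph{fixed} compact region crossed by $\sigma_2$ near parameter $T^\ast$, not from the behaviour of $f$ near $p$.
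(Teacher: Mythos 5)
Your proof is correct, and it is carried out in a somewhat different way from the paper's. The paper works multiplicatively: it derives a two-sided pointwise bound $e^{-\delta(\kappa)}\le f(\sigma_i(\tau))/f(p)\le e^{\delta(\kappa)}$ for $\tau>2\kappa$ (its \eqref{eq:logf_estimate_3}), plugs this into the ratio $L_1/L_2$ after cutting off the first segment $[0,2\kappa]$ by $\max$/$\min$ of $f^2$ on a fixed compact, and then argues that for $p$ far out the exponential factor dominates. You instead compare each $L_j(p)$ additively with $f(p)^2t_j$, showing $|L_j(p)-f(p)^2t_j|=o(t_j)$, and then divide by a separately established lower bound $L_2(p)\ge c_2 t$. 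The two are equivalent in substance — both hinge on the key fact that $\tfrac{d}{dt}\log f^2(\sigma(t))$ is controlled by $|\na f|_g$ (the $f$ from $|\dot\sigma|_g=f$ cancels the $1/f$ from $\log f$), which is integrable along rays by the decay hypothesis — but your version makes explicit the one point the paper leaves to the reader: that the tail integrals $\int_{2\kappa}^{t_j}f^2(\sigma_j)\,dt$ really do grow linearly in $t_j$, so the fixed constants $2\kappa\max_{K_{2\kappa}}f^2$ and $2\kappa\min_{K_{2\kappa}}f^2$ become negligible. In the paper this follows (implicitly) from \eqref{eq:logf_estimate_3} applied at $\tau$ slightly above $2\kappa$, which pins $f(p)$ from below by $\bigl(\min_{K_{2\kappa+1}}f\bigr)e^{-\delta(\kappa)}>0$; you instead extract the lower bound from positivity of $f$ on the fixed compact ball $\{\rho\le\Lambda+T^\ast\}$ crossed by $\sigma_2$ near time $T^\ast$, then propagate it forward by the oscillation bound $\omega_0$ on $\log f^2$. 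Your diagnosis of this as the non-trivial step is apt; making it explicit is an improvement over the paper's exposition. Minor remarks: the auxiliary bound $\sup_E f<\infty$ is fine but not strictly necessary (one can keep all comparisons multiplicative, as the paper does), and the citation should really be to the discussion after Definition~\ref{def:f-complete} for $\tilde g$-geodesic completeness of the end rather than to Lemma~\ref{le:completeness}, which you do also mention.
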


\begin{proof}
Fix the compact hypersurfaces $\Sigma_1$, $\Sigma_2$, the point $x$ and the constant $\ep>0$. Let $K\supset\pa E$ be the compact set such that $|\na f|<C\rho^{-1-\ep}$ outside $K$, where $\rho$ is the $\tilde g$-distance from $x$. Up to enlarging $K$, we can suppose that $x$, $\Sigma_1$ and $\Sigma_2$ are inside $K$.

Let $p$ be a point outside $K$. For $i=1,2$, consider the unit speed $\tilde g$-geodesic $\sigma_i:[0,t_i]\to M$ minimizing the distance between $\Sigma_i$ and $p$ and such that $\sigma_i(0)\in\Sigma_i$, $\sigma_i(t_i)=p$. 
We compare the value of $f$ at a point $\sigma_i(\tau)$ and at the point $p$. Integrating along the geodesic, we find
$$
\log f(p)\,=\,\log f(\sigma_i(\tau))+\int_{\tau}^{t_i}\big\langle\widetilde\na \log f\,\big|\,\dot\sigma_i\big\rangle_{\tilde g}(\sigma_i(t))\,dt\,,
$$
hence
\begin{equation}
\label{eq:logf_estimate}
\left|\log\left(\frac{f(\sigma_i(\tau))}{f(p)}\right)\right|\,\leq\,\int_{\tau}^{t_i}\frac{1}{f}\big|\widetilde\na f\big|_{\tilde g}(\sigma_i(t))\,dt
=\,\int_{\tau}^{t_i}|\na f|(\sigma_i(t))\,dt\,.
\end{equation}
We now exploit our hypothesis. Assume that the segment ${\sigma_i}_{|_{[\tau,t_i]}}$ is outside $K$. Then $|\na f|\leq C\rho^{-1-\ep}$ at the points of ${\sigma_i}_{|_{[\tau,t_i]}}$, where $\rho$ is the distance from $x$. Notice that for all $\tau\leq t\leq t_i$, $\sigma_i(t)$ is at distance $t$ from $\Sigma_i$, so that by triangle inequality, for any $y\in K$ it holds
$$
t-\max_{y\in\Sigma_i}{\rm d}_{\tilde g}(x,y)\leq\rho(\sigma_i(t))\leq t+\max_{y\in\Sigma_i}{\rm d}_{\tilde g}(x,y)\,.
$$
If we then take 
\begin{equation}
\label{eq:aux_uniform}
t\geq 2\max_{y\in\Sigma_i}{\rm d}_{\tilde g}(x,y)
\end{equation}
we get 
$$
|\na f|(\sigma_i(t))\,\leq\,
C\rho^{-1-\ep}(\sigma_i(t))
\,\leq\,C(t-\max_{y\in\Sigma_i}{\rm d}_{\tilde g}(x,y))^{-1-\ep}\,\leq\,C\left(\frac{t}{2}\right)^{-1-\ep}=2^{1+\ep}Ct^{-1-\ep}\,.
$$
If we then suppose that $\tau\geq 2\max_{y\in\Sigma_i}{\rm d}_{\tilde g}(x,y)$, we deduce from~\eqref{eq:logf_estimate} that
\begin{equation}
\left|\log\left(\frac{f(\sigma_i(\tau))}{f(p)}\right)\right|\,\leq\,2^{1+\ep}C\frac{\tau^{-\ep}-t_i^{-\ep}}{\ep}
\end{equation}
that is,
\begin{equation}
\label{eq:logf_estimate_2}
e^{-\frac{2^{1+\ep}C}{\ep}\tau^{-\ep}}\,\leq\,{\rm exp}\left[2^{1+\ep}C\frac{t_i^{-\ep}-\tau^{-\ep}}{\ep}\right]\,\leq\,\frac{f(\sigma_i(\tau))}{f(p)}\,\leq\,{\rm exp}\left[2^{1+\ep}C\frac{\tau^{-\ep}-t_i^{-\ep}}{\ep}\right]\,\leq\,e^{\frac{2^{1+\ep}C}{\ep}\tau^{-\ep}}\,.
\end{equation}

Let now $\kappa>0$ be a large number and consider the compact set 
\[
K_\kappa\,=\,B^{\tilde g}_\kappa(K)\,:=\,\{y\in M\,:\,{\rm d}_{\tilde g}(K,y)\leq\kappa\}\,.
\]
Since $\Sigma_i\subset K$, notice that if $\sigma_i(t)\not\in K_\kappa$ then $t>\kappa$. It follows that, up to taking $\kappa$ large enough, inequality~\eqref{eq:aux_uniform} holds and in particular $|\na f|(\sigma_i(t))\leq2^{1+\ep}Ct^{-1-\ep}$ for any $t$ such that $\sigma_i(t)\not\in K_\kappa$.

Conversely, also notice by triangle inequality that if 
$$
t>\kappa+\max_{y\in\Sigma_i,z\in \pa K\setminus\pa E}{\rm d}_{\tilde g}(y,z)\,,
$$
then $\sigma_i(t)\not\in K_\kappa$. Up to taking $\kappa$ large enough, we can then also suppose that $\sigma_i(t)\not\in K_\kappa$ for every $t>2\kappa$. In particular, for any $\tau>2\kappa$ we can apply estimate~\eqref{eq:logf_estimate_2}, obtaining
\begin{equation}
\label{eq:logf_estimate_3}
e^{-\frac{2C}{\ep}\kappa^{-\ep}}\,\leq\,\frac{f(\sigma_i(\tau))}{f(p)}\,\leq\,e^{\frac{2C}{\ep}\kappa^{-\ep}}
\end{equation}
We are finally ready to prove uniformity at infinity with respect to the compact set $K_\kappa$, for $\kappa$ sufficiently large. As already noticed, $\sigma_i(t)\in K_{2\kappa}$ for $t<2\kappa$, hence
$$
\frac{\int_0^{t_1}f^2(\sigma_1(t))dt}{\int_0^{t_2}f^2(\sigma_2(t))dt}\,\leq\,
\frac{2\kappa\max_{K_{2\kappa}}f^2+\int_{2\kappa}^{t_1}f^2(\sigma_1(t))dt}{2\kappa\min_{K_{2\kappa}}f^2+\int_{2\kappa}^{t_2}f^2(\sigma_2(t))dt}\,\leq\,
\frac{2\kappa\max_{K_{2\kappa}}f^2+(t_1-2\kappa)f(p)^2 e^{\frac{2C}{\ep}\kappa^{-\ep}}}{2\kappa\min_{K_{2\kappa}}f^2+(t_2-2\kappa)f(p)^2e^{-\frac{2C}{\ep}\kappa^{-\ep}}}\,.
$$
Notice that  $t_1$ and $t_2$ are comparable (their difference is bounded via the triangle inequality by the maximum of the distance between points of $\Sigma_1$ and $\Sigma_2$). Therefore, for any $\tilde\ep>0$ arbitrarily small, we can find $\tilde \kappa$ much larger than $\kappa$ so that, assuming $p$ is outside $K_{\tilde\kappa}$ (in particular $t_1,t_2$ are also much larger than $\kappa$) it holds
$$
\frac{\int_0^{t_1}f^2(\sigma_1(t))dt}{\int_0^{t_2}f^2(\sigma_2(t))dt}\,\leq\,(1+\tilde\ep)e^{\frac{4C}{\ep}\kappa^{-\ep}}\,.
$$
Up to choosing $\kappa$ large enough, we can also make the exponential term in the inequality above as close to $1$ as necessary.
Of course, exchanging the roles of $\sigma_1$ and $\sigma_2$ we also find the opposite bound. This proves uniformity.
\end{proof}

Equipped with the notion of uniformity of the ends, we are now ready to define the substatic version of the asymptotic volume ratio.

\begin{definition}
\label{def:AVR}
Let $(M,g,f)$ be a substatic solution and let $E$ be a uniform $f$-complete end. Let $\rho$ be the distance function to a point or a hypersurface with respect to the metric $\tilde g=g/f^2$ and $\eta$ be the solution to~\eqref{eq:eta} or~\eqref{eq:eta_Sigma}, respectively.
The {\em Asymptotic Volume Ratio $\mathrm{AVR}(E, g, f)$} of $E$ is defined as
$$
{\rm AVR}(E, g, f)\,=\,\frac{1}{|\Sph^{n-1}|}\lim_{t\to+\infty}\int_{\{\rho=t\}\cap E}\frac{1}{\eta^{n-1}}d\sigma\,.
$$
If $(M,g,f)$ has a unique end $E$, we refer to $\mathrm{AVR}(E, g, f)$  with $\mathrm{AVR}(M, g, f)$.
\end{definition}

\smallskip

The following basic fact motivates the introduction of the notion of uniform ends.

\begin{proposition}
\label{pro:AVR_wellposed}
The substatic Asymptotic Volume Ratio  is well-defined on any uniform $f$-complete end. In other words, its definition does not depend on the choice of the point/hypersurface we are taking the distance $\rho$ from.
\end{proposition}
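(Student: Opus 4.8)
The plan is to show that the limit defining $\mathrm{AVR}(E,g,f)$ is the same whether we compute the $\tilde g$-distance $\rho$ from a point $x$ or from another point $y$ (the case of a hypersurface being handled by reducing to this one, since any hypersurface is wedged between two geodesic spheres in the $\tilde g$-metric, and the monotonicity of $A(t)$ from Theorem~\ref{thm:BG_nb} squeezes the three limits together). So let $\rho_x$, $\rho_y$ be the $\tilde g$-distances from $x,y$, with corresponding reparametrized distances $\eta_x$, $\eta_y$ solving \cref{eq:eta}, and let
\[
A_x(t)\,=\,\frac{1}{|\Sph^{n-1}|}\int_{\{\rho_x=t\}\cap E}\frac{1}{\eta_x^{n-1}}d\sigma\,,\qquad A_y(t)\,=\,\frac{1}{|\Sph^{n-1}|}\int_{\{\rho_y=t\}\cap E}\frac{1}{\eta_y^{n-1}}d\sigma\,.
\]
Both limits $\lim_{t\to\infty}A_x(t)$ and $\lim_{t\to\infty}A_y(t)$ exist by the Bishop--Gromov monotonicity. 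The goal is to prove they coincide.

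First I would record the basic geometric comparison: along any $\tilde g$-geodesic $\sigma$ running out the end $E$, the reparametrized distances $\eta_x$ and $\eta_y$ differ only in their ``initial conditions'', because both satisfy $\partial\eta/\partial\rho=f^2$ along $\sigma$ — that is, $\eta_x(\sigma(s))-\eta_x(\sigma(s_0))=\int_{s_0}^s f^2(\sigma(t))dt=\eta_y(\sigma(s))-\eta_y(\sigma(s_0))$, so the difference $\eta_x-\eta_y$ is \emph{constant} along radial $\tilde g$-geodesics once they share a direction. Combined with the interpretation of $\eta$ as $\tilde g$-geodesic integrals of $f^2$ from the base point, the uniformity hypothesis \cref{eq:uniformity_requirement} says precisely that the ratio $\eta_x/\eta_y\to 1$ uniformly as we go to infinity along $E$: given $\delta>0$ there is a compact $K\supset\partial E$ such that $|\eta_x/\eta_y-1|\le\delta$ at every point of $E\setminus K$. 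This is the step where uniformity is used, and essentially the only place.

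Next I would turn the pointwise ratio estimate into an estimate between $A_x$ and $A_y$. The clean way is to compare $A_x(t)$ with $A_y$ evaluated at comparable radii. Fix $\delta>0$ and the compact $K$ from uniformity; choose $T$ large enough that $\{\rho_x\ge T\}\cap E$ and $\{\rho_y\ge T\}\cap E$ both lie outside $K$. On $\{\rho_x=t\}$ with $t\ge T$, each point has $\eta_y\le(1+\delta)\eta_x$, hence $1/\eta_x^{n-1}\ge(1+\delta)^{-(n-1)}/\eta_y^{n-1}$, and by the triangle inequality $\{\rho_x=t\}$ is contained in the region $\{|\rho_y-t|\le \mathrm{d}_{\tilde g}(x,y)\}$. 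Using the coarea formula together with the monotonicity of $A_y$ — which controls the $\tilde g$-area of $\{\rho_y=\cdot\}$ weighted by $\eta_y^{-(n-1)}$ on that thin shell — one gets $A_x(t)\ge(1+\delta)^{-(n-1)}\big(\lim_{s\to\infty}A_y(s)\big)$ for $t$ large, after also discarding the cut-locus contribution (which is harmless because removing ${\rm Cut}^{\tilde g}$ only decreases $A$, and because the relevant sets have the measurability established around \cref{eq:rigorous_A}). Letting $t\to\infty$ and then $\delta\to 0$ gives $\lim A_x\ge\lim A_y$; swapping the roles of $x$ and $y$ gives the reverse inequality, hence equality. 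Finally, for a mean-convex hypersurface $\Sigma$ homologous to $\partial M$, pick a point $x$ in the compact region between $\partial M$ and (a $\tilde g$-sphere enclosing) $\Sigma$; since $\rho_\Sigma$ and $\rho_x$ differ by a bounded amount near infinity and the corresponding $\eta$'s again have ratio tending to $1$ by uniformity, the same squeezing argument identifies $\mathrm{AVR}(E,g,f)$ computed from $\Sigma$ with that computed from $x$.

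\textbf{Main obstacle.} The genuinely delicate point is handling the mismatch in radii and the cut locus simultaneously: $\{\rho_x=t\}$ is not a level set of $\rho_y$, so one cannot directly substitute one monotone quantity for the other, and both $\eta_x$ and $\eta_y$ fail to be defined on their respective cut loci. The remedy is to work on the ``thin shell'' $\{t-c\le\rho_y\le t+c\}$ with $c=\mathrm{d}_{\tilde g}(x,y)$, apply the coarea formula there (valid since $\rho_y$ is Lipschitz, as used in the proof of Theorem~\ref{thm:BG_nb}), and bound the shell average of $A_y$ between $A_y(t-c)$ and $A_y(t+c)$ using monotonicity; both of these tend to the same limit as $t\to\infty$, so the thickness $c$ — fixed once $x,y$ are chosen — washes out. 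Once this bookkeeping is set up, the rest is the uniform ratio bound and a limit.
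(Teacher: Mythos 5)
Your high-level intuition — use uniformity to force $\eta_x/\eta_y\to 1$ at infinity and then squeeze — is right, but the step in which you compare $A_x(t)$ to $A_y$ has a genuine gap, and it is exactly the gap the paper avoids by routing the argument through the \emph{volume} functional $V$ of Theorem~\ref{thm:BGvolumi_nb} rather than through $A$. After applying uniformity you are left needing to control $\int_{\{\rho_x=t\}}\eta_y^{-(n-1)}d\sigma$: an integral over a level set of $\rho_x$, but with the reparametrized distance $\eta_y$ attached to the \emph{other} base. No monotone or divergence-structured quantity in the paper controls this mixed object. The vector field $X_y=f\eta_y^{-(n-1)}\nabla\rho_y$ has nonpositive weak divergence, but its flux through $\{\rho_x=t\}$ involves $\langle\nabla\rho_y,\nu_x\rangle$, which is not $|\nabla\rho_y|$; and the thin-shell coarea computation only yields $\int_{t-c}^{t+c}A_y(s)\,ds$, which does not dominate $\int_{\{\rho_x=t\}}\eta_y^{-(n-1)}d\sigma$ without a monotonicity statement for $s\mapsto\int_{\{\rho_x=s\}}\eta_y^{-(n-1)}d\sigma$ that you neither have nor establish. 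So the claimed bound $A_x(t)\ge(1+\delta)^{-(n-1)}\lim_{s}A_y(s)$ is not actually derived. (There is also a direction error: $\eta_y\le(1+\delta)\eta_x$ gives $\eta_x^{-(n-1)}\le(1+\delta)^{n-1}\eta_y^{-(n-1)}$, not $\ge$.) Your reduction of the hypersurface case to the point case by "wedging $\Sigma$ between geodesic spheres and squeezing" fails for the same reason: the three $\eta$'s are attached to three different bases, so the three monotonicities do not interleave.

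The paper's fix is to first record, via L'H\^opital, that $\lim_{t\to\infty}A(t)=(k/n)\lim_{t\to\infty}V(t)$ for $V(t)=\frac{1}{|\mathbb{B}^n|t^k}\int_{\{\rho\le t\}}\frac{\rho^{k-1}}{f\eta^{n-1}}d\mu$ (then take $k=1$), and to compare $V_1$ and $V_2$ instead of $A_1$ and $A_2$. Because $V$ is a volume integral over a sublevel set, the triangle inequality $\{\rho_1\le t-\delta\}\subset\{\rho_2\le t\}$ makes the two domains nested, and $V_1(t)-V_2(t)$ splits into an integral over the thin shell $\{t-\delta\le\rho_1\le t\}$ (which vanishes because $A_1$ converges) plus an integral over the bulk $\{\rho_1\le t-\delta\}$ with integrand $f^{-1}\eta_1^{-(n-1)}\bigl(1-\eta_1^{n-1}/\eta_2^{n-1}\bigr)$, which vanishes by uniformity and the boundedness of $V$. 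No cross-level-set comparison is ever needed, and the hypersurface case is handled by the same triangle-inequality inclusion without a separate reduction. You have the right ingredients (triangle inequality, uniformity as a ratio estimate on $\eta$), but you need to deploy them on $V$, not on $A$.
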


\begin{proof}
Let $\rho$ be the $\tilde g$-distance from a point or a hypersurface. We consider the functional $V(t)$ defined in~\eqref{eq:V_k}, that we recall here for the reader's convenience:
\[
V(t)\,=\,\frac{1}{|\mathbb{B}^n|t^k}\int_{\{0\leq\rho\leq t\}}\frac{\rho^{k-1}}{f\eta^{n-1}}d\mu\,,
\]
where $k>0$ is a constant.
A simple application of L'H\^{o}pital's rule tells us immediately that
\begin{align}
\lim_{t\to+\infty}V(t)\,&=\,\frac{1}{|\mathbb{B}^n|}\lim_{t\to+\infty}\frac{1}{t^k}\int_{\{\rho\leq t\}}\frac{\rho^{k-1}}{f\eta^{n-1}}d\mu
\\
&=\,\frac{n}{k|\mathbb{S}^{n-1}|}\lim_{t\to+\infty}\int_{\{\rho= t\}}\frac{1}{\eta^{n-1}}d\sigma
\\
\label{eq:AVR_A_V}
&=\,\frac{n}{k}\lim_{t\to+\infty}A(t)\,.
\end{align}

In order to conclude the proof, it is then enough to show that $\lim_{t\to+\infty}V(t)$ is independent of the choice of the point/hypersurface. In the rest of the proof, it is convenient to set $k=1$ in our functional $V$.
Let $\eta_1$, $\eta_2$ be reparametrized distances with respect to two different points (resp. two different hypersurfaces) and let $\delta$ be the distance between the two points (resp. the maximum distance between points of the two hypersurfaces) with respect to the metric $\tilde g=g/f^2$. By triangle inequality we have the inclusion $\{\rho_1\leq t-\delta\}\subset\{\rho_2\leq t\}$, therefore
\begin{align}
V_1(t)-V_2(t)&=\frac{1}{|\mathbb{B}^n|t}\int_{\{\rho_1\leq t\}}\frac{1}{f\eta_1^{n-1}}d\mu-\frac{1}{|\mathbb{B}^n|t}\int_{\{\rho_2\leq t\}}\frac{1}{f\eta_2^{n-1}}d\mu
\\
&\leq 
\frac{1}{|\mathbb{B}^n|t}\int_{\{\rho_1\leq t\}}\frac{1}{f\eta_1^{n-1}}d\mu-\frac{1}{|\mathbb{B}^n|t}\int_{\{\rho_1\leq t-\delta\}}\frac{1}{f\eta_2^{n-1}}d\mu
\\
&\leq 
\frac{1}{|\mathbb{B}^n|t}\int_{\{t-\delta\leq\rho_1\leq t\}}\frac{1}{f\eta_1^{n-1}}d\mu+\frac{1}{|\mathbb{B}^n|t}\int_{\{\rho_1\leq t-\delta\}}\frac{1}{f\eta_1^{n-1}}\left(1-\frac{\eta_1^{n-1}}{\eta_2^{n-1}}\right)d\mu\,.
\end{align}
Concerning the first integral, applying again L'H\^{o}pital's rule we find that its limit is the same as the limit of $nA(t)-nA(t-\delta)$ at $t\to+\infty$, where $A$ is the usual area functional with respect to the distance $\rho_1$. Since $A(t)$ has a finite limit at infinity and $\delta$ is fixed, this limit is zero. From the uniformity of the end and the fact that $V(t)$ is bounded, we deduce that the second integral also goes to zero. Hence, we have found that $\lim_{t\to+\infty}[V_1(t)-V_2(t)]\leq 0$.
Switching the roles of $V_1$ and $V_2$ we find that the opposite inequality is also in place, hence the limits of $V_1(t)$ and $V_2(t)$ are the same, as wished.
\end{proof}

\begin{remark}
As noted in \cref{rem:eta}, $\eta$ represents the distance along radial $\tilde{g}$-geodesics with respect to the metric $\overline{g}= f^2 g$. Providing a suitable Bishop-Gromov-type Theorem in terms of the $\overline{g}$-distance in place of $\eta$ may be useful to cook a notion of Asymptotic Volume Ratio that does not need the notion of uniformity to be well defined.
\end{remark}

The following is a  basic yet fundamental consequence of the Splitting Theorem~\ref{thm:splitting_intro}. 

\begin{lemma}
\label{lem:positiveavr->1end}
Let $(M, g, f)$ be a substatic triple with $f$-complete ends. If there is more than one uniform $f$-complete end, then all ends have vanishing asymptotic volume ratio.
\end{lemma}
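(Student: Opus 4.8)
The plan is to feed the Splitting Theorem~\ref{thm:splitting_intro} into an explicit computation of the Bishop--Gromov functionals of Theorems~\ref{thm:BG_nb} and~\ref{thm:BGvolumi_nb} in the resulting product.

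\emph{Reduction.} Since all ends of $(M,g,f)$ are $f$-complete and there is more than one, Theorem~\ref{thm:splitting_intro} yields an isometry $(M,g)\cong(\R\times\Sigma,\,f^2\,ds\otimes ds+g_\Sigma)$ with $(\Sigma,g_\Sigma)$ closed; in particular $\pa M=\emptyset$ and $M$ has exactly two ends $E_\pm$, corresponding to $s\to\pm\infty$, both $f$-complete and (there being only two, at least two of which uniform) both uniform. In these coordinates $\sqrt{\det g}=f\sqrt{\det g_\Sigma}$, so $\tfrac1f\,d\mu=ds\wedge d\mathrm{vol}_{g_\Sigma}$, while $\tilde g=ds\otimes ds+f^{-2}g_\Sigma$ has $|\widetilde\na s|_{\tilde g}\equiv1$. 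Fix a base point $p\in\{s=0\}$, let $\rho$ be the $\tilde g$-distance from $p$, $\eta$ the reparametrized distance~\eqref{eq:eta}, and $U=M\setminus({\rm Cut}^{\tilde g}_p\cup\{p\})$; then $|s(q)|\le\rho(q)$ for every $q$. By Proposition~\ref{pro:AVR_wellposed} one has $\mathrm{AVR}(E_\pm,g,f)=\lim_{t\to\infty}A_\pm(t)$ with $A_\pm(t):=\tfrac1{|\Sph^{n-1}|}\int_{\{\rho=t\}\cap E_\pm}\eta^{-(n-1)}d\sigma$, and (the proof of) that proposition, see~\eqref{eq:AVR_A_V}, gives $\lim_{t\to\infty}V(t)=n\lim_{t\to\infty}A(t)$ for the functional~\eqref{eq:V_k} with exponent $k=1$, where $A,V$ refer to the full level sets/balls of $\rho$. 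Since $A$ is nonincreasing and nonnegative these limits exist, and $A_\pm(t)\le A(t)$ for $t$ large, so it suffices to prove $\lim_{t\to\infty}V(t)=0$.

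\emph{Uniform divergence of $\eta$.} Set $\overline g=f^2g$. First I would check that $(M,\overline g)$ is complete: a curve from $p$ leaving every compact set has infinite $g$-length (as $(M,g)$ is complete), and, parametrized by $g$-arclength as $\gamma$, has $\overline g$-length $\int f(\gamma(t))\,dt=+\infty$ by $f$-completeness; since $\pa M=\emptyset$ this forces both ends to lie at infinite $\overline g$-distance from $p$. By Remark~\ref{rem:eta}, $\eta(q)$ is the $\overline g$-length of the radial $\tilde g$-geodesic from $p$ to $q$, hence $\eta\ge d_{\overline g}(p,\cdot)$ on $U$, and therefore $\eta(q)\to+\infty$ as $q\to\infty$ in $M$. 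Since every $g$-compact set sits inside some slab $\{|s|\le c\}$, the nondecreasing function $\Lambda(\sigma):=\inf\{\eta(q):q\in U,\ |s(q)|\ge\sigma\}$ tends to $+\infty$; fix $R_0$ with $\Lambda(R_0)\ge1$ and such that $\{|s|>R_0\}$ splits into neighborhoods of $E_+$ and $E_-$, and put $D_0:=\max_{\{|s|\le R_0\}}\rho<+\infty$ (finite because $(M,\tilde g)$ is complete by Lemma~\ref{le:completeness}).

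\emph{Conclusion.} Using $\{\rho>D_0\}\subseteq\{|s|>R_0\}$ and $|s|\le\rho$, together with $\tfrac1f\,d\mu=ds\wedge d\mathrm{vol}_{g_\Sigma}$ and $\eta\ge\Lambda(|s|)$ on $\{|s|>R_0\}\cap U$, one gets
\[
\int_{\{\rho\le t\}}\frac{d\mu}{f\eta^{n-1}}\ \le\ \int_{\{\rho\le D_0\}}\frac{d\mu}{f\eta^{n-1}}\ +\ 2\,|\Sigma|_{g_\Sigma}\int_{R_0}^{t}\frac{d\sigma}{\Lambda(\sigma)^{n-1}}\,.
\]
The first term is a finite constant, as the finiteness of $\int_{\{\rho\le s\}}(f\eta^{n-1})^{-1}d\mu$ is recalled in the proof of Theorem~\ref{thm:BGvolumi_nb}; the second term is $\le 2|\Sigma|_{g_\Sigma}(t-R_0)$ and, since $\Lambda(\sigma)^{-(n-1)}\to0$, its division by $t$ tends to $0$ by Ces\`aro. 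Hence $V(t)=\tfrac1{|\mathbb B^n|t}\int_{\{\rho\le t\}}(f\eta^{n-1})^{-1}d\mu\to0$, so $\lim A(t)=0$, whence $0\le A_\pm(t)\le A(t)\to0$ and $\mathrm{AVR}(E_\pm,g,f)=0$. The only non-formal step here is upgrading ``$\eta\to\infty$ along rays'' (the content of $f$-completeness) to the \emph{uniform} statement $\Lambda(\sigma)\to\infty$ required to control the full weighted area $\int_{\{\rho=t\}\cap E_\pm}\eta^{-(n-1)}d\sigma$; this is exactly where the completeness of $(M,\overline g)$ and the bound $\eta\ge d_{\overline g}(p,\cdot)$ enter. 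Everything else is bookkeeping with the coarea formula and the explicit product metric.
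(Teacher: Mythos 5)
Your proof is correct, but it takes a noticeably longer route than the paper's. The paper's argument picks $\rho$ to be the signed $\tilde g$-distance from the totally geodesic cross-section $\{s=0\}$ of the split manifold; in the product metric $\tilde g = ds\otimes ds + f^{-2}g_\Sigma$ the level sets $\{\rho=t\}$ are then exactly the cross-sections $\{s=\pm t\}$, each carrying the fixed metric $g_\Sigma$ and hence fixed area $|\Sigma|_{g_\Sigma}$, so that $A(t)\le |\Sigma|_{g_\Sigma}/(|\Sph^{n-1}|\min_{\{\rho=t\}}\eta^{n-1})\to 0$ at once (the uniformity of $\eta\to\infty$ on cross-sections follows from compactness of $\Sigma$ and continuity). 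By working with the distance from a point instead, you lose this clean product structure of the level sets and must compensate in two places: first, you need to upgrade the pointwise divergence of $\eta$ along rays to the uniform statement $\Lambda(\sigma)\to\infty$, which you do via the neat observation from Remark~\ref{rem:eta} that $\eta\ge d_{\overline g}(p,\cdot)$ together with the completeness of $(M,\overline g)$ deduced from $f$-completeness — this is the genuinely nontrivial step in your argument, and it is correct; second, because you cannot compute $A(t)$ on a single cross-section you instead pass through the volume functional $V(t)$ with $k=1$, the coarea formula, and a Ces\`aro estimate. Both approaches reach the goal; yours is more machinery-heavy but does make explicit the role of $\overline g$-completeness (a structural point the paper's shorter computation never has to invoke), whereas the paper's choice of the cross-section as base hypersurface collapses the whole calculation to essentially one line.
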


\begin{proof}
Suppose that there is more than one uniform $f$-complete end. Then the Splitting Theorem~\ref{thm:splitting_intro} implies that the manifold splits as a twisted product
$$
(\R\times\Sigma,\,f^2\,ds\otimes ds+g_\Sigma)\,,
$$ 
for some $(n-1)$-dimensional Riemannian manifold $(\Sigma,g_\Sigma)$. Let $\rho$ be the $\tilde g$-distance from the cross section $\{s=0\}$ and $\eta$ be defined by~\eqref{eq:eta_Sigma}, as usual. Notice that the level sets of $\rho$ and $\eta$ are also level sets of $s$, hence in particular the metric induced on any level set of $\rho$ is $g_\Sigma$. It follows then that
\[
{\rm AVR}(E, g, f)\,=\,\frac{1}{|\Sph^{n-1}|}\lim_{t\to+\infty}\int_{\{\rho=t\}\cap E}\frac{1}{\eta^{n-1}}d\sigma\,=\,\lim_{t\to+\infty}
\frac{|\Sigma|}{|\Sph^{n-1}|\eta_{|_{\{\rho=t\}}}^{n-1}}
\,.
\]
Since the end is $f$-complete, we have $\eta\to+\infty$ at infinity, hence the above limit vanishes. 
\end{proof}

In light of the above Lemma, our main geometric inequalities \cref{eq:isoperimetric_intro} and \cref{eq:willmore-intro} will only involve one end and the global $\mathrm{AVR}(M, g, f)$.

In this framework, we now discuss some cases in which we are able to give more precise estimates for the Asymptotic Volume Ratio. A first simple estimate, in the case where the boundary is empty, is obtained from~\eqref{eq:BG_effective}. Taking the limit of this formula as $t\to+\infty$, assuming that the boundary is empty (so that the term $A(t)$ appearing in that formula converges to the asymptotic volume ratio) we find the following
\[
{\rm AVR}(M,g,f)\,f(p)^{n-1}\,\leq\,1\,.
\]
This must hold for any point $p\in M$.
In particular, it follows that if $\pa M=\emptyset$ and $f$ is not bounded then the Asymptotic Volume Ratio must vanish.

An important family of substatic manifolds having nonzero AVR is that of asymptotically flat triples, that we now define precisely.

\begin{definition}
\label{def:AF}
A substatic triple $(M,g,f)$ is said to be {\em asymptotically flat} if
\begin{itemize}
\item[$(i)$] there exists a compact domain $K\supset\pa M$ and a diffeomorphism (called {\em chart at infinity}) between $M\setminus K$ and $\R^n$ minus a ball.
\smallskip
\item[$(ii)$] in the chart at infinity, it holds $|g_{ij}-\delta_{ij}|=o(1)$ and $|f-1|=o(1)$
as $|x|\to+\infty$.
\end{itemize}
\end{definition}

We remark that the usual definition of asymptotic flatness 
requires a higher degree of convergence of the metric $g$ to the Euclidean one. However, the above definition is sufficient to compute precisely the asymptotic volume ratio.

\begin{proposition}
Let $(M,g,f)$ be an asymptotically flat substatic triple. Then ${\rm AVR}(M,g,f)=1$.
\end{proposition}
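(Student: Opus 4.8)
The plan is to reduce the claim to an elementary volume comparison in the chart at infinity, carried out through the monotone functional $V$ of \cref{thm:BGvolumi_nb}. Since $f\to1$ at infinity, \cref{pro:fcomplete_AF} and \cref{prop:uniform_AF} guarantee that the (unique) end is $f$-complete and uniform, so by \cref{pro:AVR_wellposed} the ratio $\mathrm{AVR}(M,g,f)$ is well defined and may be computed using $\rho=$ the $\tilde g$-distance from any fixed interior point $p$ --- this is legitimate even when $\partial M\neq\emptyset$, because $\partial M$ lies at infinite $\tilde g$-distance by \cref{le:completeness}, so for $t$ large $\{\rho=t\}$ is entirely contained in the end. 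Choosing $k=n$ in \cref{eq:V_k} and recalling from \cref{eq:AVR_A_V} that $\lim_{t\to+\infty}V(t)=\frac{n}{k}\lim_{t\to+\infty}A(t)$, hence with $k=n$ that $\lim_{t\to+\infty}V(t)=\lim_{t\to+\infty}A(t)=\mathrm{AVR}(M,g,f)$, it suffices to prove
\[
V(t)\,=\,\frac{1}{|\mathbb{B}^n|\,t^{n}}\int_{\{\rho\leq t\}}\frac{\rho^{n-1}}{f\,\eta^{n-1}}\,d\mu\,\longrightarrow\,1\qquad\text{as }t\to+\infty\,.
\]
It is crucial to work with $V$ rather than directly with the area functional $A$: the former is a volume integral, stable under the merely $\mathscr{C}^0$ asymptotics of \cref{def:AF}, whereas the areas of the level sets $\{\rho=t\}$ may oscillate without control at that regularity.

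The two asymptotic facts needed, both in the chart at infinity, are: $(a)$ $\rho(x)/|x|\to1$ as $|x|\to+\infty$; and $(b)$ $\eta/\rho\to1$ as $\rho\to+\infty$ off the cut locus (which is enough, as the cut locus is negligible in the volume integral). For the upper bound in $(a)$ I would estimate the $\tilde g$-length of the Euclidean segment joining a fixed chart point near $p$ to $x$, splitting it at a large coordinate ball $B_R$: the portion inside $B_R$ has bounded $\tilde g$-length, while on the exterior portion $\tilde g\leq(1+\varepsilon_R)\delta$ with $\varepsilon_R\to0$, so $\rho(x)\leq(1+\varepsilon_R)^{1/2}|x|+O_R(1)$, and letting $R\to\infty$ gives $\rho(x)\leq(1+o(1))|x|$. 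For the lower bound I would take a minimizing $\tilde g$-geodesic $\gamma$ from $p$ to $x$ and a slowly growing cutoff $R=R(|x|)$, say $R=|x|^{1/2}$: being minimizing, the last exit time of $\gamma$ from $B_R$ equals the $\tilde g$-distance from $p$ to a point of $\partial B_R$, hence is $O(R)=o(|x|)$, while past that time $\gamma$ stays where $\tilde g\geq(1-o(1))\delta$, so its remaining $\tilde g$-length is at least $(1-o(1))(|x|-R)$, yielding $\rho(x)\geq(1-o(1))|x|$. Fact $(b)$ then follows from $\partial\eta/\partial\rho=f^2$, the convergence $f\to1$ at infinity and $(a)$, via a Cesàro averaging uniform in the angular variables.

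Combining $(a)$, $(b)$, $f\to1$ and $\sqrt{\det g}\to1$, the integrand $\rho^{n-1}/(f\eta^{n-1})$ and the density $d\mu/dx$ both tend to $1$ at infinity, and for every $\varepsilon>0$ and $t$ large one has $\{|x|\leq(1-\varepsilon)t\}\subseteq\{\rho\leq t\}\subseteq\{|x|\leq(1+\varepsilon)t\}$ outside a fixed compact set. Splitting $\{\rho\leq t\}$ into that fixed compact set, whose contribution to $V(t)$ is $O(t^{-n})$, and its complement, and using $\mathrm{Vol}_{\mathrm{euc}}(\{|x|\leq s\})=|\mathbb{B}^n|s^n$, one obtains $(1-\varepsilon)^{n+2}\leq\liminf_{t\to+\infty}V(t)\leq\limsup_{t\to+\infty}V(t)\leq(1+\varepsilon)^{n+2}$ for every $\varepsilon>0$; letting $\varepsilon\to0$ gives $\lim_{t\to+\infty}V(t)=1$, that is, $\mathrm{AVR}(M,g,f)=1$.

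I expect the main obstacle to be part $(a)$, the comparison of the conformal distance $\rho$ with the coordinate radius $|x|$: one has to prevent minimizing $\tilde g$-geodesics from wasting length near the compact core, which is precisely what the slowly growing cutoff radius achieves, at the cost of a careful bookkeeping of error terms; everything else is routine Cesàro-type averaging of $o(1)$ quantities together with elementary Euclidean volume estimates.
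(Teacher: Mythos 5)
Your proof is correct, but you take a somewhat heavier route than the paper. The paper also reduces to the volume functional with $k=n$, but crucially it first invokes \cref{pro:AVR_wellposed} not merely to justify that ${\rm AVR}$ is well defined, but to \emph{switch the base object}: it computes $\rho$ as the $\tilde g$-distance from a large coordinate sphere $S=\{|x|=R\}$ rather than from a point $p$. Once $R$ is large enough that $|f-1|<\ep$ and $|g_{ij}-\delta_{ij}|<\ep$ on $\{|x|>R\}$, every minimizing $\tilde g$-geodesic from $S$ stays entirely in the near-Euclidean region, so the comparison $\{R\leq |x|\leq (R+t)(1-\delta)\}\subset\{0\leq\rho\leq t\}\subset\{R\leq|x|\leq(R+t)(1+\delta)\}$ is essentially immediate, and likewise $\eta\approx R+\rho$ via $\partial_\rho\eta=f^2\approx 1$. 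This avoids precisely the difficulty you (correctly) identified as the main obstacle: a minimizing geodesic from a fixed interior point can waste length near the compact core. Your solution --- the slowly growing cutoff radius $R(|x|)=|x|^{1/2}$ together with the last-exit-time bound for minimizing geodesics --- does work, and it is a valid alternative, but it requires the extra bookkeeping that the paper sidesteps by exploiting the freedom in the base hypersurface. Net: same structure (reduce to $V(t)\to 1$, compare $\rho$ and $\eta$ to Euclidean quantities, volume comparison of annuli), but the paper gets a cleaner distance comparison by one additional preparatory reduction; your version is self-contained at the cost of the cutoff-radius argument. One small thing to keep track of if you flesh this out: the Euclidean segment you use for the upper bound on $\rho(x)$ may leave the chart at infinity (the chart is $\R^n$ minus a ball), so strictly speaking you should detour around the removed ball --- an $O_R(1)$ correction that does not affect the conclusion, but worth a sentence.
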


\begin{proof}
The fact that the end is $f$-complete and uniform follows from \cref{prop:uniform_AF}.
Let $K$ be a compact set as in Definition~\ref{def:AF} and let $S=\{|x|=R\}$ be a large coordinate sphere contained in the chart at infinity. From Proposition~\ref{pro:AVR_wellposed} we know that the asymptotic volume ratio does not depend on the hypersurface we are taking the distance from. It is then convenient to work with the $\tilde g$-distance $\rho$ from the coordinate sphere $S$.
As it follows from~\eqref{eq:AVR_A_V}, we can also compute the AVR via the following limit 
\begin{equation}
\label{eq:AVR_af}
{\rm AVR}(M,g,f)\,=\,\frac{1}{|\mathbb{B}^n|}\lim_{t\to+\infty}\frac{1}{t^n}\int_{\{0\leq \rho\leq t\}}\frac{\rho^{n-1}}{f\eta^{n-1}}d\mu\,.
\end{equation}
If the radius $R$ of the coordinate sphere $S$ is large, then we can assume $|f-1|<\ep$ and $|g_{ij}-\delta_{ij}|<\ep$ in $\{|x|>R\}$ for some fixed small $\ep$. It is then easily seen that there exists $\delta=\delta(\ep)$ such that 
\[
\{R\leq |x|\leq (R+t)(1-\delta)\}\subset\{0\leq\rho\leq t\}\subset\{R\leq|x|\leq(R+t)(1+\delta)\}
\]
for all $t$. Since $\eta$ grows as $f^2$ along $\tilde g$-geodesics, we have $(1-\ep)^2\rho<\eta<(1+\ep)^2\rho$. It follows that the integral in~\eqref{eq:AVR_af} grows as the Euclidean volume of the annulus $\{R<|x|<R+t\}$, or more explicitly as: 
\[
\left[(R+t)^n-R^n\right]\,|\mathbb{B}^n|\,\cong\,|\mathbb{B}^n|t^n\,.
\]
The wished result follows easily. 
\end{proof}

\subsection{Willmore inequality}
\label{sub:Willmore}

As a consequence of our definition of AVR and the Bishop--Gromov monotonicity of the area functional $A(t)$ (Theorem~\ref{thm:BG_nb}), we obtain the Willmore inequality for hypersurfaces with nonnegative mean curvature of \cref{thm:Willmore_intro}. The following statement provides more details about the equality case.

\begin{theorem}[Willmore inequality]
\label{thm:Willmore}
Let $(M,g,f)$ be a substatic solution with a uniform $f$-complete end. Let $\Sigma$ be a hypersurface that is homologous to the boundary. Suppose that the mean curvature $\HHH$ of $\Sigma$ with respect to the normal pointing towards infinity satisfies $\HHH> 0$ pointwise. Then
\begin{equation}
\label{eq:willmore}
\int_\Sigma \left[\frac{\HHH}{(n-1)f}\right]^{n-1}d\sigma\,\geq\,
{\rm AVR}(M,g,f)\,|\Sph^{n-1}|\,.
\end{equation}

If the equality holds, then the set $U=\{\rho>0\}$ is isometric to $[0,+\infty)\times\Sigma$ with metric 
$$
g\,=\,f^2 d\rho\otimes d\rho+\eta^2g_0\,,
$$
where $g_0$ is a metric on the level set $\Sigma$. Furthermore, in $U$ the functions $f$ and $\eta$ satisfy
$$
\eta\,=\,(\alpha+\beta)^{\frac{1}{n-1}}\,,\qquad f^2\,=\,\frac{\dot\alpha}{(n-1)(\alpha+\beta)^{\frac{n-2}{n-1}}}
$$
where $\alpha$ is a function of $\rho$ and $\beta$ is a function on $\Sigma$.
\end{theorem}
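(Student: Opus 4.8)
The plan is to read off both the inequality and the rigidity from the Bishop--Gromov monotonicity of the area functional $A$ in Theorem~\ref{thm:BG_nb}, used with $\rho$ the signed $\tilde g$-distance from $\Sigma$: this is legitimate since $\HHH>0$ makes $\Sigma$ strictly mean-convex, the uniform $f$-complete end (together with Lemma~\ref{le:completeness} near $\pa M$) provides the $\tilde g$-completeness required by that theorem, and Proposition~\ref{pro:AVR_wellposed} ensures $\mathrm{AVR}(M,g,f)$ is well defined and may be computed through the distance from $\Sigma$. First I would identify the two endpoints of $A$. Near $\rho=0$ there is no cut locus, so $A(0)=\tfrac{1}{|\Sph^{n-1}|}\int_\Sigma \eta(0,\cdot)^{-(n-1)}\,d\sigma$, and the initial condition $\eta(0,x)=(n-1)f(x)/\HHH(x)$ of~\eqref{eq:eta_Sigma} turns this into
\[
A(0)\,=\,\frac{1}{|\Sph^{n-1}|}\int_\Sigma\left[\frac{\HHH}{(n-1)f}\right]^{n-1}d\sigma\,,
\]
whereas $\lim_{t\to+\infty}A(t)=\mathrm{AVR}(M,g,f)$ by Definition~\ref{def:AVR}. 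Since $A(0)\ge A(t)$ for all $t>0$ by Theorem~\ref{thm:BG_nb} (the bound recorded right after its proof), letting $t\to+\infty$ and multiplying by $|\Sph^{n-1}|$ yields~\eqref{eq:willmore}.

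For the equality case, assume equality in~\eqref{eq:willmore}. Then $A(0)=\mathrm{AVR}(M,g,f)=\lim_{t\to+\infty}A(t)$, so the nonincreasing function $A$ is constant on $[0,+\infty)$. Applying the rigidity part of Theorem~\ref{thm:BG_nb} to each slab $\{t_1\le\rho\le t_2\}$ with $0<t_1<t_2$, and then letting $t_1\to0^+$ and $t_2\to+\infty$ (the warped product form and the identities~\eqref{eq:rigidity_A} being mutually compatible on overlapping slabs, and extending up to $\rho=0$ by smoothness of $\rho$ in a collar of $\Sigma$), we conclude that $U=\{\rho>0\}$ is isometric to $[0,+\infty)\times\Sigma$ with $g=f^2\,d\rho\otimes d\rho+\eta^2 g_0$ and that $f,\eta$ satisfy~\eqref{eq:rigidity_A} on $U$, with $\ffi,\psi$ independent of $\rho$.

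It remains to pin down the shape of $f$ and $\eta$, and here the $f$-completeness of the end is used crucially. Setting $P=1/\eta$, the first identity in~\eqref{eq:rigidity_A} becomes $\pa_i P=\ffi P^{n}-\psi$ (understood for every $i$, for each fixed $\rho$). Along every ray $\rho\mapsto(\rho,\theta)$ the $f$-completeness forces $\eta\to+\infty$, that is $P(\rho,\cdot)\to0$ pointwise on $\Sigma$, while $P(\rho,\cdot)$ stays uniformly bounded for $\rho\ge1$ (by $1/\min_{\{\rho=1\}}\eta$); integrating the $1$-form $\pa_i P\,d\theta^i$ along an arbitrary path of $\Sigma$ and passing to the limit $\rho\to+\infty$ (dominated convergence eliminating the term with $\ffi P^{n}$) gives $\int_\gamma\psi=0$ for every path $\gamma$, hence $\psi\equiv0$ since $\Sigma$ is connected. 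With $\psi=0$ the same identity reads $\pa_i(\eta^{n-1})=-(n-1)\ffi$, which is independent of $\rho$, so $\eta^{n-1}=\alpha(\rho)+\beta(\theta)$ for suitable $\alpha$ on $\rho$ and $\beta$ on $\Sigma$; and then $f^{2}=\pa\eta/\pa\rho=\dot\alpha/((n-1)(\alpha+\beta)^{(n-2)/(n-1)})$. The second identity in~\eqref{eq:rigidity_A} is then automatically satisfied, merely recording that $f\,(\alpha+\beta)^{(n-2)/(2(n-1))}$ is a function of $\rho$ alone.

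The main obstacle I expect lies in this last step: the system~\eqref{eq:rigidity_A} admits a priori ``degenerate'' solutions with $\psi\not\equiv0$, for which $\eta$ fails to split additively and indeed remains bounded along some directions, and it is precisely $f$-completeness that rules them out; making the passage to the limit that yields $\psi\equiv0$ rigorous --- in particular the uniform control on $P$ as $\rho\to+\infty$ --- is the delicate point, while the rest is bookkeeping on top of Theorem~\ref{thm:BG_nb}.
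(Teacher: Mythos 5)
Your proposal is correct and follows essentially the same route as the paper: the inequality is read off from the monotonicity $A(0)\geq\lim_{t\to\infty}A(t)$ of the Bishop--Gromov area functional of Theorem~\ref{thm:BG_nb}, the initial condition $\eta(0,x)=(n-1)f/\HHH$ converts $A(0)$ into the Willmore energy, and equality triggers the rigidity part of Theorem~\ref{thm:BG_nb}, after which $\psi\equiv0$ is forced by the divergence of $\eta$ at infinity and the rest is the algebraic bookkeeping yielding $\eta^{n-1}=\alpha+\beta$.

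The one place where you and the paper diverge slightly is the step showing $\psi\equiv0$: the paper invokes the uniformity of the end to claim that $1/\eta\to0$ uniformly at infinity and then argues by contradiction from the pointwise identity $\pa_i(1/\eta)=\ffi/\eta^n-\psi$, whereas you use $f$-completeness (pointwise divergence of $\eta$ along rays), the uniform bound $P=1/\eta\leq 1/\min_{\{\rho=1\}}\eta$ for $\rho\geq1$ coming from monotonicity of $\eta$ in $\rho$ and compactness of $\{\rho=1\}$, and dominated convergence on finite paths in $\Sigma$ to conclude $\int_\gamma\psi=0$ for every path. The two devices serve the same purpose; your version is perhaps a touch more transparent and shows that uniformity is not really needed for this particular step, though it is of course still required for the inequality itself since $\mathrm{AVR}$ is only well defined on a uniform end. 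Your remark about connectedness of $\Sigma$ is easily handled by arguing on each component separately.
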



\begin{proof}
We recall from Theorem~\ref{thm:BG_nb} that $A(t)$ is monotonically nonincreasing. Taking the limit as $t\to+\infty$ we then get
$$
\frac{1}{|\Sph^{n-1}|}\int_\Sigma \left[\frac{\HHH}{(n-1)f}\right]^{n-1}d\sigma\,=\,A(0)\,\geq\,\lim_{t\to+\infty}A(t)\,=\,\AVR(M,g,f)\,.
$$
This proves the inequality.

Furthermore, if the equality holds, then from the rigidity statement in Theorem~\ref{thm:BG_nb} it follows
$$
\frac{1}{\eta}\frac{\pa\eta}{\pa\theta^i}\,=\,\psi\eta-\ffi\frac{1}{\eta^{n-1}}\,,\qquad
\frac{1}{f}\frac{\pa f}{\pa\theta^i}\,=\,\psi\eta+\frac{n-2}{2}\ffi\frac{1}{\eta^{n-1}}\,,
$$
where $\phi$ and $\psi$ are functions on $\Sigma$.
From the first of these equations, in particular we get
$$
\frac{\pa}{\pa\theta^i}\left(\frac{1}{\eta}\right)\,=\,\psi-\ffi\frac{1}{\eta^n}\,.
$$
We now focus on this identity near infinity: since our end is $f$-complete, we know that $\eta$ is going to $+\infty$. Furthermore, from the uniformity at infinity we can also prove that $1/\eta$ goes to zero uniformly at infinity. To prove that, it is enough to apply the uniformity at infinity property to the two hypersurfaces $\Sigma$ and $\Sigma_\delta=\{\rho=\delta\}$. Notice that then the function $\eta_\delta$ associated to $\Sigma_\delta$ differs from $\eta$ just by a constant $k$, that is $\eta_\delta=\eta+k$. Then uniformity at infinity implies precisely that for any $\ep$ there exists a compact set such that 
$$
\left|\frac{k}{\eta}\right|\,=\,\left|\frac{k+\eta}{\eta}-1\right|\,\leq\,\ep\,.
$$
Hence, $1/\eta$ goes to $0$ uniformly at infinity, as wished. 

Given $\ep>0$, fix $R>0$ big enough so that $1/\eta<\ep$ in $[R,+\infty)\times V$.
If $\psi$ is not everywhere vanishing, then there is an open set $V\subset \Sigma$ such that $\psi>\delta$ in $V$ (the case $\psi<-\delta$ is done in the exact same way). Therefore we would get 
$$
\frac{\pa}{\pa\theta^i}\left(\frac{1}{\eta}\right)\,>\,\delta-|\ffi|\,\ep^n\,\geq\,\delta-\ep^n\max_\Sigma|\ffi|=\tilde\delta
$$
in $[R,+\infty)\times V$. Up to taking $\ep$ small enough, we can assume that $\tilde\delta>0$.  But then, for any two points $p_\rho=(\rho,\theta^1,\dots,\theta^i,\dots,\theta^n)$, $q_\rho=(\rho,\theta^1,\dots,\theta^i+\lambda,\dots,\theta^n)$ belonging to $[R,+\infty)\times V$, we would deduce
$$
\frac{1}{\eta}(p_\rho)\,=\,\frac{1}{\eta}(q_\rho)+\int_0^\lambda \frac{\pa}{\pa \theta^i}{\left(\frac{1}{\eta}\right)}_{|_{(\rho,\theta^1,\dots,\theta^i+t,\dots,\theta^n)}}dt\,\geq\,\lambda\tilde\delta\,,
$$
which in turn would imply $\lim_{\rho\to+\infty}(1/\eta)(p_\rho)\geq\lambda\tilde\delta>0$, contradicting the fact that $1/\eta\to 0$ at infinity.
It follows then that 
\[
\psi=0\,.
\]
Our constraints on $f$ and $\eta$ then become
\begin{equation}
\label{eq:aux_rigidity_willmore}
\frac{1}{\eta}\frac{\pa\eta}{\pa\theta^i}\,=\,-\ffi\frac{1}{\eta^{n-1}}\,,\qquad
\frac{1}{f}\frac{\pa f}{\pa\theta^i}\,=\,\frac{n-2}{2}\ffi\frac{1}{\eta^{n-1}}\,.
\end{equation}
The first equation can be rewritten as
$$
\frac{\pa}{\pa\theta^i}\eta^{n-1}\,=\,-(n-1)\ffi\,.
$$
Since $\ffi$ does not depend on $\rho$, it follows then that
\[
\eta^{n-1}\,=\,\alpha+\beta\,,
\]
where $\alpha$ is a function of $\rho$ and $\beta$ is a function on $\Sigma$. Taking the derivative with respect to $\rho$ of this formula and using the fact that $\pa_\rho\eta=f^2$, we then deduce
$$
f^2\,=\,\frac{\dot\alpha}{(n-1)(\alpha+\beta)^{\frac{n-2}{n-1}}}
$$
It is easy to check that for any $f$ and $\eta$ of this form (that is, for any choice of $\alpha$ and $\beta$), formulas~\eqref{eq:aux_rigidity_willmore} are satisfied.
\end{proof}

\section{Isoperimetric Inequality for Substatic manifolds}
\label{sec:isoperimetric}

In this Section, we focus our attention on substatic manifolds $(M, g, f)$ admitting an \emph{exhaustion of outward minimising hypersurfaces} homologous to $\partial M$. A hypersurface $\Sigma$ homologous to $\partial M$ is outward minimizing if, 
 denoting by $\Omega$ the compact domain with $\pa\Omega=\Sigma\sqcup\pa M$,
we have 
\begin{equation}
P(\Omega) \leq P(F) 
\end{equation}
for any bounded set $F \supset \Omega$. We say that a sequence $(S_j)_{j \in \N}$ of hypersurfaces homologous to $\partial M$ exhaust $M$ if, given a compact set $K\subset M$, there exists an element $S$ in the sequence such that $K \subset \Omega$, for  $\Omega$ satisfying $\partial \Omega = S \sqcup \partial M$. Conditions ensuring the existence of such an exhaustion are discussed in \cite{Fogagnolo_Mazzieri-minimising}.

We start from showing that $\partial M$ is a priori 
area minimizing. In showing so, we also derive that $\partial M$ is \emph{outermost}, that is, there exist no minimal submanifolds homologous to $\partial M$ other than the boundary itself.
These facts are the first main reason why we require the existence of (nonminimal) outward minimizing sets homologous to $\partial M$.
Since the following auxiliary result does not need any a priori growth at infinity assumption, we think it may have an independent interest.
\begin{proposition}[The boundary is outermost area-minimizing]
\label{prop:outermost}
Let $(M, g, f)$ be a substatic triple with horizon boundary. Assume that there exists an outward minimizing smooth hypersurface $S$ homologous to $\partial M$. Then, the horizon is  outward minimizing, meaning that 
\begin{equation}
\label{eq:minimizingboundary}
\abs{\Sigma} \geq \abs{\partial M}
\end{equation}
for any hypersurface $\Sigma$ homologous to $\partial M$. Moreover, it is outermost, that is there exists no other minimal hypersurfaces homologous to $\partial M$. 
\end{proposition}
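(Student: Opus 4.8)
The plan is to deduce~\eqref{eq:minimizingboundary} from a comparison argument based on the outward minimizing hypersurface $S$, and then to read off the outermost property from the same analysis. I would organize it in three steps.

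\emph{Step 1: reduction to an inner, strictly outward minimizing competitor.} Let $\Sigma$ be homologous to $\partial M$, bounding $\Omega_\Sigma$. Using the outward minimizing hypersurface $S$ (bounding $\Omega_S$) together with the submodularity of the perimeter, $P(\Omega_S\cup\Omega_\Sigma)+P(\Omega_S\cap\Omega_\Sigma)\leq P(\Omega_S)+P(\Omega_\Sigma)$ and $P(\Omega_S)\leq P(\Omega_S\cup\Omega_\Sigma)$ give $P(\Omega_S\cap\Omega_\Sigma)\leq P(\Omega_\Sigma)$; since $\Omega_S\cap\Omega_\Sigma$ still contains a collar of $\partial M$ in its topological boundary and lies inside $\Omega_S$, it is enough to prove~\eqref{eq:minimizingboundary} for the corresponding inner hypersurface. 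Replacing $\Omega_\Sigma$ by its strictly outward minimizing hull inside $\Omega_S$ — which exists and does not increase the area of the outer boundary, by the theory of~\cite{Fogagnolo_Mazzieri-minimising} with $\Omega_S$ as outer barrier — I may thus assume $\Sigma$ is strictly outward minimizing; for $n\leq7$ it is then smooth and has nonnegative mean curvature with respect to the normal pointing towards infinity.

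\emph{Step 2: driving the competitor onto $\partial M$.} I would then let $\Sigma$ flow by its (weak, level-set) mean curvature flow $\{\Sigma_t\}_{t\ge0}$, or, equivalently for the purpose at hand, minimize the area among hypersurfaces homologous to $\partial M$ contained in $\overline{\Omega_\Sigma}$. In the flow picture, mean-convexity is preserved and $t\mapsto|\Sigma_t|$ is non-increasing, and since $\partial M$ is minimal it is a static solution, so the avoidance principle keeps $\Sigma_t$ disjoint from $\partial M$ and confined to the compact region $\overline{\Omega_\Sigma}$; combined with $\frac{d}{dt}|\Sigma_t|=-\int_{\Sigma_t}\HHH^2\,d\sigma$, this forces convergence of $\Sigma_t$, as $t\to+\infty$, to a stationary integral varifold $\Sigma_\infty$ homologous to $\partial M$ with $|\Sigma_\infty|\le|\Sigma|$. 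Hence both~\eqref{eq:minimizingboundary} and the outermost property reduce to showing that \emph{$\partial M$ is the only closed minimal hypersurface homologous to it} (so that $\Sigma_\infty$ is an integer multiple of $[\partial M]$, whence $|\Sigma|\ge|\Sigma_\infty|\ge|\partial M|$).

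\emph{Step 3 (the crux): rigidity for minimal hypersurfaces.} Assume $\Sigma'$ is a closed, two-sided minimal hypersurface homologous to $\partial M$ with $\Sigma'\neq\partial M$; taking an innermost such hypersurface I may also assume it is stable (it bounds, with $\partial M$, a region with no minimal hypersurface in its interior, and is realized there as an area minimizer, or as the limit of the flow). Then $f>0$ on $\Sigma'$. Contracting~\eqref{eq:substatic} with the unit normal $\nu$ of $\Sigma'$ and using $\HHH_{\Sigma'}=0$, which turns the boundary decomposition of the Laplacian into $\De f=\De_{\Sigma'}f+\nana f(\nu,\nu)$, one gets $\Ric(\nu,\nu)\ge-\De_{\Sigma'}f/f$ on $\Sigma'$. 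Testing the stability inequality of $\Sigma'$ with $\varphi=f|_{\Sigma'}$, using this bound and $\int_{\Sigma'}f\,\De_{\Sigma'}f=-\int_{\Sigma'}|\na_{\Sigma'}f|^2$, one obtains
\[
0\;\le\;\int_{\Sigma'}\!\Big(|\na_{\Sigma'}f|^2-\big(|A|^2+\Ric(\nu,\nu)\big)f^2\Big)\,d\sigma\;\le\;-\int_{\Sigma'}|A|^2f^2\,d\sigma\;\le\;0\,,
\]
so $A\equiv0$ on $\Sigma'$ (it is totally geodesic), equality holds in~\eqref{eq:substatic} in the $\nu\nu$-direction, and $f|_{\Sigma'}$ is a positive Jacobi field. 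Integrating this Jacobi field along the normal direction towards $\partial M$ — as in the classical splitting arguments for stable minimal hypersurfaces — foliates a collar of $\Sigma'$ by totally geodesic hypersurfaces; pushing the foliation up to $\partial M$ identifies the region between $\partial M$ and $\Sigma'$ with a Riemannian product $\big([0,\ell]\times\Sigma',\,ds^2+g_{\Sigma'}\big)$ on which $f=f(s)$ depends on the interval variable alone, with $f(0)=0$ and $f'(0)\neq0$ by the horizon hypothesis. Reading~\eqref{eq:substatic} on this product in the directions tangent to the leaves gives $f\,\Ric_{\Sigma'}+f''(s)\,g_{\Sigma'}\ge0$, which at $s=0$ forces $f''(0)\ge0$; continuing to read~\eqref{eq:substatic} along the whole interval, together with $f(0)=0$, $f'(0)>0$ and $f>0$ on $(0,\ell]$, yields the contradiction. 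Consequently $\partial M$ is the unique closed minimal hypersurface homologous to itself, and both assertions of the Proposition follow from Step 2.

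\emph{Main obstacle.} The delicate part is Step 3: converting ``$\Sigma'$ stable minimal, $\Sigma'\neq\partial M$'' into a contradiction. The test $\varphi=f$ is clean and produces ``totally geodesic $+$ positive Jacobi field $+$ equality in~\eqref{eq:substatic}'', but it is only at the last stage — passing through the product splitting and then back to~\eqref{eq:substatic} near $\partial M$ — that the horizon condition $|\na f|\neq0$ enters in an essential way, and making this airtight (in particular ruling out that the foliation degenerates before reaching $\partial M$) is the real content. A secondary, technical point is the long-time existence and varifold convergence of the weak mean curvature flow (or the regularity of the constrained area minimizer) in the dimension range considered, for which one appeals to the standard theory, including the minimizing-hull and mean-convexity properties recorded in~\cite{huisken_inverse_2001}.
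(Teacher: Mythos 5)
Your Steps 1--2 are broadly in the spirit of the paper's argument (flow a mean-convex competitor by weak MCF towards a minimal limit), but at Step 3, the crux, you diverge from the paper and the argument breaks down. The paper does not pass through a ``stable minimal hypersurface $\Rightarrow$ splitting $\Rightarrow$ contradiction'' analysis at all. Instead, it uses the Riccati/Laplacian comparison for the conformal distance $\rho$: if the smooth MCF limit $\Sigma$ were a minimal hypersurface $\neq\partial M$, then item~$(iii)$ of \cref{pro:H_bound_Sigma} foliates a one-sided outer neighbourhood of $\Sigma$ by hypersurfaces of \emph{nonpositive} mean curvature, and this is incompatible, via the maximum principle at a tangency point, with the strictly mean-convex smooth MCF approaching $\Sigma$ from outside. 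The strict mean-convexity of that approaching flow, inherited from $S$ being a nonminimal outward minimizing barrier, is the essential input. Your Step 3 never invokes anything about $S$, and that is the root of the gap.

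Concretely, your claimed contradiction at the end of Step 3 does not hold. Granting everything up to the product splitting, consider
\[
M=[0,\infty)\times\Sigma_0,\qquad g=ds\otimes ds+g_{\Sigma_0},\qquad f(s)=s,
\]
with $(\Sigma_0,g_{\Sigma_0})$ closed and $\Ric_{\Sigma_0}\ge 0$. Then $\nana f=0$, $\De f=0$, so
\[
f\Ric-\nana f+(\De f)g=f\,\Ric_{\Sigma_0}\ge 0,
\]
and $(M,g,f)$ is a substatic triple with horizon boundary ($f(0)=0$, $|\na f|\equiv 1$, $\partial M$ totally geodesic). Every slice $\{s\}\times\Sigma_0$ is a totally geodesic minimal hypersurface homologous to $\partial M$, with $f|_{\Sigma'}$ a positive Jacobi field and equality in the substatic condition in the normal direction. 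This is exactly the output of your stability computation and product splitting, together with $f(0)=0$, $f'(0)>0$, $f''\equiv 0\ge 0$ — and there is nothing further to contradict. The statement ``$\partial M$ is the only closed minimal hypersurface homologous to it'' is simply \emph{false} in this geometry, so no argument that uses only the rigidity data of Step 3 can rule it out. (This example also shows that the ``innermost $\Rightarrow$ stable'' reduction and the splitting itself are not where the problem lies; the problem is that they do not, by themselves, yield an impossibility.)

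What actually rules out the situation is the presence of a \emph{strictly} mean-convex outward minimizing $S$ in the exhaustion (as the paper assumes in \cref{thm:isoperimetric_intro} and uses at the end of the proof): in the product above, no slice is strictly mean-convex, so the barrier hypothesis fails, while in the paper's argument the strict mean-convexity propagates along the MCF and clashes, at first tangency, with the nonpositive mean curvature foliation given by the Riccati comparison \cref{pro:H_bound_Sigma}-$(iii)$. If you want to salvage your Step 3, the clean way is to drop the stability/splitting route altogether and replace it with the substatic Riccati comparison: starting from a minimal $\Sigma_\infty$ disjoint from $\partial M$, the parallel hypersurfaces $\{\rho=\text{const}\}$ on the outer side have $\HHH\le 0$; now bring in the smooth mean-convex MCF issuing from the nonminimal $S$ (or $\Sigma$) and apply the maximum principle at the first tangency point. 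A secondary issue worth flagging in your Step 2 is that the phrase ``integer multiple of $[\partial M]$'' is not what White's theory produces — one gets smooth convergence to a smooth minimal hypersurface, and the one-sided minimization property of \cite{White-mcf} (not just area monotonicity) is what transfers the outward minimizing property to the limit; your Step 1 submodularity reduction, however, is a fine alternative to the paper's one-sided minimization step.
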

\begin{proof}
Let $\Omega$ be such that $\partial \Omega = S\sqcup \partial M$. Indeed $S$ being outward minimizing is mean-convex, and consequently the Maximum Principle implies it is disjoint from $\partial M$ (see e.g.~\cite[Corollary 4.2]{Lee_book}; it is a consequence of the strong comparison principle for quasilinear equations).
We flow $\Omega$ by weak Mean Curvature Flow, referring to the notion considered in~\cite{Ilmanen-mcf}. In particular the analysis carried out by White~\cite{White-mcf} applies. Moreover, observe that the mean curvature of $S$ is necessarily nonnegative, and in particular $\Omega$ is mean-convex in the sense of~\cite[Section~3]{White-mcf}. Since $\partial M$ constitutes itself a (steady) MCF, the well-known~\cite[Inclusion Property 5.3]{Ilmanen-mcf} ensures that the possibly singular evolving sets $\partial \Omega_t \setminus \partial M$ remain homologous to the horizon. By \cite[Theorem 11.1]{White-mcf}, $\partial \Omega_t$ must converge smoothly to a minimal hypersurface $\Sigma$, necessarily homologous to $\partial M$. We show that $\Sigma$ can be the horizon only. Indeed, if this were not the case, $\Sigma$ would be detached from $\partial M$ by the Maximum Principle, and~$(iii)$ in \cref{pro:H_bound_Sigma}  would apply, foliating an outer neighbourhood of $\Sigma$ with hypersurfaces of nonpositive mean curvature. But this is a contradiction, through the Maximum Principle for the mean curvature operator, applied on tangency points, with the smooth mean-convex Mean Curvature Flow smoothly approaching $\Sigma$. Then, the Mean Curvature Flow of $\Omega$ converges smoothly to $\partial M$. 
Observe that this also implies that no minimal hypersurface homologous to $\partial M$ contained in $\Omega$ can exist. Indeed, if there were one, it would obviously remain fixed under MCF, and thus $\Omega_t$ converging to $\partial M$ would eventually go beyond it, contradicting~\cite[Inclusion Property 5.3]{Ilmanen-mcf}. The nonminimal outward minimizing sets forming an exhaustion, we in particular proved that $\partial M$ is outermost.

Finally, recall that the outward minimizing property of the initial set is preserved along the flow, as it can be easily checked applying~\cite[One-Sided Minimization Theorem~3.5]{White-mcf} (see~\cite[Lemma 5.6]{huisken_inverse_2001} for a proof in the smooth flow setting). Then, $\partial M$ being one-sided limit of outward minimizing hypersurfaces homologous to the boundary, is outward minimizing as well.    
\end{proof}

As already pointed out in the Introduction, our proof of \cref{thm:isoperimetric_intro} ultimately builds on the application of the Willmore-type inequality~\eqref{eq:willmore} on hypersurfaces homologous to $\partial M$ bounding a set that is isoperimetric with respect to the volume weighted by $f$. In order to bypass the lack of existence, we will consider \emph{constrained} isoperimetric sets. We find convenient to extend $(M, g)$ over the horizon, letting $(N, g_N)$ be the extended Riemannian manifold. This can be obtained through gluing another copy of $M$ along its boundary, and endowing it with a smooth metric that coincides with $g$ on the original manifold. The existence of such metric is ensured by \cite[Theorem A]{Pigola_Veronelli}. Let $S$ be homologous to $\partial M$ and disjoint from it, and  let $\Omega\subset M$ have boundary $S \sqcup \partial M$.  Extend $\Omega$ too, so to find $\Omega_N \subset N$ satisfying $\Omega_N \cap M = \Omega$, and let 
\begin{equation}
B^{N\setminus M}_\epsilon (\partial M) = \{p \in N\setminus M \, | \,  \mathrm{d}_N(p, \partial M) \leq \epsilon\}, 
\end{equation}
for $\epsilon > 0$ such that $B^{N\setminus M}_\epsilon (\partial M) \subset \Omega_N$  , where $\mathrm{d}_N$ is the distance induced by the metric $g_N$.
We are going to consider sets of finite perimeter $E_V$ in $(N, g_N)$ satisfying
\begin{equation}
\label{eq:isoproblem}
\abs{E_V\cap M}_f = V \qquad\qquad
P(E_V) = \inf\left\{P(F) \, | \, B^{N\setminus M}_\epsilon (\partial M) \subset F\subset \Omega_N,  \abs{F\cap M}_f = V\right\}
\end{equation}
for $V < \abs{\Omega}_f$, where we recall that given $E \subset M$ we defined
\begin{equation}
\label{eq:weighted}
\abs{E}_f = \int_E f d\mu.
\end{equation}
The following result gathers the main properties these constrained isoperimetric sets satisfy. 
\begin{theorem}[Existence and structure of constrained $f$-isoperimetric sets]
\label{thm:f-isoperimetric}
Let $(M, g, f)$ be a substatic triple with horizon boundary, of dimension $n \leq 7$. Let $S$ be a strictly mean-convex outward minimizing hypersurface homologous to $\partial M$, and let $(N, g_N), \Omega, \Omega_N$ and $B^N_\epsilon (\partial M)$ as above, for $\epsilon > 0$. Then, for any $V < \abs{\Omega}_f$, there exists $E_V\subset \Omega_N$ satisfying \cref{eq:isoproblem}.
Moreover, 
\begin{itemize}
\item[$(i)$]$\partial E_V \cap \partial M = \emptyset$. Moreover, $\partial (E_V \cap M) = \Sigma \sqcup \partial M$, where $\Sigma$ is a $\mathscr{C}^{1, 1}$-hypersurface. 
\item[$(ii)$]
The set $\Sigma \setminus S$ is a smooth hypersurface. Moreover, there exists a \emph{positive} constant $\lambda$ such that $\HHH (x) = \lambda f(x)$ for any $x \in \Sigma \setminus S$.
\item[$(iii)$] We have
\begin{equation}
\label{eq:supH/f}
\lambda \geq \frac{\HHH}{f}(x) > 0
\end{equation}
for $(n-1)$-almost any $x \in \Sigma$.
\end{itemize}
\end{theorem}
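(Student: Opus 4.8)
The plan is to follow the by now classical scheme for constrained isoperimetric problems (Kleiner, in the Riemannian form of \cite{Fogagnolo_Mazzieri-minimising}), adapting it to the weight $f$ and to the presence of the horizon. \textbf{Existence.} First I would run the direct method in $\mathrm{BV}(N)$: the competitor class $\{F: B^{N\setminus M}_\epsilon(\partial M)\subset F\subset\Omega_N,\ |F\cap M|_f = V\}$ is nonempty for every $V\in[0,|\Omega|_f)$ because $B^{N\setminus M}_\epsilon(\partial M)$ carries zero $f$-weighted volume and $f$ is continuous and positive in $M\setminus\partial M$, so interpolating along a smooth exhaustion of $\Omega$ realizes every weighted volume below $|\Omega|_f$; since $\Omega_N$ is bounded, minimizing sequences have equibounded perimeters, and $\mathrm{BV}$-compactness, stability of the inclusions and of the (bounded, continuous density) weighted-volume constraint under $L^1$-convergence, together with lower semicontinuity of the perimeter, produce a minimizer $E_V$. \textbf{Regularity and Euler--Lagrange.} Away from the obstacles $\partial\Omega_N$ (with trace $S$ on $M$) and $\partial B^{N\setminus M}_\epsilon(\partial M)$, the set $E_V$ is a $(\Lambda,r_0)$-perimeter minimizer, the weighted-volume term being a lower-order, Lipschitz perturbation; hence by De Giorgi--Federer regularity and $n\le 7$ its free boundary is smooth with empty singular set, the Lagrange multiplier rule gives a $\lambda\in\mathbb R$ with $\HHH = \lambda f$ on the free portion contained in $M\setminus\partial M$ and $\HHH = 0$ on the free portion contained in $N\setminus M$ (where the constraint is inert), the one-sided admissible variations at the contact set with $S$ give $\HHH/f\le\lambda$ $\mathscr H^{n-1}$-a.e., and optimal obstacle-problem regularity makes $\Sigma := \partial(E_V\cap M)\setminus\partial M$ of class $\mathscr C^{1,1}$ and smooth off $S$.

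\textbf{The free boundary does not touch $\partial M$ (claim $(i)$).} This is the heart of the argument, and the very reason for passing to the extension $(N,g_N)$ with the collar constraint $B^{N\setminus M}_\epsilon(\partial M)\subset F$: it is here that $f|_{\partial M}=0$ and the \emph{outward minimizing and outermost} character of the horizon (Proposition~\ref{prop:outermost}) are used. Suppose $\partial E_V\cap\partial M\ne\emptyset$. Comparing $E_V$ with $E_V\cup\{x\in\Omega:\mathrm d_g(x,\partial M)<\delta\}$, the $f$-weighted volume increases by $O(\delta^2)$ only, since $f = O(\delta)$ on that collar, while the perimeter does not increase up to an $o(1)$ error as $\delta\to0$: the newly created boundary is essentially a normal translate of the area-minimizing $\partial M$ and the erased portion of $\partial^* E_V$ had nonnegative area; a subsequent $O(\delta^2)$ modification where $f$ is bounded below restores the weighted volume at cost $O(\delta^2)$ in perimeter, so for $\delta$ small the total perimeter strictly decreases — a contradiction. (Once it is known that $\lambda\ge0$, this can be seen more directly: a tangency of $\Sigma$ with $\partial M$ would, since $\HHH_\Sigma = \lambda f\ge 0 = \HHH_{\partial M}$ and $|\nabla f|\ne 0$ on $\partial M$, force via the strong maximum principle for the mean curvature operator a piece of $\partial M$ to lie inside $\Sigma$, impossible because $\Sigma$ is homologous to $\partial M$ and disjoint from it.) Hence $E_V$ contains a two-sided collar of $\partial M$ in $N$, so $\partial E_V\cap\partial M=\emptyset$, $\partial(E_V\cap M) = \Sigma\sqcup\partial M$, and the free part of $\Sigma$ now lies in $M\setminus\partial M$, where $f>0$: this is $(i)$, and it also yields $(ii)$ in the form $\Sigma\setminus S$ smooth with $\HHH = \lambda f$ there, modulo the sign of $\lambda$.

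\textbf{Positivity of $\lambda$, and $(iii)$.} Using $E_V$ as a minimizer of $P(\cdot) - \lambda|\cdot\cap M|_f$ over $B^{N\setminus M}_\epsilon(\partial M)\subset F\subset\Omega_N$ and testing with $F = B^{N\setminus M}_\epsilon(\partial M)$, one gets $\lambda V \ge P(E_V) - P(B^{N\setminus M}_\epsilon(\partial M))$; since by $(i)$ the boundary of $E_V$ splits into $\Sigma$ and a piece inside $N\setminus M$ that (the collar being taken outward minimizing in the extension) coincides with $\partial B^{N\setminus M}_\epsilon(\partial M)\cap(N\setminus M)$, and since $|\Sigma|\ge|\partial M|$ because $\partial M$ is outward minimizing (Proposition~\ref{prop:outermost}), this yields $\lambda V\ge|\Sigma| - |\partial M|\ge 0$. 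If $\lambda = 0$, then $|\Sigma| = |\partial M|$, so $\Sigma$ is area minimizing among hypersurfaces homologous to $\partial M$, hence minimal on $\Sigma\setminus S$; the contact set $\Sigma\cap S$ must then be empty, for there $\HHH_\Sigma = \HHH_S > 0$ would contradict $\HHH/f\le\lambda = 0$; but then $\Sigma$ is a closed minimal hypersurface homologous to $\partial M$ and disjoint from it, contradicting that $\partial M$ is outermost (Proposition~\ref{prop:outermost}). Therefore $\lambda > 0$, whence $\HHH = \lambda f > 0$ on $\Sigma\setminus S$ (as $f>0$ there by $(i)$) and $\HHH = \HHH_S > 0$ on $\Sigma\cap S$ by strict mean-convexity of $S$; together with $\HHH/f\le\lambda$ a.e.\ this is exactly $(iii)$, and it completes $(ii)$.

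I expect claim $(i)$ — ruling out contact with the horizon — to be the real obstacle: it must reconcile the degeneracy of the weight along $\partial M$ with the sharp minimizing features of $\partial M$, and it is precisely the step for which the apparatus of the extension $(N,g_N)$ and of the collar constraint has been engineered; the sign of $\lambda$ is delicate as well, being interlaced with $(i)$ and with the outermost property of the horizon.
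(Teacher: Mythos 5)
Your overall scheme (direct method, almost-minimality, first variation with Lagrange multiplier, obstacle-problem $\mathscr{C}^{1,1}$ regularity) is the one the paper follows, but the two steps you yourself flag as the heart of the matter are exactly where the proposal breaks down. The collar comparison for $(i)$ is not sound: passing from $E_V$ to $E_V\cup\{d_g(\cdot,\partial M)<\delta\}$, the perimeter change is $|\{d_g=\delta\}\setminus E_V|-|\partial^*E_V\cap\overline{\{d_g<\delta\}}|$, and unless $\partial^*E_V\cap\partial M$ has nearly full $(n-1)$-measure the added piece can be of order $|\partial M|$ while the erased piece is small, so the perimeter may \emph{increase} by a fixed amount rather than change by $o(1)$; the claim that the new boundary is "essentially a normal translate of $\partial M$" simply does not control where $\{d_g=\delta\}$ leaves $E_V$. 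Likewise, the assertion that $E_V$ minimizes $P(\cdot)-\lambda\,|\cdot\cap M|_f$ unconditionally is not a consequence of the Lagrange multiplier rule: a constrained perimeter minimizer is only \emph{stationary} for the Lagrangian, and $F=B^{N\setminus M}_\epsilon(\partial M)$ is not even a competitor for the constrained problem since it carries zero $f$-volume. Your alternative route to $(i)$ presupposes $\lambda\geq 0$, which you only derive from $(i)$ — so it is circular.

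The paper avoids both traps by reversing the order and exploiting the obstacle formulation decisively. It proves $\lambda\geq 0$ \emph{before} knowing $(i)$: from the one-sided variational inequality one gets $\lambda\geq\HHH/f$ $(n-1)$-a.e.\ on $\partial E_V\setminus\partial M$; if $\partial E_V\cap S$ has positive $(n-1)$-measure the strict mean-convexity of $S$ yields $\lambda>0$ immediately; if it is negligible then $\partial E_V\setminus\partial M$ is smooth with $\HHH=\lambda f$, and assuming $\lambda<0$ pushes it off $S$, so the perimeter-minimizing hull of $E_V$ in $\Omega$ (which exists, is $\mathscr{C}^{1,1}$ and outward minimizing since $S$ is a barrier) would be a minimal hypersurface homologous to and disjoint from $\partial M$, contradicting Proposition~\ref{prop:outermost}. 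Only then does the paper prove $(i)$: writing the obstacle Euler--Lagrange equation in local graph coordinates near a point of $\partial E_V\cap\partial M$, the forcing term is $[-L\psi+\lambda f]^+\chi_{\{u=\psi\}}$, and because $\partial M$ is minimal and $f\vert_{\partial M}=0$ this forcing \emph{vanishes}, so $\partial E_V$ is in fact smooth near $\partial M$ with $\HHH=\lambda f\geq 0$, and the strong maximum principle against the minimal $\partial M$ gives disjointness. Finally $\lambda=0$ is ruled out by outermostness once more. You identified the right tools (outermost horizon, mean-convexity of $S$, the obstacle regularity), but the order in which $\lambda\geq 0$ and $(i)$ are established, and the vanishing-forcing observation near $\{f=0\}$, are the missing ideas.
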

\begin{proof}
The existence of $E_V$ directly follows from the Direct Method. Indeed, let $(F_j)_{j\in \N}$ be a minimizing sequence for \cref{eq:isoproblem}. Then, by compactness, up to subsequences it converges to a set $E_V \subset \Omega_N$ in $L^1$. In particular, we have
\begin{equation}
\lim_{j \to + \infty} \int_{M}f\abs{\chi_{E_V} - \chi_{F_j}} d\mu \leq \lim_{j \to + \infty}\sup_\Omega f\abs{(E_V\bigtriangleup F_j)\cap M} = 0.
\end{equation}
So, $\abs{E_V \cap M} = V$. By the convergence almost everywhere one also deduces that $B_\epsilon^{N\setminus M}(\pa M) \subset E_V \subset \Omega_N$ is satisfied too. The lower semicontinuity of the perimeter also ensures that the infimum in \cref{eq:isoproblem} is attained by $E_V$. 

\smallskip

As far as the regularity of $\partial (E_V \cap  M)$ is concerned, let us first crucially observe that $E_V \cap M$ is (constrained) isoperimetric in $M$ endowed with the conformal metric $\overline{g} = f^2 g$ with respect to a perimeter and volume with the same weight, namely with respect to
\begin{equation}
\label{weighted}
{P}(E) = \int_{\partial^*E} f^{1-n} d\sigma_{\overline{g}} , \quad \abs{E}_f = \int_{E} f^{1-n} d \mu_{\overline{g}}, 
\end{equation}
where $d\sigma_{\overline{g}}$ and $d \mu_{\overline{g}}$ denote the area and volume measure induced by $\overline{g}$ respectively.
In particular, away from $S$ and $\partial M$, where $\overline{g}$ becomes singular, we have that classical regularity for the weighted isoperimetric problem applies \cite[Section 3.10]{Morgan}, and implies the claimed smoothness. In order to prove the global $\mathscr{C}^{1, 1}$-regularity,
we mainly follow the nice exposition in \cite[Section 6]{MondinoSpadaro}, taking advantage also of \cite[Section 17]{Maggi}. We first show that $E_V \cap M$ is an almost minimizer for the perimeter. This amounts to say that there exists $r_0$ such that for every $x \in\Omega$ and every $r < r_0$
\begin{equation}
\label{eq:almostminimizer}
P(E_V \cap M) \leq P(F) + \mathrm{C} r^n 
\end{equation}
holds for any $F$ such that $(E_V\cap M) \bigtriangleup F \Subset B(x, r)$, for some constant $\mathrm{C}$ independent of $x$ and $r$. Observe that $B(x, r)$ can intersect $\Omega_N \setminus \Omega$, and  this is the main reason  we did extend our substatic manifold. Let for simplicity $E = E_V \cap M$,  consider two small enough balls $B_1$ and $B_2$ centered on $\partial E \setminus \partial M$ with $B_1, B_2 \Subset M \setminus \partial M$, and let $X_1$ and $X_2$ be  variation vector fields compactly supported in $B_1$ and $B_2$ respectively. Let $E_t^i = \psi_t^i(E)$, where $\psi_t^i$ is the flow of $X_i$ at time $t$, for $i = 1, 2$. By \cite[Proposition 17.8]{Maggi}, we have
\begin{equation}
\label{expansionvolume}
\abs{E_t^i}_f = \abs{E} + t\int_{\partial E} f\langle X_i | \nu_E\rangle d\sigma + O(t^2) 
\end{equation}
as $t \to 0$, where $\nu_E$ is a unit normal for $E$. 
If $f > c> 0$ uniformly on $B_1 \cup B_2$, we deduce
\begin{equation}
\label{controlvolume}
\left|\abs{E_t^i} - \abs{E}\right| \geq \mathrm{C} \abs{t}
\end{equation}
for $t$ in some small neighbourhood of $0$ and for some uniform constant $\mathrm{C}$. Moreover,
the perimeter satisfies the usual expansion \cite[Theorem 17.5]{Maggi}
\begin{equation}
\label{expansionperimeter}
P(E^i_t) = P(E)  + t\int_{\partial E} \div_{\partial E} X_i d\sigma +O(t^2)
\end{equation}
as $t \to 0$,
where $\div_{\partial E}$ denotes the tangential divergence  $\div_{\partial E} X_i = \div (X_i) - \langle X_i | \nu_E\rangle$. Thus,
\begin{equation}
\label{controlperimeter}
\left|\abs{P(E^i_t)} - \abs{P(E)} \right| \leq \mathrm{C} \abs{t},
\end{equation}
again for $t$ in some small neighbourhood of $0$ and for some uniform constant $C$. We can now conclude the proof of \cref{eq:almostminimizer} for the suitable competitors $F$ as done for the proof of \cite[Lemma 6.3]{MondinoSpadaro}. Namely, letting $F$ as above, we recover the possibly lost or gained $f$-volume $\delta$ by the competitor $F \cap \Omega$ by slightly deforming $E$ inside $B_i$ with $i$ chosen so that $(F \bigtriangleup E) \cap B_i = \emptyset$. Observe that $\abs{\delta} \leq \mathrm{C} r^n$ for some suitable constant. Exploiting \cref{controlvolume} and \cref{controlperimeter} we get a set $\tilde{F}$ with  $\abs{E}_f = \abs{\tilde{F}}_f$ such that 
\begin{equation}
\label{volumerecovered} 
P(\tilde{F}) \leq P(F\cap \Omega) + \mathrm{C} \abs{\delta} \leq P(F\cap \Omega) + \mathrm{C} r^n  
\end{equation}
for some suitable $\mathrm{C} >0$, uniform for any $r < r_0$, with $r_0$ small enough. Since $E$ is constrained $f$-isoperimetric, we have then
\begin{equation}
\label{chainalmost}
P(E) \leq P(\tilde{F}) \leq P(F) + P(\Omega) - P(F \cup \Omega) + \mathrm{C} r^n.
\end{equation}
Observe that $F \cup \Omega$ may intersect $\Omega_N \setminus \Omega$.
On the other hand, it is easy to notice that sets with smooth boundary are in fact automatically almost minimizers for the perimeter (see e.g. the derivation of \cite[(6-9)]{MondinoSpadaro}), and so $P(\Omega) \leq P(F \cup \Omega) + \mathrm{C} r^n$. Plugging it into \cref{chainalmost} concludes the proof of $E= E_V \cap M$ being almost minimizing. From this crucial property, one deduces that $\partial E$ is $C^{1, 1/2}$ in a neighbourhood of $\partial E \cap \partial \Omega$ exactly as exposed in  \cite[Proof of Proposition 6.1]{MondinoSpadaro}.  

\medskip

To establish the optimal $\mathscr{C}^{1, 1}$ regularity, we first take advantage of~$ (ii)$, which we proceed to prove. In order to make the comparison with the references easier, along this proof we are going to assume that $\nu_E$ is the interior unit normal to $E$, in the extended manifold.
Let $Y$ be a vector field supported in some small ball centered at some point of $\partial E$, with a flow $\psi$ such that $\psi_t(E) \subset \Omega$, for $t$ small enough. Let now $X$ satisfy the same assumptions, with the additional requirement to be supported around the smooth part of $\partial E \setminus \partial \Omega$ and such that $\mathrm{supp} X \cap \mathrm{supp} Y = \emptyset$.  Assume also that the composition of the two flows gives a $f$-volume preserving diffeomorphism. Then, the first variation formula gives
\begin{equation}
\label{firstvariation}
0 \leq \int_{\partial E} \div_{\partial E} (Y + X) d\sigma = \int_{\partial E} \div_{\partial E} Y d\sigma - \int_{\partial E} \HHH\langle X | \nu_E \rangle d\sigma.
\end{equation}
Assume for the time being that $Y$ is supported around a point where $\partial E \setminus \partial \Omega$ is smooth. Then we can integrate by parts also in the integrand involving $Y$, and obtain
\begin{equation}
0 \leq - \int_{\partial E} \HHH\langle Y + X | \nu_E \rangle d\sigma.
\end{equation}
Repeating the argument with $-X$ and $-Y$ in place of $X$ and $Y$, that is possible in the present case since these vector fields are supported  away from $\partial \Omega$, we actually get
\begin{equation}
\label{firstvariationsmooth}
0 =  \int_{\partial E} \HHH\langle Y + X | \nu_E \rangle d\sigma.
\end{equation}
Moreover, the $f$-volume being preserved entails, by \cref{expansionvolume},
\begin{equation}
\label{variationvolume}
0 = \int_{\partial E} f \langle Y + X | \nu_E\rangle d \sigma. 
\end{equation}
Choose now $X$ and $Y$ so that $X = \alpha \nu_E$ and $Y = -\beta \nu_E$ on $\partial E$, with $\alpha$ and $\beta$ being smooth functions compactly supported on $\partial E$. Combining \eqref{variationvolume} with \cref{firstvariationsmooth} with this choice of $X$ and $Y$, we obtain
\begin{equation}
\frac{\int_{\partial E} \left(\frac{\HHH}{f}\right) f \alpha d\sigma}{\int_{\partial E} f\alpha d\sigma} = \frac{\int_{\partial E} \left(\frac{\HHH}{f}\right) f \beta d\sigma}{\int_{\partial E} f\alpha d\sigma} = \frac{\int_{\partial E} \left(\frac{\HHH}{f}\right) f \beta d\sigma}{\int_{\partial E} f\beta d\sigma}. 
\end{equation}
Then, since the support of $\alpha$ and $\beta$ on $\partial E$ can be chosen arbitrarily close to any two points in the smooth part of $\partial E \setminus \partial \Omega$, we conclude that there exists $\lambda \in \R$ such that $\HHH = \lambda f$ in the smooth part of $\partial E \setminus \partial \Omega$. We plug this information into \cref{firstvariation}, for a vector field $Y$ that is now supported in a ball centered on a point of $\partial E \cap \partial \Omega$. Coupling with \cref{variationvolume}, this yields
\begin{equation}
\label{eq:diffineq}
  \int_{\partial E} \div_{\partial E} Y d\sigma + \lambda \int_{\partial E} f \langle Y | \nu_E \rangle d\sigma \geq 0.   
\end{equation}
Writing \cref{eq:diffineq} in local coordinates in a neighbourhood  of a point in $\partial E\cap \partial \Omega$, where $\partial E$ is given by the graph of a function $u: B \to \R$ and $\partial \Omega$ as the graph of a function $\psi : B \to \R$ with $B \subset \R^{n-1}$, and $Y$ as a normal vector field cut off with a function $\phi$, it is a routine computation to check that, for some quasilinear elliptic operator $L$, it holds
\begin{equation}
\label{eq:variational-obstacle}
\int_B \phi [-L u + \lambda f(\cdot, u(\cdot))] d\mu \geq 0, 
\end{equation}
where $\phi \in \mathscr{C}^{\infty}_c (B)$ and $\phi \geq 0$ in $\{u = \psi\}$. We address the interested reader to \cite[Section 6C]{MondinoSpadaro} and \cite[Section 4.1]{FocardiGeraciSpadaro} for details of these computations. In particular, the function $u$ succumbs to the regularity theory for obstacle problems, that is $u \in \mathscr{C}^{1,1}$, see \cite[Theorem 3.8]{FocardiGeraciSpadaro}. As a consequence, $\partial E$ has a notion of mean curvature defined almost everywhere. Observe that the quasilinear elliptic operator $L u$ provides the mean curvature of $\partial E$ at the point $(x, u(x))$, with $x \in B$.
We are going to take advantage also of the basic step leading to the regularity result recalled above. Namely, as nicely presented in \cite[Proposition 3.2]{FocardiGeraciSpadaro}, the variational property \cref{eq:variational-obstacle} implies that $u$ is also  solution to the Euler--Lagrange equation
\begin{equation}
\label{eq:euler-lagrange-obstacle}
    \int_B \phi [-L u + \lambda f(\cdot, u(\cdot))] d\mu = \int_B \xi \,  d\mu,
\end{equation}
given any $\phi \in \mathscr{C}^{\infty}_c (B)$, where
\begin{equation}
0 \leq \xi \leq  [-L \psi + \lambda f(\cdot, \psi(\cdot))]^+ \chi_{\{u = \psi\}}.
\end{equation}

\medskip

We now proceed to show that $\lambda > 0$, that $\partial E_V$ is disjoint from $\partial M$ and that \cref{eq:supH/f} holds, completing thus the proof.
Let $Y$ in \cref{eq:diffineq} be supported on a neighbourhood of a point $\partial E \cap S$. Integrating by parts the first summand in \cref{eq:diffineq}, and letting $Y = \alpha \nu_E$ for some compactly supported nonnegative test function $\alpha$, we get
\begin{equation}
\label{eq:supH/f-passaggio}
\int_{\partial E} (\lambda f - \HHH) \alpha \, d \sigma \geq 0.
\end{equation}
The arbitrariness of $\alpha$ implies that 
\begin{equation}
\label{eq:supH/f-passaggio2}
\lambda \geq \frac{\HHH}{f} (x)  \qquad \text{for $(n-1)$-almost any $x \in \partial E \setminus \partial M$}.
\end{equation}
Since $S$ is mean-convex, if the $(n-1)$-induced measure of $\partial E \cap S$ is strictly positive, then the (weak) mean curvature of such region is strictly positive \cite[Lemma 6.10]{MondinoSpadaro}, and consequently \cref{eq:supH/f-passaggio2} directly implies $\lambda > 0$ and \cref{eq:supH/f}. If instead the intersection is $(n-1)$-negligible, then \cref{eq:euler-lagrange-obstacle} implies that $Lu = \lambda f$ holds in the weak sense outside of $\partial M$. In particular, classical regularity theory implies that $\partial E \setminus \partial M$ is smooth and that its mean curvature is given by $\lambda f$. Assume by contradiction that $\lambda < 0$. Hence, by the Maximum Principle, $\partial E \cap S = \emptyset$. Then, $S$ being outward minimizing acts as a barrier to minimize the perimeter among sets homologous to $\partial M$ containing $E$ \cite[Theorem 2.10]{Fogagnolo_Mazzieri-minimising}. We call $E^*$ such a minimizer. The boundary of such set is $\mathscr{C}^{1,1}$ \cite{SternbergZiemerWilliams}, and obviously it is outward minimizing, so that its (weak) mean curvature is nonnegative. Having assumed that $\lambda < 0$, and since $\partial M$ is minimal, we deduce that $\partial E^*$ is minimal itself, and by the Maximum Principle disjoint from $\partial M$. However, this is a contradiction with $\partial M$ being outermost, proved in \cref{prop:outermost}. 
We established that $\lambda \geq 0$, and that $\lambda > 0$ if $\partial E \cap S$ has positive $(n-1)$-measure.  We focus our attention to \cref{eq:euler-lagrange-obstacle} again, considering a neighbourhood of a point where $\partial E$ meets $\partial M$. Crucially observe that by the minimality of $\partial M = \{f = 0\}$ the right hand side of \cref{eq:euler-lagrange-obstacle} in this case vanishes, and that thus by classical regularity $\partial E$ is smooth in a neighbourhood of $\partial M$. Since its mean curvature $\HHH = \lambda f \geq 0$, the Maximum Principle implies that $\partial E$ is disjoint from $\partial M$. The possibility that $\lambda = 0$ in the case of negligible intersection $\partial E \cap S$ is finally ruled out by  \cref{prop:outermost} again, and \cref{eq:supH/f} becomes simply \cref{eq:supH/f-passaggio2}. 
\end{proof}
The Willmore-type inequality \cref{eq:willmore-intro} and the description of $f$-isoperimetric sets provided by \cref{thm:f-isoperimetric} allow to carry out the proof of the Isoperimetric Inequality of \cref{thm:isoperimetric_intro}.
\begin{proof}[Proof of \cref{thm:isoperimetric_intro}]
If there is more than one end, then by \cref{lem:positiveavr->1end} all ends have vanishing asymptotic volume ratio. In this case,
\cref{eq:isoperimetric_intro} reduces to the just proved \cref{eq:minimizingboundary}. 
Obviously, the same is true for the one-ended case if
 $\mathrm{AVR}(M, g, f) = 0$. The core of the Theorem then lies in the one-ended case with  $\mathrm{AVR}(M, g, f) > 0$. 

\smallskip

We first carry out the proof in the more involved and relevant case of nonempty boundary. 
Let $S$ be one of the outward minimizing hypersurfaces in the outward minimizing exhaustion, and let $\Omega$ be such that $\partial \Omega = \partial M \sqcup S$. For $V < \abs{\Omega}_f$, consider an $f$-isoperimetric set $E_V$ constrained in $\Omega$ with $f$-volume equal to $V$, that is, satisfying \cref{eq:isoproblem}. $E_V$ exists and is subject to the properties described in \cref{thm:f-isoperimetric}. In particular, $\partial E_V = \partial M \sqcup \Sigma_V$, with $\Sigma_V$ a $\mathscr{C}^{1, 1}$ hypersurface. Varying $V$, we also define $I_f: (0, \abs{\Omega}_f) \to (0, +\infty)$ by $I_f (V) = \abs{\Sigma_V}$, the $f$-isoperimetric profile of $\Omega$. It is argued as in the classical case that $I_f$ is continuous. Indeed, $E_{V + \epsilon}$ converges in $L^1$ to some $\tilde{E}$, that in particular satisfies $\abs{\tilde{E}}_f = V$. We have that, for a fixed $\delta > 0$, by lower semicontinuity, for any $\epsilon$ close enough to zero such that
\begin{equation}
\label{eq:profilecont1}
I_f(V) \leq  P(\tilde{E}) \leq P(E_{V + \epsilon}) + \delta = I_f(V+\epsilon) + \delta  \leq P(E_V) + P(B_\epsilon) +\delta = I_f(V) + P(B_\epsilon) +\delta.
\end{equation}
In the above inequality, $B_\epsilon$ is chosen so that $\abs{F \cup B_\epsilon}_f = V + \epsilon$ or $\abs{F \setminus B_\epsilon}_f = V + \epsilon$, according to the sign of $\epsilon$. Letting first $\epsilon \to 0$, and then $\delta \to 0^+$ establishes the continuity of $I_f$. Let now $\epsilon > 0$. Let $\Sigma^\epsilon$ be an inward variation of $\Sigma_V$ supported in $\Sigma_V \setminus S$ such that $\abs{E_V^\epsilon}_f = V - \epsilon$,  where $E_V^\epsilon$ is such that $\partial E_V^\epsilon = \partial \Sigma^\epsilon \sqcup \partial M$. We have
\begin{equation}
\label{eq:derivative-profile}
\liminf_{\epsilon \to 0^+}\frac{I_f^{\frac{n}{n-1}} (V) - I_f^{\frac{n}{n-1}}(V-\epsilon)}{\epsilon} \geq \liminf_{\epsilon \to 0^+}\frac{\abs{\Sigma}^{\frac{n}{n-1}} - \abs{\Sigma^\epsilon}^{\frac{n}{n-1}}}{\epsilon} 
\end{equation}
Assume now that $\Sigma^\epsilon$ is obtained through a normal variation field coinciding with $\phi \nu$ on $\Sigma$, with $\phi \in \mathscr{C}^{\infty}_c(\Sigma)$.
Since the first variation of $f$-volume is given by the $f$-weighted area, the right hand side is computed as 
\begin{equation}
\label{eq:derivative-profile2}
\liminf_{\epsilon \to 0^+}\frac{\abs{\Sigma}^{\frac{n}{n-1}} - \abs{\Sigma^\epsilon}^{\frac{n}{n-1}}}{\epsilon} =\frac{n}{n-1} \abs{\Sigma_V}^{\frac{1}{n-1}}\frac{\int_\Sigma \HHH \phi d\sigma}{\int_\Sigma f \phi d\sigma} = \frac{n}{n-1} \abs{\Sigma_V}^{\frac{1}{n-1}} \lambda,
\end{equation}
where $\HHH = \lambda f$ on the support of $\phi$ is due to \cref{thm:f-isoperimetric}.
Letting now $W$ be the infimum of $\int_\Sigma \HHH / f d\sigma$ taken among strictly mean-convex smooth hypersurfaces $\Sigma$ homologous to $\partial M$, we actually have, by the Substatic Willmore-type inequality \cref{eq:willmore-intro}
\begin{equation}
\label{eq:willmore-applied}
\AVR(M, g, f) \abs{\Sf^{n-1}} \leq W \leq \frac{1}{(n-1)^{n-1}}\int_{\Sigma_V} \left(\frac{\HHH}{f}\right)^{n-1} d\sigma \leq \frac{1}{(n-1)^{n-1}} \abs{\Sigma_V} \lambda^{n-1},  
\end{equation}
since $0 < \HHH/f \leq \lambda$ by \cref{thm:f-isoperimetric} for $(n-1)$-almost any point on $\Sigma_V$. The above inequality holds for the $\mathscr{C}^{1,1}$ $\Sigma_V$ since we can approximate it with smooth, strictly mean-convex hypersurfaces through Mean Curvature Flow, see \cite[Lemma 5.6]{huisken_inverse_2001}. Observe that this is possible since, by \cref{thm:f-isoperimetric}, $\Sigma_V$ is disjoint from $\partial M$. Combining \cref{eq:derivative-profile}, \cref{eq:derivative-profile2} and \cref{eq:willmore-applied} yields
\begin{equation}
\label{eq:derivativeprofilewillmore}
\liminf_{\epsilon \to 0^+}\frac{I_f^{\frac{n}{n-1}} (V) - I_f^{\frac{n}{n-1}}(V-\epsilon)}{\epsilon} \geq n \left[\AVR(M, g, f) \abs{\Sf^{n-1}}\right]^{\frac{1}{n-1}}.
\end{equation}
Comparing with the reference warped product $f$-isoperimetric profile given by 
\begin{equation}
\label{eq:referencewarped}
J_f(V) = n^{\frac{n-1}{n}}\left[\AVR(M, g, f) \abs{\Sf^{n-1}}\right]^{\frac{1}{n}} V^{\frac{n-1}{n}},
\end{equation}
whose derivative equals the right-hand side of \cref{eq:derivativeprofilewillmore},
we deduce at once that the continuous function $I_f^{{n}/{(n-1)}}  - J_f^{{n}/{(n-1)}}$ has nonnegative Dini derivative, and is thus monotone nondecreasing. Hence, for $V_0 < V$, we get
\begin{equation}
    \label{eq:comparingprofiles}
    I_f^{\frac{n}{n-1}}(V) \geq  n \left[\AVR(M, g, f) \abs{\Sf^{n-1}}\right]^{\frac{1}{n-1}} V + I_f^{\frac{n}{n-1}}(V_0) - n \left[\AVR(M, g, f) \abs{\Sf^{n-1}}\right]^{\frac{1}{n-1}} V_0.
\end{equation}
Recall now that $I_f(V_0) = \abs{\Sigma_{V_0}}$ for some $\Sigma_{V_0}$ homologous to $\partial M$. The boundary being minimizing, by \cref{prop:outermost}, implies then that $I_f (V_0) \geq \abs{\partial M}$. Plugging it into \cref{eq:comparingprofiles}, and then letting $V_0$ go to $0$, leaves us with
\begin{equation}
\label{eq:almostinequality}
    I_f^{\frac{n}{n-1}}(V) - \abs{\partial M}^{\frac{n}{n-1}} \geq n \left[\AVR(M, g, f) \abs{\Sf^{n-1}}\right]^{\frac{1}{n-1}} V. 
\end{equation}
By definition of Isoperimetric profile, the above inequality implies \cref{eq:isoperimetric_intro} for any hypersurface homologous to $\partial M$ inside $\Omega$, enclosing a set of volume $V$. But the volumes being arbitrary and the outward minimizing envelopes forming an exhaustion, the proof of \cref{eq:isoperimetric_intro} is actually complete. 

\smallskip

We are left to characterize the situation when some smooth $\Sigma$ homologous to $\partial M$ fulfils the equality in \cref{eq:isoperimetric_intro}.
Let $V$ be the $f$-volume subtended by $\Sigma$.  Let $\Omega = \partial M \sqcup S$, with $S$ strictly mean-convex outward minimizing and $\Sigma\subset\Omega$. As above, let $I_f$ be the $f$-isoperimetric profile of $\Omega$ and $J_f$ the reference warped product $f$-isoperimetric profile defined in \cref{eq:referencewarped}. By approximation, we observe that any other $f$-isoperimetric set constrained in $\Omega$ of $f$-volume $V_0$ satisfies the Isoperimetric inequality \cref{eq:isoperimetric_intro}. Hence, by \cref{eq:comparingprofiles}, we have
\begin{equation}
\label{eq:profiles-rigidity}
 \abs{\partial M}^{\frac{n}{n-1}} = I_f^{\frac{n}{n-1}}(V) - J_f^{\frac{n}{n-1}}(V) \geq I_f^{\frac{n}{n-1}}(V_0) - J_f^{\frac{n}{n-1}}(V_0) \geq \abs{\partial M}^{\frac{n}{n-1}}.   
\end{equation}
As a consequence, any $f$-isoperimetric set of volume $V_0 \leq V$ satisfies the equality in the $f$-Isoperimetric inequality \cref{eq:isoperimetric_intro}. Observe now, that by approximation with smooth sets in the possibly extended Riemannian manifold $(N, g_N)$, this implies that such constrained $f$-Isoperimetric sets of volume $V_0$ are in fact \emph{globally} $f$-Isoperimetric, and consequently the regularity observed in \cref{thm:f-isoperimetric} implies that any $\Sigma_{V_0}$ is smooth. Retracing the steps that lead to \cref{eq:comparingprofiles}, we have that the smooth hypersurface $\Sigma_{V_0}$ satisfies the equality in the Willmore-type inequality in  \cref{thm:Willmore_intro}. This triggers the rigidity stated there, and yields, for $\Omega_{V_0}$ the domain enclosed between $\Sigma_{V_0}$ and $\partial M$, the isometry between $(M \setminus \Omega_{V_0}, g)$ and $[{s}_0, +\infty) \times \Sigma$ endowed with
\begin{equation}
\label{eq:rigiditywillmoreiso}
   g = f^2 d\rho\otimes d\rho + \eta^2 g_{\Sigma_{V_0}}. 
\end{equation}
In particular, since $M$ has one end, the hypersurface $\Sigma_{V_0}$ is necessarily connected. 
We now observe that, again due to the global $f$-isoperimetry of $\Sigma_{V_0}$, the value of $\HHH/f$ is constant on such hypersurface. But then, retracing the computations that lead to the isometry with \cref{eq:rigiditywillmoreiso}, more precisely coupling \cref{eq:H/f_eta} with \cref{eq:aux_rigidity_willmore}, we deduce that $f$ and $\eta$ in \cref{eq:rigiditywillmoreiso} depend only on $\rho$. Introduce now a new coordinate $s$ defined by $f^2 (\rho) d\rho = d s$. Recall that $\eta$ satisfies $\partial_\rho \eta = f^2$, and thus $\partial_s \eta = 1$. Possibly translating the variable $s$, we thus have
\begin{equation}
\label{eq:rigidityins}
    g = \frac{ds \otimes ds}{f^2(s)} + s^2 g_{\Sigma_{V_0}},
\end{equation}
for any $V_0 \leq V$, and $s \geq s_0$ for some $s_0 > 0$. 

Since we have proved that $f$ is a function of the distance $\rho$ from $\Sigma_{V_0}$ only, in particular we have shown that $f$ must be constant on $\Sigma_{V_0}$. This must hold for all $V_0\leq V$.
Moreover,  the (Hausdorff) distance between $\Sigma_{V_0}$ and $\partial M$ goes to $0$ as ${V_0}\to 0$, because otherwise the volume enclosed along the (sub)sequence would be necessarily bounded away from $0$. 
But then, the level sets of $f$ forming a regular foliation of a neighbourhood of $\partial M$, we deduce that $\Sigma_{V_0}$ must actually be a level set of $f$, in particular diffeomorphic to $\partial M$, for $V_0$ small enough. Letting $V_0$ to zero we thus extend the expression \cref{eq:rigidityins} to the whole manifold, that is \cref{eq:rigidiso_intro}. The connectednedness of $\partial M$ is again a consequence of $(M, g)$ being one ended. 

As far as the characterization of $\Sigma$ is concerned, we already showed that $f$ is constant on it. 
If, by contradiction, $s$ were not constant on $\Sigma$, then, letting $s_{\mathrm{min}} = \min\{s(p): p \in \Sigma\}$ and $s_{\mathrm{max}} = \max\{s(p): p \in \Sigma\}$, $\Sigma$ would lie in the region $[s_{\mathrm{min}}, s_{\mathrm{max}}] \times \partial M$, where, $f$ being constant, by \cref{eq:rigidiso_intro} has the metric of a truncated cone. By \cref{eq:rigidityins} for $V_0 = V$, $\Sigma$ is a totally umbilical constantly mean-curved hypersurface in such cone. Moreover,
the constancy of $f$ on such region makes the substatic condition simplify to nonnegative Ricci curvature.
By \cite[Lemma 3.8]{morgan-ritore}, $\Sigma$ could then only be a level set of $s$ or bound a flat round ball. The first possibility gives a contradiction with the initial assumption that $s$ were not constant on $\Sigma$, the second one with $\Sigma$ being homologous to $\partial M$. This concludes the proof of $\Sigma$ being a level set of $s$, and of \cref{thm:isoperimetric_intro} in the nonempty-boundary case.

\smallskip

We finally discuss the empty-boundary-case. It is immediately checked that the $f$-Isoperimetric inequality \cref{eq:isoperimetric_intro} follows with a pure simplification of the proof given above. When a hypersurface $\Sigma$ satisfies with equality the $f$-Isoperimetric inequality, arguing as done above for \cref{eq:rigidityins} we reach for an isometry between $(M \setminus \Omega_\Sigma, g)$ and $I = [\overline{s}, +\infty) \times \Sigma$ endowed with
\begin{equation}
\label{eq:rigiditypartial-nobordo}
     g = \frac{ds \otimes ds}{f^2(s)} + s^2 g_{\Sigma},
\end{equation}
for $\Omega_\Sigma$ enclosed by $\Sigma$. Again, $\Sigma$ must be connected, since $M$ is one ended by \cref{lem:positiveavr->1end}.
Now, we claim that $\Sigma$  satisfies
\begin{equation}
\label{eq:HK-equality}
\frac{n-1}{n} \int_\Sigma \frac{f}{\HHH} d\sigma = \int_{\Omega_\Sigma} f d\mu, 
\end{equation}
in fact saturating the substatic Heintze-Karcher inequality \cite[Theorem 1.3]{Li_Xia_19} (see also \cite[Theorem 3.6]{fogagnolo-pinamonti}) in boundaryless substatic manifolds. The analysis of the equality case worked out in \cite[Theorem 3.1-$(ii)$]{borghini-fogagnolo-pinamonti} then provides us with an isometry between $(\Omega_\Sigma, g)$ and $I \times \Sf^{n-1}$ endowed with
\begin{equation}
\label{eq:rigidity-interior}
   g = \frac{ds \otimes ds}{f^2(s)} + \left(\frac{s}{f(x)}\right)^2 g_{\Sf^{n-1}}, 
\end{equation}
for $x \in \Omega_\Sigma$
with $\Sigma$ becoming a level set of $s$. Coupled with \cref{eq:rigiditypartial-nobordo} on the complement of $\Omega$, this yields the desired rigidity statement.

In order to check \cref{eq:HK-equality}, just observe that, since as above one has that $\Sigma$ is $f$-isoperimetric, $\HHH/f$ \emph{is constant on such hypersurface}. Moreover, since it satisfies equality in the Willmore-type inequality \cref{eq:willmore-intro}, one has 
\begin{equation}
\label{eq:idmeancurv}
\frac{\HHH}{f} \,\abs{\Sigma}^{\frac{1}{n-1}} =(n-1)\left[\mathrm{AVR}(M, g, f) \abs{\Sf^{n-1}}\right]^{\frac{1}{n-1}}.
\end{equation}
Coupling with
\begin{equation}
\label{eq:identity-isop}
\abs{\Sigma}^{\frac{n}{n-1}} = n \left[\mathrm{AVR}(M, g, f) \abs{\Sf^{n-1}}\right]^{\frac{1}{n-1}} \abs{\Omega_\Sigma}_f,
\end{equation}
it is straightforwardly seen that \cref{eq:HK-equality} holds, completing the proof.
\end{proof}

\appendix

\section{Comments on the substatic condition}

\subsection{Physical motivation}
\label{app:phys_mot}

Here we give a physical interpretation of substatic triples, following~\cite[Lemma~3.8]{Wang_Wang_Zhang}.
Let 
$$
L=\R\times M\,,\qquad \gr\,=\,-f^2dt\otimes dt+g
$$
be a static spacetime satisfying the Einstein Field Equation
$$
\Ric_\gr\,+\,\left(\Lambda-\frac{1}{2}\RRR_\gr\right)\gr\,=\,T\,,
$$
where $T$ is the stress-energy tensor and $\Lambda\in\R$ is the cosmological constant. Using standard formulas to express the Ricci tensor of a warped product, we find out that
$$
\Ric_\gr(\pa_t,\pa_t)\,=\,f\De f\,,\qquad \Ric_\gr(\pa_i,\pa_j)\,=\,\Ric(\pa_i,\pa_j)-\frac{1}{f}\nana f(\pa_i,\pa_j)\,.
$$
In particular a simple computation gives
$$
\RRR_\gr\,=\,\RRR-\frac{2}{f}\De f\,.
$$
Putting these pieces of information inside the Einstein Field Equation, we get
\begin{align}
T_{tt}\,&=\,\left(-\Lambda +\frac{\RRR}{2}\right)f^2\,,
\\
T_{it}\,&=\,0\,,
\\
T_{ij}\,&=\,\RRR_{ij}-\frac{1}{f}\nana_{ij} f+\left(\Lambda-\frac{\RRR}{2}+\frac{\De f}{f}\right)g_{ij}\,.
\end{align}
We now assume that the Null Energy Condition is satisfied. Namely, for any vector $X=\pa_t+Y^i\pa_i$ with $\gr(X,X)=0$ (that is, $g(Y,Y)=f^2$), we require $T(X,X)=T_{tt}+T_{ij}Y^i Y^j\geq 0$.
Using the above identities, this hypothesis tells us
$$
0\,\leq\,T_{tt}+T_{ij}Y^i Y^j\,=\,\left(-\Lambda +\frac{\RRR}{2}\right)f^2+\left(\Ric-\frac{1}{f}\nana f+\frac{\De f}{f}g\right)(Y,Y)+\left(\Lambda -\frac{\RRR}{2}\right)g(Y,Y)\,.
$$
Recalling that $g(Y,Y)=f^2$, we have obtained
$$
\hbox{ NEC holds}\ \ \Leftrightarrow\ \  \left(\Ric-\frac{1}{f}\nana f+\frac{\De f}{f}g\right)(Y,Y)\geq 0 \ \hbox{ for all $Y$ with $g(Y,Y)=f^2$.}
$$
By rescaling of $Y$, we then conclude that the Null Energy Condition on static spacetimes is equivalent to
$$
\Ric-\frac{1}{f}\nana f+\frac{\De f}{f}g\,\geq\,0\,.
$$
In other words, a static spacetime satisfies the Null Energy Condition if and only if its spacelike slices are substatic.

Finally, we briefly discuss the physical interpretation of the conformal metric $\tilde g=g/f^2$. In the context of static spacetimes, this metric is usually referred to as optical metric and has the property that $\tilde g$-geodesics lift to null geodesics in the spacetime metric $\gr$.
This follows easily from the fact that the trajectories of null geodesics do not change under a conformal change of metric, hence the null geodesics of $\gr$ are the same as the null geodesics of $f^2\gr=-dt\otimes dt+\tilde g$.

\subsection{Relation between ${\rm CD}(0,1)$ and substatic condition}
\label{app:CD_subst}

Let $(M,g,f)$ be a substatic triple and let $\tilde g=g/f^2$. 
We want to show that $(M,\tilde g,\psi)$ satisfies the ${\rm CD}(0,1)$ condition, where $\psi=-(n-1)\log f$.
To this end, we need to rewrite the substatic condition in terms of the conformal metric. We start from the following formulas:
\begin{align}
\widetilde\na^2 f\,&=\,\nana f+\frac{1}{f}\left(2df\otimes df-|\na f|^2 g\right)\,,
\\
\De_{\tilde g} f\,&=\,f^2\De f-(n-2)f|\na f|^2\,,
\end{align}

In particular
\begin{align}
\frac{1}{f}\nana f-\frac{1}{f}\De f g\,&=\,\frac{1}{f}\widetilde\na^2 f-\frac{1}{f^2}\left(2df\otimes df-|\na f|^2 g\right)
-\frac{1}{f}\De_{\tilde g} f\,\tilde g-(n-2)|\na f|^2\tilde g
\\
&=\,\frac{1}{f}\widetilde\na^2 f-\frac{2}{f^2}df\otimes df
-\frac{1}{f}\De_{\tilde g} f\tilde g-(n-3)\frac{1}{f^2}\big|\widetilde\na f\big|_{\tilde g}^2\,\tilde g
\end{align}

On the other hand, it is well known that the Ricci tensor $\Ric$ of $g$ and the Ricci tensor $\Ric_{\tilde g}$ of the conformal metric $\tilde g$ are related as follows
\[
\Ric\,=\,\Ric_{\tilde g}-\frac{n-2}{f}\widetilde\na^2 f+\frac{2(n-2)}{f^2}df\otimes df-\left(\frac{1}{f}\De_{\tilde g} f+\frac{n-3}{f^2}|\widetilde\na f|^2_{\tilde g}\right)\tilde g\,.
\]
Putting together the above formulas, we get
\begin{align}
\Ric-\frac{1}{f}\nana f+\frac{1}{f}\De f g\,&=\,
\Ric_{\tilde g}-\frac{n-1}{f}\widetilde\na^2 f+\frac{2(n-1)}{f^2}df\otimes df
\\
&=\,\Ric_{\tilde g}-(n-1)\widetilde\na^2 (\log f)+(n-1)d\log f\otimes d\log f
\\
&=\,\Ric_{\tilde g}+\widetilde\na^2 \psi+\frac{1}{n-1}d\psi\otimes d\psi\,.
\end{align}
It follows then that, if $(M,g,f)$ satisfies the substatic condition, then $(M,\tilde g,\psi)$ satisfies the ${\rm CD}(0,1)$ condition. 

\subsection{Li--Xia connections}
\label{app:LiXia}

In~\cite{Li_Xia_17}, Li and Xia consider the family of connections $\D^{u\a\gamma}$, where $u\in\mathscr{C}^\infty(M)$, $\a,\gamma\in\R$, defined by
\[
\D^{u\a\gamma}_X Y=\nabla_X Y+\alpha \left[X(u)Y+Y(u)X\right]+\gamma\, g(X,Y)\na u\,.
\]
They then compute the Ricci tensor $\Ric^{u\a\gamma}$ induced by a connection $\D^{u\a\gamma}$, showing that it is related to the usual Ricci tensor by
\[
\Ric^{u\a\gamma}=\Ric-\left[(n-1)\a+\gamma\right]\nana u+\left[(n-1)\a^2-\gamma^2\right]du\otimes du+\left[\gamma\De u+\left(\gamma^2+(n-1)\a\gamma\right)|\na u|^2\right]g\,.
\]
When $\a=0$ and $\gamma=1$, in particular we have
\[
\Ric^{u01}\,=\,\Ric-\nana u-du\otimes du+\left[\De u+|\na u|^2\right]g\,,
\]
which can be rewritten as follows by setting $u=\log f$:
\[
\Ric^{u01}\,=\,\Ric-\frac{1}{f}\nana f+\frac{\De f}{f}g\,.
\]
It is then clear that the condition $\Ric^{u01}\geq 0$ is equivalent to the substatic condition.

Choosing instead $\a=1/(n-1)$, $\gamma=0$, setting $v=-u$ one gets
\[
\Ric^{u\frac{1}{n-1}0}=\Ric+\nana v+\frac{1}{n-1}dv\otimes dv\,,
\]
hence $\Ric^{u\frac{1}{n-1}0}\geq 0$ gives  the ${\rm CD}(0,1)$ condition. In fact, the connection $\D^{u\frac{1}{n-1}0}$ had already been considered in the work~\cite{wylie-yeroshkin} that was focused on the ${\rm CD}(0,1)$ case only.

Here we show that the two connections $\D^{u01}$ and $\D^{u\frac{1}{n-1}0}$ are in fact conformally related: let $(M,g)$ be a Riemannian manifold and let $\na,\widetilde\na$ be the Levi-Civita connections corresponding to the metrics $g$, $\tilde g=g/f^2$, respectively. It is easy to show that $\na$ and $\widetilde\na$ are related a follows
\[
\na_X Y\,=\,\widetilde\na_XY+\frac{1}{f}\left[X(f)Y+Y(f) X- g(X,Y)\na f\right]\,=\,
\widetilde\na_XY+X(u)Y+Y(u) X-g(X,Y)\na u
\,,
\]
hence, setting $\psi=-(n-1)u$:
\begin{align}
\D_X^{u01}Y\,&=\,\na_XY+g(X,Y)\na u
\\
&=\,\widetilde\na_XY+X(u)Y+Y(u) X
\\
&=\,\widetilde\na_XY+\frac{1}{n-1}\left [X(-\psi)Y+Y(-\psi) X\right]\,.
\end{align}
This is then precisely $\D^{-\psi\frac{1}{n-1}0}$ using $\widetilde\na$ as the Levi-Civita connection in place of $\na$.

Notice that in this subsection $\psi$ and $f$ have been introduced as the functions satisfying $u=\log f$ and $\psi=-(n-1)u$, hence they are related by $\psi=-(n-1)\log f$, in agreement with Subsection~\ref{app:CD_subst}.


\printbibliography

@article {White-mcf,
    AUTHOR = {White, Brian},
     TITLE = {The size of the singular set in mean curvature flow of
              mean-convex sets},
   JOURNAL = {J. Amer. Math. Soc.},
  FJOURNAL = {Journal of the American Mathematical Society},
    VOLUME = {13},
      YEAR = {2000},
    NUMBER = {3},
     PAGES = {665--695},
      ISSN = {0894-0347},
   MRCLASS = {53C44 (49Q20)},
  MRNUMBER = {1758759},
MRREVIEWER = {Harold Parks},
       DOI = {10.1090/S0894-0347-00-00338-6},
       URL = {https://doi.org/10.1090/S0894-0347-00-00338-6},
}

@book {Lee_book,
    AUTHOR = {Lee, Dan A.},
     TITLE = {Geometric relativity},
    SERIES = {Graduate Studies in Mathematics},
    VOLUME = {201},
 PUBLISHER = {American Mathematical Society, Providence, RI},
      YEAR = {2019},
     PAGES = {xii+361},
      ISBN = {978-1-4704-5081-6},
   MRCLASS = {83-01 (83C05 83C57)},
  MRNUMBER = {3970261},
MRREVIEWER = {Stephen McCormick},
       DOI = {10.1090/gsm/201},
       URL = {https://doi.org/10.1090/gsm/201},
}

@article{mccormick,
  title={On a {M}inkowski-like inequality for asymptotically flat static manifolds},
  author={McCormick, Stephen},
  journal={Proceedings of the American Mathematical Society},
  volume={146},
  number={9},
  pages={4039--4046},
  year={2018}
}

@article{wei,
  title={On the {M}inkowski-type inequality for outward minimizing hypersurfaces in Schwarzschild space},
  author={Wei, Yong},
  journal={Calculus of Variations and Partial Differential Equations},
  volume={57},
  pages={1--17},
  year={2018},
  publisher={Springer}
}

@misc{Har_Wan,
      title={A rigidity theorem for asymptotically flat static manifolds and its applications}, 
      author={Brian Harvie and Ye-Kai Wang},
      year={2023},
      eprint={2305.08570},
      archivePrefix={arXiv},
      primaryClass={math.DG}
}

@article {morgan-ritore,
    AUTHOR = {Morgan, Frank and Ritor\'{e}, Manuel},
     TITLE = {Isoperimetric regions in cones},
   JOURNAL = {Trans. Amer. Math. Soc.},
  FJOURNAL = {Transactions of the American Mathematical Society},
    VOLUME = {354},
      YEAR = {2002},
    NUMBER = {6},
     PAGES = {2327--2339},
      ISSN = {0002-9947},
   MRCLASS = {53C42 (49Q20)},
  MRNUMBER = {1885654},
MRREVIEWER = {Constantin Vernicos},
       DOI = {10.1090/S0002-9947-02-02983-5},
       URL = {https://doi.org/10.1090/S0002-9947-02-02983-5},
}

@book {bray-thesis,
    AUTHOR = {Bray, Hubert Lewis},
     TITLE = {The {P}enrose inequality in general relativity and volume
              comparison theorems involving scalar curvature},
      NOTE = {Thesis (Ph.D.)--Stanford University},
 PUBLISHER = {ProQuest LLC, Ann Arbor, MI},
      YEAR = {1997},
     PAGES = {103},
      ISBN = {978-0591-60594-5},
   MRCLASS = {Thesis},
  MRNUMBER = {2696584},
       URL =
              {http://gateway.proquest.com/openurl?url_ver=Z39.88-2004&rft_val_fmt=info:ofi/fmt:kev:mtx:dissertation&res_dat=xri:pqdiss&rft_dat=xri:pqdiss:9810085},
}

@article {fogagnolo-pinamonti,
    AUTHOR = {Fogagnolo, Mattia and Pinamonti, Andrea},
     TITLE = {New integral estimates in substatic {R}iemannian manifolds and
              the {A}lexandrov theorem},
   JOURNAL = {J. Math. Pures Appl. (9)},
  FJOURNAL = {Journal de Math\'{e}matiques Pures et Appliqu\'{e}es. Neuvi\`eme S\'{e}rie},
    VOLUME = {163},
      YEAR = {2022},
     PAGES = {299--317},
      ISSN = {0021-7824},
   MRCLASS = {49Q10 (49J40 53C21 53C24 58J32 83C20)},
  MRNUMBER = {4438902},
       DOI = {10.1016/j.matpur.2022.05.007},
       URL = {https://doi.org/10.1016/j.matpur.2022.05.007},
}

@misc{brendle_sobolevinequalitiesmanifoldsnonnegative_2021,
  title = {Sobolev Inequalities in Manifolds with Nonnegative Curvature},
  author = {Brendle, Simon},
  year = {2021},
  note = {To appear in \emph{Comm. Pure App. Math}},
}

@misc{borghini-fogagnolo-pinamonti,
      title={The equality case in the substatic Heintze-Karcher inequality}, 
      author={Stefano Borghini and Mattia Fogagnolo and Andrea Pinamonti},
      year={2023},
      eprint={2307.04253},
      archivePrefix={arXiv},
      primaryClass={math.DG}
}

@misc{benatti2022minkowski,
      title={Minkowski Inequality on complete Riemannian manifolds with nonnegative Ricci curvature}, 
      author={Luca Benatti and Mattia Fogagnolo and Lorenzo Mazzieri},
      year={2022},
      eprint={2101.06063},
      archivePrefix={arXiv},
      primaryClass={math.DG}
}

@misc{fujitani2022functional,
      title={Some functional inequalities under lower Bakry-\'{E}mery-Ricci curvature bounds with $\varepsilon$-range}, 
      author={Yasuaki Fujitani},
      year={2022},
      eprint={2211.12310},
      archivePrefix={arXiv},
      primaryClass={math.DG}
}

@article {antonelli-fogagnolo-pozzetta,
    AUTHOR = {Antonelli, Gioacchino and Fogagnolo, Mattia and Pozzetta,
              Marco},
     TITLE = {The isoperimetric problem on {R}iemannian manifolds via {G}romov–{H}ausdorff asymptotic analysis},
   JOURNAL = {Communications in Contemporary Mathematics},
   YEAR ={2022},
    DOI = {10.1142/S0219199722500687}
}

@article {antonelli-nardulli-pozzetta,
    AUTHOR = {Antonelli, Gioacchino and Nardulli, Stefano and Pozzetta,
              Marco},
     TITLE = {The isoperimetric problem {\it via} direct method in
              noncompact metric measure spaces with lower {R}icci bounds},
   JOURNAL = {ESAIM Control Optim. Calc. Var.},
  FJOURNAL = {ESAIM. Control, Optimisation and Calculus of Variations},
    VOLUME = {28},
      YEAR = {2022},
     PAGES = {Paper No. 57, 32},
      ISSN = {1292-8119},
   MRCLASS = {49Q20 (49J45 53A35 53C23)},
  MRNUMBER = {4467099},
MRREVIEWER = {Luca Lussardi},
       DOI = {10.1051/cocv/2022052},
       URL = {https://doi.org/10.1051/cocv/2022052},
}

@article {nardulli,
    AUTHOR = {Nardulli, Stefano},
     TITLE = {Generalized existence of isoperimetric regions in non-compact
              {R}iemannian manifolds and applications to the isoperimetric
              profile},
   JOURNAL = {Asian J. Math.},
  FJOURNAL = {Asian Journal of Mathematics},
    VOLUME = {18},
      YEAR = {2014},
    NUMBER = {1},
     PAGES = {1--28},
      ISSN = {1093-6106},
   MRCLASS = {49Q20 (53A10 58C35 58E30)},
  MRNUMBER = {3215337},
MRREVIEWER = {C\'{e}sar Rosales},
       DOI = {10.4310/AJM.2014.v18.n1.a1},
       URL = {https://doi.org/10.4310/AJM.2014.v18.n1.a1},
}

@article {sakurai-convexity,
    AUTHOR = {Sakurai, Yohei},
     TITLE = {One dimensional weighted {R}icci curvature and displacement
              convexity of entropies},
   JOURNAL = {Math. Nachr.},
  FJOURNAL = {Mathematische Nachrichten},
    VOLUME = {294},
      YEAR = {2021},
    NUMBER = {10},
     PAGES = {1950--1967},
      ISSN = {0025-584X},
   MRCLASS = {49Q22 (53C21)},
  MRNUMBER = {4371277},
MRREVIEWER = {Mathias Viktor Joachim Braun},
       DOI = {10.1002/mana.201900143},
       URL = {https://doi.org/10.1002/mana.201900143},
}

@article {mari-rigoli-setti,
    AUTHOR = {Mari, Luciano and Rigoli, Marco and Setti, Alberto G.},
     TITLE = {On the {$1/H$}-flow by {$p$}-{L}aplace approximation: new
              estimates via fake distances under {R}icci lower bounds},
   JOURNAL = {Amer. J. Math.},
  FJOURNAL = {American Journal of Mathematics},
    VOLUME = {144},
      YEAR = {2022},
    NUMBER = {3},
     PAGES = {779--849},
      ISSN = {0002-9327},
   MRCLASS = {53E10 (53C21)},
  MRNUMBER = {4436145},
MRREVIEWER = {Yong Wei},
       DOI = {10.1353/ajm.2022.0016},
       URL = {https://doi.org/10.1353/ajm.2022.0016},
}

@article {wang-willmore,
    AUTHOR = {Wang, Xiaodong},
     TITLE = {Remark on an inequality for closed hypersurfaces in complete
              manifolds with nonnegative {R}icci curvature},
   JOURNAL = {Ann. Fac. Sci. Toulouse Math. (6)},
  FJOURNAL = {Annales de la Facult\'{e} des Sciences de Toulouse.
              Math\'{e}matiques. S\'{e}rie 6},
    VOLUME = {32},
      YEAR = {2023},
    NUMBER = {1},
     PAGES = {173--178},
      ISSN = {0240-2963,2258-7519},
   MRCLASS = {53C40 (53C21)},
  MRNUMBER = {4574743},
}

@article {cheeger-gromoll,
    AUTHOR = {Cheeger, Jeff and Gromoll, Detlef},
     TITLE = {The splitting theorem for manifolds of nonnegative {R}icci
              curvature},
   JOURNAL = {J. Differential Geometry},
  FJOURNAL = {Journal of Differential Geometry},
    VOLUME = {6},
      YEAR = {1971/72},
     PAGES = {119--128},
      ISSN = {0022-040X},
   MRCLASS = {53C20},
  MRNUMBER = {303460},
MRREVIEWER = {J. R. Vanstone},
       URL = {http://projecteuclid.org/euclid.jdg/1214430220},
}

@article {kleiner,
    AUTHOR = {Kleiner, Bruce},
     TITLE = {An isoperimetric comparison theorem},
   JOURNAL = {Invent. Math.},
  FJOURNAL = {Inventiones Mathematicae},
    VOLUME = {108},
      YEAR = {1992},
    NUMBER = {1},
     PAGES = {37--47},
      ISSN = {0020-9910},
   MRCLASS = {53C20 (49Q20)},
  MRNUMBER = {1156385},
MRREVIEWER = {Viktor Schroeder},
       DOI = {10.1007/BF02100598},
       URL = {https://doi.org/10.1007/BF02100598},
}

@article{balogh-kristaly,
author={Balogh, Zolt{\'a}n M.
and Krist{\'a}ly, Alexandru},
title={Sharp isoperimetric and Sobolev inequalities in spaces with nonnegative Ricci curvature},
journal={Mathematische Annalen},
year={2022},
issn={1432-1807},
doi={10.1007/s00208-022-02380-1},
url={https://doi.org/10.1007/s00208-022-02380-1},
}

@misc{wylie-yeroshkin,
      title={On the geometry of Riemannian manifolds with density}, 
      author={William Wylie and Dmytro Yeroshkin},
      year={2016},
      eprint={1602.08000},
      archivePrefix={arXiv},
      primaryClass={math.DG}
}

@article {kuwae-li,
    AUTHOR = {Kuwae, Kazuhiro and Li, Xiang-Dong},
     TITLE = {New {L}aplacian comparison theorem and its applications to
              diffusion processes on {R}iemannian manifolds},
   JOURNAL = {Bull. Lond. Math. Soc.},
  FJOURNAL = {Bulletin of the London Mathematical Society},
    VOLUME = {54},
      YEAR = {2022},
    NUMBER = {2},
     PAGES = {404--427},
      ISSN = {0024-6093},
   MRCLASS = {58J65 (58J05 58J60 60H30 60J45 60J60)},
  MRNUMBER = {4414994},
MRREVIEWER = {Maria Gordina},
       DOI = {10.1112/blms.12568},
       URL = {https://doi.org/10.1112/blms.12568},
}

@book {ohta-book,
    AUTHOR = {Ohta, Shin-ichi},
     TITLE = {Comparison {F}insler geometry},
    SERIES = {Springer Monographs in Mathematics},
 PUBLISHER = {Springer, Cham},
      YEAR = {2021},
     PAGES = {xxii+316},
   MRCLASS = {53C60 (53-02 53B40 53C23 58J35)},
  MRNUMBER = {4386101},
       DOI = {10.1007/978-3-030-80650-7},
       URL = {https://doi.org/10.1007/978-3-030-80650-7},
}

@article {kuwae-sakurai,
    AUTHOR = {Kuwae, Kazuhiro and Sakurai, Yohei},
     TITLE = {Comparison geometry of manifolds with boundary under lower
              {$N$}-weighted {R}icci curvature bounds with
              {$\varepsilon$}-range},
   JOURNAL = {J. Math. Soc. Japan},
  FJOURNAL = {Journal of the Mathematical Society of Japan},
    VOLUME = {75},
      YEAR = {2023},
    NUMBER = {1},
     PAGES = {151--172},
      ISSN = {0025-5645},
   MRCLASS = {53C21 (53C20)},
  MRNUMBER = {4539013},
       DOI = {10.2969/jmsj/87278727},
       URL = {https://doi.org/10.2969/jmsj/87278727},
}

@article {lu-minguzzi-ohta,
    AUTHOR = {Lu, Yufeng and Minguzzi, Ettore and Ohta, Shin-ichi},
     TITLE = {Comparison theorems on weighted {F}insler manifolds and
              spacetimes with {$\epsilon$}-range},
   JOURNAL = {Anal. Geom. Metr. Spaces},
  FJOURNAL = {Analysis and Geometry in Metric Spaces},
    VOLUME = {10},
      YEAR = {2022},
    NUMBER = {1},
     PAGES = {1--30},
   MRCLASS = {53C21 (53C50 53C60)},
  MRNUMBER = {4388774},
MRREVIEWER = {Bing-Ye Wu},
       DOI = {10.1515/agms-2020-0131},
       URL = {https://doi.org/10.1515/agms-2020-0131},
}

@article {cavalletti-manini1,
    AUTHOR = {Cavalletti, Fabio and Manini, Davide},
     TITLE = {Isoperimetric inequality in noncompact {${MCP}$} spaces},
   JOURNAL = {Proc. Amer. Math. Soc.},
  FJOURNAL = {Proceedings of the American Mathematical Society},
    VOLUME = {150},
      YEAR = {2022},
    NUMBER = {8},
     PAGES = {3537--3548},
      ISSN = {0002-9939},
   MRCLASS = {49Q20 (49Q22)},
  MRNUMBER = {4439475},
MRREVIEWER = {Alp\'{a}r R. M\'{e}sz\'{a}ros},
       DOI = {10.1090/proc/15945},
       URL = {https://doi.org/10.1090/proc/15945},
}

@misc{antonelli-pasqualetto-pozzetta-semola,
      title={Asymptotic isoperimetry on non collapsed spaces with lower Ricci bounds}, 
      author={Gioacchino Antonelli and Enrico Pasqualetto and Marco Pozzetta and Daniele Semola},
      year={2022},
      eprint={2208.03739},
      archivePrefix={arXiv},
      primaryClass={math.DG},
      note={Accepted for publication in Math. Ann.},
}

@misc{pozzetta2023isoperimetry,
      title={Isoperimetry on manifolds with Ricci bounded below: overview of recent results and methods}, 
      author={Marco Pozzetta},
      year={2023},
      eprint={2303.11925},
      archivePrefix={arXiv},
      primaryClass={math.DG}
}

@misc{cavalletti-manini2,
      title={Rigidities of Isoperimetric inequality under nonnegative Ricci curvature}, 
      author={Fabio Cavalletti and Davide Manini},
      year={2022},
      eprint={2207.03423},
      archivePrefix={arXiv},
      primaryClass={math.MG},
}

@misc{johne,
      title={Sobolev inequalities on manifolds with nonnegative Bakry-\'Emery Ricci curvature}, 
      author={Florian Johne},
      year={2021},
      eprint={2103.08496},
      archivePrefix={arXiv},
      primaryClass={math.DG},
}

@article{agostiniani_sharpgeometricinequalitiesclosed_2020,
  title = {Sharp Geometric Inequalities for Closed Hypersurfaces in Manifolds with Nonnegative {{Ricci}} Curvature},
  author = {Agostiniani, Virginia and Fogagnolo, Mattia and Mazzieri, Lorenzo},
  year = {2020},
  journal = {Inventiones mathematicae},
  issn = {1432-1297},
  doi = {10.1007/s00222-020-00985-4},
}

@article {bray-morgan,
    AUTHOR = {Bray, Hubert and Morgan, Frank},
     TITLE = {An isoperimetric comparison theorem for {S}chwarzschild space
              and other manifolds},
   JOURNAL = {Proc. Amer. Math. Soc.},
  FJOURNAL = {Proceedings of the American Mathematical Society},
    VOLUME = {130},
      YEAR = {2002},
    NUMBER = {5},
     PAGES = {1467--1472},
      ISSN = {0002-9939},
   MRCLASS = {53C42},
  MRNUMBER = {1879971},
MRREVIEWER = {Constantin Vernicos},
       DOI = {10.1090/S0002-9939-01-06186-X},
       URL = {https://doi.org/10.1090/S0002-9939-01-06186-X},
}

@article {brendle-alexandrov,
    AUTHOR = {Brendle, Simon},
     TITLE = {Constant mean curvature surfaces in warped product manifolds},
   JOURNAL = {Publ. Math. Inst. Hautes \'{E}tudes Sci.},
  FJOURNAL = {Publications Math\'{e}matiques. Institut de Hautes \'{E}tudes
              Scientifiques},
    VOLUME = {117},
      YEAR = {2013},
     PAGES = {247--269},
      ISSN = {0073-8301},
   MRCLASS = {53A10 (53C45)},
  MRNUMBER = {3090261},
MRREVIEWER = {Andrew Bucki},
       DOI = {10.1007/s10240-012-0047-5},
       URL = {https://doi.org/10.1007/s10240-012-0047-5},
}

@article {SternbergZiemerWilliams,
    AUTHOR = {Sternberg, Peter and Ziemer, William P. and Williams, Graham},
     TITLE = {{$C^{1,1}$}-regularity of constrained area minimizing
              hypersurfaces},
   JOURNAL = {J. Differential Equations},
  FJOURNAL = {Journal of Differential Equations},
    VOLUME = {94},
      YEAR = {1991},
    NUMBER = {1},
     PAGES = {83--94},
      ISSN = {0022-0396},
   MRCLASS = {49Q25 (58E12)},
  MRNUMBER = {1133542},
MRREVIEWER = {Carlo Bardaro},
       DOI = {10.1016/0022-0396(91)90104-H},
       URL = {https://doi.org/10.1016/0022-0396(91)90104-H},
}

@book {Maggi,
    AUTHOR = {Maggi, Francesco},
     TITLE = {Sets of finite perimeter and geometric variational problems},
    SERIES = {Cambridge Studies in Advanced Mathematics},
    VOLUME = {135},
      NOTE = {An introduction to geometric measure theory},
 PUBLISHER = {Cambridge University Press, Cambridge},
      YEAR = {2012},
     PAGES = {xx+454},
      ISBN = {978-1-107-02103-7},
   MRCLASS = {49-01 (26B20 28-02 49-02 49Q05 49Q20)},
  MRNUMBER = {2976521},
MRREVIEWER = {Giovanni Alberti},
       DOI = {10.1017/CBO9781139108133},
       URL = {https://doi.org/10.1017/CBO9781139108133},
}

@article {Morgan,
    AUTHOR = {Morgan, Frank},
     TITLE = {Regularity of isoperimetric hypersurfaces in {R}iemannian
              manifolds},
   JOURNAL = {Trans. Amer. Math. Soc.},
  FJOURNAL = {Transactions of the American Mathematical Society},
    VOLUME = {355},
      YEAR = {2003},
    NUMBER = {12},
     PAGES = {5041--5052},
      ISSN = {0002-9947},
   MRCLASS = {49Q20 (49N60 53C42)},
  MRNUMBER = {1997594},
MRREVIEWER = {Monica Torres},
       DOI = {10.1090/S0002-9947-03-03061-7},
       URL = {https://doi.org/10.1090/S0002-9947-03-03061-7},
}

@article {FocardiGeraciSpadaro,
    AUTHOR = {Focardi, M. and Geraci, F. and Spadaro, E.},
     TITLE = {The classical obstacle problem for nonlinear variational
              energies},
   JOURNAL = {Nonlinear Anal.},
  FJOURNAL = {Nonlinear Analysis. Theory, Methods \& Applications. An
              International Multidisciplinary Journal},
    VOLUME = {154},
      YEAR = {2017},
     PAGES = {71--87},
      ISSN = {0362-546X},
   MRCLASS = {35R35 (35J87 49N60)},
  MRNUMBER = {3614645},
       DOI = {10.1016/j.na.2016.10.020},
       URL = {https://doi.org/10.1016/j.na.2016.10.020},
}

@article {MondinoSpadaro,
    AUTHOR = {Mondino, Andrea and Spadaro, Emanuele},
     TITLE = {On an isoperimetric-isodiametric inequality},
   JOURNAL = {Anal. PDE},
  FJOURNAL = {Analysis \& PDE},
    VOLUME = {10},
      YEAR = {2017},
    NUMBER = {1},
     PAGES = {95--126},
      ISSN = {2157-5045},
   MRCLASS = {49J40 (35B65 35R01 49Q10 49Q20)},
  MRNUMBER = {3611014},
MRREVIEWER = {Faustino Maestre},
       DOI = {10.2140/apde.2017.10.95},
       URL = {https://doi.org/10.2140/apde.2017.10.95},
}

@article {Pigola_Veronelli,
    AUTHOR = {Pigola, Stefano and Veronelli, Giona},
     TITLE = {The smooth {R}iemannian extension problem},
   JOURNAL = {Ann. Sc. Norm. Super. Pisa Cl. Sci. (5)},
  FJOURNAL = {Annali della Scuola Normale Superiore di Pisa. Classe di
              Scienze. Serie V},
    VOLUME = {20},
      YEAR = {2020},
    NUMBER = {4},
     PAGES = {1507--1551},
      ISSN = {0391-173X},
   MRCLASS = {53C23 (53C20 53C40)},
  MRNUMBER = {4201188},
MRREVIEWER = {Hans-Bert Rademacher},
}

@article{huisken_inverse_2001,
  title = {The Inverse Mean Curvature Flow and the {{Riemannian Penrose}} Inequality},
  author = {Huisken, Gerhard and Ilmanen, Tom},
  year = {2001},
  journal = {Journal of Differential Geometry},
  volume = {59},
  number = {3},
  pages = {353--437},
  issn = {0022-040X},
  url = {https://mathscinet.ams.org/mathscinet-getitem?mr=1916951},
  mrnumber = {1916951},
  file = {C\:\\Users\\LucaBenatti\\Google Drive\\Library\\Huisken e Ilmanen (2001) The inverse mean curvature flow and the Riemannian Penrose inequality.pdf},
}

@article {Ilmanen-mcf,
    AUTHOR = {Ilmanen, Tom},
     TITLE = {Generalized flow of sets by mean curvature on a manifold},
   JOURNAL = {Indiana Univ. Math. J.},
  FJOURNAL = {Indiana University Mathematics Journal},
    VOLUME = {41},
      YEAR = {1992},
    NUMBER = {3},
     PAGES = {671--705},
      ISSN = {0022-2518},
   MRCLASS = {58E15 (53A07)},
  MRNUMBER = {1189906},
MRREVIEWER = {Anders Linn\'{e}r},
       DOI = {10.1512/iumj.1992.41.41036},
       URL = {https://doi.org/10.1512/iumj.1992.41.41036},
}

@book {Petersen,
    AUTHOR = {Petersen, Peter},
     TITLE = {Riemannian geometry},
    SERIES = {Graduate Texts in Mathematics},
    VOLUME = {171},
   EDITION = {Third},
 PUBLISHER = {Springer, Cham},
      YEAR = {2016},
     PAGES = {xviii+499},
%      ISBN = {978-3-319-26652-7; 978-3-319-26654-1},
   MRCLASS = {53-01 (53C20 53C21 53C23)},
  MRNUMBER = {3469435},
       DOI = {10.1007/978-3-319-26654-1},
       URL = {https://doi.org/10.1007/978-3-319-26654-1},
}

@article {Fogagnolo_Mazzieri-minimising,
    AUTHOR = {Fogagnolo, Mattia and Mazzieri, Lorenzo},
     TITLE = {Minimising hulls, {$p$}-capacity and isoperimetric inequality
              on complete {R}iemannian manifolds},
   JOURNAL = {J. Funct. Anal.},
  FJOURNAL = {Journal of Functional Analysis},
    VOLUME = {283},
      YEAR = {2022},
    NUMBER = {9},
     PAGES = {Paper No. 109638, 49},
      ISSN = {0022-1236},
   MRCLASS = {49Q10 (31C15 49J40 53C21 53E10)},
  MRNUMBER = {4459004},
       DOI = {10.1016/j.jfa.2022.109638},
       URL = {https://doi.org/10.1016/j.jfa.2022.109638},
}

@article {Kasue,
    AUTHOR = {Kasue, Atsushi},
     TITLE = {Ricci curvature, geodesics and some geometric properties of
              {R}iemannian manifolds with boundary},
   JOURNAL = {J. Math. Soc. Japan},
  FJOURNAL = {Journal of the Mathematical Society of Japan},
    VOLUME = {35},
      YEAR = {1983},
    NUMBER = {1},
     PAGES = {117--131},
      ISSN = {0025-5645},
   MRCLASS = {53C20 (53C22)},
  MRNUMBER = {679079},
MRREVIEWER = {Yu. Burago},
       DOI = {10.2969/jmsj/03510117},
       URL = {https://doi.org/10.2969/jmsj/03510117},
}

@article {Li_Xia_17,
    AUTHOR = {Li, Junfang and Xia, Chao},
     TITLE = {An integral formula for affine connections},
   JOURNAL = {J. Geom. Anal.},
  FJOURNAL = {Journal of Geometric Analysis},
    VOLUME = {27},
      YEAR = {2017},
    NUMBER = {3},
     PAGES = {2539--2556},
      ISSN = {1050-6926},
   MRCLASS = {53B05 (53B21 53C21)},
  MRNUMBER = {3667440},
MRREVIEWER = {L\'{e}onard Todjihounde},
       DOI = {10.1007/s12220-017-9771-x},
       URL = {https://doi.org/10.1007/s12220-017-9771-x},
}

@article {Li_Xia_19,
    AUTHOR = {Li, Junfang and Xia, Chao},
     TITLE = {An integral formula and its applications on sub-static
              manifolds},
   JOURNAL = {J. Differential Geom.},
  FJOURNAL = {Journal of Differential Geometry},
    VOLUME = {113},
      YEAR = {2019},
    NUMBER = {3},
     PAGES = {493--518},
      ISSN = {0022-040X},
   MRCLASS = {53C40 (53C21)},
  MRNUMBER = {4031740},
MRREVIEWER = {Theodoros Vlachos},
       DOI = {10.4310/jdg/1573786972},
       URL = {https://doi.org/10.4310/jdg/1573786972},
}

@article {Wang_Wang_Zhang,
    AUTHOR = {Wang, Mu-Tao and Wang, Ye-Kai and Zhang, Xiangwen},
     TITLE = {Minkowski formulae and {A}lexandrov theorems in spacetime},
   JOURNAL = {J. Differential Geom.},
  FJOURNAL = {Journal of Differential Geometry},
    VOLUME = {105},
      YEAR = {2017},
    NUMBER = {2},
     PAGES = {249--290},
      ISSN = {0022-040X},
   MRCLASS = {53C42 (53C50 83C20)},
  MRNUMBER = {3606730},
MRREVIEWER = {Wei-huan Chen},
       DOI = {10.4310/jdg/1486522815},
       URL = {https://doi.org/10.4310/jdg/1486522815},
}

@book {Evans,
    AUTHOR = {Evans, Lawrence C.},
     TITLE = {Partial differential equations},
    SERIES = {Graduate Studies in Mathematics},
    VOLUME = {19},
   EDITION = {Second},
 PUBLISHER = {American Mathematical Society, Providence, RI},
      YEAR = {2010},
     PAGES = {xxii+749},
      ISBN = {978-0-8218-4974-3},
   MRCLASS = {35-01},
  MRNUMBER = {2597943},
MRREVIEWER = {Diego M. Maldonado},
       DOI = {10.1090/gsm/019},
       URL = {https://doi.org/10.1090/gsm/019},
}

@book {doCarmo,
    AUTHOR = {do Carmo, Manfredo Perdig\~{a}o},
     TITLE = {Riemannian geometry},
    SERIES = {Mathematics: Theory \& Applications},
      NOTE = {Translated from the second Portuguese edition by Francis
              Flaherty},
 PUBLISHER = {Birkh\"{a}user Boston, Inc., Boston, MA},
      YEAR = {1992},
     PAGES = {xiv+300},
      ISBN = {0-8176-3490-8},
   MRCLASS = {53-01},
  MRNUMBER = {1138207},
MRREVIEWER = {Bang-yen Chen},
       DOI = {10.1007/978-1-4757-2201-7},
       URL = {https://doi.org/10.1007/978-1-4757-2201-7},
}

@article{Wylie,
  title={A warped product version of the {C}heeger--{G}romoll splitting theorem},
  author={Wylie, William},
  journal={Transactions of the American Mathematical Society},
  volume={369},
  number={9},
  pages={6661--6681},
  year={2017},
}

@article {Chrusciel_Simon,
    AUTHOR = {Chru\'{s}ciel, Piotr T. and Simon, Walter},
     TITLE = {Towards the classification of static vacuum spacetimes with
              negative cosmological constant},
   JOURNAL = {J. Math. Phys.},
  FJOURNAL = {Journal of Mathematical Physics},
    VOLUME = {42},
      YEAR = {2001},
    NUMBER = {4},
     PAGES = {1779--1817},
      ISSN = {0022-2488},
   MRCLASS = {83C20 (83C57)},
  MRNUMBER = {1820431},
MRREVIEWER = {Lars \AA ke Andersson},
       DOI = {10.1063/1.1340869},
       URL = {https://doi.org/10.1063/1.1340869},
}

@article {Mantegazza_Mennucci,
    AUTHOR = {Mantegazza, Carlo and Mennucci, Andrea Carlo},
     TITLE = {Hamilton-{J}acobi equations and distance functions on
              {R}iemannian manifolds},
   JOURNAL = {Appl. Math. Optim.},
  FJOURNAL = {Applied Mathematics and Optimization},
    VOLUME = {47},
      YEAR = {2003},
    NUMBER = {1},
     PAGES = {1--25},
      ISSN = {0095-4616},
   MRCLASS = {49L20 (49Q15 53C22)},
  MRNUMBER = {1941909},
MRREVIEWER = {Pierre Cardaliaguet},
       DOI = {10.1007/s00245-002-0736-4},
       URL = {https://doi.org/10.1007/s00245-002-0736-4},
}

@article{Gal_Sch_Wit_Woo,
  title={Topological censorship and higher genus black holes},
  author={Galloway, G. J. and Schleich, K. and Witt, D. M. and Woolgar, E.},
  journal={Physical Review D},
  volume={60},
  number={10},
  pages={104039},
  year={1999},
  publisher={APS}
}

@article{kennard_wylie_yeroshkin,
  title={The weighted connection and sectional curvature for manifolds with density},
  author={Kennard, Lee and Wylie, William and Yeroshkin, Dmytro},
  journal={The Journal of Geometric Analysis},
  volume={29},
  pages={957--1001},
  year={2019},
  publisher={Springer}
}

@article{Fang_Yuan,
  title={Brown--York mass and positive scalar curvature II: Besse’s conjecture and related problems},
  author={Fang, Yi and Yuan, Wei},
  journal={Annals of Global Analysis and Geometry},
  volume={56},
  pages={1--15},
  year={2019},
  publisher={Springer}
}

@article{He,
  title={Critical metrics of the volume functional on three-dimensional manifolds},
  author={He, Huiya},
  journal={arXiv preprint arXiv:2101.05621},
  year={2021}
}

@article {Zeng,
    AUTHOR = {Zeng, Fanqi},
     TITLE = {Some almost-{S}chur type inequalities and applications on
              sub-static manifolds},
   JOURNAL = {Electron. Res. Arch.},
  FJOURNAL = {Electronic Research Archive},
    VOLUME = {30},
      YEAR = {2022},
    NUMBER = {8},
     PAGES = {2860--2870},
      ISSN = {2688-1594},
   MRCLASS = {53C21 (53C42)},
  MRNUMBER = {4432005},
MRREVIEWER = {Xiaodong\ Wang},
       DOI = {10.3934/era.2022145},
       URL = {https://doi.org/10.3934/era.2022145},
}

@article {Cheng,
    AUTHOR = {Cheng, Xu},
     TITLE = {An almost-{S}chur type lemma for symmetric {$(2,0)$} tensors
              and applications},
   JOURNAL = {Pacific J. Math.},
  FJOURNAL = {Pacific Journal of Mathematics},
    VOLUME = {267},
      YEAR = {2014},
    NUMBER = {2},
     PAGES = {325--340},
      ISSN = {0030-8730,1945-5844},
   MRCLASS = {53C21},
  MRNUMBER = {3207587},
MRREVIEWER = {Tan\ Zhang},
       DOI = {10.2140/pjm.2014.267.325},
       URL = {https://doi.org/10.2140/pjm.2014.267.325},
}
\end{document}